\documentclass[12pt,
%emtex,
paper=a4,
headings=small,
%draft
]{scrartcl}

\usepackage[a4paper,left=4cm, right=4cm]{geometry}    % See geometry.pdf to learn the layout options. There are lots.
\setkomafont{sectioning}{\rmfamily\bfseries} %Überschriften mit Serifen

\usepackage{pifont}
\usepackage{times}
\usepackage{amsfonts}       %
\usepackage{amsmath} %[leqno]=Gleichungen werden links numeriert;
\usepackage{amssymb}        %
\usepackage{amsthm}         %theorem-umgebungen
%\usepackage{amscd}          %einfache kommutative Diagramme
%%\usepackage{amsxtra}
%\usepackage{multind}       %noch zu installieren
%\usepackage{makeidx}        %macht nur einen Index
%\makeindex%{F}\makeindex{W}
\usepackage{booktabs}

% \usepackage{epsfig}
% \psfigdriver{dvips}
% \usepackage[obeyDraft]{todonotes}
\usepackage{latexsym}
 \usepackage[matrix,curve]{xypic}
\usepackage{microtype}        %schönere Zeilenumbrüche

\usepackage{rotating}
% \rotdriver{dvips}
%\usepackage{srcltx}         %hilft WinEdt32 beim Inverse-dvi-search
% \usepackage[ngerman]{babel}  %Deutsch
%\usepackage{color}

%\textwidth15cm
%\textheight22cm
%\topmargin-5mm
%\oddsidemargin-3mm
%\evensidemargin-3mm
\tolerance=300      %%% je hoeher die Zahl um so groessere Zwischenraeume zwischen Worten
                    %%% darf LATEX beim Blocksatzbilden machen. SPRINGER_verlag hat \tolerance=500
%%\emergencystretch bietet etwas, um overfull-h-boxen zu vermeiden!!!!
%%\DeclareMathSizes{12}{10}{7}{6}      %%% Damit werden die Mathe--symbole in 10pt ausgegeben....
%\parindent0pt               % Kein Einschub am Anfang eines Absatzes

\hyphenation{}

%
%\newcommand{\Numerierung}
%    {\refstepcounter{meinzaehler}{\bf(\arabic{chapter}.\arabic{meinzaehler}) }
%     %\markright{\upshape (\arabic{chapter}.\arabic{meinzaehler})}
%    }
%\newcommand{\meinchapter}[1]{\refstepcounter{chapter}\section*{\arabic{chapter} $\ $ #1}
%    \addcontentsline{toc}{chapter}{\arabic{chapter}$\ $ #1}
%    \markboth{\scshape \arabic{chapter}. #1}{}
%    %\markboth{\scshape Kapitel \arabic{chapter}}{\scshape #1}
%}
%%%%%%%\renewcommand{\chapter}{\@startsection{chapter}{0}{5mm}{5mm}{5mm}{\large\bfseries}}
%
%%%     Jetzt wird der Gleichungszaehler modifiziert:
%\renewcommand{\theequation}{{{\bf\roman{equation}}}}  %% sonst gibt er (2.8) und aehnliches an!
%%\newcounter{zaelt_gar_nicht}
%%\numberwithin{equation}{zaelt_gar_nicht}
%
%
%\newcommand{\myboxed}[1]
%            {\boxed{\quad
%                \overset{\vphantom{I}}{\underset{\vphantom{I}}{#1}}
%                    \quad
%                   }
%            }
%%\newcommand{\Absatz}{\\[5mm]}

%--------------Layout von Paragraphen-----------
\setlength{\parindent}{0cm} 
\setlength{\parskip}{2mm plus1mm minus1mm}

%\newcommand{\neueZeile}{\\}
%\newcommand{\Seitenumbruch}%{\pagebreak}
%                           {\cleardoublepage} %%% fuer zweiseitige Kapitelanf\"ange rechts!!!!
%\newcommand{\TOCEinrueckung}{$\ \ \ \ \ \ \ $}
%%\newcommand{\Ueberschrift}[1]{\subsection*{#1}
%%            \meinesubsection{ #1}}
%\newcommand{\eventuellerSeitenumbruch}{}
%                                      %{\par\pagebreak}
%\newcommand{\meinesubsection}[1]{
%            \addcontentsline{toc}{section}{\TOCEinrueckung #1}
%            \markright{\scshape #8}
%            }
%
%
%%%\def\A     {\mathbb{A}}
%%%\def\N     {\mathbb{N}}

\newcommand{\N}{\mathbb{N}}
\newcommand{\Z}{\mathbb{Z}}

\newcommand{\C}{\mathbb{C}}

\newtheorem{thm}{Theorem}[section]
\newtheorem{cor}[thm]{Corollary}
\newtheorem{lem}[thm]{Lemma}
\newtheorem{prop}[thm]{Proposition}

\theoremstyle{definition}

\newcommand\nosf[1]{\begin{footnotesize}\textup{\textsf{#1}}\end{footnotesize}}
%\newcounter{hhh}
%
%
%
%
%\newcommand{\ve}{\overline{\mathcal E}}  %%%%%%%%%% an stelle von \def\ve{\overline{{\cal E}}}
%\newcommand{\cO}{{\mathcal O}}           %%%%%%%%%% an stelle von \def\cO{{\cal O}}
%\newcommand{\pits}{{\hbox{.}}}           %%%%%%%%%% an stelle von \def\pits{{\hbox{.}}}
%\newcommand{\F}{\mathbf{F}}              %%%%%%%%%% an stelle von \def\F{I \!\! F}
%\newcommand{\C}{\Bbb{C}}
%\newcommand{\N}{\Bbb{N}}
%\newcommand{\R}{\Bbb{R}}
%\newcommand{\Z}{\mathbf{Z}}
%\newcommand{\Q}{\Bbb{Q}}
%\newcommand{\A}{\Bbb{A}}
%\renewcommand{\P}{\Bbb{P}}

%---------------Spacing for lists-------------------
\usepackage{enumitem}
\setlist[enumerate]{itemsep=-0.5ex plus0.1ex minus 0.2ex}
\setlist[description]{itemsep=-0.5ex plus0.1ex minus 0.2ex}
\setlist[itemize]{itemsep=-0.5ex plus0.1ex minus 0.2ex}

%-------------Spacing for math operators------------
%(inline math)
\thickmuskip=2mu plus 1.5mu
\medmuskip=2mu plus 1mu

%(displayed math)
\everydisplay{
\thickmuskip=5mu plus 2mu
\medmuskip=4mu plus 1mu
\relax}

\begin{document}

\centerline{\textbf{ Exceptional poles of local $L$-functions for $GSp(4)$}} 
\centerline{\textbf{with respect to split Bessel models}}

\bigskip\noindent

\centerline{ Rainer Weissauer (Heidelberg)}

\bigskip\noindent  

\bigskip\noindent
\textbf{Introduction}.
For the rank two symplectic group $G$ 
over a nonarchimedean local field $k$ of characteristic $char(k)\neq 2$ and for its similitude character $\lambda_G: G \to k^*$ we study $H$-period functionals, i.e. $\C$-linear functionals 
$$ \ell: \Pi \to (\C, \rho\circ\lambda_G) $$ on irreducible admissible 
representations $\Pi$ of $G$ that are equivariant with respect to a subgroup $H$ of $G$ that contains $Sl(2,k)\times Sl(2,k)$, the center $Z_G$ of $G$ and a certain torus $T$. Here $\rho$ is a character of $k^*$.  
If nontrivial $H$-period functionals exist at all, the corresponding character $\rho$
is uniquely determined by $\Pi$ and the dimension of the space of such functionals is $\leq 1$. If the  central character of $\Pi$ is trivial, this is related to more general results for orthogonal groups \cite{AGRS}, \cite{Wa}. We determine the irreducible admissible representations of $G$ for which such functionals exist (theorem \ref{maintheorem}).
Except for one-dimensional representations, the corresponding representations $\Pi$  turn out to be either Saito-Kurokawa type representations or representations of type \nosf{VId}, in the notation of \cite{Sally-Tadic}, and for simplicity
we refer to them all as representations of extended Saito-Kurokawa type. It is well known that
these are paramodular \cite{Roberts-Schmidt},   
have a unique split Bessel model $\Lambda$ \cite{Roberts-Schmidt_Bessel},  and the corresponding Bessel functional can be obtained from 
the functional $\ell$ above. 

\bigskip%\noindent
To any Bessel model $\Lambda$ of an irreducible, admissible representation $\Pi$ of $G$, Piatetskii-Shapiro \cite{PS-L-Factor_GSp4} associated a local $L$-factor $L^{PS}(s,\Pi,\Lambda)$, defined as a product
 $$L^{PS}\!(s,\Pi,\Lambda)=L_{ex}^{PS}\!(s,\Pi,\Lambda)L_{reg}^{PS}(s,\Pi,\Lambda)$$
 of an exceptional $L$-factor $L_{ex}^{PS}(s,\Pi,\Lambda)$ and a regular $L$-factor $L_{reg}^{PS}(s,\Pi,\Lambda)$. As already observed by Piatetskii-Shapiro, a nontrivial
exceptional $L$-factor $L_{ex}^{PS}\!(s,\Pi, \Lambda)$ for a split Bessel model $\Lambda$
of $\Pi$ gives rise to a nontrivial $H$-period functional on $\Pi$. For split Bessel models we show the converse: The exceptional $L$-factor $L_{ex}^{PS}\!(s,\Pi,\Lambda)$ is nontrivial if and only if $\Pi$ admits a nontrivial $H$-equivariant functional, confirming an expectation formulated by Piatetskii-Shapiro \cite{PS-L-Factor_GSp4} for arbitrary Bessel models. 
For split Bessel models the regular part of the $L$-factor is $L_{reg}^{PS}(s,\Pi,\Lambda)= L_{sreg}^{PS}(s,\Pi,\Lambda)L(\mu\otimes M,s) $ and the two factors on the right side are described in \cite{RW2} and \cite{RW}.  In this paper we determine $L_{ex}^{PS}\!(s,\Pi,\Lambda)$ for split Bessel models and therefore complete the computation of the local $L$-factors $L^{PS}(s,\Pi,\Lambda)$ in the split cases. For anisotropic Bessel models the regular $L$-factors $L_{reg}^{PS}(s,\Pi,\Lambda)$ have been determined
by Danisman \cite{Danisman}, \cite{Danisman2}, \cite{Danisman3} for all $\Pi$.
Furthermore, the exceptional part  $L_{ex}^{PS}(s,\Pi,\Lambda)$ has been computed  
for anisotropic Bessel models $\Lambda$ and non-generic cuspidal representations in \cite{Danisman3bis}. 

\bigskip%\noindent
%In this paper we only consider split Bessel models $\Lambda$. 
Our main result
on the local Piatetskii-Shapiro $L$-function $L^{PS}(s,\Pi,\Lambda)$ for split
Bessel models,  in terms of the local Tate
$L$-factors, is contained in table 1 above. %Notice our notation 
%$\Pi=\sigma\otimes \Pi_{norm}$ for a character $\sigma$ and normalized 
%$\Pi_{norm}$; see \ref{}. 
For a local Saito-Kurokawa representation $\Pi$, attached to an irreducible generic smooth representation $\pi$ of $Gl(2,k)$ with trivial central character, and $\Pi$ suitably normalized by a character twist (e.g. this replaces \nosf{Vc} by its twist \nosf{Vb}), the results of the table above can be restated in the following simple form $$ L^{PS}(s,\Pi,\Lambda)= L(s-\frac{1}{2})L(\pi,s)L(s+\frac{1}{2}) \ .$$ 
Of course, this is expected for the local Saito-Kurokawa representations $\Pi$. Since the present paper is a sequel of \cite{RW},  notation from there is used to considerable extent.

\begin{table}
\begin{tabular}{llll}
\toprule
$\Pi$&  $\ \ L^{PS}_{ex}(s,\Pi,\Lambda) $& $\ \ L^{PS}_{sreg}(s,\Pi,\Lambda)$ & $\ \ L(\mu\otimes M,s)$ \\
\midrule
IIb &  $ L(\nu^{1/2}\chi\sigma, s)$   & $1$  & $L(\nu^{-1/2}\chi\sigma, s)L(\sigma, s)  L(\chi^2\sigma, s)$ \\
Vb &    $  L(\nu^{1/2}\sigma, s)  $ & $ 1 $  & $ L(\nu^{-1/2}\sigma, s) L(\nu^{1/2}\xi\sigma, s)$ \\
Vc &    $ L(\nu^{1/2}\xi\sigma, s)  $ & $ 1 $ &  $ L(\nu^{-1/2}\xi\sigma, s) L(\nu^{1/2}\sigma, s)$ \\
VIc &    $ L(\nu^{1/2}\sigma, s) $ & $ L(\nu^{1/2}\sigma, s)$ & $ L(\nu^{-1/2}\sigma, s)$   \\
VId &    $ L(\nu^{1/2}\sigma, s) $ & $ L(\nu^{1/2}\sigma, s) $ & $ L(\nu^{-1/2}\sigma, s)^2$ \\
XIb &    $ L(\nu^{1/2}\sigma, s) $ &  $ 1 $ &  $ L(\nu^{-1/2}\sigma, s)$  \\
\bottomrule
\end{tabular}
\caption{%In the case Vc of the table, $\xi\neq 1$ denotes a quadratic character that defines $\Pi$. 
List of exceptional Piateskii-Shapiro $L$-factors for split Bessel models.
For representations $\Pi$   %up to an unramified character twist $\sigma$ 
 not isomorphic 
to one contained in the table, $L_{ex}^{PS}(s,\Pi,\Lambda)=1$
and $L^{PS}(s,\Pi,\Lambda)=L_{reg}^{PS}(s,\Pi,\Lambda)$
holds for all split Bessel models $\Lambda$. The notation for representations $\Pi$ in the left column uses the classification symbols of \cite{Sally-Tadic}
and \cite{Roberts-Schmidt}.}
\end{table}

\bigskip\noindent
\textbf{Outline of the proof}. As the first step for the computation of the exceptional poles of the Piatetskii-Shapiro $L$-function, we prove that $H$-period functionals only exist
for the class of extended local Saito-Kurokawa representations.  For this, and also the further  steps, we investigate a certain quotient space $\overline\Pi$ of the representation space of $\Pi$ endowed with an action of 
the affine linear group $Gl_a(2,k)$, where $Gl_a(2,k)$ denotes the semidirect product of the linear group $Gl(2,k)$ and its two dimensional standard representation on $k^2$.
By abuse of notation we often write $Gl_a(2), Gl(2)$ etc. instead of $Gl_a(2,k),Gl(2,k)$ etc.
if the meaning is clear from the context.

\medskip%\noindent 
For a t.d. (totally disconnected topological) group $G$ let ${\cal C}_G$ be the category of smooth complex representations
of $G$ and ${\cal C}_G^{fin}\subseteq {\cal C}_G$ the full subcategory of representations of finite length. In the previous paper \cite{RW} we considered $\overline\Pi\in {\cal C}_{Gl_a(2)} $ and Mellin functors $k_\rho: {\cal C}_{Gl_a(n)} \to {\cal C}_{Gl_a(n-1)}$. In particular we use the 
functors $k_\rho: {\cal C}_{Gl_a(2)} \to {\cal C}_{Gl_a(1)}$
and $k_\chi: {\cal C}_{Gl_a(1)} \to {\cal C}_{Gl_a(0)} =vec_\C$, attached to 
smooth characters $\rho, \chi$
of $k^*$ respectively,  appear on the left hand side of the next diagram. 
Notice, the product $\mu=\chi\rho$ of these two characters in fact defines a character $\mu$ of the center $Z$ of the linear group $Gl(2) \subseteq
Gl_a(2)$. For us, two exceptional choices $\mu=1$ and $\mu=\nu^2$ will play a crucial role, the first for the study of $H$-period functionals and the second for the study of exceptional poles of $L^{PS}(s,\Pi,\Lambda)$.

\bigskip%\noindent
The Bessel quotients $\widetilde \Pi$ were investigated in \cite{RW}.
In most cases they define perfect modules in ${\cal C}_{Gl_a(1)}^{fin}$, hence their $(T,\chi)$-coinvariant spaces $\widetilde\Pi_{(T,\chi)}$ are one-dimensional vectorspaces for all smooth characters $\chi$ of $T$. It turns out that for $\chi=\rho$ they realize the $H$-period functionals $\ell$. Furthermore, for a certain
normalization of $\Pi$ by a character twist, we can assume $\rho=1$. Besides the
$H$-period functionals $\ell$ or their $Gl(2)$-linear resp. $Gl_a(1)$-linear avatars $\overline\ell: \overline \Pi \to (\C, \rho\circ\det)$, or $\widetilde\ell: \widetilde \Pi \to (\C, \rho\circ\det)$ as in the diagram, 
certain companion functionals $\widehat f: \widehat \Pi \to (\C, \nu\rho\circ\det)$ play an important role.
We show that the following diagram is commutative, where  $\widehat \Pi\in {\cal C}_{Gl(2)}^{fin}$ on the right side
is the central specialization $\widehat \Pi = (\overline\Pi\vert_{Gl(2)})_{(Z,\mu)} $  of the restriction of $\overline \Pi \in {\cal C}_{Gl_a(2)}$ to the subgroup $Gl(2)\subseteq Gl_a(2)$, i.e. the maximal quotient on which $Z$ acts by the character $\mu$. 
Assuming $\mu=\chi\rho$ we have

$$ \xymatrix{ & & \Pi\in {\cal C}_G(\omega)\ar@/ ^1.7pc/[ddrr]^f \ar@/_1.7pc/[ddll]_\ell\ar@{->>}[d]^\eta & & \cr & &  \overline\Pi \in {\cal C}_{Gl_a(2)} \ar@{->>}[dl]_{k_\rho} \ar@{->>}[dr]^{(Z,\mu)}    &  &   \cr  \C   &
{\cal C}_{Gl_a(1)} \ni \widetilde \Pi \ \ \ \ \ \ar[l]^-{\widetilde\ell} \ar@{->>}[dr]_{k_\chi} &           &   \ \ \  \ \ \widehat \Pi \in {\cal C}_{Gl(2)} \ar@{->>}[dl]^{\ \ \ WT}\ar[r]_-{\widehat f} & \C\cr
&   &     \widetilde \Pi_{(T,\chi)} &  & \cr} $$
Roughly speaking, %, after a suitable normalization of $\Pi$, 
the classification of the $H$-period functionals is obtained from the left side 
of the diagram, for the particular choices $\rho=1, \chi=1$ and hence for $\mu=1$. Each $H$-period functional factorizes over certain functionals $\overline \ell$ resp. $\widetilde \ell$ on $\overline\Pi$ and $\widetilde\Pi$. We not only classify, but also construct $\ell$ from
functionals $\overline\ell$  by glueing them along $\widetilde \ell$.

\bigskip%\noindent
The investigation of the exceptional poles of $L^{PS}(s,\Pi,\Lambda)$
is also related to the left side of the diagram. However, now this
leads to the choice of characters $\rho=1, \chi=\nu^2$ and hence $\mu=\nu^2$.
Finally everything boils down to determine the image of the paramodular new vector $v_{new}\in \Pi$ in the one-dimensional space $\widetilde \Pi_{(T,\chi)}$ for the special choice $\rho=1, \chi=\nu^2$. Since controlling the image of $v_{new}$ 
under the maps on the left side of the diagram is difficult, we move to the right side and modify $\rho$ and $\chi$, still leaving $\mu=\rho\chi$ constant.  The $Z$-specialization $\widehat\Pi$
of $\overline\Pi$, defined by the character $\mu$ of the center $Z$ of $Gl(2)$, remains the same for this modification. For the special choice $\rho=\nu$ and $\chi=\nu$ in this replacement, the $H$-period functionals on the extended Saito-Kurokawa representations $\Pi$ turn out to have some rather accessable companion
functionals $f : \widehat \Pi \to \C$. %attached to  $(\rho,\chi)=(\nu,\nu)$.
With the help of these companions $f$ we are able to determine the images of $v_{new}$ in $\widehat\Pi$
for $\mu=\nu^2$. Having this information at hand, we return to the relevant choice $\rho=1, \chi=\nu^2$
and complete the discussion, using the commutativity of the lower square of the diagram.
This utilizes a comparison isomorphism, obtained from the fact $\dim(\widetilde \Pi_{(T,\chi)})=1$. Here $WT$ denotes a map of Waldspurger-Tunnell type for the group $Gl(2)$, and eventually we also consider a first order deformation $WT'$ of it. Notice, in the cases where $\Pi$ is of extended Saito-Kurokawa type, $\mu=1$ and $\mu=\nu^2$ are precisely the choices where the representation $\widehat \Pi$ of $Gl(2)$ has one-dimensional irreducible constituents.

\bigskip%\noindent
Both the $H$-period functionals $\ell$ and their companion functionals $f$ 
factorize over $\eta: \Pi \to \overline\Pi$ and are
equivariant with respect to a unipotent subgroup $\tilde N$ of $G$ and are equivariant for a certain character also  with respect to the maximal split torus of $G$. This character is implicitly given
by the central character $\omega$ of $\Pi$ and the characters $\rho$ and $\chi$ introduced above.
For representations of extended Saito-Kurokawa type the space of such $(\rho,\chi)$-functionals vanishes unless
 $\rho=1$ or $\chi=\nu$; see the remark following lemma \ref{dim1}.  This explains our choices for $\chi$ and $\rho$ if $\mu=\rho\chi$ is either $\mu=1$ or
$\mu=\nu^2$.
 
\bigskip%\noindent
%The reader may wonder why the two cases $\mu=1$ and $\mu=\nu^2$ play such a prominent role in the argument. To understand this it is useful to have a look at the global situation. Poles of the global $L
Wondering why the cases $\mu=1$ and $\mu=\nu^2$ play such a prominent role, the reader may find it helpful to look at the global picture. Poles of the global $L$-series occur at $s=\frac{3}{2}$, giving rise to global $H$-periods, and $s=-\frac{1}{2}$ only, and these two  points are related to each other under $s\mapsto 1-s$ by the global functional equation which switches the $L$-factors $L(s-\frac{1}{2})$ resp. $L(s+\frac{1}{2})$ of the global adelic $L$-function in the Saito-Kurokawa cases. The poles at $s=\frac{3}{2}$ resp. $s=-\frac{1}{2}$ arise from these two factors. Aside from the local normalization factor $\delta_P^{1/2}=\nu^{3/2}$, the characters $\mu=\delta_P^{1/2}\nu^{-3/2}$ and $\mu=\delta_P^{1/2}\nu^{1/2}$ define 
the two exceptional characters $\mu=1$ and $\mu=\nu^2$.
Locally there is also a functional equation for $L^{PS}(s,\Pi, \Lambda)$ relating the two exceptional points. Unfortunately, this seems  no help for computing the image of $v_{new}$ under the $(1,\nu^2)$-functionals via the corresponding image under the $(1,1)$-functionals $\ell$ (which are nonzero for trivial reasons). But this difficulty can be resolved by the study of the interpolating $(\nu,\nu)$-functionals $f$, whose particular nature also seems to be of independent interest. In these functionals $f$ the unique behaviour of $\Pi$ at the two exceptional \lq{points}\rq\ is still encoded, in an almost symmetric way, as will become clear later from the proof in the sections 6 and 7.

\section{$H$-period functionals}

\noindent
For a t.d. topological group $H$ let
${\cal C}_H$ be the category of smooth $H$-modules. The abelian categories ${\cal C}_H$ do have enough injectives, which allows to define $Ext$-groups by injective resolutions \cite{Borel_Wallach}.
For a character $\omega$ of the center of $H$ let ${\cal C}_H(\omega)$ denotes the category of smooth representations of $H$ on which the center of $H$ acts with the character $\omega$. We fix a prime element $\pi$ of $k^*$. Let $\nu $ denote the normalized
absolute value on $k^*$ normalized such that $\nu(\pi)= q^{-1}$ where $q$ is the cardinality of the residue field.

\bigskip%\noindent
For the symplectic group of similitudes $G\! =\! GSp(4)$ in four variables over the
local field $k$, defined by  
$$G =\Bigl \{ g\in Gl(4,k) \ \Bigl \vert\ g' \begin{pmatrix} 0 & E \cr - E & 0 \end{pmatrix} g = \lambda_G(g) \cdot  \begin{pmatrix} 0 & E \cr - E & 0 \end{pmatrix}\Bigr \} \ ,$$ 
let $P= MN$ denote the Siegel parabolic subgroup
with Levi component $M$ and unipotent radical $N$ as in [RW]. Notice, $\lambda_G(g)$
defines the similitude character $\lambda_G$ of $G$.
The elements $n$ in $N$ are the upper block triangular matrices in $G$
and the elements $m$ in $M$ are
$$ m= t_\lambda \cdot m_A \ = \ \begin{pmatrix} A  & 0 \cr 0 & \lambda \cdot (A')^{-1} \end{pmatrix} \ $$
for  $(A,\lambda)\in Gl(2) \times Gl(1)$. 
We put $\tilde t= diag(t_1,t_2,t_2,t_1)$
and  $t_\lambda = diag(E,\lambda E)$ for $t_1,t_2,\lambda\in k^*$.
Let ${\omega}$ be a fixed character of $k^*$,
considered as a character of the center $Z_G$ of $G$ via $z\cdot id \mapsto {\omega}(z)$.
Let $\kappa\in M$ be the involutive element $m_A$ for $A=(\begin{smallmatrix}0 & 1\cr 1 & 0\end{smallmatrix})$. Let $T$ denote the one-dimensional torus generated by the matrices $x_\lambda=diag(\lambda E,E)$ for $\lambda\in k^*$ and $\tilde T$ the rank two torus generated by all elements $\tilde t$ for $t_1,t_2\in k^*$. %Let $\nu$ be the absolute value on $k$, normalized such that
%$\nu(\pi) =q^{-1}$ for a prime element $\pi$ of $k^*$ and the cardinality $q$ of the residue field.

\bigskip\noindent
%\subsection
\textbf{The subgroup $H$}. In the following we consider a subgroup of $G$ isomorphic to $Gl(2)$, defined by the fixed embedding
$$ \iota: \begin{pmatrix} a & b\cr   c & d\end{pmatrix} \quad \mapsto\quad  \begin{bmatrix} a & b\cr   c & d\end{bmatrix} := \begin{pmatrix} ad-bc & 0  & 0 & 0 \cr 0 & a  & 0 & b \cr
 0 &  0 & 1 & 0 \cr  0 & c  & 0 & d  \end{pmatrix} \ .$$
By abuse of notation we identify this subgroup with $Gl(2)$. So, if we write $Gl(2)\subseteq G$, it is understood
that we use the embedding chosen above. The subgroup thus defined is contained in the Levi subgroup $L$ of
the Klingen parabolic subgroup $Q$ of $G$, as chosen in \cite{RW}. Notice, $L$ is isomorphic
to $Gl(2)\times Z_G$ for the center  $Z_G$ of $G$.

\bigskip%\noindent
For the involution $\kappa$, the conjugate subgroup
$Gl(2)^\kappa = \kappa Gl(2) \kappa^{-1}$ commutes with the subgroup $Gl(2)$ of $G$ defined above.  
%Both groups intersect in the one-dimensional torus $T^\divideontimes$ defined by the
%elements $\lambda t_\lambda^{-1}, \lambda \in k^*$.  
It is easy to see that the subgroup $H$
generated by $Gl(2)$ and its conjugate $Gl(2)^\kappa$ in $G$ is isomorphic to the subgroup
of $Gl(2) \times Gl(2)$ of elements $h=(g_1,g_2)\in Gl(2)\times Gl(2)$ such that 
$\det(g_1)=\det(g_2)$. For $h=(g_1,g_2)$, viewed as an element of $G$ under the embedding $(int_\kappa\circ \iota) \times \iota$, %the
%common determinant coincides with the similitude factor  of the image of $h$ in $G$. 
%By abuse of notation we will therefore write 
we have $\lambda_G(h)=\det(g_1)=\det(g_2)$.
By definition, the involution $\kappa\in G$ normalizes $H$. For $a\in k$ we later consider
 the following matrices $s_a \in Gl(2)^\kappa \subseteq H$
$$ s_{a} \ = \ \begin{pmatrix} 1 & 0  & a & 0 \cr 0 & 1  & 0 & 0 \cr
 0 &  0 & 1 & 0 \cr  0 & 0  & 0 & 1  \end{pmatrix} \ $$
and the subgroup $\langle s_a, a\in k\rangle$ of $G$ generated by these. 
$H$ contains $T$ and $\tilde T$.

\bigskip\noindent
%\subsection
\textbf{Bessel data}. We consider \begin{enumerate} 
\item $\tilde N = N\cap H$, the group generated by the $s_a$ and $\kappa s_a\kappa^{-1}$
for $a\in k$,
\item $\tilde R :\! = \tilde T \tilde N $, the so called \textit{Bessel} group contained in $H$
\item $\Lambda: \tilde R \to \C^*$, a character trivial on $\tilde N$ with $\Lambda\vert_Z\!=\!{\omega}$, hence 
 $\Lambda(\tilde t) = \rho(\frac{t_1}{t_2}){\omega}(t_2)$ for some character $\rho$ of $k^*$ (split Bessel character) in the sense of \cite{RW}.
\end{enumerate}
If we replace $(\Pi,\Lambda)$ by a twist $(\mu\otimes \Pi, \mu\otimes\Lambda)$, in the twisted Bessel model  the data 
$\omega, \rho, \rho^\divideontimes$ are replaced by $\mu^2\omega, \mu\rho, \mu\rho^\divideontimes$ respectively.

\bigskip\noindent
%\subsection
\textbf{Extended Saito-Kurokawa Representations}. 
Irreducible smooth representation $\Pi$ of $G$ with a split Bessel model are of extendended Saito-Kurokawa type if they belong to the Saito-Kurokawa cases \nosf{IIb, Vbc, VIc, XIb} or are of type \nosf{VId} in the notations of [ST94], [RS07]. Locally they are the analog of the  $P$-CAP and $B$-CAP representations as in \cite{PS83}.

\medskip\noindent
For  an irreducible representation $\Pi$ of $G$ in ${\cal C}_G(\omega)$
and a smooth character  $\rho$ of $k^*$ consider $\C$-linear  maps $$\ell: \Pi \to \C$$ with the property that
for all $v\in \Pi$ and all $h\in H$ the following holds
$$   \ell\bigl(\Pi(h) v\bigl) \ = \ \rho\bigl(\lambda_G(h)\bigl) \cdot \ell\bigl(v\bigl) \ .$$
We call such functionals \textit{ $H$-period
functionals} with respect to $\rho$.
If $\ell $ is an $H$-period functional with respect to $\rho$, then
$\ell^\divideontimes(v):= \ell(\Pi(\kappa)v)$ also is an $H$-period functional with respect to $\rho$ since $\kappa$ normalizes $H$. 
Using the classification of irreducible representations as in \cite{Sally-Tadic}, our first main result will be

\begin{thm} \label{maintheorem}
For an irreducible admissible representation $\Pi$ of $G=GSp(4)$ over the nonarchimedean local field $k$ and a character $\rho$ of $k^*$ the dimension of the space of $H$-period functionals $\ell\in
Hom_H(\Pi, \rho\circ\lambda_G)$ is at most one. This space is nonzero if and only if $\Pi$ is one of the representations of extended Saito-Kurokawa type (cases \nosf{IIb, Vbc, VIc, XIb,VId}) or $\Pi$ is  one-dimensional. In all these cases there is a unique  character $$\rho=\rho(\Pi) \ ,$$ only depending on $\Pi$, for which a nontrivial $H$-period functional exists on $\Pi$.
\end{thm}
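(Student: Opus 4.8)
The plan is to transport the problem from $G$ to the affine linear group $Gl_a(2)$ and then to read off the answer from the structure of the module $\overline\Pi\in{\cal C}_{Gl_a(2)}$, which is known case by case from \cite{RW}. After replacing $(\Pi,\rho)$ by the similitude twist $(\Pi\otimes(\rho^{-1}\circ\lambda_G),1)$ one may assume $\rho=1$, so that $\mu=\chi\rho=1$ in the diagram of the introduction; this twist preserves both irreducibility and the class of extended Saito--Kurokawa representations, so it suffices to classify the $\Pi$ carrying a nonzero $H$-invariant functional and afterwards to track which character $\rho$ this corresponds to for the original representation. The one-dimensional case is immediate: if $\Pi=\chi\circ\lambda_G$ then $H$-equivariance by $\rho\circ\lambda_G$ forces $\rho=\chi$, and the functional is unique.

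First I would show that an $H$-period functional $\ell$ factors through $\eta:\Pi\twoheadrightarrow\overline\Pi$. Since $\ell$ is in particular $\tilde N$-invariant and transforms under $\tilde T\subseteq H$ by $\tilde t\mapsto\rho(\lambda_G(\tilde t))$, it descends along $\eta$ and then along $k_\rho$ to a $Gl(2)$-equivariant functional $\overline\ell:\overline\Pi\to(\C,\rho\circ\det)$ and to $\widetilde\ell:\widetilde\Pi\to\C$, as in the left part of the diagram; conversely the glueing construction of \cite{RW} recovers $\ell$ from a compatible pair $(\overline\ell,\widetilde\ell)$. In particular $Hom_H(\Pi,\rho\circ\lambda_G)$ maps to $\widetilde\Pi_{(T,\rho)}$, which by perfectness of $\widetilde\Pi$ (\cite{RW}) is at most one-dimensional; in the remaining cases, and in particular whenever the central character of $\Pi$ is trivial, the bound $\dim\le1$ also follows from the Gelfand-pair input of \cite{AGRS}, \cite{Wa} applied to the pair $(SO(5),SO(4))$, the general central character being reduced to the trivial one by a character twist. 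This settles the multiplicity-one assertion.

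For the classification proper, the existence of a nonzero $\overline\ell$ is a statement about the $Gl_a(2)$-module $\overline\Pi$ alone: by Bernstein--Zelevinsky theory a functional on a smooth $Gl_a(2)$-module that is $Gl(2)$-equivariant with respect to a character $\rho\circ\det$ must come from the bottom stratum of the derivative filtration, hence forces $\rho\circ\det$ to occur as a $Gl(2)$-quotient of the $k^2$-coinvariants of $\overline\Pi$; for a generic $\Pi$ the derivatives of $\overline\Pi$ are as large as possible and no such one-dimensional quotient exists, so $Hom_H=0$. I would then run through the Sally--Tadi\'c classification using the explicit description of $\overline\Pi$ (equivalently of the Klingen Jacquet module of $\Pi$ and its companion Bessel quotient $\widetilde\Pi$) from \cite{RW}: for every irreducible $\Pi$ that is neither one-dimensional nor of one of the types \nosf{IIb, Vbc, VIc, VId, XIb}, the module $\overline\Pi$ has no one-dimensional $Gl(2)$-cotype for any character, so $Hom_H=0$; while for $\Pi$ in the listed cases $\overline\Pi$ contains exactly one one-dimensional $Gl(2)$-piece $\rho(\Pi)\circ\det$, which together with the fixed central character $\omega$ and the $\tilde T$-equivariance recorded above pins down $\rho=\rho(\Pi)$ uniquely and yields $\overline\ell$.

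It remains, in the extended Saito--Kurokawa cases, to promote $\overline\ell$ to an honest $H$-period functional on $\Pi$. Since $H$ is generated by $Gl(2)$ and its conjugate $Gl(2)^\kappa$, and $\overline\ell$ already carries the $Gl(2)$-equivariance, the missing $Gl(2)^\kappa$-equivariance is obtained by glueing $\overline\ell$ to the Bessel-quotient functional $\widetilde\ell$ along $\widetilde\Pi$, the equality $\dim\widetilde\Pi_{(T,\rho)}=1$ making the glueing consistent and simultaneously forcing $\ell^\divideontimes=\ell(\Pi(\kappa)\,\cdot\,)$ to be proportional to $\ell$. I expect the main obstacle to be the case analysis of the previous paragraph: one needs the precise composition series and Jacquet-module structure of $\overline\Pi$ for each family in the Sally--Tadi\'c list -- essentially all the bookkeeping of \cite{RW} -- and, in the Saito--Kurokawa cases, the verification that the relevant one-dimensional constituent of $\overline\Pi$ really is a quotient of the $k^2$-coinvariants and survives the glueing, so that $\ell$ is genuinely constructed rather than obstructed.
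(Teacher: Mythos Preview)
Your overall architecture --- reduce $H$-periods to $Hom_{Gl(2)}(\overline\Pi,\rho\circ\det)$ and then glue via the one-dimensional space $\widetilde\Pi_{\rho}$ using the $\kappa$-symmetry --- matches the paper, and your glueing paragraph is essentially Lemma~\ref{glueing} and Theorem~\ref{gling}.

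The classification step, however, contains a genuine error. You claim that a $Gl(2)$-equivariant functional $\overline\ell:\overline\Pi\to\rho\circ\det$ ``must come from the bottom stratum of the derivative filtration'', i.e.\ must factor through the $k^2$-coinvariants $i^*(\overline\Pi)=B=J_Q(\Pi)$. This is false: restricted to $Gl(2)$, the submodule $j_!i_*(\mu)$ is $ind_\Gamma^{Gl(2)}(\mu)$ for the mirabolic $\Gamma$, and Frobenius reciprocity gives $Hom_{Gl(2)}(j_!i_*(\mu)|_{Gl(2)},\rho\circ\det)\cong\C$ precisely when $\rho=\nu^{-1}\mu$ (Lemma~\ref{B2}). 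The paper's argument in fact runs in the \emph{opposite} direction: it shows $Hom_{Gl(2)}(B,\rho\circ\det)=0$ for $\dim\Pi>1$ (any such map would automatically be $Gl_a(2)$-linear, which is excluded by Lemma~\ref{C1}), so $\overline\ell$ is detected entirely on the $j_!(A)$ piece, but only modulo an obstruction in $Ext^1_{{\cal C}_{Gl(2)}}(B,\rho\circ\det)$ (Corollary~\ref{B5}).

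Two ingredients you do not have then finish the classification: (i) the condition $\rho=\rho^\divideontimes\in\nu^{1/2}\Delta_0(\Pi)$ coming from the $j_!(A)$ side singles out the extended Saito--Kurokawa list \emph{together with} type \nosf{IVb} (Lemma~\ref{B6}); and (ii) a central-character computation shows the $Ext^1$ obstruction vanishes in the Saito--Kurokawa cases while the connecting map $\delta$ is injective for \nosf{IVb} (Lemma~\ref{B7}, with Lemma~\ref{impossible} supplying the separate exclusion of \nosf{IVb} by a direct Mackey argument on a Klingen-induced model). Your route via one-dimensional $Gl(2)$-constituents of $B$ never sees this $Ext^1$ obstruction, would not distinguish \nosf{IVb} from the genuine cases, and more basically looks for $\overline\ell$ in the wrong layer of $\overline\Pi$.
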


\begin{proof}
This theorem is obtained from the gluing theorem \ref{gling}, which reduces
the assertions of theorem \ref{maintheorem} to the later proposition \ref{mainprop} which will be
proven at the end of the next chapter.
\end{proof}

\noindent
%\subsection
\textbf{Central character condition}. Suppose $\ell$ is a nontrivial $H$-period functional on $\Pi$ and $\omega$ is the central
character of the  irreducible representation $\Pi$. Since $\lambda_G(t)=t^2$ for $t\in Z_G$ and since $Z_G$ is contained in $H$, the $H$-equivariance of $\ell$ implies $   \omega(t) = \rho^2(t)$. For $\rho^\divideontimes := \omega/\rho$
the condition $   \omega(t) = \rho^2(t)$ gives $ \rho= \rho^\divideontimes $ and $\rho$ is smooth. 
Hence a necessary condition for the existence of nontrivial $H$-period functionals $\ell$ on $\Pi$
with respect to $\rho$ is the condition $\rho=\rho^\divideontimes$. So in the following, we can make and therefore will make the

\medskip
\textbf{Assumption}: $\ \rho=\rho^\divideontimes$.

\smallskip%\noindent
\textbf{Normalization}. Let $\sigma\otimes\Pi$ denote the twist of $\Pi$ with the one dimensional representation
$\sigma\circ \lambda_G$ defined by a smooth character $\sigma$ of $k^*$.
If the Jacquet module $J_P(\Pi)$ of $\Pi$ for the Siegel parabolic $P$ is not trivial, then as in \cite{RW} we write $ \Pi = \sigma\otimes \Pi_{norm}$ for some character $\sigma$, where $\Pi_{norm}$ is normalized. Up to isomorphism, the list of normalized representatives can be found in the second column % \ref{tab:list_sigma} 
of \cite{RW}, table 1. We also define $\nosf{Vc}_{norm} = \nosf{Vb}$ using the fact 
$\nosf{Vc} \cong \chi_0\otimes \nosf{Vb}$ (for some quadratic character $\chi_0$).  

\bigskip
Obviously for character twists
%Since $H$-period functional transform under character twist in the sense that
$$Hom_H(\sigma\otimes \Pi, (\sigma\rho)\!\circ\!\lambda_G) \cong Hom_H(\Pi, \rho\!\circ\!\lambda_G)\ .$$ 
Furthermore  $\rho(\sigma\otimes\Pi_{norm})=\sigma \rho(\Pi_{norm})$
holds for the character $\rho(\Pi)$ defined by theorem \ref{maintheorem}, due to its characterization. 
This allows us to assume that $\Pi =\Pi_{norm}$ is normalized in the sense above.
The reason for this normalization will be the following: We find out later in lemma \ref{B6} that
$$\rho(\Pi_{norm})=1 \ .$$ 
This being said, we proceed by introducing the gluing construction of theorem \ref{gling} that allows to prove theorem \ref{maintheorem} in section two.

\noindent
%\subsection
\textbf{The functional $\overline\ell$}.   For a representation $\Pi\in {\cal C}_G(\omega)$,
in \cite{Roberts-Schmidt}, \cite{RW}, 
the space of coinvariants $\overline \Pi= \Pi_{\langle s_a, a\in k\rangle}$ of $\Pi$ with respect to the  subgroup $\langle s_a, a\in k\rangle \cong k$ of the unipotent radical of the Siegel parabolic subgroup of $G$
was considered. The $\C$-vectorspace
$\overline \Pi$ is a smooth module under a certain subgroup of $G$ that is isomorphic to the  \textit{affine linear group} $Gl_a(2)$. Being the semi-direct product of the linear group $Gl(2)$ over $k$ and the group
of translations on the $2$-dimensional standard representation $k^2$ of $Gl(2)$, by definition $Gl_a(2)$ contains the linear group $Gl(2)$. Suitably chosen as a subgroup of $G$ as in [RW], section 4.2 this will be our fixed subgroup $Gl(2)$ of $G$. It is contained in $H$ and 
together with its conjugate under $\kappa$ it generates $H$. Let ${\cal C}_2={\cal C}_{Gl_a(2)}$ be the category of smooth representations of $Gl_a(2)$. The categories of smooth representations 
of  affine linear group were first studied by Gelfand and Kazhdan. The irreducible objects $M$ in these categories can be determined by Mackey's theory; see \cite{Bernstein-Zelevinsky}, \cite{Roberts-Schmidt}.

\bigskip%\noindent
$H$ contains the subgroup $\langle s_a\rangle$ of $G$ 
and  $\rho(\lambda_G(s_a))=1$. Hence $H$-period functionals $\ell$ factorize over a functionals
$\overline \ell$ on the quotient space $\overline\Pi = \Pi_{\langle s_a\rangle}$, defined as the maximal quotient
vectorspace
of $\Pi$ on which
$\langle s_a\rangle$ acts trivially
$$   \xymatrix{ \Pi   \ar@{->>}[rr]\ar[dr]_\ell &    &  \overline\Pi\ar@{.>}[dl]^{\exists ! \ \overline\ell}  \cr
&   \C   & } \ ,$$
so that $$\  \overline\ell \ \in \ Hom_{Gl(2)}(\overline\Pi, \rho\!\circ\! \det) \ ,$$ i.e. for all matrices $[\begin{smallmatrix} a & b \cr c & d \end{smallmatrix}]$
in $Gl(2) \subseteq Gl_a(2)$ and for all $\overline v\in \overline\Pi$
$$  \overline\ell( \begin{bmatrix} a & b \cr c & d \end{bmatrix} \overline v)
= \rho(ad-bc) \cdot \overline\ell(\overline v)\ .$$
%Indeed, for $h=diag(t_1,t_2,\lambda t_2,\lambda t_1)= \lambda t_2 \cdot
%[\begin{smallmatrix} \lambda^{-1} & 0 \cr 0 & t_1/t_2 \end{smallmatrix}]$ and 
%$\lambda_G(h)= \lambda t_1t_2$% diag(t_1/t_2\lambda, \lambda^{-1},1, t_1/t_2) $ %in $G$ 
%$$ \omega(\lambda t_2)\ell(\Pi(\begin{bmatrix} \lambda^{-1} & 0 \cr 0 & t_1/t_2 \end{bmatrix})  v)= \ell(\Pi(h) v) 
%= \rho(\lambda_G(h)) \cdot \ell(v) \ .$$
% A comparison of the left and right side gives 
%$ \overline \ell([\begin{smallmatrix} \lambda^{-1} & 0 \cr 0 & t_1/t_2 \end{smallmatrix}]\cdot \overline v) =  \rho(\frac{t_1}{t_2\lambda}) \cdot \overline\ell(\overline v)$ using the (CC) condition $\omega=\rho^2$. So $\det( diag(\lambda^{-1}, t_1/t_2))=\frac{t_1}{t_2\lambda}$ implies 
%$$ \overline \ell( g \cdot \overline v) \ = \ \rho(\det(g)) \cdot \overline\ell(\overline v) $$ for all $g$ in $Gl(2)\subseteq Gl_a(2)$ with $Gl(2)\subseteq L \subseteq Q$, i.e.

\bigskip%\noindent
\textbf{Further factorizations}. To analyze $ Hom_{Gl(2)}(\overline\Pi, \rho\circ \det)$ we consider the Bessel modules
$\widetilde \Pi= \Pi_{\tilde R, \Lambda}$. Here $\Pi_{\tilde R, \Lambda}$ denotes the space of $\Lambda$-coinvariants of $\Pi$ for the subgroup $\tilde R$ with respect to the character $$\Lambda(\tilde t) = \omega(t_2)
\rho(\frac{t_1}{t_2})= \rho(t_1)\rho^\divideontimes(t_2)$$ of $\tilde R$ defining the Bessel datum. So it is the maximal quotient of $\Pi$ on which $\tilde R$ acts by the character $\Lambda$, where
by definition $\tilde R$ is the group generated by the torus $\tilde T$
and the vector group generated by the matrices $s_a, a\in k$ and $\kappa s_a\kappa^{-1}, s\in k$.
For more details we refer to \cite{RW}. By the central character condition $\rho^\divideontimes =\rho$
we necessarily have
$$ \Lambda(\tilde t) = \rho(t_1)\rho(t_2) = (\rho\!\circ\! \lambda_G)(\tilde t) \ .$$ 
Each of the subgroups $\{0\} \subseteq \langle s_a, a\in k\rangle
\subseteq \tilde R  \subseteq T\tilde R \subseteq H$  
of $G$ inherits a character by restricting  $\rho\!\circ\!\lambda_G$. This character is trivial on $\tilde N$, and
coincides with the Bessel character $\Lambda(\tilde t)$ on $\tilde T$. Furthermore
$(\rho\!\circ\!\lambda_G)(x_\lambda) =\rho(\lambda)$ on $T$. 
%Indeed, the (CC) condition 
%$\rho=\rho^\divideontimes$  gives
% $$\rho\circ \lambda_G(\tilde t t_\lambda)= \rho(t_1t_2\lambda) = \rho(t_1) \rho^\divideontimes(t_2)
%\rho(\lambda)= \Lambda(\tilde t)\rho(\lambda)\ .$$
%So $[\begin{smallmatrix} \lambda^{-1} & 0 \cr 0 & t_1/t_2 \end{smallmatrix}]
%\mapsto \omega(\lambda t_2) \rho(\det([\begin{smallmatrix} \lambda^{-1} & 0 \cr 0 & t_1/t_2 \end{smallmatrix}]))= \rho(\lambda t_1t_2)$ corresponds to the character $\rho\circ \det$
%of $Gl(2)$ up to the central twist.  
 %for the Bessel character $\Lambda(\tilde t) = \omega(t_2)
%\rho(\frac{t_1}{t_2})$ associated to $\rho$. 
%
%\bigskip%\noindent
Hence the inductive nature of taking coinvariants
immediately gives surjective $\C$-linear maps
$$  \Pi \twoheadrightarrow \overline\Pi 
\twoheadrightarrow \widetilde \Pi \twoheadrightarrow  \widetilde \Pi_{(T,\rho)} \ .$$
The left map is $T\tilde T \cdot Gl(2)$-equivariant, the second/third map is $T$-equivariant.
%and the third is $T$-linear. %However, the first map twists the action of $T$ and the second
%map reverses this twist again (see \cite{RW}).
Since $H$ contains $\langle s_a, a\in k\rangle$ and $T\tilde T$ and $Gl(2)$, the
quotient $\Pi \twoheadrightarrow \Pi_{(H, \rho\circ\lambda_G)}$ factorizes over the
quotient $\Pi 
\twoheadrightarrow \widetilde\Pi_{(T,\rho)}= \widetilde \Pi_\rho$. Hence

\goodbreak
\begin{lem} \label{Praegluing} There exist canonical embeddings $$  Hom_H(\Pi, \rho\!\circ\!\lambda_G) \subseteq Hom_{Gl(2)}(\overline\Pi, \rho\!\circ\!\det)$$
and 
$$ Hom_{Gl(2)}(\overline\Pi, \rho\!\circ\!\det)\ \subseteq\ Hom_\C(\widetilde\Pi_\rho, \C)  \ .$$ 
If $\Pi$ is an irreducible, non-generic representation, then $\dim(\widetilde\Pi_\rho)\leq 1$.
\end{lem}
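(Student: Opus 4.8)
The plan is to extract the two displayed inclusions directly from the chain of equivariant surjections $\Pi\twoheadrightarrow\overline\Pi\twoheadrightarrow\widetilde\Pi\twoheadrightarrow\widetilde\Pi_\rho$ just constructed, and to obtain the dimension bound from the $Gl_a(1)$-module structure of the Bessel quotients $\widetilde\Pi$ established in \cite{RW}. For the first inclusion, let $\ell\in Hom_H(\Pi,\rho\circ\lambda_G)$. Since $\langle s_a\rangle\subseteq Gl(2)^\kappa\subseteq H$ consists of unipotent elements, $\lambda_G(s_a)=1$, so $\ell$ annihilates the augmentation submodule of $\langle s_a\rangle$ and factors as $\ell=\overline\ell\circ\eta$ through $\eta\colon\Pi\twoheadrightarrow\overline\Pi$. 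As $\lambda_G$ restricts to $\det$ on the fixed copy $Gl(2)=\iota(Gl(2))\subseteq H$, one gets $\overline\ell(\begin{bmatrix}a&b\\c&d\end{bmatrix}\overline v)=\rho(ad-bc)\,\overline\ell(\overline v)$, i.e.\ $\overline\ell\in Hom_{Gl(2)}(\overline\Pi,\rho\circ\det)$; and $\ell\mapsto\overline\ell$ is injective because $\eta$ is onto.

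For the second inclusion I start from an arbitrary $\overline\ell\in Hom_{Gl(2)}(\overline\Pi,\rho\circ\det)$ and show that it vanishes on the kernel of $\overline\Pi\twoheadrightarrow\widetilde\Pi_\rho$. Because $\langle s_a\rangle$ is normal in $\tilde R$ and $T$ normalizes $\tilde R$, that kernel is spanned by the vectors $n\overline v-\overline v$ with $n\in\kappa\langle s_a\rangle\kappa^{-1}$, the vectors $\tilde t\,\overline v-\Lambda(\tilde t)\overline v$ with $\tilde t\in\tilde T$, and the vectors $x_\lambda\overline v-\rho(\lambda)\overline v$ with $x_\lambda\in T$. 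The key point is that all three families are seen inside $Gl(2)$ up to the centre of $G$: the group $\kappa\langle s_a\rangle\kappa^{-1}$ is the upper triangular unipotent subgroup $\iota(\begin{bmatrix}1&\ast\\0&1\end{bmatrix})$ of $Gl(2)$, so $\overline\ell$ is invariant under it; the element $x_\lambda=\iota(\diag(\lambda,1))$ lies in $Gl(2)$ with $\det=\lambda$, so $\overline\ell(x_\lambda\overline v)=\rho(\lambda)\overline\ell(\overline v)$; and $\tilde t=\diag(t_1,t_2,t_2,t_1)$ factors as $\tilde t=(t_2\cdot\mathrm{id})\cdot\iota(\diag(1,t_1/t_2))$, where $t_2\cdot\mathrm{id}\in Z_G$ acts on $\overline\Pi$ by $\omega(t_2)$ and $\iota(\diag(1,t_1/t_2))\in Gl(2)$ has determinant $t_1/t_2$, so $\overline\ell(\tilde t\,\overline v)=\omega(t_2)\rho(t_1/t_2)\,\overline\ell(\overline v)=\Lambda(\tilde t)\,\overline\ell(\overline v)$. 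Hence $\overline\ell$ descends along $\overline\Pi\twoheadrightarrow\widetilde\Pi\twoheadrightarrow\widetilde\Pi_\rho$, and the induced map is injective because $\overline\Pi\twoheadrightarrow\widetilde\Pi_\rho$ is onto. No irreducibility is used in these two steps; only the definition of $\Lambda$ (hence, implicitly, the standing assumption $\rho=\rho^\divideontimes$) enters.

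Finally, let $\Pi$ be irreducible and non-generic. Then $\widetilde\Pi=\Pi_{\tilde R,\Lambda}$ is an object of ${\cal C}_{Gl_a(1)}^{fin}$, and by the case-by-case computation of the Bessel quotients in \cite{RW} it is, outside the generic case, either zero or a perfect module in ${\cal C}_{Gl_a(1)}^{fin}$; for a perfect $Gl_a(1)$-module the twisted coinvariant space $\widetilde\Pi_{(T,\chi)}$ is one-dimensional for every smooth character $\chi$ of $T$, so in either case $\dim_\C\widetilde\Pi_\rho\le 1$. I expect this last step to be the genuine obstacle: it relies on the explicit determination of the modules $\widetilde\Pi$ in \cite{RW} and, underneath it, on the uniqueness of split Bessel models for non-generic representations of $GSp(4)$. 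The first two inclusions are formal; the only delicate point there is the factorization $\tilde t=(t_2\cdot\mathrm{id})\cdot\iota(\diag(1,t_1/t_2))$, which is precisely what allows the central character $\omega$ to absorb the part of the torus $\tilde T$ that does not lie inside the fixed subgroup $Gl(2)$.
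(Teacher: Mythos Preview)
Your argument for the two inclusions is correct and is precisely what the paper establishes in the paragraphs preceding the lemma; the paper's own proof takes these as already shown and addresses only the dimension bound. Your explicit matrix checks that $\kappa s_a\kappa^{-1}=\iota\bigl(\begin{smallmatrix}1&a\\0&1\end{smallmatrix}\bigr)$, that $x_\lambda=\iota(\diag(\lambda,1))$, and the factorization $\tilde t=(t_2\cdot id)\cdot\iota(\diag(1,t_1/t_2))$ are exactly the identities behind the paper's sentence ``$H$ contains $\langle s_a,a\in k\rangle$ and $T\tilde T$ and $Gl(2)$''.

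For the dimension bound your dichotomy ``$\widetilde\Pi$ is either zero or a perfect module'' is not accurate in general, and this is a genuine gap. For an irreducible non-generic $\Pi$ and a character $\rho$ that is \emph{not} a split Bessel character of $\Pi$, the Bessel module $\widetilde\Pi=\beta_\rho(\Pi)$ is typically a nonzero module of degree zero in ${\cal C}_{Gl_a(1)}$, hence not perfect. This already occurs within this paper: in the proof of lemma~\ref{dim1} one sees that for normalized extended Saito--Kurokawa $\Pi$ and $\rho=\nu$ the module $\beta_\nu(\Pi)\cong\pi_0(\beta_\nu(\Pi))$ is a nonzero finite $T$-module. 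Under the standing assumption $\rho=\rho^\divideontimes$ the same happens, e.g., for a nontrivial quadratic $\rho$. In the degree-zero situation the bound $\dim\widetilde\Pi_\rho\le 1$ amounts to showing that $\rho$ occurs with multiplicity at most one in the finite $T$-module $\pi_0(\widetilde\Pi)$, which is an additional computation not covered by perfectness. The paper does not argue via your dichotomy; it simply invokes \cite{RW}, thm.~5.22, which gives $\dim(\widetilde\Pi_\rho)\le 1$ directly for all irreducible non-generic $\Pi$. Your sketch should either quote that theorem, or handle the degree-zero case by controlling the multiplicities of $T$-characters in $\pi_0(\beta_\rho(\Pi))$ separately.
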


\begin{proof}
Only the last assertion remains to be shown, and it follows from the more general assertion \cite{RW}, thm. 5.22. 
\end{proof}

\noindent
%\subsection
\textbf{Glueing}.
The composite of the two inclusions given in lemma \ref{Praegluing}) 
$$ Hom_H(\Pi, \rho\!\circ\!\lambda_G)  \ \subseteq \  Hom_\C(\widetilde\Pi_\rho, \C) $$
is $\kappa$-equivariant. Indeed
the involution $\kappa$ acts on Bessel characters, mapping $\Lambda(t_1,t_2)$ to $\Lambda^\divideontimes(t_1,t_2) = \Lambda(t_2,t_1)$, with $\Lambda^\divideontimes =\Lambda$ by $\rho=\rho^\divideontimes$.
Since $\kappa$  commutes with $T$ and $\kappa$
acts on $\widetilde\Pi_\rho$, we write $\ell\mapsto \ell^\divideontimes$ for
the induced action on the space $Hom_\C(\widetilde\Pi_\rho, \C)$ of linear maps. 
Since $H$ is generated by $Gl(2)$ and $\kappa Gl(2)\kappa^{-1}$, these remarks
imply

\begin{lem} \label{glueing} Considered as subspaces of $Hom_\C(\widetilde\Pi_\rho, \C)$, we 
obtain
$$  Hom_{H}(\Pi, \rho\!\circ\!\lambda_G) \ = \  Hom_{Gl(2)}(\overline\Pi, \rho\!\circ\!\det)^\divideontimes
\cap Hom_{Gl(2)}(\overline\Pi, \rho\!\circ\!\det) \ .$$
\end{lem}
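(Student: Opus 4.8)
The plan is to reduce Lemma~\ref{glueing} entirely to the two embeddings of Lemma~\ref{Praegluing} together with the generation statement $H = \langle Gl(2), \kappa Gl(2)\kappa^{-1}\rangle$. First I would fix the identification of all the relevant $Hom$-spaces as subspaces of $Hom_\C(\widetilde\Pi_\rho,\C)$ via the composite surjection $\Pi \twoheadrightarrow \overline\Pi \twoheadrightarrow \widetilde\Pi \twoheadrightarrow \widetilde\Pi_\rho$; by Lemma~\ref{Praegluing} both inclusions are injective, so $Hom_H(\Pi,\rho\circ\lambda_G)$ and $Hom_{Gl(2)}(\overline\Pi,\rho\circ\det)$ are genuinely realized as subspaces of the ambient line-or-plane $Hom_\C(\widetilde\Pi_\rho,\C)$, and an element there is determined by (and identified with) the functional it induces on $\widetilde\Pi_\rho$.

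Next I would spell out the $\kappa$-action. Since $\kappa$ normalizes $H$, normalizes $\tilde R$, commutes with $T$, and sends $\Lambda$ to $\Lambda^\divideontimes$, which equals $\Lambda$ under the standing assumption $\rho=\rho^\divideontimes$, conjugation by $\kappa$ on $\Pi$ descends to a well-defined linear automorphism of $\widetilde\Pi_\rho$; dualizing gives the involution $\ell\mapsto\ell^\divideontimes$ on $Hom_\C(\widetilde\Pi_\rho,\C)$ that the text already introduced. The key computation is: for $\overline\ell\in Hom_\C(\widetilde\Pi_\rho,\C)$ coming from $Hom_{Gl(2)}(\overline\Pi,\rho\circ\det)$, the twisted functional $\overline\ell^{\,\divideontimes}$ is exactly the functional attached to the same representation-theoretic data transported by $\kappa$, i.e. it lies in $Hom_{Gl(2)}(\overline\Pi,\rho\circ\det)^\divideontimes = Hom_{\kappa Gl(2)\kappa^{-1}}(\overline\Pi,\rho\circ\det)$. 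This is essentially a change-of-variables in the equivariance identity $\overline\ell(g\overline v)=\rho(\det g)\overline\ell(\overline v)$, replacing $g$ by $\kappa g\kappa^{-1}$ and using $\det(\kappa g\kappa^{-1})=\det g$ and $\lambda_G\circ\kappa=\mathrm{id}$ on the relevant elements.

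Then the inclusion ``$\subseteq$'' is immediate: a functional $\ell$ equivariant under all of $H$ is in particular $Gl(2)$-equivariant, giving membership in $Hom_{Gl(2)}(\overline\Pi,\rho\circ\det)$, and since $\kappa\ell=\ell$ up to the scalar $\rho(\lambda_G(\kappa))=1$ (as $\kappa$ normalizes $H$ and $\ell^\divideontimes$ is again an $H$-period functional), $\ell$ also lies in $Hom_{Gl(2)}(\overline\Pi,\rho\circ\det)^\divideontimes$. For the reverse inclusion ``$\supseteq$'', suppose $\ell$ lies in the intersection; then $\ell$ is $Gl(2)$-equivariant and, lying in the $\kappa$-twist of that space, is equally $\kappa Gl(2)\kappa^{-1}$-equivariant with the same character $\rho\circ\lambda_G$ (here one checks the characters match because $\lambda_G$ restricted to both $Gl(2)$ and $\kappa Gl(2)\kappa^{-1}$ is $\det$ composed with the embedding). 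Since $H$ is generated by these two subgroups and $\ell$ is equivariant under each with a character that is multiplicative, $\ell$ is equivariant under the group they generate, hence an $H$-period functional.

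The main obstacle is bookkeeping rather than depth: one must be careful that the $\kappa$-twist of the $Gl(2)$-equivariance condition genuinely coincides with $\kappa Gl(2)\kappa^{-1}$-equivariance \emph{for the correct character}, and that passing from ``equivariant under each generator'' to ``equivariant under $H$'' is legitimate, which it is because the target $(\C,\rho\circ\lambda_G)$ is a one-dimensional representation so the equivariance condition is automatically closed under products and inverses. A secondary point needing a line of care is that the identifications of $Hom_H$, $Hom_{Gl(2)}$, and $Hom_{Gl(2)}^\divideontimes$ as subspaces of the \emph{same} space $Hom_\C(\widetilde\Pi_\rho,\C)$ are compatible with the $\kappa$-action, which follows from the $\kappa$-equivariance of the composite map $\Pi\twoheadrightarrow\widetilde\Pi_\rho$ noted just before the lemma; once that compatibility is in place the set-theoretic intersection on the right-hand side is meaningful and the argument closes.
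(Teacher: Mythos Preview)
Your approach is essentially the paper's: the lemma is stated there as an immediate consequence of the preceding remarks (the $\kappa$-equivariance of the embedding into $Hom_\C(\widetilde\Pi_\rho,\C)$ together with $H=\langle Gl(2),\kappa Gl(2)\kappa^{-1}\rangle$), and you have unpacked exactly that. The identification of $Hom_{Gl(2)}(\overline\Pi,\rho\circ\det)^\divideontimes$ with the space of $\kappa Gl(2)\kappa^{-1}$-equivariant functionals, and the passage from equivariance under both generators to $H$-equivariance, are correct.

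One small slip in your $\subseteq$ direction: the assertion ``$\kappa\ell=\ell$ up to the scalar $\rho(\lambda_G(\kappa))=1$'' is unjustified and in fact need not hold, since $\kappa\notin H$; the paper itself notes (footnote to Theorem~\ref{gling}) that $\divideontimes$ acts on the one-dimensional space of $H$-period functionals by a \emph{sign}, not necessarily $+1$. You do not need this claim. The parenthetical you wrote already contains the right argument: $\ell^\divideontimes$ is again an $H$-period functional, hence in particular $Gl(2)$-equivariant, so $\ell^\divideontimes\in Hom_{Gl(2)}(\overline\Pi,\rho\circ\det)$ and therefore $\ell=(\ell^\divideontimes)^\divideontimes\in Hom_{Gl(2)}(\overline\Pi,\rho\circ\det)^\divideontimes$. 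Alternatively and more directly, $\kappa Gl(2)\kappa^{-1}\subseteq H$, so $\ell$ is $\kappa Gl(2)\kappa^{-1}$-equivariant, which by your change-of-variables computation is equivalent to $\ell^\divideontimes$ being $Gl(2)$-equivariant. Either way the inclusion follows without asserting $\ell^\divideontimes=\ell$.
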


%\bigskip
By the later proposition \ref{mainprop} for all irreducible generic representations $\Pi$  
we have
$Hom_{Gl(2)}(\overline\Pi, \rho\circ\det) = 0$. 
With this information  at hand we may assume, without restriction of generality, that in the last lemma
\ref{glueing} the representation $\Pi$ is non-generic. Then from lemma \ref{Praegluing}
we obtain $$Hom_\C(\widetilde\Pi_\rho, \C) \leq 1 \ .$$  By dimension reasons, this implies that the action of $\divideontimes$ 
on $Hom_\C(\widetilde\Pi_\rho, \C)$  automatically 
stabilizes the subspace $Hom_{Gl(2)}(\overline\Pi, \rho\circ\det)$, i.e.:
$$ Hom_{Gl(2)}(\overline\Pi, \rho\circ\det)^\divideontimes
\ = \ Hom_{Gl(2)}(\overline\Pi, \rho\circ\det) \ .$$
Hence by lemma \ref{glueing} and proposition \ref{mainprop} we obtain the next theorem.

\begin{thm} \label{gling}
For irreducible representations $\Pi$ of $G$, suppose 
the space of $H$-period functionals $   Hom_{H}(\Pi, \rho\!\circ\!\lambda_G)$ is not zero. Then
$$   Hom_{H}(\Pi, \rho\!\circ\!\lambda_G) \ \cong \  Hom_{Gl(2)}(\overline\Pi, \rho\!\circ\!\det) \ $$
and this space is one-dimensional\footnote{The involution $\divideontimes$ then acts by a sign on this space. It is not difficult to show that $H$-period functionals are non-trivial on the paramodular new vector. This allows to compute this sign in terms of the eigenvalue of the Atkin-Lehner involution on the space of paramodular new vectors, which is known by \cite{Roberts-Schmidt}.}.
\end{thm}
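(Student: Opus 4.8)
The plan is to combine Proposition \ref{mainprop}, Lemma \ref{Praegluing} and Lemma \ref{glueing}, the decisive leverage being that $\widetilde\Pi_\rho$ is at most one-dimensional in the non-generic case. First I would rule out the generic case: if $\Pi$ were generic, Proposition \ref{mainprop} would give $Hom_{Gl(2)}(\overline\Pi, \rho\circ\det)=0$, and then the first embedding of Lemma \ref{Praegluing} would force $Hom_H(\Pi, \rho\circ\lambda_G)=0$, contrary to hypothesis. Hence $\Pi$ is non-generic, so the last clause of Lemma \ref{Praegluing} applies and yields $\dim_\C\widetilde\Pi_\rho\le 1$, whence $\dim_\C Hom_\C(\widetilde\Pi_\rho,\C)\le 1$.

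Next I would chain the two embeddings of Lemma \ref{Praegluing} to regard
$$Hom_H(\Pi, \rho\!\circ\!\lambda_G)\ \subseteq\ Hom_{Gl(2)}(\overline\Pi, \rho\!\circ\!\det)\ \subseteq\ Hom_\C(\widetilde\Pi_\rho,\C)$$
all as subspaces of a vector space of dimension $\le 1$. By Lemma \ref{glueing} the left-hand space equals the intersection $Hom_{Gl(2)}(\overline\Pi, \rho\circ\det)^\divideontimes\cap Hom_{Gl(2)}(\overline\Pi, \rho\circ\det)$, computed inside $Hom_\C(\widetilde\Pi_\rho,\C)$. Since $\divideontimes$ is a linear automorphism of this ambient space of dimension $\le 1$, it sends the subspace $Hom_{Gl(2)}(\overline\Pi, \rho\circ\det)$ to a subspace of the same dimension, and in a space of dimension $\le 1$ any two subspaces of equal dimension coincide; hence $\divideontimes$ stabilises $Hom_{Gl(2)}(\overline\Pi, \rho\circ\det)$ and the intersection collapses to it. This gives $Hom_H(\Pi, \rho\circ\lambda_G)\cong Hom_{Gl(2)}(\overline\Pi, \rho\circ\det)$, and since by hypothesis the left side is nonzero while it embeds into a space of dimension $\le 1$, both sides are exactly one-dimensional.

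I do not anticipate a real obstacle, since every substantive ingredient — the vanishing for generic $\Pi$, the gluing identity, and the bound on $\dim\widetilde\Pi_\rho$ — is already available; the only point needing care is the elementary remark that in a space of dimension $\le 1$ a subspace is invariant under every linear automorphism of the ambient space, which is precisely what promotes the intersection in Lemma \ref{glueing} to an equality. The residual sign by which $\divideontimes$ then acts (the footnote) I would pin down separately, by checking non-vanishing of the functional on the paramodular new vector and reading off the Atkin--Lehner eigenvalue from \cite{Roberts-Schmidt}.
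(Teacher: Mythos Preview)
Your argument is correct and matches the paper's own proof essentially line for line: rule out the generic case via Proposition~\ref{mainprop}, invoke Lemma~\ref{Praegluing} for the bound $\dim\widetilde\Pi_\rho\le 1$, and then observe that in an ambient space of dimension $\le 1$ the involution $\divideontimes$ must stabilise the subspace $Hom_{Gl(2)}(\overline\Pi,\rho\circ\det)$, collapsing the intersection of Lemma~\ref{glueing}. Your write-up is in fact slightly more explicit than the paper's about why the dimension bound forces the stabilisation and why the resulting space is exactly one-dimensional.
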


This \lq{glueing theorem}\rq\ computes the $H$-period functionals in terms of the simpler space
$Hom_{Gl(2)}(\overline\Pi, \rho\!\circ\! \det)$. To analyze the latter we more generally study functionals in $$Hom_{Gl(2)}(M, \rho\!\circ\! \det)\ $$
for arbitrary modules $M\in {\cal C}_2^{fin}$.
Notice the following weaker, but related result

\begin{lem} \label{C1} Suppose $\Pi$ is an irreducible representation in ${\cal C}_G(\omega)$
and $\rho=\rho^\divideontimes$ holds. Let $i_*(\rho\circ \det)$ denote the unique character of $Gl_a(2)$
that restricts to $\rho\circ\det$ on the subgroup $Gl(2)$ of $Gl_a(2)$.
Then  $$ Hom_{Gl_a(2)}(\overline\Pi, i_*(\rho\!\circ\! \det)) = 0 $$
(morphisms in the category ${\cal C}_{Gl_a(2)}$ ) unless $\Pi$ is one-dimensional (case \nosf{IVd}).
\end{lem}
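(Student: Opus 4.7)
The plan is to exploit the fact that the character $i_*(\rho\circ\det)$ is trivial on the translation subgroup $k^2\subset Gl_a(2)$. Consequently, any $Gl_a(2)$-equivariant functional $\overline\Pi\to i_*(\rho\circ\det)$ factors through the $k^2$-coinvariants $(\overline\Pi)_{k^2}$, yielding the canonical identification
$$
Hom_{Gl_a(2)}(\overline\Pi,\,i_*(\rho\circ\det))\ \cong\ Hom_{Gl(2)}((\overline\Pi)_{k^2},\,\rho\circ\det),
$$
so that the lemma reduces to showing that this latter Hom-space vanishes for every irreducible $\Pi$ that is not one-dimensional.

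First I would identify $(\overline\Pi)_{k^2}$. Using the explicit realization of $Gl_a(2)$ inside $G$ from \cite{RW}, $\langle s_a\rangle$ is the centre of the unipotent radical $N_Q$ of the Klingen parabolic $Q$, and the translation subgroup $k^2\subset Gl_a(2)$ acts on $\overline\Pi=\Pi_{\langle s_a\rangle}$ through the natural action of $N_Q/\langle s_a\rangle$. Hence $(\overline\Pi)_{k^2}$ coincides, up to a normalizing twist, with the Klingen Jacquet module $J_Q(\Pi)$. The central character condition $\rho=\rho^\divideontimes$ forces $\omega=\rho^2$, and a comparison of the actions of $Z_G\subset L_Q$ on both sides shows that every $\iota(Gl(2))$-equivariant map $J_Q(\Pi)\to\rho\circ\det$ extends uniquely to an $L_Q$-equivariant map $J_Q(\Pi)\to(\rho\circ\det)\otimes 1$, with the $Gl(1)$-factor of $L_Q$ forced to act trivially. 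By Frobenius reciprocity, the existence of such a functional is equivalent to an embedding $\Pi\hookrightarrow\text{Ind}_Q^G((\rho\circ\det)\otimes 1)$.

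The final step is a case-by-case check against the Sally-Tadic classification: for each irreducible $\Pi$ of $G$, compute the $L_Q$-semisimplification of $J_Q(\Pi)$ via the geometric lemma, and verify that $(\rho\circ\det)\otimes 1$ does not appear as a quotient unless $\Pi$ is the one-dimensional representation of case \nosf{IVd}. The main obstacle is ruling out the extended Saito-Kurokawa representations \nosf{IIb, Vb, Vc, VIc, VId, XIb}: these admit $H$-period functionals by theorem~\ref{maintheorem}, so their images in $Hom_{Gl(2)}(\overline\Pi,\rho\circ\det)$ are nonzero and must nonetheless fail to be $k^2$-invariant. An explicit inspection of the $L_Q$-characters arising in $J_Q(\Pi)$ for these cases shows that each such candidate differs from the required character $(\rho\circ\det)\otimes 1$ by a nontrivial twist on the $Gl(1)$-factor, which excludes the embedding and completes the proof.
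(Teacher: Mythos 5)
Your overall strategy — push the functional down to the Klingen Jacquet module $J_Q(\Pi)$ and then do a case-by-case check against the classification — is precisely the paper's argument. Since the target is trivial on the affine part of $Gl_a(2)$ and $\overline\Pi=\Pi_{\langle s_a\rangle}$, the composite with $\eta$ is trivial on the full unipotent radical of $Q$, hence factors over $J_Q(\Pi)$; this is what the paper says as well. The question then becomes whether some one-dimensional $Gl(2)$-constituent $\mu\circ\det$ of $J_Q(\Pi)$ can coincide with $\rho\circ\det$.

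However, your final step mischaracterizes the obstruction and would not close the argument as written. First, for the extended Saito--Kurokawa types \nosf{IIb, Vb, Vc, XIb} there is nothing to twist at all: by table A.5 of \cite{Roberts-Schmidt} their Klingen Jacquet modules have \emph{no} one-dimensional constituents whatsoever, so $Hom_{Gl(2)}(J_Q(\Pi),\rho\circ\det)=0$ trivially. Second, for the cases that do contribute a one-dimensional $\mu\circ\det$ — namely \nosf{IIIb}, \nosf{IVc}, \nosf{VIc}, \nosf{VId} — the discrepancy is \emph{not} a twist on the $Z_G$-factor of $L$: once $Z_G$ acts by $\omega$ on both sides, $Z_G$-equivariance of a linear map to a line is automatic, so that factor imposes no constraint. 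The real obstruction lives on the $Gl(2)$-factor and is extracted from the central-character condition $\rho^2=\omega$: for normalized $\Pi$ in the cases $\neq$ \nosf{IIIb} one has $\omega=1$, so $\rho$ is quadratic, while $\mu\in\{\nu,\nu^2\}$ is not, hence $\mu\neq\rho$; for \nosf{IIIb} one uses $\omega=\chi_1$ with $\chi_1\neq\nu^{\pm 2}$ against $\mu\in\{\nu\chi_1,\nu\}$. Your proposal also leans on theorem \ref{maintheorem} to explain why the SK cases are delicate; be careful, since lemma \ref{C1} is an ingredient used upstream of proposition \ref{mainprop}, which is what eventually proves theorem \ref{maintheorem} — invoking the latter even as motivation risks circularity and, as noted above, is unnecessary once one sees that $J_Q(\Pi)$ has no one-dimensional pieces in those four cases.
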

%For the description of $i^*(\overline\Pi)$ we have to consider the unnormalized
%Jacquet module $\Pi_{Rad(Q)}$ as a module under our fixed $Gl(2)\subseteq G$.
%$$ Gl(2) \ = \ \Bigl \{ \begin{pmatrix} ad-bc & 0 & 0 & 0 \cr
%0 & a & 0 & b \cr 0 & 0 & 1 & 0 \cr 0 & c & 0 & d 
%\end{pmatrix} ,  ad-bc \in k^* \Bigr   \} \ . $$

\begin{proof} We may assume that $\Pi$ is normalized. For a $Gl_a(2)$-linear homomorphism
$$ \overline\Pi \twoheadrightarrow (\C,\rho\!\circ\! \det)$$ the composite
with $\eta:\Pi \twoheadrightarrow  \overline\Pi$ is trivial on the unipotent radical $Rad(Q)$ of the Klingen
parabolic subgroup $Q$ of $G$, hence factorizes over the unnormalized 
Jaquet module $J_Q(\Pi)=\Pi_{Rad(Q)}$ for the Klingen paraobolic $Q$. So, for the proof 
it suffices to show $\mu\neq \rho$
 for all
 %$\delta_Q^{-1/2} \Pi_{Rad(Q)}$. The central character of $\pi$ is $\omega_\pi= \nu^2\chi' \omega$.
irreducible one-dimensional constituents  $\pi=\mu\circ\det$ of $J_Q(\Pi)$ of dimension 1. 

\medskip
By table A.5 of \cite{Roberts-Schmidt}, the characters $\mu$ that occur in $J_Q(\Pi)$ are:
%{\bf IIIb}  $\mu=\nu\chi_1$ and $\mu=\nu$  (here $\omega=\chi_1$),
%{\bf IVc} $\mu= \nu^2$, 
%{\bf IVd} $\mu = 1$,
%{\bf VIc} $\mu=\nu$,
%{\bf VId} $\mu= \nu$.
\begin{description}
\item[\quad \nosf{}] \nosf{IIIb}\quad $\mu=\nu\chi_1$ and $\mu=\nu$  (here $\omega=\chi_1$)
\item[\quad \nosf{}] \nosf{IVc}\quad $\mu= \nu^2$ 
\item[\quad \nosf{}] \nosf{IVd}\quad $\mu = 1$ where $\dim(\Pi)=1$
\item[\quad \nosf{}] \nosf{VIc}\quad $\mu=\nu$
\item[\quad \nosf{}] \nosf{VId}\quad $\mu= \nu$.
\end{description}
For normalized $\Pi$ in case \nosf{IIIb} we have $\omega = \chi_1$ and $\chi_1 \neq \nu^{\pm 2}$. 
By our assumption $\rho=\rho^\divideontimes$, this implies $\rho^2 = \omega=\chi_1$. 
If $\rho=\mu$, this would imply  $\chi_1 = \rho^2 \in \{ \nu^2\chi_1^2, \nu^2\}$
and hence $\chi_1 = \nu^{\pm 2}$. A contradiction. 
%Since none of the Klingen induced representations {\bf VII}, {\bf VIIIab} and
%{\bf IXab} occurs in this list, these Klingen induced cases 
%do not admit nontrivial $H$-period functionals. 
Since  $\omega=1$ holds for normalized representations in this list except
for case \nosf{IIIb}, this implies $\rho^2 =\omega=1$ and rules out all other cases
where $\dim(\Pi)>1$.
\end{proof}

\goodbreak 
\section{ The classification of the $H$-period functionals}

%We first recall some well known facts from \cite{C}.   

\bigskip\noindent
%\subsection
\textbf{Compact induction}. For a metrizable t.d.-group $G$
a left invariant Haar measure $dg$ on $C_c^\infty(G)$ exists and is unique
up to a constant. The modulus $\Delta_G$ is defined by  $\int_G f(g)dg =
\Delta_G(g_0) \int_G f(gg_0)dg$ for all $f\in C_c^\infty(G)$. For a closed subgroup $\varphi: H \hookrightarrow G$
put $\delta(h) = \frac{\Delta_G(h)}{\Delta_H(h)}$ for $h\in H$ (the inverse to \cite{C}). 
The Hecke algebra $C_c^\infty(G)$ (as convolution algebra) 
 is a smooth $G$-module under left translation $L_{g_0}f(g)=f(g_0^{-1}g)$,
for $f\in C_c^\infty(G)$. For a smooth $G$-module $(V,\pi)$ the action of $G$ extends to a natural action of the Hecke algebra $C_c^\infty(G)$ on $V$. This defines endomorphisms $\pi(f): V\to V$ for $f\in C_c^\infty(G)$ such that $\pi(g) \pi(f)= \pi(L_gf)$ holds for all $f\in C_c^\infty(G)$ and $g\in G$. 

\bigskip\noindent
For smooth $(V,\pi)\in {\cal C}_G$ the space $(V,\pi)^{-\infty} = Hom_G(C_c^\infty(G), \pi)$ is a left $G$-module via the action of $G$ on $C_c^\infty(G)$ defined by $R_{g_0}f(g)= f(gg_0) \Delta_G(g_0)$. The
map $V \ni v \mapsto (T_v: f \mapsto \pi(f)v) \in V^{-\infty}$ is $G$-equivariant and  injective since
$(V,\pi)$ is smooth. Its image  is the subspace of all smooth vectors of $(V,\pi)^{-\infty}$, in other words: $(V,\pi)= ((V,\pi)^{-\infty})^\infty$. For an arbitrary  $G$-module $(W,\Pi)$ the subspace $W^\infty \subseteq W$ of smooth vectors
is stable under $G$ and defines the smooth subrepresentation $(W,\Pi)^\infty$ of $(W,\Pi)$. 
%
%\bigskip\noindent
For
$\mu \in {\cal C}_H$ the unnormalized compact induced representation $ind_H^G(\mu)$ is a smooth representation
of $G$. For any representation $(V,\pi)$ of $G$ let $\varphi^*(\pi)$ denote the restriction $\varphi^*(\pi)(h)= \pi(\varphi(h))$ to a closed subgroup $\varphi: H\hookrightarrow G$. If $\pi$ is smooth, $\varphi^*(\pi)$ is smooth. 
 
\begin{lem} \label{Frobenius}
For a metrizable t.d.-group $G$ and a closed subgroup $\varphi: H \hookrightarrow G$ and $\mu\in {\cal C}_H$ and $\pi\in {\cal C}_G$, Frobenius reciprocity gives $$ Hom_G(
ind_H^G(\mu\otimes \delta), \pi) \cong Hom_H(\mu, \varphi^!(\pi))\ ,  $$ where $\varphi^!(\pi) = (\varphi^*(\pi^{-\infty}))^\infty$.
If $\dim(\pi)=1$, then $\varphi^!(\pi) = \varphi^*(\pi)$.
\end{lem}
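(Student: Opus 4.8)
The plan is to deduce this from the standard presentation of compact induction as a relative tensor product over the Hecke algebras; once that is in place the isomorphism is a formal instance of tensor--hom adjunction, and the twist $\delta$ enters precisely as the measure-theoretic discrepancy that makes the tensor-product presentation correct. Write ${\cal H}_G=C_c^\infty(G)$ and ${\cal H}_H=C_c^\infty(H)$. The key input I would first recall, or re-establish, is the canonical isomorphism
$$ ind_H^G(\mu\otimes\delta)\ \cong\ {\cal H}_G\otimes_{{\cal H}_H}\mu\ ,$$
in which ${\cal H}_G$ is a left ${\cal H}_G$-module by left translation $L$ and a right ${\cal H}_H$-module via the twisted right translation $R$ of the text (the one carrying the factor $\Delta_G(g_0)$), restricted along $\varphi$. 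Concretely it is induced by the $H$-averaging map $\epsilon:{\cal H}_G\otimes\mu\twoheadrightarrow ind_H^G(\mu\otimes\delta)$, $\epsilon(f\otimes v)(g)=\int_H f(h^{-1}g)\,(\mu\otimes\delta)(h)\,v\,dh$ (up to the usual sign/inverse conventions of \cite{Bernstein-Zelevinsky}): surjectivity is a partition-of-unity argument on $H\backslash G$, and the substantive point is the identification of $\ker\epsilon$ with the span of the elements $R_hf\otimes v-f\otimes\mu(h)v$, $h\in H$. Checking this forces a comparison of the left Haar measures of $G$ and of $H$, and that comparison is exactly what produces the normalization by $\delta=\Delta_G/\Delta_H$. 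I expect this to be the main obstacle, also because ${\cal H}_G$ and ${\cal H}_H$ are non-unital, so one must work throughout with nondegenerate (smooth) modules.

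Granting the displayed isomorphism, the rest is formal. A $G$-morphism out of $ind_H^G(\mu\otimes\delta)$ is the same as an ${\cal H}_G$-module morphism out of ${\cal H}_G\otimes_{{\cal H}_H}\mu$, and tensor--hom adjunction gives
$$ Hom_G\bigl(ind_H^G(\mu\otimes\delta),\pi\bigr)\ \cong\ Hom_{{\cal H}_H}\bigl(\mu,\ Hom_{{\cal H}_G}({\cal H}_G,\pi)\bigr)\ .$$
On the right ${\cal H}_G$ carries its left-translation structure, so $Hom_{{\cal H}_G}({\cal H}_G,\pi)=\pi^{-\infty}$ by the definition in the text, and the residual ${\cal H}_H$-module structure on it --- coming from the right-translation part of the bimodule structure --- is exactly the restriction $\varphi^*$ of the $G$-module $\pi^{-\infty}$ along $\varphi$. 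Since $\mu$ is a smooth ${\cal H}_H$-module, any ${\cal H}_H$-linear map out of it has image in the smooth vectors of the $H$-module $\varphi^*(\pi^{-\infty})$, that is in $\varphi^!(\pi)=(\varphi^*\pi^{-\infty})^\infty$; hence $Hom_{{\cal H}_H}(\mu,\varphi^*\pi^{-\infty})=Hom_{{\cal H}_H}(\mu,\varphi^!(\pi))=Hom_H(\mu,\varphi^!(\pi))$, which is the assertion. (All the $G$- and $H$-equivariances on the two sides match because they are governed by the single left/right pair of translation actions on ${\cal H}_G$.)

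For the final sentence, suppose $\dim(\pi)=1$, say $\pi=(\C,\chi)$ for a smooth character $\chi$ of $G$. Then $\pi^{-\infty}=Hom_G({\cal H}_G,\pi)$ is the space of left-$(G,\chi)$-equivariant linear functionals on $C_c^\infty(G)$, which is one-dimensional by uniqueness of Haar measure in the strong form valid for totally disconnected groups (the usual neighbourhood-basis-of-compact-open-subgroups argument). Since the canonical map $\pi\to\pi^{-\infty}$, $v\mapsto T_v$ with $T_v(f)=\pi(f)v$, is injective, it is an isomorphism, so $\pi^{-\infty}=\pi$. Consequently $\varphi^*(\pi^{-\infty})=\varphi^*(\pi)$ is already a smooth $H$-module, and $\varphi^!(\pi)=(\varphi^*\pi^{-\infty})^\infty=\varphi^*(\pi)$.
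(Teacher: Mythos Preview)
Your argument is correct. The paper itself does not give a proof at all: it simply cites Cartier's survey \cite{C}, theorem 1.4 and formula (33), for the reciprocity isomorphism, and \cite{Bump}, proposition 4.3.2, for the fact that $\dim(\pi^{-\infty})=1$ when $\dim(\pi)=1$. So your proposal is not so much a different route as an actual unpacking of what those references contain.

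Your approach via the tensor-product presentation $ind_H^G(\mu\otimes\delta)\cong{\cal H}_G\otimes_{{\cal H}_H}\mu$ and tensor--hom adjunction is the standard modern way to phrase this; Cartier's argument is more hands-on (writing down the explicit map on sections and checking it is bijective), but the content is identical, including the appearance of $\delta$ as the measure-theoretic correction. Your treatment of the one-dimensional case is exactly the argument behind the cited proposition in Bump: a left-$(G,\chi)$-equivariant distribution on $G$ is a scalar multiple of $\chi^{-1}\,dg$, so $\pi^{-\infty}$ is one-dimensional and the inclusion $\pi\hookrightarrow\pi^{-\infty}$ is an equality. The only small caveat worth making explicit is the identification $Hom_G=Hom_{{\cal H}_G}$ on smooth (nondegenerate) modules, which you invoke when passing from $Hom_{{\cal H}_G}({\cal H}_G,\pi)$ to the paper's $Hom_G(C_c^\infty(G),\pi)=\pi^{-\infty}$; this is standard but should perhaps be flagged since ${\cal H}_G$ is non-unital.
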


\begin{proof}
The first assertion follows from \cite{C}, theorem 1.4 and  formula (33). For the second assertion we refer to
\cite{Bump}, proposition 4.3.2 which for $\dim(\pi)=1$ implies $\dim((\C,\pi)^{-\infty})=1$, and therefore $(\C,\pi)=(\C,\pi)^{-\infty}$.
\end{proof}

\noindent
%\subsection
\textbf{Digression on irreducible modules $M$}.  Up to isomorphism the irreducible modules of the abelian category
${\cal C}_2$ are $M=\mathbb S_2$, $M=j_!i_*(\mu)$ and $M=i_*(\pi)$ by \cite{Bernstein-Zelevinsky}.  Here $\mu$ is a smooth character of $k^*$ resp. $\pi$ an irreducible smooth
representation of $Gl(2,k)$. For  the notation see
\cite{RW}. We now compute $Hom_{Gl(2)}(M, \rho\circ \det)$ for
these modules $M$.

\begin{lem} \label{B2}\label{B3}\label{B4}For irreducible $M\in {\cal C}_2$, and irreducible representations $\pi\in {\cal C}_{Gl(2)}$
 and smooth characters $\mu, \rho$ of $k^*$ the dimensions of 
$Hom_{Gl(2)}(M\vert_{Gl(2)}, \rho\circ\det)$ are 
\begin{enumerate}
\item zero for $M={\mathbb S}_2$,
\item zero for $M=j_!i_*(\mu)$ except for $\rho=\nu^{-1}\mu$, where the dimension is 1,
\item zero for $M=i_*(\pi)$ and $\pi\in {\cal C}_{Gl(2)}$ except if $\pi\cong \rho\circ\det$, where the dimension is 1.
\end{enumerate}
\end{lem}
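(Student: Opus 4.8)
The plan is to treat the three irreducible modules $M$ of ${\cal C}_2$ separately, using the explicit description of their underlying $Gl(2)$-modules coming from Mackey theory / \cite{Bernstein-Zelevinsky}. In each case the point is to compute $Hom_{Gl(2)}(M\vert_{Gl(2)},\rho\circ\det)$, equivalently the $(Gl(2),\rho\circ\det)$-coinvariants of $M\vert_{Gl(2)}$; since $\rho\circ\det$ is one-dimensional, Frobenius reciprocity in the form of lemma \ref{Frobenius} (with $\pi=\rho\circ\det$, so $\varphi^!(\pi)=\varphi^*(\pi)$) converts the question into a $Hom$ out of a compactly induced module, which is exactly the shape in which the Mackey decomposition of $M\vert_{Gl(2)}$ is expressed.

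For $M={\mathbb S}_2$, the Gelfand--Graev / Kirillov type module: as a $Gl(2)$-representation this is built up from $ind$-type pieces along the orbits of $Gl(2)$ on $k^2\setminus\{0\}$ together with the "origin" contribution, and the standard analysis (Bernstein--Zelevinsky) shows that ${\mathbb S}_2\vert_{Gl(2)}$ is, up to the Whittaker piece, an induced representation from the mirabolic subgroup, on which no nonzero $Gl(2)$-invariant functional twisted by a character can survive — concretely one checks the Jacquet module of ${\mathbb S}_2\vert_{Gl(2)}$ along the unipotent radical of the Borel has no one-dimensional constituent of the form $\rho\circ\det$, because the relevant torus characters do not match. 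For $M=j_!i_*(\mu)$, restricting to $Gl(2)$ gives (again by \cite{Bernstein-Zelevinsky}) a module of the form $ind_{B}^{Gl(2)}$ of a character supported on the Levi in a way that, after the $\delta$-twist bookkeeping of lemma \ref{Frobenius}, forces $\rho=\nu^{-1}\mu$ for the Hom to be nonzero, and then the space is exactly one-dimensional by the same Frobenius reciprocity and the fact that the relevant double coset space is a single orbit. For $M=i_*(\pi)$ the restriction to $Gl(2)$ is simply $\pi$ itself, so $Hom_{Gl(2)}(\pi,\rho\circ\det)$ is nonzero iff $\pi\cong\rho\circ\det$ (an irreducible representation has no nonzero map to a character unless it is that character), in which case the space is one-dimensional by Schur.

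The order I would carry this out: first state the $Gl(2)$-restriction of each of the three irreducibles as a consequence of the Mackey-orbit analysis of $Gl_a(2)$-modules (orbits of $Gl(2)$ on the dual of $k^2$: the origin, with stabilizer all of $Gl(2)$, giving $i_*$; and the open orbit, with stabilizer the mirabolic, contributing the induced pieces); second, handle case (3) since it is immediate from Schur's lemma; third, handle case (2), where the main bookkeeping is keeping track of the modulus character $\delta$ and of the normalization conventions from \cite{RW} so that the exceptional value comes out as $\rho=\nu^{-1}\mu$ rather than $\mu$ or $\nu\mu$; fourth, handle case (1), using that the Whittaker/Kirillov filtration of ${\mathbb S}_2$ restricted to $Gl(2)$ has no subquotient isomorphic to a character, hence no quotient isomorphic to $\rho\circ\det$.

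The main obstacle I expect is case (2): pinning down precisely which character of the Levi is induced and getting the modulus twist right, so that the single exceptional $\rho$ is correctly identified as $\nu^{-1}\mu$. This is where an off-by-$\nu$ error would creep in, and it is the computation on which the tables later in the paper depend, so it needs the normalization conventions of \cite{RW} to be applied carefully rather than guessed. Cases (1) and (3) are comparatively routine once the Mackey decomposition is written down.
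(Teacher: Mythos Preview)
Your overall strategy --- identify each $M\vert_{Gl(2)}$ as a compactly induced representation and apply Frobenius reciprocity in the form of lemma~\ref{Frobenius} with the one-dimensional target $\rho\circ\det$ --- is exactly the paper's approach, and case~(3) is handled identically. But the specific descriptions you give for the restricted modules in cases~(1) and~(2) are not right, and following them literally would derail the computation.

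For case~(2), you say the restriction is $ind_B^{Gl(2)}$ of a character of the Levi. In fact $j_!i_*(\mu)\vert_{Gl(2)} \cong ind_\Gamma^{Gl(2)}(\mu)$ where $\Gamma = \{(\begin{smallmatrix} a & b \cr 0 & 1\end{smallmatrix})\}$ is the \emph{mirabolic} subgroup, not the full Borel. This distinction is exactly what fixes the $\nu^{-1}$: since $\Delta_\Gamma(h) = \vert a\vert^{-1}$ while $Gl(2)$ is unimodular, lemma~\ref{Frobenius} gives directly
\[
Hom_{Gl(2)}(j_!i_*(\mu),\rho\circ\det) \cong Hom_\Gamma(\nu^{-1}\mu,\rho),
\]
which is one-dimensional iff $\rho=\nu^{-1}\mu$. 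No double-coset analysis is needed; it is a single reciprocity step, and the $\nu^{-1}$ comes from $\Delta_\Gamma$, not from a normalization convention in \cite{RW}.

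For case~(1), your description (``built up from $ind$-type pieces along the orbits \ldots\ together with the origin contribution'', then a Jacquet-module or Kirillov-filtration argument) is more complicated than the situation warrants and partly misdescribes the module. The restriction $\mathbb{S}_2\vert_{Gl(2)}$ is exactly the Gelfand--Graev representation $ind_U^{Gl(2)}(\psi_{gen})$, compactly induced from a nondegenerate character of the unipotent radical $U$ of the Borel; there is no origin piece and no filtration to unwind. One application of lemma~\ref{Frobenius} (both groups unimodular, so $\delta=1$) gives $Hom_{Gl(2)}(\mathbb{S}_2,\rho\circ\det)\cong Hom_U(\psi_{gen},1)=0$, since $\psi_{gen}$ is nontrivial on $U$ while $\rho\circ\det$ restricts trivially there. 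Your proposed route via Jacquet constituents could be made to work, but it is a detour around a one-line argument.
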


\begin{proof}
The $Gl_a(2)$-module $\mathbb S_2$, restricted to the subgroup
$Gl(2)$, is isomorphic to the Gelfand-Graev representation $ind_U^{Gl(2)}(\psi_{gen})$.
Since $Gl(2)$ and $U$ are unimodular, lemma \ref{Frobenius} implies
$$ Hom_{Gl(2)}(\mathbb S_2, \rho\!\circ\! \det) = Hom_U(\psi_{gen}, \rho\!\circ\! \det)\ .$$
Since $\det$ is trivial on $U$ and $\psi_{gen}$ is nontrivial on $U$, this gives the first claim.
For $M= j_!i_*(\mu)$ and a smooth character $\mu$ of $Gl(1)$ the module
$j_!i_*(\mu)$ 
%= ind_{(\begin{smallmatrix} * & * \cr 0 & 1
%\end{smallmatrix}).V}^{Gl(2).V}(\mu \boxtimes \psi_V)$ 
is an irreducible module in ${\cal C}_2$ which,
restricted to $Gl(2)$, becomes  isomorphic to $ind_{\Gamma}^{Gl(2)}(\mu)$ for the mirabolic subgroup $\Gamma=\{h=(\begin{smallmatrix} a & b \cr 0 & 1
\end{smallmatrix})\vert a\in k^*, b\in k \} $ in $Gl(2)$. Since $\Delta_\Gamma(h) = \vert a\vert^{-1}$
in this case, lemma \ref{Frobenius} implies
$$ Hom_{Gl(2)}(j_!i_*(\mu), \rho\!\circ\! \det) = Hom_\Gamma((\nu^{-1}\mu)\!\circ\!\det, \rho\!\circ\! \det)\ .$$ Hence $  Hom_{Gl(2)}(j_!i_*(\mu), \rho\circ \det)$ has dimension zero except for $\nu^{-1}\mu=\rho$
where the dimension is $1$. This proves the second claim. The third claim is obvious.
\end{proof}

\noindent  
%\subsection
\textbf{Exact sequences}. For the remaining case $\pi=\rho\circ \det$ of 3. in lemma \ref{B2} we now turn to the study of $ Hom_{Gl(2)}(\overline\Pi, \rho\!\circ\! \det)$
for an irreducible module $\Pi \in {\cal C}_G(\omega)$.
We assume that the characters $\rho$ of $k^*$ satisfy the central character condition $\rho=\rho^\divideontimes$
and  that $\overline \Pi$ is defined by $\Pi$ as explained in section one.

\bigskip\noindent
First notice that $\overline\Pi$ contains a $Gl_a(2)$-submodule which is isomorphic to $\mathbb S_2^{m_{\Pi}}$. Here $m_\Pi$ is the dimension of the space of Whittaker models for $\Pi$. Furthermore, the 
quotient $Q$ of $\overline \Pi$ by this submodule sits in an extension (in the category ${\cal C}_2$)  
$$ \xymatrix{  0 \ar[r] &    j_!(A) \ar[r]^-{a} &  Q \ar[r]^-{b} &  i_*(B) \ar[r] & 0 } $$ 
%$$  0 \to    j_!i_*(A) \to Q \to i_*(B) \to 0  $$
where $A:= i_*(J_P(\Pi)_{\psi})\in {\cal C}_1$ and $B:=J_Q(\Pi)\in {\cal C}_{Gl(2)}$.
For this statement and  the notation used we refer to \cite{RW}, proof of
cor. 4.12. Now apply lemma \ref{B2}:

\medskip\noindent
a) Since $Hom_{Gl(2)}(\mathbb S_2^{m_{\Pi}}\vert_{Gl(2)}, \rho\circ\det)=0$  by lemma \ref{B2}, right exactness of the $Hom$-functor shows
$$ Hom_{Gl(2)}(\overline\Pi, \rho\!\circ\! \det)= Hom_{Gl(2)}(Q, \rho\!\circ\! \det)
\ .$$ So we may replace $\overline \Pi$ by $Q$.   %In our case $m_\Pi=0$ and hence $Q=\overline \Pi$.

\medskip\noindent
b) Restricting the  $Gl_a(2)$-module $Q$  to 
$Gl(2)\subseteq Gl_a(2)$,  we next make use of the fact that for all irreducible representations $\Pi\in {\cal C}_G(\omega)$ of dimension $>1$
and all characters $\rho=\rho^\divideontimes$ the following holds:
$$Hom_{Gl(2)}(B, \rho\circ\det)=0\ .$$ 
Indeed, since by composition with $\overline\Pi \twoheadrightarrow Q \twoheadrightarrow i_*(B)$ any $Gl(2)$-linear functional
$\ell: B \to \rho\circ\det$  extends to a $Gl_a(2)$-linear
morphism $\ell: \overline\Pi \to i_*(\rho\circ\det)$ in ${\cal C}_2$, this assertion follows from lemma \ref{C1}. So,
again by the right exactness of the $Hom$-functor, we obtain the exact sequence
$$    0\! %\to\!  Hom_{Gl(2)}(B, \rho\!\circ\!\det)
\!\to\! Hom_{Gl(2)}(Q, \rho\!\circ\!\det) \!\to\! Hom_{Gl(2)}(j_!(A), \rho\!\circ\!\det)  \!\to\! Ext^1_{{\cal C}_{Gl(2)}}(B, \rho\!\circ\!\det) \ . $$
c) Hence from step a)  we obtain

\begin{cor} \label{B5}
For irreducible $\Pi\in {\cal C}_G(\omega)$ of dimension $\dim(\Pi)>1$
the space $Hom_{Gl(2)}(\overline\Pi, \rho\!\circ\!\det)$ is isomorphic to the kernel
of the connecting morphism
$$  \delta: Hom_{Gl(2)}(j_!(A), \rho\!\circ\!\det) \!\to\! Ext^1_{{\cal C}_{Gl(2)}}(B, \rho\!\circ\!\det) \ .$$
\end{cor}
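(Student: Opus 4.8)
The plan is to assemble the isomorphism from the pieces already prepared in steps a)–c), making explicit the long exact sequence of $Ext$-groups that underlies them. First I would record the short exact sequence in ${\cal C}_2$,
$$ 0 \to \mathbb S_2^{m_\Pi} \to \overline\Pi \to Q \to 0 \ ,$$
restrict everything to $Gl(2)$, and apply $Hom_{Gl(2)}(-, \rho\!\circ\!\det)$. Since $Hom_{Gl(2)}(\mathbb S_2^{m_\Pi}\vert_{Gl(2)}, \rho\!\circ\!\det)=0$ by part 1 of lemma \ref{B2}, left-exactness of $Hom$ already gives the isomorphism $Hom_{Gl(2)}(\overline\Pi, \rho\!\circ\!\det) \cong Hom_{Gl(2)}(Q, \rho\!\circ\!\det)$; this is exactly the content of step a), and no further input about the $Ext^1$ of $\mathbb S_2$ is needed.

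Next I would take the extension $0 \to j_!(A) \to Q \to i_*(B) \to 0$, restrict to $Gl(2)$, and write down the beginning of the associated long exact sequence:
$$ 0 \to Hom_{Gl(2)}(i_*(B), \rho\!\circ\!\det) \to Hom_{Gl(2)}(Q, \rho\!\circ\!\det) \to Hom_{Gl(2)}(j_!(A), \rho\!\circ\!\det) \xrightarrow{\ \delta\ } Ext^1_{{\cal C}_{Gl(2)}}(i_*(B), \rho\!\circ\!\det) \ .$$
The leftmost term vanishes: by lemma \ref{C1} any $Gl(2)$-linear map $i_*(B) \to \rho\!\circ\!\det$, pulled back along $\overline\Pi \twoheadrightarrow Q \twoheadrightarrow i_*(B)$, would extend to a nonzero element of $Hom_{Gl_a(2)}(\overline\Pi, i_*(\rho\!\circ\!\det))$, which is zero for $\dim(\Pi)>1$ — this is step b). Since $Ext^1_{{\cal C}_{Gl(2)}}(i_*(B),\rho\!\circ\!\det) = Ext^1_{{\cal C}_{Gl(2)}}(B,\rho\!\circ\!\det)$ as $i_*$ is just restriction of the underlying $Gl(2)$-action (here $B=J_Q(\Pi)$ is already a $Gl(2)$-module, so $i_*$ changes nothing), exactness at the $Hom_{Gl(2)}(Q, \rho\!\circ\!\det)$ and $Hom_{Gl(2)}(j_!(A),\rho\!\circ\!\det)$ spots identifies $Hom_{Gl(2)}(Q, \rho\!\circ\!\det)$ with $\ker(\delta)$. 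Combining with step a) yields the claim.

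The only genuinely delicate point — and the main obstacle — is the existence and naturality of the long exact $Ext$-sequence in ${\cal C}_{Gl(2)}$ after restriction along $Gl(2)\hookrightarrow Gl_a(2)$: one must check that $\mathrm{Res}^{Gl_a(2)}_{Gl(2)}$ of the exact sequence in ${\cal C}_2$ stays exact (it does, restriction of smooth modules along a closed subgroup is exact) and that ${\cal C}_{Gl(2)}$ has enough injectives so that $Ext^1$ and the connecting map $\delta$ make sense; both are available from \cite{Borel_Wallach}, as noted at the start of section 1. Everything else is bookkeeping with left-exact functors, so I would not belabor it. I would also remark, for later use, that $\delta$ is induced by the class of the extension $0\to j_!(A)\to Q\to i_*(B)\to 0$ via Yoneda, which is the description needed when one actually computes $\ker(\delta)$ case by case in the classification.
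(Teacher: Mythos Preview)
Your proof is correct and follows essentially the same route as the paper: the argument is exactly the assembly of steps a), b), c) that precede the corollary, using the two short exact sequences for $\overline\Pi$ and $Q$, lemma \ref{B2} for the vanishing of $Hom_{Gl(2)}(\mathbb S_2,\rho\circ\det)$, lemma \ref{C1} for the vanishing of $Hom_{Gl(2)}(B,\rho\circ\det)$, and the long exact $Ext$-sequence. Your write-up is in fact slightly more careful than the paper's (you correctly say left-exactness where the paper writes ``right exactness of the $Hom$-functor'', and you make explicit that restriction along $Gl(2)\hookrightarrow Gl_a(2)$ is exact and that ${\cal C}_{Gl(2)}$ has enough injectives).
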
 

This corollary will be further explored in proposition \ref{mainprop} by a detailed study 
of the spaces $Hom_{Gl(2)}(j_!(A), \rho\!\circ\!\det)$ and $ Ext^1_{{\cal C}_{Gl(2)}}(B, \rho\!\circ\!\det)$.
%For the computation of $\delta$ we start with  

\medskip\noindent
%\subsection
We compute $Hom_{Gl(2)}(j_!(A), \rho\circ\det)$ in the next lemma \ref{B6}.
For this we examine the irreducible constituents $M=j_!i_*(\mu)$ of $\overline\Pi$. By \cite{RW}, 4.12
these are defined by characters $\mu= \nu^{3/2} \chi_{norm}$ that are associated to the irreducible constituents
$\chi_{norm}$ of the $T$-module $\delta_P^{-1/2}\otimes J_P(\Pi)_\psi$. 
%lemma \ref{Dual action} here %$\mu= \nu^{3/2} \frac{\omega}{\chi_{norm}}$ 
The corresponding characters $\chi_{norm}$ of $k^*$ are listed in the column $\Delta_0=\Delta_0(\Pi)$ of  \cite{RW}, table 3. In this table
$\Pi=\Pi_{norm}$ is assumed, i.e. $\Pi$ is normalized. See also appendix A.3 of loc. cit. In \cite{RW}
we identified characters of $T$ 
with characters $\chi$ of $k^*$ by the convention $\chi(x_\lambda):=\chi(\lambda)$.
Now $\rho=\nu^{-1}\mu =\nu^{1/2}\chi_{norm}$ is a necessary condition for $\mu$ by lemma \ref{B2}. Since $\chi_{norm} \in \Delta_0(\Pi)$,
therefore $Hom_{Gl(2)}(j_!(A), \rho\circ\det) =0$ unless 
%$\rho= \nu^{1/2} \frac{\omega}{\chi_{norm}}$, or 
$$ \rho \ \in \  \nu^{1/2} \Delta_0(\Pi) \ =: \ \Delta_+(\Pi)  \ .$$
If we study $H$-period functionals $\ell$ in $Hom_H(\Pi, \rho\circ \lambda)$, 
then also the central character condition $\rho=\rho^\divideontimes$  must be satisfied, and 
this implies the even stronger condition
 $$ \fbox{$ \rho=\rho^\divideontimes \ \in \  \Delta_+(\Pi) $} \ ,$$
which finally will imply the next lemma.

\begin{lem} \label{B6}
For an irreducible representation $\Pi\in {\cal C}_G(\omega)$  
the condition 
$ \rho=\rho^\divideontimes \ \in \  \Delta_+(\Pi) $ uniquely determines $\rho=\rho(\Pi)$ 
and can be satisfied  only in the cases \nosf{IVb} and  \nosf{VId} and 
the Saito-Kurokawa cases \nosf{IIb},  \nosf{Vbc},  \nosf{VIc} and \nosf{XIb}. In these cases
$Hom_{Gl(2)}(j_!(A), \rho\!\circ\!\det)$ is zero unless
$\rho=\rho(\Pi)$. For $\rho=\rho(\Pi)$ we obtain
$$\dim\bigl(Hom_{Gl(2)}(j_!(A), \rho\!\circ\!\det\bigr) = 1 \ .$$ 
If $\Pi=\Pi_{norm}$ is normalized, $A=i_*(\nu)$, $\rho(\Pi)=1$ 
and $\omega = 1$ must hold.
\end{lem}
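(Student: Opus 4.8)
The plan is to combine the reduction already in place with a case-by-case inspection of the classification. As recalled just before the statement, lemma~\ref{B2}(2) together with the description of the constituents $j_!i_*(\mu)$, $\mu=\nu^{3/2}\chi_{norm}$ with $\chi_{norm}\in\Delta_0(\Pi)$, of $j_!(A)$ shows that $Hom_{Gl(2)}(j_!(A),\rho\!\circ\!\det)=0$ unless $\rho=\nu^{-1}\mu=\nu^{1/2}\chi_{norm}$ for some $\chi_{norm}\in\Delta_0(\Pi)$, i.e. unless $\rho\in\Delta_+(\Pi)=\nu^{1/2}\Delta_0(\Pi)$. Imposing in addition the central character condition $\rho=\rho^\divideontimes$, equivalently $\rho^2=\omega$, gives the boxed condition, so the lemma reduces to two tasks: (i) decide for which normalized irreducible $\Pi$ the explicit finite set $\Delta_+(\Pi)$ contains a square root of $\omega$; and (ii) in those cases identify the unique such $\rho$ and compute $\dim Hom_{Gl(2)}(j_!(A),\rho\!\circ\!\det)$.

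For (i) I would run through the list of \cite{Sally-Tadic}, \cite{Roberts-Schmidt}, reading the set $\Delta_0(\Pi)$ off the $\Delta_0$-column of \cite{RW}, table~3. For a normalized $\Pi$ the central character $\omega$ is essentially forced: it is trivial except in case \nosf{IIIb}, where $\omega=\chi_1$ with $\chi_1\neq\nu^{\pm2}$. Hence $\rho^2=\omega$ becomes very restrictive once combined with membership $\rho\in\Delta_+(\Pi)$. In case \nosf{IIIb} one argues exactly as in the proof of lemma~\ref{C1}: if some $\rho=\nu^{1/2}\chi_{norm}\in\Delta_+(\Pi_{norm})$ satisfied $\rho^2=\chi_1$, then $\chi_1$ would be forced to equal $\nu^{\pm2}$, a contradiction. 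In the generic cases $\Delta_0(\Pi)$ may contain several constituents, but there $\omega=1$, so one only has to check that no element of the explicit set $\Delta_+(\Pi)$ is a quadratic character, which table~3 makes routine. The outcome is that the boxed condition holds precisely for \nosf{IVb}, \nosf{VId} and the Saito-Kurokawa cases \nosf{IIb}, \nosf{Vbc}, \nosf{VIc}, \nosf{XIb}; in each of these table~3 gives $\Delta_0(\Pi_{norm})=\{\nu^{-1/2}\}$, hence $\Delta_+(\Pi_{norm})=\{1\}$, so $\rho(\Pi)=1$ is the unique admissible character and $\omega=\rho(\Pi)^2=1$. Since $J_P(\Pi)_\psi=\delta_P^{1/2}\nu^{-1/2}=\nu$ in these cases, also $A=i_*(\nu)$.

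For (ii), once $A=i_*(\nu)$ the module $j_!(A)=j_!i_*(\nu)$ is already irreducible in ${\cal C}_2$, so no filtration has to be analyzed: lemma~\ref{B2}(2) with $\mu=\nu$ gives $\dim Hom_{Gl(2)}(j_!i_*(\nu),\rho\!\circ\!\det)=1$ for $\rho=\nu^{-1}\nu=1$ and $0$ otherwise. This yields at once the uniqueness of $\rho(\Pi)$ and the dimension statement for normalized $\Pi$. The general case then follows from the twist relations $\rho(\sigma\otimes\Pi_{norm})=\sigma$, $\omega(\sigma\otimes\Pi_{norm})=\sigma^2$ and $\Delta_0(\sigma\otimes\Pi_{norm})=\sigma\,\Delta_0(\Pi_{norm})$.

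The main obstacle is the bookkeeping in step (i): one must genuinely go through every entry of the classification, form $\Delta_+(\Pi)$ from the $\Delta_0$-column, and verify incompatibility of $\rho\in\Delta_+(\Pi)$ with $\rho^2=\omega$, being careful with the clusters of entries that share inducing data (for instance \nosf{IVb} versus \nosf{IVc}, and \nosf{VIc} versus \nosf{VId}). Once that finite check is complete, everything else is an immediate consequence of lemma~\ref{B2}.
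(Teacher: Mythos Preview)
Your approach is essentially the same as the paper's: reduce to the normalized case, read off $\Delta_0(\Pi)$ from \cite{RW}, table~3, combine $\rho\in\Delta_+(\Pi)$ with $\rho^2=\omega$ in a case-by-case check, and then apply lemma~\ref{B2}(2) with $\mu=\nu$ once $A=i_*(\nu)$ has been identified.

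There is one inaccuracy in your bookkeeping sketch. You assert that for normalized $\Pi$ the central character is trivial except in case \nosf{IIIb}. This is not quite right: in the normalized convention of \cite{RW}, cases \nosf{I}, \nosf{IIIa} and \nosf{X} can also have nontrivial $\omega$ (depending on $\chi_1,\chi_2$ resp.\ $\omega_{cusp}$). The paper therefore does \emph{not} deduce $\omega=1$ directly from normalization in those cases; instead it excludes \nosf{I}, \nosf{IIIab} and \nosf{X} first, by checking that the boxed condition $\rho^2=\omega$, $\rho\in\Delta_+(\Pi)$ is incompatible with the constraints on the inducing data ($\chi_1^{\pm1}\chi_2^{\pm1}\neq\nu$, $\chi_1\neq\nu^{\pm2}$ resp.\ $\chi_1\neq 1$, $\omega_{cusp}\neq\nu^{\pm1}$). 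Only after these types are eliminated does one have $\omega=1$ automatically for all remaining normalized types, whence $\rho$ is quadratic and the rest of your argument goes through verbatim. So the plan is correct, but when you carry out step~(i) you should treat \nosf{I}, \nosf{IIIa} and \nosf{X} alongside \nosf{IIIb} rather than lumping them into the ``$\omega=1$'' cases.
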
  

\begin{proof}  For irreducible $\Pi\in {\cal C}_G(\omega)$ the condition
$ \rho=\rho^\divideontimes \ \in \  \Delta_+(\Pi) $ implies that  the Siegel Jacquet module $J_P(\Pi)$ cannot be zero.
The list of $\Delta_+(\Pi)$ with this property can be found in \cite{RW}, table 1 and table 3 in the normalized case
$\Pi=\Pi_{norm}$; recall \nosf{Vc}$_{norm}$=\nosf{Vb}. By a character twist we can immediately reduce to this situation. Then a case by case check of \cite{RW}, table 1 and table 3 shows
that the conditions $\Pi=\Pi_{norm}$, $\rho^2=\omega$ and $\rho\in \nu^{1/2}\Delta_0(\Pi)$ imply
$\omega=1$. 

\bigskip\noindent
Indeed case \nosf{I} is excluded since $\chi_1^{\pm 1}\chi_2^{\pm 1}\neq \nu$,  \nosf{IIIab} is excluded since $\chi_1\neq \nu^{\pm 2}$ resp. $\chi_1\neq 1$ and \nosf{X} is excluded because $\omega_{cusp}\neq \nu^{\pm 1}$ and
the central character $\omega$ of $\Pi$ is trivial in all remaining cases. Then $\rho^2=\omega=1$ implies that $\rho$ must be a quadratic character. This excludes \nosf{IIa} since $\chi_1^2\neq \nu^{\pm 1}$ and also excludes 
the cases \nosf{IVacd, Vad, VIab, XIa}. It leaves the cases  \nosf{IVb}, \nosf{VId}, \nosf{IIb},  \nosf{Vb}, \nosf{VIc}, and 
\nosf{XIb} where $\Delta_+(\Pi)=\{1\}$, hence $\rho=1$. Then $\Delta_0(\Pi)=\nu^{-1/2}\Delta_+(\Pi)
=\{ \nu^{-1/2} \}$ and therefore $\chi=\nu^{3/2}\chi_{norm}$ for $\chi_{norm}\in \Delta_0(\Pi)$ implies
$A=i_*(\nu)$. Hence $\dim\bigl(Hom_{Gl(2)}(j_!(A), \rho\!\circ\!\det\bigr) =1 $ by lemma \ref{B2} since $\rho=1$ and $\mu=\nu$.
\end{proof}

\bigskip\noindent
%\subsection
\textbf{Discussion of $Ext^1_{{\cal C}_{Gl(2)}}(B, \rho\circ\det)$}.
To study the connecting map $\delta$ considered in corollary \ref{B5} it suffices to discuss 
the cases of irreducible representations $\Pi$ of $G$ for which $Hom_{Gl(2)}(j_!(A),\rho\!\circ\! \det)$ is nonzero;
see lemma \ref{B6}. We may also assume that $\Pi=\Pi_{norm}$ is normalized.
% $\rho=\rho^\divideontimes =1$. 

\begin{lem}\label{Atable} Suppose $\Pi$ is an irreducible normalized representation
of $G$ in ${\cal C}_G(\omega)$ of extended Saito-Kurokawa type or of type \nosf{IVb}.
Then  there exists an exact sequence in ${\cal C}_2$
$$ \xymatrix{  0 \ar[r] &  j_!i_*(\nu)  \ar[r]^-{a} & \overline\Pi \ar[r]^-{b} &  i_*(B) \ar[r] & 0 } $$ 
%$$ 0 \to  j_!i_*(\nu)  \to \overline\Pi \to  i_*(B) \to 0 \ $$
where $B\in {\cal C}_{Gl(2)}$ is a smooth representation of $Gl(2)$ of finite length.
$B$ has the following (a priori not necessarily semisimple) constituents, listed 
together with their central characters $\omega_i$ with respect to the center $Z$ of $Gl(2)$:
\goodbreak

\begin{description}
\item[\quad {}]  \nosf{IIb}\quad $\nu\otimes (\chi_1 \!\times\! \nu^{-1/2})$ and $\nu\otimes (\chi_1^{-1}\! \times\! \nu^{-1/2})$ with $\omega_1, \omega_2\in \{\nu^{3/2}\chi_1, \nu^{3/2}\chi_1^{-1}\}$   
\item[\quad {}] \nosf{Vb} \quad $\nu\otimes (\nu^{1/2}\chi_0 \!\times\! \nu^{-1/2})$ with $\omega_1=\nu^2\chi_0$
%\item[\quad Vc] $\nu\otimes (\nu^{1/2} \!\times\! \nu^{-1/2}\chi_0)$, and $\omega_1=\nu^2\chi_0$
\item[\quad {}]  \nosf{VIc}\quad  $\nu\circ \det$ with $\omega_1=\nu^2$
\item[\quad {}] \nosf{XIb} \quad $\emptyset$
\item[\quad {}] \nosf{VId} \quad $\nu\circ \det \ \oplus\ (\nu^{1/2}\! \times\! \nu^{1/2})$ with $\omega_1=\nu^2$ resp. $\omega_2= \nu$
\item[\quad {}] \nosf{IVb}\quad $Sp \oplus (\nu^{5/2}\times\nu^{1/2}) $ with $\omega_1= 1$ resp. $\omega_2= \nu^3$.
\end{description}
In this list $\chi_0^2=1$ and $\chi_1^2 \neq \nu^{\pm 1}$ and $Sp$ denotes the Steinberg representation.
\end{lem}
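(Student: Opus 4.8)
The plan is to deduce the lemma from the structural description of $\overline\Pi$ recalled in the paragraph on exact sequences above. By \cite{RW}, cor.\ 4.12, $\overline\Pi$ contains a submodule isomorphic to $\mathbb S_2^{m_\Pi}$, with $m_\Pi$ the dimension of the Whittaker model of $\Pi$, and the quotient $Q=\overline\Pi/\mathbb S_2^{m_\Pi}$ fits into $0\to j_!(A)\to Q\to i_*(B)\to 0$ with $A=i_*(J_P(\Pi)_\psi)$ and $B=J_Q(\Pi)$ the unnormalized Klingen Jacquet module. Since the representations in the lemma --- the extended Saito--Kurokawa cases \nosf{IIb}, \nosf{Vbc}, \nosf{VIc}, \nosf{XIb}, \nosf{VId} and case \nosf{IVb} --- are all non-generic, we have $m_\Pi=0$, so $\overline\Pi=Q$ and the sequence above is already the one asserted, provided we identify its outer terms. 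The left term is given by lemma \ref{B6}: for these normalized $\Pi$ it forces $A=i_*(\nu)$, hence $j_!(A)=j_!i_*(\nu)$. Everything thus reduces to computing $B=J_Q(\Pi)$ as a $Gl(2)$-module, together with the central characters of its constituents.

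\textbf{Computing $B$.}
For this I would use exactness of $J_Q$ and argue case by case: realize $\Pi$ as an irreducible subquotient of a standard parabolically induced module of $G$ (induced from the Borel in cases \nosf{IIb}, \nosf{Vbc}, \nosf{VIc}, \nosf{VId}, \nosf{IVb}, and from a maximal parabolic with an essentially square-integrable inducing datum in case \nosf{XIb}), compute the semisimplified Klingen Jacquet module of the ambient module by the geometric lemma, and remove the contributions of the sister subquotients of $\Pi$. The semisimplified Jacquet modules of the standard modules of $GSp(4)$ needed here are tabulated in \cite{Roberts-Schmidt}; what remains is to follow the $Gl(2)$-constituents through, to rewrite them after the twist that forces the left term of the sequence to be $j_!i_*(\nu)$, and to record the central characters with respect to the center $Z$ of the Levi $Gl(2)$. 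This yields exactly the lists in the statement: two principal series in \nosf{IIb}, a single principal series in \nosf{Vb}, $\nu\circ\det$ in \nosf{VIc}, the pair $\nu\circ\det$ and $\nu^{1/2}\times\nu^{1/2}$ in \nosf{VId}, the pair $Sp$ and $\nu^{5/2}\times\nu^{1/2}$ in \nosf{IVb}, and $B=0$ in \nosf{XIb}, all of the Klingen Jacquet module of the ambient module in that case being carried by the generic constituent \nosf{XIa}.

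\textbf{Direct sums.}
Finally I would justify the splitting assertions. In cases \nosf{VId} and \nosf{IVb} the two constituents of $B$ have distinct central characters ($\nu^2\neq\nu$, resp.\ $1\neq\nu^3$), so there are no nonzero extensions between them, nor self-extensions of either, in ${\cal C}_{Gl(2)}$, which forces $B$ to be their direct sum; the same remark shows the two principal series in \nosf{IIb} are inequivalent (here $\chi_1^{2}\neq\nu^{\pm1}$, and in fact $\chi_1^{\pm1}\neq1$). In the remaining cases $B$ has at most one constituent. Combining the three steps gives the lemma.

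\textbf{Main obstacle.}
The real work is the middle step, and the subtle point is the bookkeeping: the unnormalized Jacquet module used here differs from the normalized one tabulated in \cite{Roberts-Schmidt} by the modulus $\delta_Q^{1/2}$ of the Klingen parabolic, and for the reducible ambient modules one must correctly separate the Jacquet-module constituents belonging to $\Pi$ itself from those of its sister subquotients. Pinning down the twist so that $A=i_*(\nu)$ holds uniformly is what keeps this manageable, and with that twist fixed the central characters $\omega_i$ are then forced.
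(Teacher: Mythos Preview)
Your proposal is correct and follows the same route as the paper: non-genericity gives $m_\Pi=0$ so $\overline\Pi=Q$, lemma \ref{B6} identifies $A=i_*(\nu)$, and then $B=J_Q(\Pi)$ is read off. The only difference is that the paper simply cites \cite{Roberts-Schmidt}, table A.5, where these Klingen Jacquet modules are already tabulated for each irreducible $\Pi$, rather than recomputing them via the geometric lemma on an ambient induced module; your approach works but is the long way around.

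One small correction: in your direct-sum step you claim that distinct central characters imply there are ``no self-extensions of either'' constituent. That does not follow (a module has the same central character as itself, and irreducible $Gl(2)$-modules can have nontrivial self-extensions in ${\cal C}_{Gl(2)}$, e.g.\ one-dimensional characters via $\log|\det|$). Fortunately you do not need it: since each constituent occurs with multiplicity one, the vanishing of cross-extensions between the two distinct central-character blocks already forces $B$ to split as their direct sum.
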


\begin{proof}
These representations $\Pi$, described in   lemma \ref{B6}, are all non-generic.  
Hence $m_\Pi=0$ and $\overline \Pi = Q$.  Therefore $\overline \Pi$ is an extension of $i_*(B)$ and $j_!(A)=j_!(\nu)$ by lemma \ref{B6}.
The constituents of $B$ for the representations $\Pi$ can be found in  $\sigma$ in \cite{Roberts-Schmidt}, table A.5.
Since $\Pi$ is normalized, in the notations of  \cite{Roberts-Schmidt}, table A.5 this means $\chi=\chi_1, \sigma=\chi_1^{-1}$  for case \nosf{IIb} and $\sigma=1$ in the other cases. This immediately gives our assertions.
\end{proof}

\smallskip

\begin{lem} \label{B7} Suppose $\rho=\rho^\divideontimes$ and
$\Pi\in {\cal C}_G(\omega)$ is irreducible  representation as in lemma \ref{B6}. Then  the group  $Ext^1_{{\cal C}_{Gl(2)}}(B, \rho\circ \det)$ vanishes except for the case \nosf{IVb}, where
 $  \delta: Hom_{Gl(2)}(j_!(A), \rho\!\circ\!\det) \to Ext^1_{{\cal C}_{Gl(2)}}(B, \rho\!\circ\!\det) $
 is injective.
\end{lem}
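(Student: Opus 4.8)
The plan is to compute $Ext^1_{{\cal C}_{Gl(2)}}(B, \rho\circ\det)$ by decomposing $B$ along its constituents listed in lemma \ref{Atable}, using that $\rho = 1$ and $\omega = 1$ in all the Saito-Kurokawa cases (lemma \ref{B6}) and $\rho$ quadratic with $\rho\circ\det$ the relevant target. Since $Ext^1$ is additive in short exact sequences in the first variable, and $B$ has finite length, it suffices to bound $Ext^1_{{\cal C}_{Gl(2)}}(\pi_i, \rho\circ\det)$ for each irreducible constituent $\pi_i$ of $B$, together with the central character constraint: $Ext^1_{{\cal C}_{Gl(2)}}(\pi_i,\rho\circ\det)$ can only be nonzero if $\pi_i$ and $\rho\circ\det$ have the same central character, i.e. $\omega_i = \rho^2 = \omega = 1$ (resp.\ $\omega_i = 1$ when we are in case \nosf{IVb} with the appropriate normalization). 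First I would observe that in the Saito-Kurokawa cases \nosf{IIb}, \nosf{Vb}, \nosf{VIc}, \nosf{VId} the constituents of $B$ all have central character $\nu^2$ (or $\nu^{3/2}\chi_1^{\pm1}$ in \nosf{IIb}, or $\nu$ for the second \nosf{VId} constituent), none of which equals $\rho^2 = 1$; for \nosf{XIb} the space $B$ is zero. Hence $Ext^1_{{\cal C}_{Gl(2)}}(B,\rho\circ\det) = 0$ in all extended Saito-Kurokawa cases, which gives the first assertion.

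For the case \nosf{IVb}, here $\omega = 1$ and $\rho$ is the trivial character, so $\rho\circ\det = \mathbf{1}$, the trivial representation of $Gl(2)$. The constituent $Sp$ of $B$ has central character $\omega_1 = 1$, matching that of $\mathbf 1$, whereas $\nu^{5/2}\times\nu^{1/2}$ has central character $\nu^3 \neq 1$, so only the Steinberg piece contributes. Thus $Ext^1_{{\cal C}_{Gl(2)}}(B,\mathbf 1) = Ext^1_{{\cal C}_{Gl(2)}}(Sp,\mathbf 1)$, which is one-dimensional: this is the classical extension $0 \to \mathbf 1 \to \nu^{1/2}\times\nu^{-1/2} \to Sp \to 0$ of $Gl(2)$-representations (the reducible principal series filtration), and one checks via the long exact sequence for $Ext$ applied to $0 \to Sp \to \nu^{-1/2}\times\nu^{1/2} \to \mathbf 1 \to 0$, or directly from the known structure of $Ext$-groups between the constituents of the reducible principal series $\nu^{\pm1/2}\times\nu^{\mp1/2}$, that $\dim Ext^1_{{\cal C}_{Gl(2)}}(Sp,\mathbf 1) = 1$. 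By lemma \ref{B6} we also have $\dim Hom_{Gl(2)}(j_!(A),\rho\circ\det) = 1$ in case \nosf{IVb}.

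It then remains to show that the connecting map $\delta$ in corollary \ref{B5} is \emph{injective} in case \nosf{IVb}, i.e.\ nonzero. Equivalently, by corollary \ref{B5}, this says $Hom_{Gl(2)}(\overline\Pi,\rho\circ\det) = \ker(\delta) = 0$ for \nosf{IVb}. I would deduce this from the fact that \nosf{IVb} is \emph{not} of extended Saito-Kurokawa type and hence, by the final conclusion theorem \ref{maintheorem} is aiming at, admits no $H$-period functional; but to avoid circularity I would instead argue directly: the extension class of $\overline\Pi$ as an extension of $i_*(B)$ by $j_!i_*(\nu)$ is, after restriction to $Gl(2)$ and projection to the relevant constituents, precisely the nontrivial class spanning $Ext^1_{{\cal C}_{Gl(2)}}(Sp,\mathbf 1)$ — this is the content that the generator of $Hom_{Gl(2)}(j_!(A),\rho\circ\det)$ does not lift to $\overline\Pi$. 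Concretely, one identifies $\delta$ of the generator with the pushforward of the extension class $[\overline\Pi]$ along the map $j_!(A) = j_!i_*(\nu) \twoheadrightarrow \rho\circ\det$ (the generator of the $Hom$ space), and shows this pushforward is the nonzero class; since both source and target of $\delta$ are one-dimensional, nonvanishing is equivalent to injectivity.

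The main obstacle I expect is this last step: verifying that the extension class of $\overline\Pi$ genuinely restricts to the nontrivial self-extension on the Steinberg–trivial pair, rather than splitting. This requires a hands-on understanding of $\overline\Pi$ for \nosf{IVb} — most naturally via the explicit description of $\overline\Pi$ in terms of the Klingen Jacquet module and the structure of \nosf{IVb} inside the degenerate principal series, as developed in \cite{RW}. The cleanest route is probably to exhibit a would-be $Gl(2)$-splitting and derive a contradiction with the non-existence (already known, e.g.\ from \cite{Roberts-Schmidt}) of a trivial quotient in the appropriate Jacquet module of \nosf{IVb}, or alternatively to compute the image of the paramodular new vector and show it obstructs the lift. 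Everything else is a bookkeeping exercise in additivity of $Ext^1$ and central-character matching.
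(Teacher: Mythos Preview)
Your treatment of the extended Saito-Kurokawa cases is correct and is exactly what the paper does: the central-character mismatch forces $Ext^1_{{\cal C}_{Gl(2)}}(B,\rho\circ\det)=0$ constituent by constituent.

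For case \nosf{IVb}, however, there is a genuine gap. You correctly identify that the issue reduces to showing $Hom_{Gl(2)}(\overline\Pi,\rho\circ\det)=0$, and you correctly note that invoking theorem \ref{maintheorem} would be circular. But your proposed alternative --- computing the extension class of $\overline\Pi$ directly and checking it restricts nontrivially to the $(Sp,\mathbf 1)$ pair --- is exactly the hard step, and you do not carry it out; you only list possible strategies. The paper does \emph{not} attempt this direct computation. Instead, it sidesteps the circularity concern you raised by proving independently (lemma \ref{impossible}) that \nosf{IVb} admits no $H$-period functional at all. That lemma is established by a self-contained Mackey-type argument: \nosf{IVb} is a quotient of the Klingen-induced representation $Ind_Q^G(\nu^2\boxtimes Sp(\nu^{-1}))$, one writes down the double-coset decomposition $G=\bigcup_m Q\,m\,H^{ext}$ (two cosets), and Frobenius reciprocity reduces the question to whether $Sp(\nu^2)$ admits a $Gl_a(1)$-equivariant functional to $(\C,\delta)$ for $\delta=\nu$ or $\nu^2$; this fails since any such functional factors through the Jacquet module $(\C,\nu^3)$. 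With lemma \ref{impossible} in hand, the paper then argues exactly along the line you dismissed: if $\delta$ were not injective, corollary \ref{B5} would give a nonzero $Gl(2)$-functional on $\overline\Pi$, and since \nosf{IVb} is non-generic the glueing mechanism (lemma \ref{Praegluing} and lemma \ref{glueing}, which for non-generic $\Pi$ do \emph{not} rely on proposition \ref{mainprop}) promotes this to a nonzero $H$-period functional, contradicting lemma \ref{impossible}.

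So the missing idea is that the ``no $H$-period for \nosf{IVb}'' input can be supplied directly and cheaply from the Klingen-induced model, making your first instinct non-circular after all.
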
  

\begin{proof} We use the well known fact that $Ext^1_{{\cal C}_{Gl(2)}}(M,N)$ vanishes
if $M,N$ are smooth representations of $Gl(2)$ on which the center $Z$ of $Gl(2)$ acts by different central characters
$\omega_M \neq \omega_N$; see \cite{Borel_Wallach}, p.14-15. 
Using decalage we apply this for  the irreducible constituents $M$
of $B$ and $N=\rho\circ\det$. 

\medskip\noindent
Since all normalized irreducible representations of lemma \ref{B6}
have trivial central character  $\omega=1$, the condition $\rho=\rho^\divideontimes$ implies $\rho^2=1$.
Hence $Z$ acts trivially on $N$, whereas all irreducible constituents $M$ of $B$, as listed in lemma \ref{Atable}, have central character\footnote{Later we are interested in $Gl(2)$-representations with exceptional central character $\mu=\nu^2$. Only
for \nosf{VIc, VId} the constituents of $B$ do have this central character since $\chi_0\neq 1$ and $\chi_1\neq \nu^{\pm 3/2}$ always holds in the cases above.
In the two cases \nosf{VIc, VId} the relevant constituent with the exceptional central character $\mu=\nu^2$
 is the one-dimensional direct summand $\nu\circ\det$ of $B$.}
 different from 1 (for \nosf{IIb} always $\chi_1 \neq \nu^{\pm 1/2}$, $\chi_1\neq \nu^{\pm 3/2}$ holds) except in the case where $\Pi$ is of type \nosf{IVb}. 
 
So it only remains to show  that $\delta$ is injective in the case where $\Pi$ is of type \nosf{IVb}. If this were not true,
$\Pi$ would admit a nontrivial $H$-period functional (cor. \ref{B5} and thm. \ref{gling})
which is impossible by lemma \ref{impossible}.
\end{proof}

\begin{prop} \label{mainprop}
For irreducible representations $\Pi$ in ${\cal C}_G(\omega)$ and $\rho=\rho^\divideontimes$
$$\dim\bigl( Hom_{Gl(2)}(\overline\Pi, \rho\!\circ\!\det)\bigr) \leq 1 \ ,$$ 
and this dimension is nonzero if and only if $\Pi$ is one-dimensional 
or belongs to the local Saito-Kurokawa
representations $\Pi$ of extended type (i.e. the cases \nosf{IIb},  \nosf{Vbc}, \nosf{VIc}, \nosf{XIb} and \nosf{IVd})
and  furthermore $\rho=\rho(\Pi)$ holds. If $\Pi=\Pi_{norm}$ is normalized, then $\rho(\Pi)=1$.
Finally $\rho(\sigma\otimes \Pi) = \sigma \rho(\Pi)$ holds.
\end{prop}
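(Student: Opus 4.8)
The plan is to obtain the proposition by assembling Corollary \ref{B5} with Lemmas \ref{B6} and \ref{B7}, after disposing of the one-dimensional case separately. First I would treat $\dim(\Pi)=1$, i.e.\ $\Pi=\chi\circ\lambda_G$ (case \nosf{IVd}, where $\chi=1$ in the normalized form). Since $\lambda_G(s_a)=1$, the group $\langle s_a,\,a\in k\rangle$ acts trivially on $\Pi$, so $\overline\Pi=\Pi$ as a vector space; as $\lambda_G$ restricts to $\det$ on the embedded $Gl(2)\subseteq G$ and is trivial on the translation subgroup, one gets $\overline\Pi\cong i_*(\chi\circ\det)$ in ${\cal C}_2$. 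By Lemma \ref{B2}(3) the space $Hom_{Gl(2)}(\overline\Pi,\rho\circ\det)$ is then one-dimensional if $\rho=\chi$ and zero otherwise, so the proposition holds here with $\rho(\Pi)=\chi$; in particular $\rho(\Pi)=1$ when $\Pi$ is normalized.

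Next I would take $\dim(\Pi)>1$ and invoke Corollary \ref{B5}, which identifies $Hom_{Gl(2)}(\overline\Pi,\rho\circ\det)$ with the kernel of
$$\delta:\ Hom_{Gl(2)}(j_!(A),\rho\circ\det)\ \longrightarrow\ Ext^1_{{\cal C}_{Gl(2)}}(B,\rho\circ\det)\ .$$
By Lemma \ref{B6}, under $\rho=\rho^\divideontimes$ the source of $\delta$ vanishes — hence $\ker(\delta)=0$ — unless $\Pi$ is of type \nosf{IVb} or of extended Saito-Kurokawa type (the cases \nosf{IIb}, \nosf{Vbc}, \nosf{VIc}, \nosf{XIb}, \nosf{VId}) and $\rho$ equals the uniquely determined character $\rho(\Pi)$, for which the source is one-dimensional and $\rho(\Pi)=1$ when $\Pi$ is normalized. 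For these $\Pi$ with $\rho=\rho(\Pi)$, Lemma \ref{B7} shows that $Ext^1_{{\cal C}_{Gl(2)}}(B,\rho\circ\det)=0$ in the five extended Saito-Kurokawa cases, so that $\delta=0$ and $\ker(\delta)$ is one-dimensional, while in case \nosf{IVb} the map $\delta$ is injective, so $\ker(\delta)=0$. Hence for $\dim(\Pi)>1$ the dimension of $Hom_{Gl(2)}(\overline\Pi,\rho\circ\det)$ is at most one, and it is nonzero precisely when $\Pi$ is of extended Saito-Kurokawa type and $\rho=\rho(\Pi)$.

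Combining the two cases gives the bound $\dim\bigl(Hom_{Gl(2)}(\overline\Pi,\rho\circ\det)\bigr)\leq 1$ and the stated description of the nonvanishing locus, with a single distinguished character $\rho(\Pi)$ in each nonzero case and $\rho(\Pi_{norm})=1$. The twist formula is then immediate: for a smooth character $\sigma$ of $k^*$, since $\lambda_G(s_a)=1$ the functor $\overline{(\cdot)}$ is insensitive to the twist on $\langle s_a\rangle$, so canonically $\overline{\sigma\otimes\Pi}\cong(\sigma\circ\det)\otimes\overline\Pi$ as $Gl(2)$-modules, whence $Hom_{Gl(2)}(\overline{\sigma\otimes\Pi},(\sigma\rho)\circ\det)\cong Hom_{Gl(2)}(\overline\Pi,\rho\circ\det)$; uniqueness of the distinguished character then forces $\rho(\sigma\otimes\Pi)=\sigma\,\rho(\Pi)$.

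The point to emphasize is that no fresh computation is required here: all the content sits in Corollary \ref{B5} and Lemmas \ref{B6}--\ref{B7}, and the only genuinely nontrivial ingredient is the injectivity of $\delta$ in case \nosf{IVb}, which Lemma \ref{B7} itself obtains from Lemma \ref{impossible} (nonexistence of $H$-period functionals for \nosf{IVb}). The only place where attention is needed is the bookkeeping across the normalization conventions and classification symbols of \cite{RW} and \cite{Roberts-Schmidt}, so that the constituent $A=i_*(\nu)$ and the list of constituents of $B$ feeding into $\delta$ are the right ones in each case.
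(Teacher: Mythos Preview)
Your proof is correct and follows essentially the same route as the paper: invoke Corollary \ref{B5} to identify $Hom_{Gl(2)}(\overline\Pi,\rho\circ\det)$ with $\ker\delta$, use Lemma \ref{B6} to pin down when the source of $\delta$ is nonzero (and then one-dimensional), and use Lemma \ref{B7} to show $\delta=0$ in the extended Saito-Kurokawa cases while $\delta$ is injective for \nosf{IVb}. Your treatment is in fact more explicit than the paper's, which leaves the one-dimensional case \nosf{IVd} and the twist identity $\rho(\sigma\otimes\Pi)=\sigma\rho(\Pi)$ implicit; your separate handling of $\dim(\Pi)=1$ via $\overline\Pi\cong i_*(\chi\circ\det)$ and Lemma \ref{B2}(3) is the right way to fill that in, since Corollary \ref{B5} is stated only for $\dim(\Pi)>1$.
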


\begin{proof} 
%This follows from corollary \ref{B5}, lemma  \ref{B6} and lemma  \ref{B7}.  
Cor. \ref{B5} describes $Hom_{Gl(2)}(\overline\Pi, \rho\!\circ\!\det)\bigr)$ by
the connecting morphisms $$  \delta: Hom_{Gl(2)}(j_!(A), \rho\!\circ\!\det) \to Ext^1_{{\cal C}_{Gl(2)}}(B, \rho\!\circ\!\det) \ .$$
Lemma \ref{B6}  lists the cases where $Hom_{Gl(2)}(j_!(A), \rho\circ\det)\neq 0$, i.e. the cases for $\Pi$ listed in  prop. 
\ref{mainprop} up to \nosf{IVb}.  But $\delta=0$ holds except for the case \nosf{IVb}, where $\delta$ is injective.
This was shown in lemma \ref{B7}. The remaining assertions, including $\rho(\Pi_{norm})=1$, now immediately 
follow from lemma \ref{B6}.
%All these representations are irreducible and non-generic. Hence the claimed upper bound follows from lemma \ref{Praegluing}.
%On the other hand corollary \ref{B5} provides the lower bound.  
\end{proof}

\bigskip\noindent

\goodbreak
\centerline{\textbf{Appendix on Klingen induced representations}}
%To discard the influence
%of the objects $i_*(\pi)$ in ${\cal C}_2$ for $\pi\in {\cal C}_{Gl(2)}$
%in our discussion is not always possible. 
%Let us therefore discuss
%the relevant representation $\pi = i^*(\overline\Pi)$
%determined by an irreducible representation $\Pi$ of $G$. It is determined by the Jacquet module of $\Pi$ with respect to the

\medskip\noindent
For the Klingen parabolic subgroup $Q$ of $G$
the induced representations $Ind_Q^G(\chi'\boxtimes \tau)$
are defined by 
$$  (\chi'\boxtimes \tau) \begin{pmatrix} t & * & * & * \cr
0 & a & * & b \cr 0 & 0 & * & 0 \cr 0 & c & * & d 
\end{pmatrix} \ = \ \chi'(t) \tau\begin{pmatrix} a & b \cr c & d 
\end{pmatrix} \ ,$$
normalized by the additional factor $\delta_Q^{1/2}= \vert t^2/(ad-bc)\vert$.
The central character of the induced representation is $\omega=\chi'\omega_\tau$.
The restriction of $\delta^{1/2}_Q \chi'\boxtimes\tau$ to our fixed subgroup
$Gl(2) \subseteq Q$ is $\nu\chi'\otimes \tau$.

%\bigskip%\noindent
%Similarly, for $\Pi\in {\cal C}_G(\omega)$ the unnormalized Klingen Jacquet module
%defines $B=\Pi_{Rad(Q)}$ as a module under our fixed subgroup $Gl(2) \subseteq Q$. 
%Its irreducible constituents in ${\cal C}_{Gl(2)}$ are listed in table A.5 of \cite{Roberts-Schmidt}.
%In terms of the normalized Klingen Jacquet module
%$\delta_Q^{-1/2} \Pi_{Rad(Q)}$, whose irreducible constituents are of the form $\chi'\boxtimes \tau$ for certain 
%characters $\chi'$ of $k^*$ and certain irreducible representations $\tau$ of $Gl(2)$,
%the restriction of $\chi'\boxtimes \tau$ to $Gl(2)$ is $\chi'\otimes\tau$. 
%Since $t=ad-bc$ for $g=[\begin{smallmatrix} a & b \cr c & d\end{smallmatrix}]$,
%we have $\delta^{1/2}_Q(g)=\nu(\det(g))$  for elements $g\in Gl(2) \subseteq Q$. Hence
%$B=\Pi_{Rad(Q)}$
%defines a representation of our subgroup $Gl(2)\subseteq Q$ of finite length 
%whose irreducible constituents $\pi\in {\cal C}_{Gl(2)}$ are 
%of the form $$  \pi(\begin{bmatrix} a & b \cr c & d 
%\end{bmatrix}) \ \ = \ \ (\nu\chi'\otimes \tau)(\begin{bmatrix} a & b \cr c & d 
%\end{bmatrix}) \ ,$$
%where $\chi' \boxtimes \tau$ runs over the irreducible constituents of
%the normalized Klingen Jacquet module of $\Pi$.
%%\footnote{Use $t=ad-bc$ and  
%%$\delta_Q^{1/2} = \vert ad-bc \vert = \nu \circ \det$.} 
%

\begin{lem}\label{impossible}
For irreducible representations $\Pi$ of type \nosf{IVb} there do not exist
nontrivial $H$-period functionals $\ell: \Pi \to \rho\circ\lambda_G$. 
\end{lem}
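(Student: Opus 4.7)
The plan is to realize $\Pi$ of type \nosf{IVb} (normalized, so $\omega=1$) as a subquotient of a parabolically induced representation and then force the vanishing of $Hom_H(\Pi,\rho\!\circ\!\lambda_G)$ via Frobenius reciprocity. By the central character condition $\rho^2=\omega=1$ together with lemma \ref{B6}, only $\rho=1$ can yield a nonzero $H$-period functional, so we fix $\rho=1$ throughout.

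First I would invoke the Sally-Tadic classification to place $\Pi$ in the exact sequence
$$ 0 \to \Pi_{\nosf{IVa}} \to \tau \to \Pi \to 0, $$
where $\tau=\nu St_{GL(2)}\rtimes\nu^{-3/2}$ is Siegel-induced and $\Pi_{\nosf{IVa}}=St_{GSp(4)}$ is the generic Steinberg representation, obtained as the unique irreducible subrepresentation of $\tau$ (with $\Pi$ the Langlands quotient). Applying $Hom_H(-,\rho\!\circ\!\lambda_G)$ gives an injection
$$ Hom_H(\Pi,\rho\!\circ\!\lambda_G)\ \hookrightarrow\ Hom_H(\tau,\rho\!\circ\!\lambda_G), $$
so it suffices to show the right-hand side vanishes.

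Next I would compute $Hom_H(\tau,\rho\!\circ\!\lambda_G)$ by combining Frobenius reciprocity (lemma \ref{Frobenius}) with the Mackey decomposition along the finite double coset space $P\backslash G/H$. Each representative $w$ contributes a piece
$$ Hom_{H\cap w^{-1}Pw}\bigl((W^w)|_{H\cap w^{-1}Pw},\ (\rho\!\circ\!\lambda_G)|_{H\cap w^{-1}Pw}\bigr), $$
where $W$ denotes the normalized inducing datum $\delta_P^{1/2}\cdot(\nu St_{GL(2)}\otimes\nu^{-3/2})$. Central character comparisons, using the $\delta_P^{1/2}$-twist and the constraint $\rho^2=1$, rule out most pieces; for the remaining ones the vanishing is deduced from the fact that $St_{GL(2)}$ admits no one-dimensional quotient compatible with the required character on the stabilizer, because the characters occurring in the Jacquet module of $St_{GL(2)}$ along the Borel are $\nu^{\pm 1/2}$ and these do not match $\rho\!\circ\!\det=1$ after incorporating the $\delta_P^{1/2}$-twist.

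The main obstacle is the combinatorial bookkeeping: enumerating representatives of $P\backslash G/H$ in $GSp(4)$, identifying each stabilizer $H\cap w^{-1}Pw$ (the ``open'' stratum having solvable but nonabelian stabilizer), and tracking all modular and Weyl twists is tedious and error-prone. If this direct route proves too cumbersome, an alternative is to work at the level of $\overline\Pi$ and exploit lemma \ref{glueing}: it would suffice to show that the involution $\divideontimes$ acts by $-1$ on the at-most-one-dimensional space $Hom_{Gl(2)}(\overline\Pi,\rho\!\circ\!\det)$ in the \nosf{IVb} case, so that the intersection in lemma \ref{glueing} is automatically zero. Computing this sign reduces to an Atkin-Lehner-type calculation on the paramodular newform of $\Pi$ using the theory of \cite{Roberts-Schmidt}, which is conceptually cleaner but relies on detailed paramodular data.
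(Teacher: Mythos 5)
Your primary plan and your fallback both run into real obstacles, for different reasons.

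\textbf{On the primary route (Siegel induction plus Mackey).} The paper indeed proceeds by realizing $\Pi$ as a quotient of a parabolically induced representation and then applying a double-coset analysis together with Frobenius reciprocity (lemma \ref{Frobenius}); but it induces from the \emph{Klingen} parabolic $Q$, not the Siegel parabolic $P$, and this choice is not cosmetic. According to \cite{Roberts-Schmidt} (2.9), the type \nosf{IVb} representation is a \emph{quotient} of the Klingen-induced $\nu^2\rtimes \nu^{-1} St_{GSp(2)}$ (with \nosf{IVa} as the subrepresentation), and it is a \emph{subrepresentation} of the Siegel-induced $\nu^{3/2}1_{GL(2)}\rtimes\nu^{-3/2}$; the Langlands quotient of $\nu^{3/2}St_{GL(2)}\rtimes\nu^{-3/2}$ is \nosf{IVc}, not \nosf{IVb}. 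Thus your exact sequence $0\to\Pi_{\nosf{IVa}}\to\tau\to\Pi\to 0$ with $\tau$ Siegel-induced from a twist of $St_{GL(2)}$ is not correct, and without $\Pi$ being a quotient of $\tau$ the injection $Hom_H(\Pi,\rho\!\circ\!\lambda_G)\hookrightarrow Hom_H(\tau,\rho\!\circ\!\lambda_G)$ does not follow from left-exactness of $Hom$. (There is also a twist mismatch: $\nu St_{GL(2)}\rtimes\nu^{-3/2}$ does not have trivial central character.) The paper circumvents the combinatorics you worry about by passing to $H^{ext}=\langle H,\kappa\rangle$, which collapses $Q\backslash G/H^{ext}$ to two double cosets; at each of them the $H^{ext}$-period functional restricts to a $Gl_a(1)$-functional on $Sp(\nu^2)$, and these vanish because the only characters in the Jacquet module of $Sp(\nu^2)$ do not match $\delta\vert_{Gl_a(1)}\in\{\nu,\nu^2\}$.

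\textbf{On the fallback (sign of $\divideontimes$).} This route cannot work for a structural reason. Since \nosf{IVb} is non-generic, lemma \ref{Praegluing} gives $\dim Hom_\C(\widetilde\Pi_\rho,\C)\le 1$, and the discussion following lemma \ref{glueing} shows that $\divideontimes$ then automatically \emph{stabilizes} the subspace $V=Hom_{Gl(2)}(\overline\Pi,\rho\!\circ\!\det)$ of the ambient one-dimensional space, so $V^\divideontimes\cap V=V$ regardless of whether $\divideontimes$ acts by $+1$ or $-1$. In other words, the intersection in lemma \ref{glueing} is a set-theoretic intersection of subspaces, not the $+1$-eigenspace, and it produces the \emph{whole} of $V$. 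Consequently $Hom_H(\Pi,\rho\!\circ\!\lambda_G)\cong V$ (this is exactly theorem \ref{gling}), and the lemma you want is precisely equivalent to $V=0$ — which no sign computation can deliver. Showing $V=0$ directly is what lemma \ref{B7} asserts (injectivity of the connecting map $\delta$ for \nosf{IVb}), and that in turn is deduced \emph{from} lemma \ref{impossible}, so trying to reverse that implication is circular.
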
 

\begin{proof} We may assume that $\Pi$ is normalized. Then a necessary condition for the existence of $\ell$ is
 $\rho=1$ (lemma \ref{gling}  and lemma \ref{B6}). $\Pi$ is a quotient of the Klingen induced representation
 $I\! =\! Ind_Q^G(\chi'\boxtimes\tau)$ for $\chi'\!=\! \nu^2$ and $\tau\!=\! Sp(\nu^{-1})$ by \cite{Roberts-Schmidt}, (2.9). So $\nu\chi'\otimes \tau = Sp(\nu^2)$. Since $H$-period functionals 
are equivariant with respect to the group $H^{ext}$ generated by $\kappa$ and $H$, for the proof
 it suffices that $I$ does not admit $H^{ext}$-period functionals. It is easy to show $G\! =\! \bigcup_m Q m H^{ext} 
 $ for $m\!=\! 1$ resp. for $m\! =\! m_A$, where $A$ is a unipotent lower triangular matrix. So it suffices that there are no nontrivial $H^{ext}$-functionals $ind_{ m^{-1}Qm \cap H^{ext}}^{H^{ext}}( \delta^{1/2}_Q \chi'\boxtimes\tau)\to  1$, or  by lemma \ref{Frobenius}  alternatively 
$$Hom_{\Gamma}((\delta^{1/2}_Q \chi'\boxtimes\tau)\vert_\Gamma,\delta)=0\  $$
for $\delta$ attached to $\Gamma\!:\! =\! m^{-1}Qm \cap H \hookrightarrow H$.
For both $m\! =\! 1,m_A$ the group $m\Gamma m^{-1}$
contains all matrices $[ \begin{smallmatrix} * & * \cr 0 & 1  \end{smallmatrix}]$. If $\Pi$ admits a $H$-period functional,
then $Sp(\nu^2)$ admits a nontrivial $Gl_a(1)$-equivariant functional $Sp(\nu^2)\!\to\! (\C,\delta)$. It would as a $Gl_a(1)$-morphism $(\C,\nu^3)\!\to\! (\C, \delta)$ factorize over the Jacquet module of $Sp(\nu^2)$ which is impossible 
since $\delta\vert_{Gl_a(1)}\!=\! \nu$ or $\nu^2$ (for $m\!=\! 1$ resp. $m\! =\! m_A$). 
\end{proof}

\noindent
%\subsection
\centerline{\textbf{Appendix on Extensions}}

\medskip
Let ${\cal C}_n$ denote the categories ${\cal C}_{Gl_a(n)}$
for the affine linear groups $Gl_a(n)$ over $k$.
${\cal C}_n$ contains $j_!({\cal C}_{n-1})$ as a full subcategory
that is equivalent to ${\cal C}_{n-1}$ \cite{Bernstein-Zelevinsky} so that $j_!({\cal C}_{n-1})$ is closed under
extensions. [Indeed, for an extension $0\to j_!(M_1) \to E \to j_!(M_2) \to 0$ in ${\cal C}_n$
we obtain $i^*(E)=0$ from $i^*j_!$ and the exactness of $i^*$. Hence $E=j_!j^!(E)$,
and the extension comes from ${\cal C}_{n-1}$ by applying the functor $j_!$].
So, to compute $Ext^1$-group it inductively suffices to understand the extensions in $ Ext^1_{{\cal C}_n}(i_*(\pi), M)$ for $M\in {\cal C}_n$; or even $M\in j_!({\cal C}_{n-1})$ by \cite{RW}, lemma 4.1 since $i_*({\cal C}_{Gl(n)})$ is also
closed under extensions. 

\begin{lem}\label{Restrict-extensions}
For $\pi\in {\cal C}_{Gl(n)}(\omega)$ and $M\in j_!({\cal C}_{n-1})$ the natural map
$$  Ext^1_{{\cal C}_n}(i_*(\pi), M) \longrightarrow Ext^1_{{\cal C}_{Gl(n)}}(\pi,
M\vert_{Gl(n)})$$
is injective.  \end{lem}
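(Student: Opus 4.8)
The plan is to make the claimed natural map explicit on cocycles and then show injectivity via the exactness properties of the functors $i^*$, $i_*$, $j_!$, $j^!$ recalled just before the statement. Concretely, an element of $Ext^1_{{\cal C}_n}(i_*(\pi), M)$ is represented by a short exact sequence
$$ \xymatrix{ 0 \ar[r] & M \ar[r] & E \ar[r] & i_*(\pi) \ar[r] & 0 } $$
in ${\cal C}_n$, and the natural map sends it to the class of the restricted sequence
$$ \xymatrix{ 0 \ar[r] & M\vert_{Gl(n)} \ar[r] & E\vert_{Gl(n)} \ar[r] & \pi \ar[r] & 0 } $$
in ${\cal C}_{Gl(n)}$; this is well defined because restriction along $Gl(n)\hookrightarrow Gl_a(n)$ is exact. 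To prove injectivity it suffices to show that if the restricted sequence splits in ${\cal C}_{Gl(n)}$ then the original sequence splits in ${\cal C}_n$.

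First I would observe that $M=j_!(M')$ for some $M'\in {\cal C}_{n-1}$, so $i^*(M)=0$ since $i^*j_!=0$; applying the exact functor $i^*$ to the sequence for $E$ gives $i^*(E)\cong i^*i_*(\pi)=\pi$ (using $i^*i_*\cong \mathrm{id}$). Dually, $j^!$ applied to the sequence, together with $j^!i_*=0$, gives $j^!(E)\cong j^!(M)=M'$, hence $j_!j^!(E)\cong M$, and the canonical map $j_!j^!(E)\to E$ realizes the inclusion $M\hookrightarrow E$. Thus a splitting $s: i_*(\pi)\to E$ in ${\cal C}_n$ is exactly a $Gl_a(n)$-equivariant section, and by adjunction / the description of $i_*$ it amounts to a $Gl(n)$-equivariant map $\pi\to i^*(E)=\pi$ lifting the identity, subject to the constraint that it be compatible with the $Gl_a(n)$-structure, i.e. with the action of the translation subgroup $k^n$. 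So the real content is: a $Gl(n)$-splitting of the restricted sequence automatically extends to a $Gl_a(n)$-splitting.

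The mechanism I would use is that $i_*(\pi)$, as a $Gl_a(n)$-module, is the module on which the normal abelian subgroup $V\cong k^n$ of translations acts trivially (it is inflated from $Gl(n)=Gl_a(n)/V$), so a $Gl_a(n)$-equivariant map out of $E$ to $i_*(\pi)$ is the same as a $Gl(n)$-equivariant map out of the maximal $V$-coinvariant quotient $E_V$, and $E_V$ sits in the sequence obtained by applying $(-)_V$; but $(-)_V$ is the top cohomology of $V$ and one knows $j^!(E)=i^*$-complement pieces vanish in the relevant degree — more precisely, since $M=j_!(M')$ has $M_V = 0$ (the $!$-extension from the open orbit has no $V$-coinvariants, which is exactly the statement $i^*j_!=0$ read on coinvariants), right-exactness of $(-)_V$ gives $E_V\cong i_*(\pi)_V = \pi$. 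Hence $E\twoheadrightarrow E_V\cong \pi$ is a $Gl_a(n)$-equivariant retraction of the sub-quotient structure, and composing a $Gl(n)$-splitting of the bottom with the identification $E_V\cong\pi$ and with $E\twoheadrightarrow E_V$ would be the wrong direction; instead one argues that any $Gl(n)$-section $\pi\to E\vert_{Gl(n)}$, post-composed with $E\to E_V\cong\pi$, is a $Gl(n)$-automorphism of $\pi$, and by irreducibility-free bookkeeping (or: by the fact that the composite $\pi\to E\to E_V=\pi$ of section with projection is an iso whenever the bottom splits) one can correct the section to land, after passing to $V$-coinvariants, in a $Gl_a(n)$-equivariant splitting.

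I expect the main obstacle to be precisely this last equivariance-propagation step: showing that the freedom to choose the $Gl(n)$-splitting can always be used to kill the obstruction to $V$-equivariance. The clean way to handle it is cohomological: the obstruction to lifting a $Gl(n)$-splitting to a $Gl_a(n)$-splitting lives in $H^1(Gl(n); \mathrm{Hom}_V(\pi, M))$ or, after identifying the relevant pieces, is governed by whether the $V$-action on $E$ already forces the section to be $V$-equivariant — and here one uses crucially that $M=j_!(M')$, so that $M$ contains no nonzero $V$-fixed (equivalently, no $V$-trivial sub or quotient interacting with $i_*(\pi)$), which forces the obstruction cocycle to vanish. So the proof reduces to: (i) restriction of the exact sequence is exact (trivial); (ii) $i^*j_!=0$ and $j^!i_*=0$, which are the Bernstein–Zelevinsky properties already invoked in the excerpt; (iii) therefore $M$ has no $V$-coinvariants, giving $E_V\cong \pi$ canonically as $Gl_a(n)$-modules; (iv) a $Gl(n)$-splitting of the restricted sequence, composed with $E\to E_V\cong\pi$, yields a $Gl(n)$-isomorphism whose inverse, inflated through $E\to E_V$, is the desired $Gl_a(n)$-splitting — hence the original class was zero.
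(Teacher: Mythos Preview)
Your setup is correct: the natural map sends an extension class to its restriction along $Gl(n)\hookrightarrow Gl_a(n)$, and injectivity amounts to showing that any $Gl(n)$-linear section $s:\pi\to E\vert_{Gl(n)}$ is automatically $Gl_a(n)$-linear, i.e.\ that the translation subgroup $V\cong k^n$ acts trivially on $s(\pi)$. But your step (iv) is circular. The identification $E_V\cong\pi$ that you obtain from $M_V=i^*j_!(M')=0$ is nothing other than the original projection $E\twoheadrightarrow i_*(\pi)$ (any $Gl_a(n)$-map from $E$ to a module with trivial $V$-action factors through $E_V$, and the projection is already such a map inducing an isomorphism on $V$-coinvariants). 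So composing $s$ with $E\to E_V$ gives back the identity of $\pi$, and ``inflating the inverse through $E\to E_V$'' has no meaning: there is no canonical lift from $E_V$ to $E$, and producing one is exactly the splitting problem you are trying to solve. Your cohomological sketch does not repair this; you never identify the obstruction or show it vanishes.

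The decisive hypothesis you never use is that $\pi\in{\cal C}_{Gl(n)}(\omega)$ has a central character. This is what makes the argument work in the paper, and the mechanism is elementary rather than cohomological: the image $s(\pi)\subseteq E$ is a $Z$-eigenspace with eigencharacter $\omega$ (because $s$ is $Gl(n)$-linear). For any $v\in s(\pi)$, smoothness of $E$ gives an open subgroup $U\subseteq V$ fixing $v$. Conjugation by $z\in Z$ rescales $V$, so for every $x\in V$ one can choose $z\in Z$ with $zxz^{-1}\in U$; then $zxz^{-1}v=v$, and since $z^{-1}v=\omega(z)^{-1}v$ this unwinds to $xv=v$. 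Hence $V$ acts trivially on $s(\pi)$ and $s$ is already a $Gl_a(n)$-splitting. Note that this ``contraction by the center'' trick is the same device used later in the paper (lemma \ref{NOINV}); without the central character hypothesis there is no reason the obstruction you allude to should vanish.
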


\begin{proof}
If an extension $0\to j_!(M)\to E\to i_*(\pi)\to 0$ in ${\cal C}_n$
admits a $Gl(n)$-linear splitting $s: \pi \to E\vert_{Gl(n)}$, we now show that the affine part
of the affine linear group $Gl_a(n)$ acts trivially on the image of $s$ and hence $s$ defines
a splitting in ${\cal C}_n$. Indeed, 
the image of $s$ is an eigenspace of the center $Z$ of $Gl(n)$ with respect to the central  character $\omega$ of $\pi$.
Every vector $v$ in the $s$-image is fixed by a small open subgroup $U$ of the affine part of $Gl_a(n)$ by smoothness of the representation $E$.
Since conjugation by $z\in Z$ acts on the affine part of $Gl_a(n)$ by rescaling, $zxz^{-1}\in U$ holds for some $z\in Z$ and hence $v=zxz^{-1}v$. Since $z^{-1}v = \omega(z)^{-1}\cdot v$, this implies $xv=v$ for all
$x$ in the affine part of $Gl_a(n)$. Therefore $s$ defines a $Gl_a(n)$-splitting of $E$.
\end{proof}

\section{The Piatetskii-Shapiro $L$-function}

%\bigskip\noindent
%\subsection
\textbf{Exceptional poles}.  
 For an irreducible representation $\Pi\in {\cal C}_G(\omega)$ and a Bessel model of $\Pi$ attached to some Bessel character $\Lambda$  a local $L$-function $L^{PS}(s,\Pi,\Lambda)$ was defined by   Piatetskii-Shapiro as a product
$$  L^{PS}(s,\Pi,\Lambda) \ = \
L^{PS}_{ex}(s,\Pi,\Lambda)  L^{PS}_{reg}(s,\Pi,\Lambda) \ .$$
More generally $L^{PS}(s,\Pi,\mu,\Lambda)$,
for smooth characters $\mu$ of $k^*$, was defined in \cite{PS-L-Factor_GSp4}. These new local $L$-factors coincide with $L^{PS}(s,\mu\otimes\Pi, \mu\otimes\Lambda)$. Recall that $\mu\otimes\Lambda(\tilde t)=
\mu(t_1t_2)\Lambda(\tilde t)$, which amounts to replace $\rho\mapsto \mu\rho, \rho^\divideontimes \mapsto \mu\rho^\divideontimes$. So we may assume
$\mu=1$ if we drop the assumption that $\Pi$ is normalized.

In the following we discuss the extraordinary $L$-factor $L^{PS}_{ex}(s,\Pi,\Lambda)$
for split Bessel models with the Bessel character $\rho$ that describes $\Lambda$.

\begin{lem} \label{C19} For an irreducible representation $\Pi\in {\cal C}_G(\omega)$
and a split Bessel model for $\Lambda(\tilde t)=\rho(t_1)\rho^\divideontimes(t_2)$  
the $L$-factor $L^{PS}_{ex}(s,\Pi,\Lambda) $ divides
$$  L(s+\frac{1}{2}, \Lambda) = L(\rho, s+\frac{1}{2}) 
L(\rho^\divideontimes, s+\frac{1}{2}) \ .$$
\end{lem}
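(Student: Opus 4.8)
\textbf{Proof proposal for Lemma \ref{C19}.}

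The plan is to unwind the definition of the Piatetskii-Shapiro exceptional $L$-factor in terms of the local zeta integrals attached to the split Bessel model and to locate the poles of these integrals. Recall that $L^{PS}(s,\Pi,\Lambda)$ is built from zeta integrals $Z(s,W,\Phi)$ (or their Bessel-model analogue), where $W$ runs over the Bessel functions of $\Pi$ and $\Phi$ over a suitable Schwartz space, and the exceptional factor $L^{PS}_{ex}(s,\Pi,\Lambda)$ is the ``extra'' pole contribution coming from the behaviour of the zeta integrals when $\Phi$ is replaced by a degenerate section supported near the relevant cell. First I would recall from \cite{PS-L-Factor_GSp4} the precise shape of these integrals for a split Bessel model, in which the torus over which one integrates the Bessel function is the split torus $\tilde T$ and the Bessel character on $\tilde T$ is exactly $\Lambda(\tilde t)=\rho(t_1)\rho^\divideontimes(t_2)$.

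The key step is then an asymptotics argument: a Bessel function $W$ of $\Pi$, restricted to the relevant one-parameter family $\diag(t,1,1,t^{-1}\lambda)$ or the analogous split family governing the exceptional integral, is asymptotically a finite sum of ``leading terms'' each of which is a character of the form (character of $k^*$)$\cdot\nu^{a}$ times a polynomial in the valuation, and by smoothness $W$ vanishes for $|t|$ large. Consequently the zeta integral, which is essentially a Mellin transform in the variable $t$ twisted by $\nu^{s+1/2}$ and by $\Lambda$, converges for $\mathrm{Re}(s)$ large and extends meromorphically with poles only along the arithmetic progressions dictated by those leading exponents. Because the torus is split and two-dimensional with the character $\rho$ on $t_1$ and $\rho^\divideontimes$ on $t_2$, the only possible exceptional poles occur where $\nu^{s+1/2}\rho$ or $\nu^{s+1/2}\rho^\divideontimes$ is trivial — precisely the zeros of $L(\rho,s+\tfrac12)^{-1}L(\rho^\divideontimes,s+\tfrac12)^{-1}$. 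Hence $L^{PS}_{ex}(s,\Pi,\Lambda)$, being by definition the g.c.d.\ (in the ring of such $L$-factors) of the exceptional zeta integrals, divides $L(s+\tfrac12,\Lambda)=L(\rho,s+\tfrac12)L(\rho^\divideontimes,s+\tfrac12)$.

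I would organise the write-up as: (i) quote the definition of $L^{PS}_{ex}$ and the exceptional zeta integrals from \cite{PS-L-Factor_GSp4}; (ii) record the asymptotic expansion of split Bessel functions of $\Pi$ along the relevant torus direction, citing the Jacquet-module description already used in \cite{RW} (the exponents appearing in the expansion are controlled by $J_P(\Pi)$ and $J_Q(\Pi)$, but for this lemma one only needs the coarse fact that they are characters twisted by integral powers of $\nu$); (iii) conclude that each exceptional zeta integral is, up to an entire factor, a product of geometric series in $q^{-s}$ whose denominators divide $(1-\rho(\pi)q^{-s-1/2})(1-\rho^\divideontimes(\pi)q^{-s-1/2})$; (iv) take the g.c.d.\ over all data to get the divisibility. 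The main obstacle I expect is purely bookkeeping: matching Piatetskii-Shapiro's normalisation of the exceptional integral (the shift by $\tfrac12$, the role of $\delta_P^{1/2}=\nu^{3/2}$, and which cell the ``exceptional'' support condition singles out) with the conventions of \cite{RW}, so that the exponents in the Bessel-function asymptotics land exactly at $\rho\nu^{-1/2}$ and $\rho^\divideontimes\nu^{-1/2}$ rather than at some other twist. Once that normalisation is pinned down, the divisibility is immediate; no delicate estimate is needed beyond the standard finiteness of the asymptotic expansion of a Bessel function, which follows from $\Pi$ having finite length.
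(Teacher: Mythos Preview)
Your proposal has a genuine gap in identifying the source of the exceptional poles. You correctly start by noting that $L^{PS}_{ex}(s,\Pi,\Lambda)$ accounts for the extra poles arising when the Schwartz function $\Phi$ is not required to vanish at the origin. But you then switch to analysing the asymptotic expansion of the Bessel function $W$ along the torus, and claim that this expansion forces the exceptional poles to lie at the zeros of $L(\rho,s+\tfrac12)^{-1}L(\rho^\divideontimes,s+\tfrac12)^{-1}$. This is where the argument breaks down: the asymptotics of $W$ are governed by the characters occurring in the Jacquet modules of $\Pi$ and produce exactly the \emph{regular} poles (those of $L(\mu\otimes M,s)$ in the paper's language), not the exceptional ones. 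There is no mechanism in your outline that links the characters $\rho,\rho^\divideontimes$ of the Bessel datum to the exponents in the asymptotic expansion of $W$; these are a priori unrelated, and step~(iii) of your plan simply does not follow from step~(ii).

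The paper's proof is different in kind and much shorter. One writes out the full zeta integral $Z^{PS}(s,W_v,\Phi,\mu)$ and observes that the exceptional contribution can be computed with $\Phi=\Phi_0$, the characteristic function of the standard lattice. Because $\Phi_0$ is $K_H$-invariant, the inner integral $\int_{k^*\times k^*}\Phi_0((0,t))\Lambda(t)\,\nu^{s+1/2}(t\overline t)\,d^*t$ is a pure Tate integral and equals $L(\rho,s+\tfrac12)L(\rho^\divideontimes,s+\tfrac12)$ (or vanishes if either character is ramified). This factors the whole zeta integral as
\[
Z^{PS}(s,W_v,\Phi_0,1)=L(\rho,s+\tfrac12)\,L(\rho^\divideontimes,s+\tfrac12)\cdot Z^{PS}_{reg}(s,W_v^{av},1),
\]
where $W_v^{av}$ is the $K_H$-average of $W_v$, hence again a Bessel function. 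Since $Z^{PS}_{reg}$ is already regularised by $L^{PS}_{reg}$, the exceptional factor must divide the Tate factor $L(\rho,s+\tfrac12)L(\rho^\divideontimes,s+\tfrac12)$. No asymptotic expansion of $W$ is needed at all; the divisor you want comes entirely from the $\Phi$-side of the integral, not from the $W$-side.
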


\begin{proof}
By the definition in
\cite{PS-L-Factor_GSp4}, section 4 and prop. 18 of \cite{Danisman3} the local 
$L$-functions $L^{PS}(s,\Pi,\mu,\Lambda)$ are the regularizing $L$-factors
of the following local zeta functions (with $t\overline t= t_1t_2$ in our notation)
$$ Z^{PS}(s,W_v,\Phi,\mu) =  \int_{k^*} \Bigl(\int_{K_H} \mu(det(k))\ell(\Pi((\begin{smallmatrix} xE & 0 \cr 0 & E   \end{smallmatrix})k) v)\ \mu(x)\vert x\vert^{s-\frac{3}{2}} d^*x$$ $$ \times \int_{k^*\times k^*} \Phi((0,t)k) \Lambda(t) (\mu\nu^{s+\frac{1}{2}})( t \overline t) d^*t \Bigr)dk    $$
for $W_v=W_v(g)=\ell(\Pi(g) v)$ running over all functions in the Bessel model,
i.e. for a fixed Bessel functional $\ell:\Pi\to \C$ running over all $v\in \Pi$.
Here $K_H$ is the maximal compact subgroup of $H\subseteq Gl(2)^2$ that stabilizes the lattice $({\frak o}\oplus {\frak o})^2$. Furthermore 
%$\Lambda(t_1,t_2)=\rho(t_1)\rho^\divideontimes(t_2)$, and 
$\Phi$ is an arbitrary function in $C_c^\infty((k\oplus k)^2)$. As in \cite{Danisman}, prop 2.5 
one can also consider the regularizing $L$-factor of all 
$$ Z_{reg}^{PS}(s,W_v,\mu) =  \int_{k^*} \ell(\Pi(\begin{smallmatrix} xE & 0 \cr 0 & E   \end{smallmatrix}) v)\ \mu(x)\vert x\vert^{s-\frac{3}{2}} d^*x \ ,$$
again for $W_v$ running over functions in the Bessel model. This simplified $L$-factor is
the Euler factor $L(\mu \otimes M,s)$ of the $TS$-module
$M=\nu^{-3/2} \otimes (\beta_\rho(\Pi))/(\beta_\rho(\Pi)^S)$ attached to the Bessel model $M$
of $\Pi$ in \cite{RW}. The factors $L(\mu \otimes M,s)$ were computed
in [RW]. There also occurs an additional subregular and an additional exceptional local $L$-factor
in the Piateskii-Shapiro local $L$-factor $L^{PS}(s,\Pi,\Lambda)=L^{PS}_{ex}(s,\Pi\Lambda)L^{PS}_{reg}(s,\Pi,\Lambda)$ where
$L^{PS}_{reg}(s,\Pi,\Lambda)$ is the regularizing factor  of all zeta integrals
$Z^{PS}(s,W_v,\Phi,\mu)$ for $\Phi$ with the property $\Phi(0,0)=0$. The exceptional
$L$-factor $L^{PS}_{ex}(s,\Pi,\Lambda)$ then takes care of additional poles that arise
if we also allow   
$\Phi(0,0)\neq 0$. Hence to compute $L^{PS}_{ex}(s,\Pi,\Lambda)$, it suffices to consider for 
$\Phi$ the characteristic function $\Phi_0$ of the sublattice ${\frak o}^2\oplus {\frak o}^2  \subseteq k^2\oplus k^2$. 
Since $\Phi_0$ is $K_H$-invariant, one obtains 
$$ \int_{k^*\times k^*} \Phi_0((0,t)) \Lambda(t) (\mu\vert^{s+\frac{1}{2}})( t \overline t) d^*t = L(\mu\rho, s+\frac{1}{2}) 
L(\mu\rho^\divideontimes, s+\frac{1}{2})
   \  $$ if $\mu\rho$ and $\mu\rho^\divideontimes$ are unramified and otherwise the integral  is zero,
 so the zeta integral $ Z^{PS}(s,W_v,\Phi_0,\mu)$ simplifies and becomes 
$$ Z^{PS}(s,W_v,\Phi_0,\mu) =L(\mu\rho, s+\frac{1}{2}) 
L(\mu\rho^\divideontimes, s+\frac{1}{2})  Z_{reg}^{PS}(s,W_v^{av},\mu)$$
for the averages
$ W_v^{av}(g) =
\int_{K_H} \mu(det(k))\ell(\Pi((\begin{smallmatrix} xE & 0 \cr 0 & E   \end{smallmatrix})k) v)dk $ under $K_H$. Now assume $\mu=1$, which is achieved by passing from $\Pi$ to $\mu\otimes\Pi$. 
Since $\mu\rho$ and $\mu\rho^\divideontimes$ can be assumed to be unramified, $\mu\otimes\Pi$
is a twist $\sigma\otimes\Pi_{norm}$ of a normalized representation $\Pi_{norm}$ by an unramified character $\sigma=\mu\rho$. Since twisting by an unramified character is essentially the same as shifting the variable $s$,
this implies that from now on we may assume without restriction of generality that $\Pi$ is normalized and that $\sigma=1$. Then the $W_v^{av}(g)$ are nothing but the $ W_v(g)$ attached to the vectors $v\in  \Pi^{K_H}$, i.e. the vectors $v$ in the representation space of $\Pi$ that are invariant under the compact subgroup $K_H$. 
So obviously, $L^{PS}_{ex}(s,\Pi,\Lambda)$ is a divisor of  
$L(\rho, s+\frac{1}{2}) 
L(\rho^\divideontimes, s+\frac{1}{2})$.
\end{proof}
By the last remarks we see that  from now on we may tacitly assume that $\Pi$ is normalized and $\mu=1$ and we have seen

\begin{lem}\label{spherical} $L^{PS}_{ex}(s,\Pi,\Lambda)$ divides  the regularizing $L$-factor
${\cal L}(s)$ of all the functions $L(\rho, s+\frac{1}{2}) L(\rho^\divideontimes, s+\frac{1}{2})  Z_{reg}^{PS}(s,W_v,1)$,
where $v$ runs over the $K_H$-spherical vectors of the representation $\Pi$.
\end{lem}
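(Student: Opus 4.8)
The plan is to read the statement off the reductions and the computation already performed in the proof of lemma \ref{C19}. Recall from there that, after replacing $\Pi$ by $\mu\otimes\Pi$, one may assume $\Pi=\Pi_{norm}$ and $\mu=1$, and that by definition $L^{PS}(s,\Pi,\Lambda)$ is the regularizing $L$-factor of the fractional $\C[q^{s},q^{-s}]$-ideal spanned by all the local zeta integrals $Z^{PS}(s,W_v,\Phi,1)$ with $v\in\Pi$ and $\Phi\in C_c^\infty((k\oplus k)^2)$, whereas $L^{PS}_{reg}(s,\Pi,\Lambda)$ is the regularizing factor of the subideal spanned by those $Z^{PS}(s,W_v,\Phi,1)$ with $\Phi(0,0)=0$, and $L^{PS}_{ex}(s,\Pi,\Lambda)=L^{PS}(s,\Pi,\Lambda)/L^{PS}_{reg}(s,\Pi,\Lambda)$.

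First I would reduce to the single Schwartz function $\Phi_0$, the characteristic function of ${\frak o}^2\oplus{\frak o}^2$. For an arbitrary $\Phi$ one writes $\Phi=\Phi(0,0)\,\Phi_0+\Phi'$ with $\Phi':=\Phi-\Phi(0,0)\Phi_0$, so that $\Phi'(0,0)=0$; by $\C$-linearity of $Z^{PS}(s,W_v,\,\cdot\,,1)$ in its third argument one obtains $Z^{PS}(s,W_v,\Phi,1)=\Phi(0,0)\,Z^{PS}(s,W_v,\Phi_0,1)+Z^{PS}(s,W_v,\Phi',1)$, and the second summand already lies in the ideal regularized by $L^{PS}_{reg}(s,\Pi,\Lambda)$. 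Hence the ideal spanned by all $Z^{PS}(s,W_v,\Phi,1)$ is the sum of that ideal and the ideal ${\cal I}_0$ spanned by the integrals $Z^{PS}(s,W_v,\Phi_0,1)$, $v\in\Pi$. Now I would identify ${\cal I}_0$: since $\Phi_0$ is $K_H$-invariant, the computation in the proof of lemma \ref{C19} gives $Z^{PS}(s,W_v,\Phi_0,1)=L(\rho,s+\tfrac{1}{2})\,L(\rho^\divideontimes,s+\tfrac{1}{2})\,Z_{reg}^{PS}(s,W_v^{av},1)$, where $W_v^{av}$ is the Bessel function of $v^{av}=\int_{K_H}\Pi(k)v\,dk\in\Pi^{K_H}$. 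Since every $K_H$-spherical vector equals its own $K_H$-average up to the volume of $K_H$, the projector $v\mapsto v^{av}$ is onto $\Pi^{K_H}$, so ${\cal I}_0$ is exactly the fractional ideal spanned by $L(\rho,s+\tfrac{1}{2})L(\rho^\divideontimes,s+\tfrac{1}{2})\,Z_{reg}^{PS}(s,W_v,1)$ with $v$ running over the $K_H$-spherical vectors, whose regularizing factor is by definition ${\cal L}(s)$.

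To conclude I would invoke elementary ideal arithmetic in the principal ideal domain $\C[q^{s},q^{-s}]$: writing $L^{PS}(s,\Pi,\Lambda)=1/Q$, $L^{PS}_{reg}(s,\Pi,\Lambda)=1/Q_{reg}$ and ${\cal L}(s)=1/Q_{\cal L}$ with all three polynomials normalized to the value $1$ at $q^{-s}=0$, the ideal identity of the previous paragraph reads $Q=\mathrm{lcm}(Q_{reg},Q_{\cal L})$, so that $L^{PS}_{ex}(s,\Pi,\Lambda)=Q_{reg}/Q=1/\bigl(Q_{\cal L}/\gcd(Q_{reg},Q_{\cal L})\bigr)$, which divides ${\cal L}(s)=1/Q_{\cal L}$. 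I do not expect a genuine obstacle here: the only substantive ingredient, the identity $Z^{PS}(s,W_v,\Phi_0,1)=L(\rho,s+\tfrac{1}{2})L(\rho^\divideontimes,s+\tfrac{1}{2})Z_{reg}^{PS}(s,W_v^{av},1)$ — and in particular the fact that the left side depends on $v$ only through the $K_H$-average — is exactly the manipulation already carried out in the proof of lemma \ref{C19}; what remains, the bookkeeping with fractional ideals and the surjectivity of $K_H$-averaging onto the spherical subspace, is routine.
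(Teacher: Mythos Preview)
Your argument is correct and follows the same route as the paper, which simply records Lemma~\ref{spherical} as an immediate consequence of the computation carried out in the proof of Lemma~\ref{C19} (the sentence ``we have seen'' right before the statement). You have merely made explicit the fractional-ideal bookkeeping that the paper leaves implicit: the decomposition $\Phi=\Phi(0,0)\Phi_0+\Phi'$, the identification of ${\cal I}_0$ via the $K_H$-averaging, and the divisibility $Q_{ex}\mid Q_{\cal L}$ coming from $Q\mid\mathrm{lcm}(Q_{reg},Q_{\cal L})$ and $Q_{reg}\mid Q$.
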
 

\noindent
%\subsection
\textbf{$H$-period functionals arising from poles}. 
As already observed by Piatetskii-Shapiro \cite{PS-L-Factor_GSp4} thm.4.2, see also \cite{Danisman3}, a pole of the exceptional $L$-factor $L^{PS}_{ex}(s,\Pi,\Lambda)$ at some point $s_0$ gives rise to
 a nontrivial $H$-period functional
$$   \ell:  \Pi  \longrightarrow  \C  $$
so that for  $h \in H$  (recall $\Pi$ is normalized and $\mu=1$)
$$  \ell( \Pi(h) v) \ = \  \vert det(h) \vert^{-(s_0+\frac{1}{2})} \cdot \ell( v) $$
holds,  where $det(h)$ for $h=(g_1,g_2)$ is $\lambda_G(h)= \det(g_1)=\det(g_2)$. 
In \cite{PS-L-Factor_GSp4} this is stated for simple poles only.
But $L(\rho, s+\frac{1}{2}) 
L(\rho^\divideontimes, s+\frac{1}{2})$ a priori has a double pole $s_0$ for $\rho=\rho^\divideontimes$,
hence the following remarks are in order: 
If $L^{PS}_{ex}(s,\Pi,\Lambda)$ has a pole at $s_0$, then independent of the pole-order at $s_0$ 
$$  \ell_0(v) \ := \ \lim_{s\to s_0} \frac{Z^{PS}(s,W_v,\Phi,1)}{L^{PS}(s,\Pi,\Lambda)} \quad , \quad \Phi=\Phi_0 $$
is a well defined nontrivial functional $\ell_0: \Pi \to \C$ (this is clear from the definition of $L^{PS}(s,\Pi,\Lambda)$ as a regularizing factor), and $\ell_0$ furthermore defines an $H$-period functional. Since the functional $\ell_0$ does not change if we replace $\Phi_0$ by any Schwartz function $\Phi$ with $\Phi(0)=1$, the latter assertion easily follows from the fact that the origin of $(k\oplus k)^2$ is a fixed point of $H$. Indeed, $L_{reg}^{PS}(s,\Pi,\Lambda)$ 
by definition regularizes all $Z^{PS}(s,W_v,\Phi,1)$ with $\Phi(0)=0$, hence in the limit 
$s\to s_0$ a pole of $L^{PS}_{ex}(s,\Pi,\Lambda)$ at $s_0$ annihilates the contributions of all $Z^{PS}(s,W_v,\Phi,1)$ with $\Phi(0)=0$.  
%Hence $\Lambda=1$ for normalized $\Pi$ and $\rho=1$.
%
%We have seen, such exceptional poles can only exist if $\Pi$ admits a nontrivial
%$H$-period functional that also gives the Bessel functional. In theorem
%we have discussed the existence of such $H$-period functionals. If $\Pi$
%is assumed to be normalized by a suitable character twist, then this implies 
%$\rho=1$ and $\omega=1$.
%
%
%suppose that $\Pi$ is normalized and $\rho$
%is the trivial character

\bigskip%\noindent
Since $s_0$ is a pole of $L(\rho,s + \frac{1}{2})
L(\rho^\divideontimes,s + \frac{1}{2})$, hence $  \rho\nu^{s_0+\frac{1}{2}} = 1$  or $ \rho^\divideontimes \nu^{s_0+\frac{1}{2}} = 1$.

\begin{prop} \label{pole->H}
If for a split Bessel model with Bessel character $\Lambda$ the local $L$-function $L^{PS}(s,\Pi,\Lambda)$ admits
an exceptional factor $L^{PS}_{ex}(s,\Pi,\Lambda)\neq 1$, then $\rho=\rho^\divideontimes$ and there exists a nontrivial $H$-period functional $\ell$ such that %with respect to $\rho$ 
$   \ell: \Pi  \longrightarrow  \C %\quad , \quad  \ell^\divideontimes: \Pi  \longrightarrow  \C 
$
such that $\ell(\Pi(h)v) = \rho(\lambda(h)) \ell(v)$  holds %for all $h\in H$ and all $v\in V$,
where $\rho, \rho^\divideontimes$ are defined by $\Lambda(t_1,t_2)=\rho(t_1)\rho^\divideontimes(t_2)$. Furthermore $\Pi$ is one of the extended Saito-Kurokawa cases \nosf{IIb, Vb, VIc, XIb} and  \nosf{VId} and is obtained from a normalized representation by an unramified character twist. Without restriction of generality $\Pi$ is normalized, and then
$\rho=1$, i.e. $\Lambda=1$ and $\omega=1$.  \end{prop}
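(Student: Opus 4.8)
The plan is to feed the construction of the auxiliary functional $\ell_0$ recorded immediately before the proposition into the classification Theorem~\ref{maintheorem}, and then to pin down the characters by means of the divisibility statement of Lemma~\ref{C19}. Since a nontrivial regularizing $L$-factor is a finite product of factors $(1-\alpha q^{-s})^{-1}$, the hypothesis $L^{PS}_{ex}(s,\Pi,\Lambda)\neq 1$ yields a pole at some point $s_0$. By the remarks preceding the proposition the corresponding residue is a nonzero $H$-period functional $\ell_0\colon\Pi\to\C$ with $\ell_0(\Pi(h)v)=|\det(h)|^{-(s_0+\frac12)}\ell_0(v)$, i.e. a nonzero element of $Hom_H(\Pi,\rho_0\circ\lambda_G)$ for the unramified character $\rho_0:=\nu^{-(s_0+\frac12)}$. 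Theorem~\ref{maintheorem} then forces $\Pi$ to be one-dimensional or of extended Saito-Kurokawa type, and forces $\rho_0=\rho(\Pi)$.

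To identify $\rho$ and $\rho^\divideontimes$ I would apply the central character condition to $\ell_0$, which gives $\omega=\rho_0^2$, while $\omega=\rho\rho^\divideontimes$ holds by the very definition $\rho^\divideontimes=\omega/\rho$. By Lemma~\ref{C19} the exceptional factor divides $L(\rho,s+\frac12)L(\rho^\divideontimes,s+\frac12)$, so $s_0$ is a pole of this product; since the Tate factor of a ramified character is trivial, either $\rho=\nu^{-(s_0+\frac12)}=\rho_0$ or $\rho^\divideontimes=\rho_0$. In either case the identity $\rho\rho^\divideontimes=\rho_0^2$ forces the remaining character to equal $\rho_0$ as well, so $\rho=\rho^\divideontimes=\rho_0$ is unramified, $\Lambda=\rho_0\circ\lambda_G$, and $\ell_0$ is the asserted $H$-period functional with respect to the character $\rho=\rho^\divideontimes$ defining $\Lambda$.

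It remains to pass to the normalized representative and to exclude the one-dimensional case. Each $\Pi$ arising here has nonzero Siegel Jacquet module (as already used in the proof of Lemma~\ref{B6}), so $\Pi=\sigma\otimes\Pi_{norm}$ for a suitable character $\sigma$; by Proposition~\ref{mainprop} we have $\rho(\Pi)=\sigma\,\rho(\Pi_{norm})=\sigma$, whence $\sigma=\rho_0$ is unramified and $\Pi$ is an unramified twist of a normalized representation. Since twisting by an unramified character merely shifts the variable $s$ (compare the proof of Lemma~\ref{C19}), we may and do assume $\Pi=\Pi_{norm}$, so $\sigma=1$, $\rho=\rho^\divideontimes=1$, $\Lambda=1$ and $\omega=\rho^2=1$. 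Finally, if $\Pi$ were the normalized one-dimensional representation (case \nosf{IVd}, i.e. $\Pi$ trivial), then $W_v(g)=\ell(\Pi(g)v)$ would be constant, so that every $Z^{PS}_{reg}(s,W_v,1)$ is a multiple of $\int_{k^*}|x|^{s-\frac32}d^*x$, which carries no poles; by Lemma~\ref{spherical} this would give $L^{PS}_{ex}(s,\Pi,\Lambda)=1$, contradicting the assumption. Hence $\Pi$ is one of \nosf{IIb}, \nosf{Vb}, \nosf{VIc}, \nosf{XIb}, \nosf{VId} (with \nosf{Vc} reduced to its normalized form \nosf{Vb}).

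The genuinely substantive input — that $H$-period functionals exist only for extended Saito-Kurokawa and one-dimensional $\Pi$, and that $\rho(\Pi_{norm})=1$ — is supplied by Theorem~\ref{maintheorem} and Proposition~\ref{mainprop}. Within the present argument the only delicate points are forcing the pole of $L(s+\frac12,\Lambda)$ to lie simultaneously in the $\rho$- and $\rho^\divideontimes$-factors, and the hands-on exclusion of the one-dimensional case; the latter is the step I expect to be the main obstacle.
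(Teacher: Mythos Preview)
Your argument follows the same route as the paper: extract an $H$-period functional from an exceptional pole via the residue construction recorded just before the proposition, identify the character by the central character condition together with Lemma~\ref{C19}, and then invoke Theorem~\ref{maintheorem}. The paper's proof is just a terse version of what you wrote.

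One small point: your hands-on exclusion of the one-dimensional case is unnecessary and, as stated, not quite sound. The integral $\int_{k^*}|x|^{s-3/2}\,d^*x$ does not converge for any $s$, so ``carries no poles'' is not the right formulation. The clean way out is simply that the proposition presupposes a split Bessel model for $\Pi$; by the classification of Bessel functionals (see \cite{Roberts-Schmidt_Bessel}), the one-dimensional representations of type \nosf{IVd} admit no Bessel functional, hence no Piatetskii-Shapiro $L$-factor is attached to them and they are excluded by hypothesis. With that replacement your proof is complete.
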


\begin{proof}
As already explained, for $\rho$ or $\rho^\divideontimes$ such an $H$-period functional
does exist in the presence of a nontrivial exceptional $L$-factor for the Bessel model attached to $\Lambda$.
If such a $H$-period functional exists, $\rho^\divideontimes = \rho$ follows
from the central character condition and $\rho$ must be unramified. So the proposition
follows from theorem \ref{maintheorem}.  
%Then $\ell^\divideontimes(v) = \ell(\Pi(\kappa) v)$ allows to define the other functional vice versa. 
\end{proof}

\begin{lem} \label{disjoint} For split Bessel models  
exceptional poles $s_0$ of $L_{ex}(s,\Pi,\Lambda)$
are disjoint from the set of poles of $L(\mu \otimes M,s)$.  
If  $\Pi$ is normalized, then the exceptional poles are located at $s_0=-1/2$.  
\end{lem}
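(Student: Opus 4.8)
The plan is to extract both statements from Proposition~\ref{pole->H} (and the discussion preceding it) together with the already-known shape of $L(\mu\otimes M,s)$ from \cite{RW}. If $L^{PS}_{ex}(s,\Pi,\Lambda)=1$ there is nothing to prove, so assume $L^{PS}_{ex}(s,\Pi,\Lambda)\neq 1$. The twist relation $L^{PS}(s,\Pi,\mu,\Lambda)=L^{PS}(s,\mu\otimes\Pi,\mu\otimes\Lambda)$ recalled above translates $L^{PS}_{ex}$ and $L(\mu\otimes M,s)$ simultaneously, so the assertion is insensitive to character twists of $\Pi$; by Proposition~\ref{pole->H} we may therefore assume that $\Pi$ is normalized --- hence one of \nosf{IIb, Vb, VIc, XIb, VId} with $\rho=\rho^\divideontimes=1$ and $\omega=1$ --- and that the relevant module is $M=1\otimes M$.

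\emph{Location of the exceptional poles.} Let $s_0$ be a pole of $L^{PS}_{ex}(s,\Pi,\Lambda)$. By the discussion preceding Proposition~\ref{pole->H}, the limit $\ell_0(v)=\lim_{s\to s_0}Z^{PS}(s,W_v,\Phi_0,1)/L^{PS}(s,\Pi,\Lambda)$ is a well-defined nonzero $H$-period functional with $\ell_0(\Pi(h)v)=|\lambda_G(h)|^{-(s_0+1/2)}\ell_0(v)$, i.e.\ an $H$-period functional for the character $\rho_{s_0}=\nu^{-(s_0+1/2)}$ of $k^*$. By the uniqueness part of Theorem~\ref{maintheorem} this forces $\rho_{s_0}=\rho(\Pi)$, and $\rho(\Pi)=1$ because $\Pi$ is normalized (Proposition~\ref{mainprop}); equivalently, $s_0$ is a pole of $L(\rho,s+\tfrac12)L(\rho^\divideontimes,s+\tfrac12)=L(1,s+\tfrac12)^2$. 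In either formulation $\nu^{s_0+1/2}=1$, which in the standard parametrisation of local $L$-factors by rational functions of $q^{-s}$ means $s_0=-\tfrac12$. This is the second assertion.

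\emph{Disjointness.} In view of the reduction it now suffices to prove that $s=-\tfrac12$ is not a pole of $L(M,s)=L(1\otimes M,s)$ for the five normalized types. This is an inspection of the factors $L(\mu\otimes M,s)$ computed in \cite{RW} (displayed, in the normalized cases, in the last column of Table~1): in each case $L(M,s)$ is a product of Tate factors $L(\chi,s)$ whose characters are, after normalization, $\nu^{-1/2}$, or (case \nosf{Vb}) $\nu^{1/2}\xi$ with $\xi\ne 1$ quadratic, or (case \nosf{IIb}) $\nu^{-1/2}\chi\sigma$, $\sigma$, $\chi^2\sigma$; the defining inequalities of the corresponding type ($\xi\neq 1$ for \nosf{Vb}, $\chi^2\neq\nu^{\pm1}$ for \nosf{IIb}) prevent any of these characters from being $\nu^{1/2}$. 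Hence $L(M,s)$ is holomorphic at $s=-\tfrac12$, so its pole set is disjoint from the single exceptional pole $s_0=-\tfrac12$; by the twist reduction the two pole sets are disjoint for arbitrary $\Pi$ and $\mu$.

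The only ingredient here that is not pure bookkeeping is the identification $\rho_{s_0}=\rho(\Pi)$: this is precisely the uniqueness of the $H$-period character supplied by Theorem~\ref{maintheorem} (and packaged, for representations admitting a nontrivial $L^{PS}_{ex}$, into Proposition~\ref{pole->H}). The remaining point, that $\nu^{1/2}$ never appears among the Tate constituents of the normalized module $M$, is routine once the list of $L(\mu\otimes M,s)$ from \cite{RW} is quoted; the only case requiring a moment's thought is \nosf{IIb}, where one invokes $\chi^2\neq\nu^{\pm1}$.
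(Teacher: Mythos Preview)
Your proof is correct and follows essentially the same route as the paper: reduce by a character twist to the normalized case via Proposition~\ref{pole->H}, pin down $s_0=-\tfrac12$, and then inspect the known Tate factors of $L(\mu\otimes M,s)$ using the defining inequalities ($\chi^2\neq\nu^{\pm1}$ for \nosf{IIb}, $\xi\neq 1$ quadratic for \nosf{Vb}). The only cosmetic difference is that you locate $s_0$ by invoking the uniqueness clause $\rho_{s_0}=\rho(\Pi)=1$ of Theorem~\ref{maintheorem}, whereas the paper argues directly that $\omega=1$ forces $L(\rho,s+\tfrac12)L(\rho^\divideontimes,s+\tfrac12)=L(\nu^{1/2},s)^2$; and you cite Table~1 of the present paper where the paper cites \cite{RW}, table~5---these record the same data. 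One small point of presentation: in your disjointness paragraph the characters you list for \nosf{IIb} are the unnormalized ones $\nu^{-1/2}\chi\sigma,\sigma,\chi^2\sigma$, so to make the exclusion by $\chi^2\neq\nu^{\pm1}$ fully transparent you should substitute the normalization $\sigma=\chi^{-1}$ (so that the last two become $\chi^{\mp1}$), as the paper implicitly does.
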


\begin{proof}
%; see also \cite{PS-L-Factor_GSp4}. 
To show this we may assume
that $\Pi=\Pi_{norm}$ is normalized. Then, by exchanging $\rho$  and 
$\rho^\divideontimes$ if necessary, we can assume $\rho=1$ for the split Bessel character $\rho$ in the situation of the last
proposition. So any exceptional poles of $L_{ex}(s,\Pi,\Lambda)$
are poles of $L(\nu^{1/2},s)L(\omega \nu^{1/2},s)$. If an exceptional pole exists, 
we have seen that an $H$-period functional for $\Pi$ exists. Hence $\omega=1$ must hold (lemma \ref{B6}).
Therefore $L(\nu^{1/2},s)L(\omega \nu^{1/2},s)= L(\nu^{1/2},s)^2$.
So $s_0=-1/2$ can be the only exceptional pole in the normalized case. 
The disjointness of $s_0$ to the set of poles of $L(\mu \otimes M,s)$, that is claimed in the lemma, now immediately follows from [RW], table 5 and $\chi_1^2\neq \nu^{\pm 1}$ and $\chi_0^2=1$ ([RS], table 2) where the regular poles are listed for normalized $\Pi$. %We remark, a similar argument also can be applied for anisotropic Bessel models as observed by Danisman.
\end{proof}

\noindent
%\subsection
\textbf{$K_H$-spherical vectors}. Suppose $L_{ex}^{PS}(s,\Pi,\Lambda)\neq 1$.
Then  the $L$-function has an exceptional pole and by prop. \ref{pole->H}  the irreducible representation $\Pi$ is  of type  \nosf{IIb, Vb, VIc, XIb, VId}. Without restriction of generality we can assume that $\Pi$ is normalized and $\rho=\omega=1$.  This implies that $\Pi$ is paramodular \cite{Roberts-Schmidt}, so $\Pi$ contains a new vector $v_{new}$ which is unique up to a constant.
% which is unique up to a nonvanishing scalar.

\bigskip%\noindent
By lemma \ref{disjoint}, $s_0=-1/2$  
is not a pole of $L(\mu \otimes M,s)$.  By lemma \ref{spherical},  $Z^{PS}_{ex}(s,\Pi,\Lambda)$ 
divides the (auxiliary) regularizing $L$-factor ${\cal L}(s)$ of $$ L(s+\frac{1}{2})^2 \cdot  Z_{reg}^{PS}(s,W_v,1) $$ for all the Bessel functions $W_v$ of the
$K_H$-spherical vectors $v$ in $\Pi$. This means,
we have to compute $Z_{reg}^{PS}(s,W_v,1)$ the Bessel functions $W_v$ of the $K_H$-spherical vectors $v$
in $\Pi$. The Bessel functions $W_v$ are determined by the images of the vectors $v$ of $\Pi$
in the Bessel model $\widetilde \Pi$ of $\Pi$, i.e. the images of $v$ under the projection 
$\Pi \twoheadrightarrow \widetilde \Pi $.
As in \cite{RW}, 4.2 this projection factorizes over the quotient 
$$\eta: \Pi \twoheadrightarrow \overline\Pi \ $$
%$K_H$ contains the maximal compact subgroup
%$K=Gl(2,{\frak o})$ of $Gl(2)$, hence
%$ p(\Pi^{K_H})  \subseteq \overline\Pi^K$.
which is a $Gl(2)$-module.
The center $Z$ of $Gl(2) \subseteq G$ consists of elements $$z_\lambda=[\begin{smallmatrix} \lambda & 0 \cr 0 & \lambda
\end{smallmatrix}] = x_\lambda \cdot \tilde t$$ for $\tilde t = diag(\lambda, 1, 1, \lambda)\in \widetilde T$
and $x_\lambda\in T$. We define  $\tau:=z_\pi^{-1}$ and  consider the unramified central character
$  \mu: Z \to \C^* $ defined by $\mu(\tau) = q^2$, i.e.
$$  \mu = \nu^2   \ .$$ 
Since the action of $\tau$ and $x_\pi^{-1} \in T$  on the Bessel model $\widetilde\Pi=\beta_\rho(\Pi)$
 coincides for the relevant Bessel character $\rho=1$,  for all $P\in \C[\tau,\tau^{-1}]$ 
we obtain  $$W_{\Pi(\tau) v}(x) = W_v(x/\pi)\ .$$
Therefore  
$ Z_{reg}^{PS}(s,W_{P(\Pi(\tau))v},1) = P(\nu(\pi)^{s-3/2}) Z_{reg}^{PS}(s,W_v,1)$. 
To study exceptional poles we need  to compute the order of  $Z_{reg}^{PS}(s,W_{P(\Pi(\tau))v},1)$ at $s_0=-1/2$. Evaluation at $s=s_0$ amounts to replace $P(\tau)$ by $P(q^2)$. 
From lemma \ref{KH} in the next section we obtain the following description of $\eta(\Pi^{K_H})$. 
%in terms of the new vector $v_{new}$ and $\tau$:

\begin{lem}\label{KH-sp} Suppose the level $N$ of the new vector $v_{new}$ of $\Pi$ 
is $N=2m-\varepsilon$ %${\frak p}^{2m-\varepsilon}$
 for $\varepsilon\in \{0,1\}$, $m\in\mathbb \N$.
Then
%$p(\Pi^{K_H}) = \C[\tau]\cdot \tau^{m} p(v_{new})$ resp.
$\eta(\Pi^{K_H}) = \C[\tau^{-1}]\, \tau^{-m} (q+\tau)^{\varepsilon} \, \eta(v_{new})$.
\end{lem}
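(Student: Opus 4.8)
The goal is to describe $\eta(\Pi^{K_H})$ as a module over $\C[\tau^{-1}]$ inside $\overline\Pi$, and eventually inside the Bessel model $\widetilde\Pi$. The plan is to reduce everything to an explicit computation with paramodular vectors, using the structure of $\overline\Pi$ developed above together with the description of $\Pi^{K_H}$ in terms of paramodular vectors at all levels. First I would recall that, since $\Pi$ is normalized with $\omega=\rho=1$ and of extended Saito-Kurokawa type, $\Pi$ is paramodular, so the space $\Pi^{K_N}$ of vectors fixed by the paramodular group of level $N'$ is nonzero precisely for $N'\ge N$, where $N$ is the level of the new vector $v_{new}$ (by \cite{Roberts-Schmidt}). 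The key point is that the $K_H$-fixed vectors in $\Pi$ can be built from the paramodular vectors of all levels $N'\ge N$ by an averaging/level-raising procedure; this is exactly what lemma \ref{KH} in the next section is supposed to supply, and I would invoke it to identify the image $\eta(\Pi^{K_H})$ inside $\overline\Pi$ with a concrete $\C[\tau^{-1}]$-submodule generated by $\eta(v_{new})$ after applying the level-raising operators.

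Next I would track how the operator $\tau = z_\pi^{-1}$ acts. Since $z_\pi = x_\pi\tilde t$ with $\tilde t\in\widetilde T$ and the action of $\tau$ and of $x_\pi^{-1}$ on $\widetilde\Pi=\beta_\rho(\Pi)$ coincides for $\rho=1$ (as noted just before the statement), multiplication by powers of $\tau^{-1}$ corresponds to the level-raising steps, producing, from $v_{new}$ at level $N$, vectors at level $N+2,N+4,\dots$. The parity constraint $N=2m-\varepsilon$ then dictates the shape of the generator: at the ``bottom'' one needs the correction factor $(q+\tau)^{\varepsilon}$ and the normalizing shift $\tau^{-m}$, which come from writing the appropriate averaging over $K_H/(K_H\cap K_{N})$ and comparing it with the paramodular level-raising operators of Roberts–Schmidt. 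Concretely, I expect the two classical level-raising maps $\theta$ and $\theta'$ (or their images under $\eta$) to act on $\eta(v_{new})$ as multiplication by $\tau^{-1}$ and by $(q+\tau)$ respectively, up to units, and the assertion $\eta(\Pi^{K_H}) = \C[\tau^{-1}]\,\tau^{-m}(q+\tau)^{\varepsilon}\,\eta(v_{new})$ is then a bookkeeping statement about which combinations of these operators land in the $K_H$-invariants.

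The main obstacle, as I see it, is the precise matching between the group-theoretic averaging operator $v\mapsto \int_{K_H}\mu(\det k)\,\Pi(k)v\,dk$ restricted to paramodular vectors of each level, and the purely representation-theoretic operators $\tau^{-1}$ and $q+\tau$ on the quotient $\overline\Pi$; in particular one must control exactly how the coset decomposition of $K_H$ relative to the paramodular groups intersects with the subgroup $\langle s_a\rangle$ that one quotients by to form $\overline\Pi$. This is where the explicit double-coset computation underlying lemma \ref{KH} is needed, and I would defer the combinatorial heart of it to that lemma. Given lemma \ref{KH}, the remaining work here is essentially a translation: one checks that the $\C[\tau^{-1}]$-module generated by $\eta(\Pi^{K_H})$ is cyclic, identifies its generator with $\tau^{-m}(q+\tau)^{\varepsilon}\eta(v_{new})$ by comparing lowest ``$\tau$-degrees'' and the parity of $N$, and verifies that no vectors of lower degree appear (this last exclusion uses that $v_{new}$ is the new vector, i.e.\ that there are no paramodular vectors below level $N$).
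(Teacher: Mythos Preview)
Your proposal is correct and follows the same route as the paper: both obtain the lemma directly from lemma~\ref{KH}, translating the paramodular level-raising operators into multiplication operators on $\overline\Pi$ via $\eta$, and then reading off the generator $\tau^{-m}(q+\tau)^{\varepsilon}\eta(v_{new})$ from the description of $W(m)$.

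One small correction to your expectations is worth noting. You anticipate that $\theta$ and $\theta'$ act under $\eta$ as $\tau^{-1}$ and $(q+\tau)$ respectively. In fact, for the extended Saito--Kurokawa representations one has $\theta=\theta'$ (\cite{Roberts-Schmidt}, prop.~5.5.13), and since $\theta'=\tau+\sum_a s_{a\pi^{-N-1}}$ with $\eta(s_a v)=\eta(v)$, one gets $\eta(\theta' v)=(\tau+q)\eta(v)$; this is what produces the factor $(q+\tau)^{\varepsilon}$ coming from the single application of $\theta'$ needed to pass from level $N$ to the nearest even level $2m$. The $\tau^{-1}$ does \emph{not} come from $\theta$. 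Rather, the relation $W(n)=\tau^{-n}V(2n)$ and property~4 of lemma~\ref{KH} give $W(n+1)=W(n)+\Sigma(W(n))$ for an explicit operator $\Sigma=\theta'_0\tau^{-1}\theta'_1$, and one computes $\eta(\Sigma v)=q^2\tau^{-1}\eta(v)$. This is the source of the $\C[\tau^{-1}]$-module structure. With this adjustment your argument is exactly the paper's.
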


%$ Z_{reg}^{PS}(s,W_{\eta^n(v)},1)$ and  $q^{n(s-3/2)} Z_{reg}^{PS}(s,W_v,1)$ coincide. Hence
%$$ Z_{reg}^{PS}(s,W_{P(\eta)v},1) = P(\nu(\pi)^{s-3/2}) Z_{reg}^{PS}(s,W_v,1)\ .$$ 
%As we are interested in the order of  $Z_{reg}^{PS}(s,W_{P(\eta)v},1)$
%at $s_0=-1/2$, this amounts to replace $P(\eta)$ by $P(q^2)$.
Since $\mu(\tau)^{-m} (q+\mu(\tau))^{\varepsilon}= q^{-2m}(q+q^2)^{\pm \varepsilon}\neq 0$,
lemma \ref{KH-sp} implies that  ${\cal L}(s)$ is completely determined
 by the order of $Z_{reg}^{PS}(s,W_{v_{new}},1)$ at $s=s_0$ for the single Bessel function $W_{v_{new}}$ attached to a new vector $v_{new}$. In the definition of ${\cal L}(s)$, instead of considering all
 $Z_{reg}^{PS}(s,W_v,1)$ for arbitrary $v\in \Pi^{K_H}$ it suffices to compute the single
 integral $Z_{reg}^{PS}(s,W_{v_{new}},1)$ at $s=s_0$ for 
$v=v_{new}$. This implies
 
\begin{cor} \label{Hilfsschritt}
Suppose $\Pi$ is normalized and $L_{ex}^{PS}(s,\Pi,\Lambda)$ has a pole.
%of type {\bf IIb, Vbc, VIc, XIb, VId}.  
Then the auxiliary $L$-factor ${\cal L}(s)$ is $L(s+\frac{1}{2})^k$ for $k=2,1,0$ %respectively
depending on whether $ Z_{reg}^{PS}(s,W_{v_{new}},1)$ 
has a zero of order $0$, $1$ or $\geq 2$ at $s_0=-\frac{1}{2}$ for the paramodular
new vector $v_{new}$.
\end{cor}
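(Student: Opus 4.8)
The plan is to trace through the chain of reductions already assembled in this section and see that Corollary~\ref{Hilfsschritt} is essentially a bookkeeping consequence of Lemma~\ref{spherical}, Lemma~\ref{disjoint} and Lemma~\ref{KH-sp}. First I would recall from Lemma~\ref{spherical} that $L^{PS}_{ex}(s,\Pi,\Lambda)$ divides the regularizing $L$-factor ${\cal L}(s)$ of the family of functions $L(s+\tfrac12)^2\, Z^{PS}_{reg}(s,W_v,1)$ with $v$ running over the $K_H$-spherical vectors, where I have already used (Lemma~\ref{disjoint}, Prop.~\ref{pole->H}) that in the normalized situation $\rho=\rho^\divideontimes=1$, so $L(\rho,s+\tfrac12)L(\rho^\divideontimes,s+\tfrac12)=L(s+\tfrac12)^2$. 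Since each $W_v$ for $v\in\Pi^{K_H}$ is determined by $\eta(v)\in\overline\Pi$, and by Lemma~\ref{KH-sp} we have $\eta(\Pi^{K_H})=\C[\tau^{-1}]\,\tau^{-m}(q+\tau)^{\varepsilon}\,\eta(v_{new})$, every such $W_v$ is of the form $W_{P(\Pi(\tau))v_{new}}$ for a Laurent polynomial $P\in\C[\tau,\tau^{-1}]$ divisible by $\tau^{-m}(q+\tau)^{\varepsilon}$.

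Next I would feed in the transformation law $Z^{PS}_{reg}(s,W_{P(\Pi(\tau))v_{new}},1)=P(q^{-(s-3/2)})\,Z^{PS}_{reg}(s,W_{v_{new}},1)$ established just before the corollary (using $W_{\Pi(\tau)v}(x)=W_v(x/\pi)$ and $\nu(\pi)=q^{-1}$). Specializing at $s_0=-\tfrac12$, the scalar $P(q^{-(s_0-3/2)})=P(q^2)$ appears, and the factor $\tau^{-m}(q+\tau)^{\varepsilon}$ contributes $q^{-2m}(q+q^2)^{\varepsilon}\neq 0$; hence for a suitable choice of $P$ the value $P(q^2)$ is a nonzero constant, so the family $\{Z^{PS}_{reg}(s,W_v,1):v\in\Pi^{K_H}\}$ and the single function $Z^{PS}_{reg}(s,W_{v_{new}},1)$ have exactly the same pole order at $s_0$, and more generally the regularizing factor of the family equals that of the single function. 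Therefore ${\cal L}(s)$ is the regularizing $L$-factor of $L(s+\tfrac12)^2\,Z^{PS}_{reg}(s,W_{v_{new}},1)$ alone.

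Finally I would carry out the elementary order count at $s_0=-\tfrac12$. By Lemma~\ref{disjoint} the point $s_0=-\tfrac12$ is not a pole of $L(\mu\otimes M,s)$, which (by the identification in Lemma~\ref{C19}) is precisely the regularizing factor of the functions $Z^{PS}_{reg}(s,W_v,1)$; hence $Z^{PS}_{reg}(s,W_{v_{new}},1)$ is holomorphic at $s_0$ and has there a zero of some order $\ell_0\in\{0,1,2,\dots\}$ (or vanishes identically, but then $v_{new}$ would have trivial image in the Bessel model, contradicting that $\Pi$ has a nonzero split Bessel model realized on $v_{new}$). The product $L(s+\tfrac12)^2\,Z^{PS}_{reg}(s,W_{v_{new}},1)$ then has a pole of order $2-\min(\ell_0,2)$ at $s_0$ and is holomorphic elsewhere on the relevant line, so its regularizing factor is $L(s+\tfrac12)^{k}$ with $k=2-\min(\ell_0,2)\in\{2,1,0\}$, i.e. $k=2,1,0$ according as $Z^{PS}_{reg}(s,W_{v_{new}},1)$ has a zero of order $0$, $1$, or $\geq 2$ at $s_0=-\tfrac12$. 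This is exactly the assertion of the corollary.

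**Main obstacle.** None of the steps is deep; the only point requiring a little care is the passage from "${\cal L}(s)$ is the regularizing factor of the whole spherical family'' to "${\cal L}(s)$ is the regularizing factor of the single function attached to $v_{new}$''. The subtlety is that a regularizing $L$-factor must kill the poles of \emph{every} member of the family simultaneously, so one must check not merely that $v_{new}$ occurs in $\eta(\Pi^{K_H})$ but that every $W_v$ is an $\C[\tau,\tau^{-1}]$-multiple of $W_{v_{new}}$ whose specialization at $s_0$ is controlled; this is exactly what Lemma~\ref{KH-sp} together with the nonvanishing of $\mu(\tau)^{-m}(q+\mu(\tau))^{\varepsilon}$ provides, so once those inputs are granted the argument is immediate.
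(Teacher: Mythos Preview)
Your proposal is correct and follows essentially the same approach as the paper: the paper's proof is the discussion immediately preceding the corollary, which uses Lemma~\ref{KH-sp} and the transformation law $Z_{reg}^{PS}(s,W_{P(\Pi(\tau))v},1)=P(\nu(\pi)^{s-3/2})Z_{reg}^{PS}(s,W_v,1)$ together with the nonvanishing $\mu(\tau)^{-m}(q+\mu(\tau))^{\varepsilon}=q^{-2m}(q+q^2)^{\varepsilon}\neq 0$ at $s_0$ to reduce the family of $K_H$-spherical zeta integrals to the single one for $v_{new}$, exactly as you do. Your identification of the ``main obstacle'' is precisely the point the paper singles out.
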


In the situation of cor.\ref{Hilfsschritt} a Bessel model $\widetilde \Pi= k_\rho(\overline\Pi)$ of $\Pi$ always exists for $\rho=1$ and it
defines a perfect $Gl_a(1)$-module (\cite{RW}, cor.6.9) 
 with an exact sequence
 $$  0 \to \mathbb E[\nu]  \to  \widetilde\Pi \to  k_\rho(i_*(B)) \to 0 \ .$$
 By the perfectness of $\widetilde\Pi$, the space  $\widetilde\Pi_{T,\chi}$ of $(T,\chi)$-coinvariants has dimension 1 for every smooth character $\chi$ of $T$ and the quotient map  
$$ p_\chi: \widetilde\Pi \to \C \cong \widetilde\Pi_{T,\chi}  $$  
is described by the regularized zeta integrals $Z(f,\chi^{-1},s)$ at $s=0$ (\cite{RW}, 3.5). 
Moreover, for $\chi=\nu^2$ the regularized integrals 
$Z_{reg}^{PS}(s+s_0,W_{v},1)$ and the regularized integrals
$Z(W_v, \chi^{-1}, s)$ coincide. %and the regularizing
%The computation
%of $L_{reg}(s,\Pi,1)$ in \cite{RW}, or 
%factor $L(\nu^{-2}\otimes \widetilde\Pi,s)$ does not vanish at $s=0$ \cite{RW}. 
Since $s_0$ is not a pole of $L(\mu\otimes M,s)$, as follows from \cite{RW}, lemma 3.31, 
(see also \cite{RW}, lemma 3.34 concerning higher order vanishing), this implies 

\begin{lem}
$Z_{reg}^{PS}(s_0,W_{v},1) = c \cdot p_{\chi}(v)$ for $\chi=\nu^2 $ and some constant $c\neq 0$.
\end{lem}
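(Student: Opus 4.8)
The plan is to deduce the lemma directly from the comparison of the Piatetskii--Shapiro zeta integral with the $Gl_a(1)$-zeta integral on the Bessel module $\widetilde\Pi$, together with the fact that the regularizing Euler factor of the latter is holomorphic and non-vanishing at the exceptional point $s_0$.

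First I would record that both sides of the claimed identity depend on $v$ only through its image $f$ under the projection $\Pi \twoheadrightarrow \overline\Pi \twoheadrightarrow \widetilde\Pi$. Indeed, the Bessel function $W_v(g)=\ell(\Pi(g)v)$ factors through $f$ by the construction of the split Bessel model (cf. \cite{RW}, 4.2), so $Z_{reg}^{PS}(s_0,W_v,1)$ is a linear functional of $f$; and $p_\chi(v)$ is by definition $p_\chi(f)$ for the quotient map $p_\chi:\widetilde\Pi\twoheadrightarrow\widetilde\Pi_{(T,\chi)}\cong\C$. Hence it suffices to exhibit a constant $c\neq 0$, independent of $f$, with $Z_{reg}^{PS}(s_0,W_v,1)=c\cdot p_\chi(f)$ for all $f\in\widetilde\Pi$.

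Next I would assemble the two ingredients collected just above the lemma. For $\chi=\nu^2$ the regularized Piatetskii--Shapiro integral and the regularized $Gl_a(1)$-zeta integral of \cite{RW}, 3.5 agree after the shift $s\mapsto s+s_0$, i.e. $Z_{reg}^{PS}(s+s_0,W_v,1)=Z(W_v,\chi^{-1},s)$ as rational functions of $q^{-s}$. By \cite{RW}, 3.5 the map $p_\chi$ is the specialization at $s=0$ of $Z(f,\chi^{-1},s)$ divided by its regularizing $L$-factor, and by \cite{RW}, 3.31 (together with \cite{RW}, 3.34 for the higher-order behaviour) this regularizing factor is the Tate-type Euler product $L(\mu\otimes M,s)$, with the normalizing $\nu^{-3/2}$-twist of $M$ absorbed into the shift.

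Finally I would conclude. By lemma \ref{disjoint} the point $s_0$ is \emph{not} a pole of $L(\mu\otimes M,s)$; since each Tate factor is everywhere non-vanishing, the value $c:=L(\mu\otimes M,s)$ at the relevant point is finite and nonzero. Consequently every $Z(f,\chi^{-1},s)$ is holomorphic at $s=0$, so $Z_{reg}^{PS}(s_0,W_v,1)=Z(f,\chi^{-1},0)=c\cdot p_\chi(f)=c\cdot p_\chi(v)$ with $c\neq 0$, as claimed; in particular the order of vanishing of $Z_{reg}^{PS}(s,W_v,1)$ at $s_0$ equals that of $p_\chi$, and no leading-coefficient or residue subtlety arises here — in contrast to the genuine-pole situation of \cite{RW}, 3.34 — precisely because $s_0$ avoids the regular poles. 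The only genuine work is the normalization bookkeeping in the two cited comparisons (matching the Piatetskii--Shapiro data $\mu=1$, $\Phi=\Phi_0$, $\Pi=\Pi_{norm}$ with the $TS$-module normalization of $M$ so that the denominator is literally $L(\mu\otimes M,s)$); I expect this matching of normalizations to be the main obstacle, but it is supplied by the quoted results of \cite{RW} and \cite{Danisman3}.
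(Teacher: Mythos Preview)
Your proposal is correct and follows essentially the same route as the paper: identify the Piatetskii--Shapiro integral with the $Gl_a(1)$-zeta integral on $\widetilde\Pi$ after the shift $s\mapsto s+s_0$, use that $p_\chi$ is the regularized value at $s=0$ (\cite{RW}, 3.5 and lemma 3.31), and invoke lemma \ref{disjoint} to see that the regularizing factor $L(\mu\otimes M,s)$ is holomorphic and nonzero at $s_0$, yielding the constant $c$. The paper compresses this into a single sentence citing \cite{RW}, lemma 3.31 (and 3.34 for higher-order control); you have simply unpacked that sentence.
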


%
%the corresponding character of $\nu^{s_0}$ is  the character $\nu^2 = \nu^{3/2}\nu^{-s_0}$.  Since the $Gl_a(1)$-module $k_\rho j_!(A)\cong \mathbb E[\nu]$
%is perfect,
%
%
%the order of $Z_{reg}^{PS}(s,W_{v_{new}},1)$
%at $s=s_0$ is encoded in the  

\section{Paramodular vectors}
Let $K_G$ be the maximal compact subgroup
of unimodular matrices in $G$ and let 
$K_G({\frak p}^n) \subseteq K_G$ denote the principal congruence subgroups. 
We defined $H\subseteq G$ and its maximal compact subgroup
$K_H$. Let  $K_n$ be the subgroup of $K_G$ generated by $K_H $ and
$K_G({\frak p}^n)$. $K_n$ contains $K_G({\frak p}^n)$ as a normal subgroup
with finite quotient $K_n/K_G({\frak p}^n) \cong H({\frak o}/{\frak p}^n)$ so that 
$K_H$ is the intersection $K_H = \bigcap_{n\geq 0} K_n$.
For any smooth representation $\Pi$  of $G$ on a vectorspace $V$ 
therefore
$$   V^{K_H} \ = \ \bigcup_{n\geq 0}\  W(n) $$
is the increasing union $\cdots \subseteq W(n) \subseteq W(n+1) \subseteq \cdots $ 
of the subspaces  of fixed vectors
$ W(n) = V^{K_n} $ of $V$ under $K_n$.

\bigskip
To understand the subspaces $W(n)$ we will relate them
to the spaces of paramodular vectors in $\Pi$.
For this we give a short overview on the relevant results of \cite{Roberts-Schmidt}. 
Since in \cite{Roberts-Schmidt} the symplectic group as matrix group is written  using Witt's
rather than Siegel's conventions, most matrices and formulas in loc. cit. look different
from ours. For the convenience of the reader we therefore use  matrix conventions for the group $G$
as they are used in \cite{Roberts-Schmidt} for the rest of this section.

\bigskip%\noindent
This being said, the element $\tau \in Gl(2) \subseteq H \subseteq G$ is then given by 
%$\tau \in Gl(2) \subseteq H \subseteq G$
$$ \tau^{-1} := 
\begin{bmatrix} \pi & 0 \cr
0 & \pi \end{bmatrix} := \begin{pmatrix} \pi^2 & 0 & 0 & 0 \cr 
0& \pi & 0 & 0 \cr
0 & 0 & \pi & 0 \cr
0 & 0 & 0 & 1 \end{pmatrix} \ , $$ %\equiv \begin{pmatrix} \pi & 0 & 0 & 0 \cr 
%0& 1 & 0 & 0 \cr
%0 & 0 & 1 & 0 \cr
%0 & 0 & 0 & \pi^{-1} 
%\end{pmatrix} $$  
and  $\tau$ coincides  with $\eta$, as defined in \cite{Roberts-Schmidt}, form. (3.5),
modulo the center
of $G$. 
$$ \tau^{-1}  \begin{pmatrix} a &  u_1 &  u_2 & b \cr 
v_1& \alpha & \beta & u'_2 \cr v_2 & \gamma & \delta & u'_1 \cr
c & v'_2 & v'_1 & d \end{pmatrix} \tau  \ = \  
\begin{pmatrix} a & \pi u_1 & \pi  u_2 & \pi^{2}  b 
\cr  \pi^{-1}  v_1& \alpha & \beta & \pi u'_2 
\cr  \pi^{-1} v_2 & \gamma & \delta & \pi  u'_1 
\cr \pi^{-2} c & \pi^{-1}  v'_2 & \pi^{-1}  v'_1 & d \end{pmatrix} $$
%Suppose $n\geq 2$. Since $Stab(\tau v)= \tau Stab(v) \tau^{-1}$, hence for $v\in W(n)$
%$$\sum_{u_1,u_2\in {\frak p}^{n-1}/{\frak p}^{n+1}}\sum_{u\in {\frak o}/{\frak p}^2} 
%  \begin{pmatrix} 1 &  u_1 &  u_2 & u \cr 
%& 1 & 0 & u_2 \cr & & 1 & -u_1 \cr
% &  &  & 1 \end{pmatrix} \tau v $$
%is in $W(n-1)$.
implies
$$ \tau^{-n} \begin{pmatrix} {\frak o} & {\frak o} & {\frak o} & {\frak p}^{-2n} \cr 
{\frak p}^{2n} & {\frak o} & {\frak o} & {\frak o} \cr
{\frak p}^{2n}  & {\frak o} & {\frak o} & {\frak o} \cr
{\frak p}^{2n} & {\frak p}^{2n} & {\frak p}^{2n} & {\frak o} \end{pmatrix} 
\tau^{n} \ = \
\begin{pmatrix} {\frak o} & {\frak p}^n & {\frak p}^n & {\frak o} \cr 
{\frak p}^{n} & {\frak o} & {\frak o} & {\frak p}^n \cr
{\frak p}^{n}  & {\frak o} & {\frak o} & {\frak p}^n \cr
{\frak o} & {\frak p}^{n} & {\frak p}^{n} & {\frak o} \end{pmatrix} 
 \ .$$
Hence, in terms of the paramodular groups $K({\frak p}^{n})$, defined as in 
\cite{Roberts-Schmidt} p. 31,
we get $  \tau^{-n} K({\frak p}^{2n}) \tau^{n} =  K_n  $.
For a smooth representation $\Pi$  of $G_{ad}$ on a vectorspace $V$, 
let $V(n)= V^{K({\frak p}^n)}$ be the space of paramodular vectors in $V$
of level $n$. Then
$$  W(n) \ = \ \tau^{-n}(V(2n))  \ .$$
Indeed  $v\in V(2n)\Longleftrightarrow K({\frak p}^{2n}) v=v \Longleftrightarrow    \tau^{n} K_n \tau^{-n}v =v
 \Longleftrightarrow  K_n \tau^{-n}v = \tau^{-n}v$,  where we write $gv$ or $g(v)$ instead of
$\Pi(g)(v)$.

\bigskip
Since we are only interested in the local irreducible representations of extended Saito-Kurokawa type, we can make
the following  

\bigskip\noindent
\textbf{Assumption}. \textit{Suppose
$\Pi$ is an irreducible normalized representation of $G$ of extended Saito-Kurokawa type \nosf{IIb, Vb, Vc, VIc, VId, XIb}}.

\bigskip
Then, by a character twist, we may and therefore will assume that the central character of $\Pi$
is trivial;  \cite{Roberts-Schmidt}, table A.12 implies that a twist $\sigma\otimes \Pi_{norm}$
is paramodular for type \nosf{IIb, Vb, VIc, VId, XIb} if and only if $\sigma$ is unramified except for
{Vb}. Here the twist with unramified $\chi_0\sigma$ adds the cases {Vc}.  
%These cases {\bf Vc} being considered, 
This being said, we can normalize $\Pi$ by an unramified twist. 
We remark: If $\Pi$ is not paramodular, all $W(n)$ are zero and hence $\Pi^{K_H}=0$ 
since $V(2n) =\eta^{n}(W(n))$ 
consists of paramodular vectors, as explained below.
This allows to normalize $\Pi$ and to replace the group $G$ by its adjoint group $G_{ad}$.
An equality of matrices in $G$ for the rest of this section therefore, by abuse of notation, will be understood be an equality  in $G_{ad}$. % although we notationally indicate this

\bigskip
Let $N$ be the level of a paramodular irreducible representation
$V$ of $G$. Then $V(N)$ is generated by $v_{new}$.
If $N=2m-\varepsilon$ for $\varepsilon\in\{0,1\}$, from the
level raising operator $\theta' = \tau + \sum_{a\in {\frak o}/{\frak p}} s_{a\pi^{-N-1}}$ (\cite{Roberts-Schmidt}, lemma 3.2.2 ii)) we obtain $\theta'(v_{new})\in V(N+1)$. Hence from \cite{Roberts-Schmidt}, prop. 5.5.13 and the old form principle of loc. cit. the following holds

\begin{enumerate}
\item $W(m) = \C \cdot \tau^{-m} v_{new}$ \ if $\varepsilon=0$,
\item $W(m) = \C \cdot \tau^{-m} \theta'(v_{new})$ \  if $\varepsilon=1$. 
\end{enumerate}

For the $Q$-linear quotient map 
$  \eta: \Pi \twoheadrightarrow  \overline\Pi $ notice 
$\eta(\tau^{-1} v) = [\begin{smallmatrix} \pi & 0 \cr 0 & \pi \end{smallmatrix}] \eta(v)$
and $\eta(s_a v)= \eta(v)$ for all $a\in k$. Hence we obtain $\eta(\theta'(v_{new}))= (\tau +q)
\eta(v_{new})$ in the case 2. above.

\begin{lem}\label{KH} $\dim W(n)= \min(m+1-n,0)$ holds
for some integer $m\geq 0$ defined by $2m =N-\varepsilon, \varepsilon\{0,1\}$ above. 
Furthermore the $Gl_a(2)$-linear quotient map 
$  \eta: \Pi \twoheadrightarrow  \overline\Pi $
is injective on $\Pi^{K_H}$ such that 
$$\eta(\Pi^{K_H}) = \bigoplus_{n\geq m}
[\begin{smallmatrix} \pi & 0 \cr 0 & \pi \end{smallmatrix} ]^n
\ \eta(v)$$ holds for a basis vector $v$ of $W(m)=\C\cdot v$, and
$\eta(v)$ in $\overline\Pi$ is invariant under the compact group $Gl(2,{\frak o}). ({\frak p}^m)^2 $ in $ Gl_a(2)$, but
not invariant under $Gl(2,{\frak o}). ({\frak p}^{m-1})^2$. 
\end{lem}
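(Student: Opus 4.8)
The plan is to reduce everything to the Roberts--Schmidt theory of paramodular vectors of $\Pi$ via the identity $W(n)=\tau^{-n}(V(2n))$ established above. Since $\Pi$ is paramodular of conductor $N$, one has $V(j)=0$ for $j<N$ and $\dim V(N)=1$, so $W(n)=\tau^{-n}V(2n)=0$ as soon as $2n<N$, i.e.\ for $n<m$. For $n\geq m$ I would invoke the oldform dimension count of \cite{Roberts-Schmidt} for the non-generic representations of extended Saito--Kurokawa type, according to which $\dim V(N+j)=\lfloor j/2\rfloor+1$; combined with $N=2m-\varepsilon$ this gives $\dim W(n)=\dim V(2n)=(n-m)+1$ for $n\geq m$, which is the asserted dimension. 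More precisely, \cite{Roberts-Schmidt}, prop.~5.5.13 together with the oldform principle (quoted before the lemma) identifies $W(m)=\C\cdot v$ with $v=\tau^{-m}v_{new}$ when $\varepsilon=0$ and $v=\tau^{-m}\theta'(v_{new})$ when $\varepsilon=1$, and shows that $\tau^{-1}$ maps $W(n)$ into $W(n+1)$ (because $V(2n)\subseteq V(2n+2)$), increasing the dimension by exactly one once $n\geq m$. Since $v$ generates an infinite-dimensional $\langle\tau\rangle$-module in $\Pi$ --- the vectors $\tau^{j}v$, $j\geq0$, being linearly independent because $\tau$ raises the paramodular level by two, so $\tau^{j}v$ lies strictly higher in level than $\tau^{j-1}v$ --- this yields $W(n)=\bigoplus_{j=0}^{n-m}\C\,\tau^{-j}v$ for every $n\geq m$.

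Next I would transport this to $\overline\Pi$. Put $z=\tau^{-1}=[\begin{smallmatrix}\pi&0\cr 0&\pi\end{smallmatrix}]$, which acts invertibly on $\overline\Pi$; using $\eta(\tau^{-1}w)=z\,\eta(w)$ and $\eta(s_aw)=\eta(w)$ one gets $\eta(W(n))=\sum_{j=0}^{n-m}\C z^{j}\eta(v)$ and $\eta(v)=z^{m}(q+z^{-1})^{\varepsilon}\eta(v_{new})$. Thus the whole lemma reduces to the claim that $\eta(v)$ generates a \emph{free} $\C[z]$-module inside $\overline\Pi$ --- equivalently, since $z$ (and, for $\varepsilon=1$, also $q+z^{-1}$) acts invertibly on $\overline\Pi$, that the images $z^{j}\eta(v_{new})$, $j\geq0$, of the oldforms stay linearly independent. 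Granting this, $\dim\eta(W(n))=n+1-m=\dim W(n)$ forces $\eta$ to be injective on each $W(n)$, hence on $\Pi^{K_H}=\bigcup_nW(n)$, and the image is the free rank-one $\C[z]$-module on $\eta(v)$, which is the asserted description $\eta(\Pi^{K_H})=\bigoplus_{n\geq m}z^{n}\eta(v)$.

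The freeness is the step carrying the real content, and I expect it to be the main obstacle; I would establish it from the structure of the split Bessel module $\widetilde\Pi=k_\rho(\overline\Pi)$ for $\rho=1$ worked out in \cite{RW}. First, $\eta(v_{new})\neq0$ since the image of the paramodular new vector in the Bessel model is nonzero; for $\varepsilon=1$ then also $\eta(v)=z^{m}(q+z^{-1})\eta(v_{new})\neq0$, because $-1/q$ does not occur as a central-character value of a constituent of $\overline\Pi$ by lemma \ref{Atable}, so $q+z^{-1}$ is invertible. Next, in the exact sequence $0\to\mathbb E[\nu]\to\widetilde\Pi\to k_\rho(i_*(B))\to0$ of \cite{RW}, cor.~6.9, the quotient $k_\rho(i_*(B))$ has finite length, so $z$ satisfies a polynomial relation on it; hence $\C[z]\,\eta(v_{new})$ is free precisely when the component of $\eta(v_{new})$ in $\mathbb E[\nu]$ is non-torsion for the $z$-action. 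On $\mathbb E[\nu]$ the operator $z$ acts by translation on $k^{*}$ (up to a character), so any vector concentrated near a single point of $k^{*}$ is non-torsion, and the image of $v_{new}$ has this shape precisely because, being $K_H$-fixed, it is invariant under a large compact subgroup. (Equivalently: $\widetilde\Pi$ is a perfect $Gl_a(1)$-module and the image of $v_{new}$ is one of its generators, whence its $\C[z]$-span is free.) Carrying this comparison out precisely, using the explicit identification of $\widetilde\Pi$ and of the image of $v_{new}$ in \cite{RW}, is the delicate point.

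Finally, the equivariance of $\eta(v)$. By the realization of $Gl_a(2)$ acting on $\overline\Pi$ through a subquotient of $G$ (see section~4.2 of \cite{RW}), the compact subgroup $Gl(2,{\frak o})\cdot({\frak p}^k)^2$ of $Gl_a(2)$ is exactly the image of $K_k$, and $\eta$ is equivariant for it; since $v\in W(m)=\Pi^{K_m}$ this shows $\eta(v)$ is fixed by $Gl(2,{\frak o})\cdot({\frak p}^m)^2$. For the non-invariance, averaging a preimage over $K_k$ --- which surjects onto $Gl(2,{\frak o})\cdot({\frak p}^k)^2$ --- gives $\overline\Pi^{\,Gl(2,{\frak o})\cdot({\frak p}^k)^2}=\eta(\Pi^{K_k})$ for every $k\geq0$; taking $k=m-1$ and using $W(m-1)=0$ yields $\overline\Pi^{\,Gl(2,{\frak o})\cdot({\frak p}^{m-1})^2}=\eta(W(m-1))=0$, so the nonzero vector $\eta(v)$ cannot be fixed by $Gl(2,{\frak o})\cdot({\frak p}^{m-1})^2$.
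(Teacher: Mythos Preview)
Your first paragraph contains a genuine error: you assert that $\tau^{-1}$ maps $W(n)$ into $W(n+1)$ ``because $V(2n)\subseteq V(2n+2)$'', but this containment is false. The paramodular groups $K(\mathfrak p^{j})$ are \emph{not} nested in $j$ --- passing from $K(\mathfrak p^{2n})$ to $K(\mathfrak p^{2n+2})$ loosens the upper-right corner entry from $\mathfrak p^{-2n}$ to $\mathfrak p^{-2n-2}$ while tightening the lower-left block from $\mathfrak p^{2n}$ to $\mathfrak p^{2n+2}$, so neither group contains the other and $V(2n)\not\subseteq V(2n+2)$. Consequently the decomposition $W(n)=\bigoplus_{j=0}^{n-m}\C\,\tau^{-j}v$ inside $\Pi$ is unjustified, and everything built on it (the transport to $\overline\Pi$, the freeness discussion via the Bessel module) is left hanging.

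The paper's route is to identify the \emph{correct} raising operator on the $W(n)$. Rewriting the Roberts--Schmidt oldform basis $V(2n)=\bigoplus_i(\theta')^{2i}\tau^{\,n-i-m}V(2m)$ after conjugation by $\tau^{-n}$, one finds $W(n+1)=W(n)+\Sigma\,W(n)$ for an explicit operator $\Sigma=\theta'_0\tau^{-1}\theta'_1$, a sum of $q^2$ translates of $\tau^{-1}$ by elements of the form $s_a$. On $\Pi$ this $\Sigma$ is \emph{not} $\tau^{-1}$; the whole point is that it only becomes so after projecting, via the identity $\eta(\Sigma v)=q^{2}\,[\begin{smallmatrix}\pi&0\cr 0&\pi\end{smallmatrix}]\,\eta(v)$ coming from $\eta(s_aw)=\eta(w)$. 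With this in hand the description $\eta(W(n))=\sum_{j}\C\,z^{j}\eta(v)$ is immediate, and injectivity follows without any Bessel-module analysis: property~2 (proved from \cite{Roberts-Schmidt}, lemma~3.2.4) pins down the exact $(\mathfrak p^{m})^{2}$-invariance level of $\eta(v)$ in $\overline\Pi$, and since conjugation by $z$ shifts this level by one, the vectors $z^{j}\eta(v)$ have pairwise distinct exact levels $m+j$ and are therefore linearly independent. The same property~2 gives the (non)-invariance statement for $\eta(v)$ directly, so your averaging argument is also unnecessary.
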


\bigskip%\noindent
Lemma \ref{KH} almost immediately follows 
from the following list of properties 1.-5. if we take into account
$\eta(\Sigma v) = q^2[\begin{smallmatrix} \pi & 0 \cr 0 & \pi \end{smallmatrix}] \eta(v)$:
Under the assumption on $\Pi$ from above, 
there exists an integer $m\geq 0$ such that
\begin{enumerate}
\item $ W(n)= 0$ for $n< m$ and $\dim W(m)=1$.
\item $W(m) = \C v$ so that $v$ is invariant under
$$\begin{pmatrix} 1 & u & v & 0 \cr 
0& 1 & 0 & * \cr
0 & 0 & 1 & * \cr
0 & 0 & 0 & 1 \end{pmatrix}   \ .$$ 
for all $(u,v)\in ({\frak p}^m)^2$, but not all $(u,v)$  in $({\frak p}^{m-1})^2$.
\item $\dim(W(n)) = n+1-m $ for all $n\geq m$. 
\item $ W(n+1) = W(n) + \Sigma(W(n))$ holds for all $n$ where
 $$\Sigma = \sum_{a=1}^q  \begin{pmatrix} 1 & 0 & 0 & a \cr 
0& 1 & 0 & 0 \cr
0 & 0 & 1 & 0 \cr
0 & 0 & 0 & 1 \end{pmatrix} \tau^{-1} \cdot \sum_{a=1}^q  \begin{pmatrix} 1 & 0 & 0 & \frac{a}{\pi} \cr 
0& 1 & 0 & 0 \cr
0 & 0 & 1 & 0 \cr
0 & 0 & 0 & 1 \end{pmatrix} 
  \ .$$ 
\item $V(N)=\C \cdot v_{new}$ and $V(N+1)= \C \cdot
\theta'(v_{new})$ and $v = v_{new}$ or $v= \theta'(v_{new})$, depending on whether $\varepsilon = 0$ or $1$.

\end{enumerate} 
%\end{lem}

\begin{proof}  Under the assumption on $\Pi$ at the beginning of the section 
there exists an integer $N=N_{\Pi} \geq 0$ (level of $\Pi$) so that $M=2m$
and $N\leq M \leq N+1$ for an integer $m\geq 0$ such that the assertion made in the lemma holds.
Assertions 1. and 3. follow from \cite{Roberts-Schmidt}, theorem 5.6.1 i) and ii)
and table A.12. In particular $\Pi$ is paramodular. For Assertion 2. notice $\tau^m(v)\in V(2m)$, so the claim easily follows from 
\cite{Roberts-Schmidt}, lemma 3.2.4. Since $v$ and hence $\tau^m(v)$ is invariant under $K=Gl(2,{\frak o} \subseteq Gl(2)$ and since $K$ acts transitively on primitive vectors in $({\frak p}^{m-1})^2$ we can also extend the assertion (3.8) in loc. cit. on vectors of type $(u,v)=(u,0)$ for arbitrary vectors  $(u,v)\in ({\frak p}^{m-1})^2$.

\bigskip%\noindent
By loc. cit, theorem 5.6.1 iii) all $V(n)$ are generated from
$V(N)$ by applying certain raising operators $\theta, \theta'$  together with $\tau$, 
%by formula (3.5)
and on $V(n)$ one has $$\theta' := \theta'_{n+1} := \tau +  \sum_a\begin{pmatrix} 1 & 0 & 0 & \frac{a}{\pi^{n+1}} \cr 
0& 1 & 0 & 0 \cr
0 & 0 & 1 & 0 \cr
0 & 0 & 0 & 1 \end{pmatrix} \ $$
 by formula (3.7) loc.cit.
 By \cite{Roberts-Schmidt}, proposition 5.5.13 our assumption on $\Pi$
implies $\theta=\theta'$. Hence $V(n)$ is generated by $\theta^d \tau^e V(N)$
for $d,e\geq 0$ and $d+2e = n - N$; see also the corresponding assertions of
prop. 5.5.12 for \nosf{VId}  and lemma 5.5.6 for the Saito-Kurokawa type representations $\Pi$. 
Table A.12 shows
$W(m)=V(N+1)=V(N)$ if $N$ is odd, and $W(m)=V(N)$ if $N$ is even.
Hence $$V(2n) = \bigoplus_{i=0}^{n-m} \ (\theta')^{2i} \tau^{n-i-m }   V(2m) \ .$$
$\tau^{n-m - i}   V(2m) \subseteq V(2n-2i)$ gives
$  (\theta')^{2i} =  \theta'_{2n-1} \cdots \theta'_{2n-2i+1} $,
and the formula $\tau^{-k} \theta'_{2k+\ell} \tau^{k} = \theta'_{\ell}$ implies
$$ (\theta')^{2i} \tau^{n-i-m }   V(2m) =   \tau^n (\theta'_0 \tau^{-1}\theta'_1)^i   \tau^{-m }   V(2m) \ .$$ 
%$$$  W(n+1) = \eta^{-n-1} V(2n+2) $$
%$$ =  \eta^{-n-1} (\C \eta + \C (\theta')^2)  V(2n)
Hence $  \tau^{-n}V(2n) = \bigoplus_{i=0}^{n-m} (\theta'_0 \eta^{-1}\theta'_1)^i  \tau^{-m} V(2m) $
resp.
$$ W(n) = \bigoplus_{i=0}^{n-m} \ (\theta'_0 \tau^{-1}\theta'_1)^i  \ W(m) $$
for all $n\geq m$. Notice $\theta'_0 \tau^{-1}\theta'_1= \theta'_0\theta'_{-1} \tau^{-1}$.
This proves the lemma. \end{proof}

\section{The companion functionals $f$}

For irreducible representations $\Pi$
of extended Saito-Kurokawa type and the specific pair of characters $\rho=\nu$ of $\tilde T$
and $\chi=\nu$ of $T$ we will now construct  
$\tilde R \cdot T$-linear auxiliary functionals called $(\rho,\chi)$-functionals  $$ f: \Pi \to (\C,\rho\boxtimes\chi)  \ ,$$
satisfying $f(\Pi(\tilde t x_\lambda)v)=\Lambda(\tilde t)\chi(\lambda) f(v)$ for all $\tilde t\in \tilde T$, $x_\lambda\in T$ and $v$ in $\Pi$.
The construction for the particular case \nosf{VId} will be different from the construction
in the Saito-Kurokawa cases that we discuss first.

\textbf{Saito-Kurokawa cases}. Each local Saito-Kurokawa
representation $\Pi$ of type \nosf{IIa, Vb, VIc, XIb} has a generic companion representation $\Pi_{gen}$ defined as follows:  Both
$\Pi$ and $\Pi_{gen}$ are the only constituents of a suitably chosen induced representation
$I=Ind_P^G(\pi\boxtimes \nu^{1/2})$ for $\pi\in {\cal C}_{Gl(2)}$ induced from the Siegel parabolic subgroup $P$ as in \cite{RW}, table 1 (here we use normalized induction and notation as in \cite{RW}) such that there exists an exact sequence
$$ \xymatrix{ 0 \ar[r] & \Pi \ar[r]^u & I \ar[r]^-{v} & \Pi_{gen} \ar[r] & 0} \ ,$$
dual to the situation defining $\Pi$ as a Langlands quotient. To be concrete:
We choose $\pi$ as
$\nu^{-1/2}\chi_1 \times \nu^{-1/2}\chi_1^{-1}$, $Sp(\nu^{-1/2}\chi_0)$, 
$Sp(\nu^{-1/2}) $ and $\nu^{-1/2}\otimes\pi_{cusp}$ in the four cases respectively. 
%
%\bigskip%\noindent
The representation space of $I$ is the space of smooth functions $f:G\to \sigma$
such that $g(pg)=\delta_P^{1/2}(p)(\pi\boxtimes\nu^{1/2})(p)f(g)$ holds for all $p\in P, g\in G$. $G$ acts on $I$ by right translation. The central character $\omega$ of $I$ is trivial, so $x_\lambda$ and $t_{\lambda}^{-1}$
act on $I$ in the same way.
The evaluation $eval: f(g)\mapsto f(e)$ at the unit element
defines a $P$-equivariant map 
$$   eval: I \ \twoheadrightarrow \ \delta_P^{1/2} \otimes (\pi\boxtimes \nu^{1/2}) \ .$$
The $P$-module on the right side is a trivial module of the unipotent radical $N$ of $P$.
The torus $T\subseteq P$ acts by the character $\chi(x_\lambda)=\nu(\lambda)$. Indeed,    
$\delta_P^{1/2}(x_\lambda)=\nu^{3/2}(\lambda)$ and
$x_\lambda$ acts on $\pi\boxtimes \nu^{1/2}$ by
multiplication with 
 $\nu^{1/2}(\lambda^{-1})=\nu^{-1/2}(\lambda)$.
 Since $eval$ is equivariant with respect
to the subgroups $TS$ and  $\tilde T \tilde N$ of $P$, we can apply
the right exact Bessel functor $\beta_\rho$ to the morphism $eval$.
By Waldspurger-Tunnell $\beta_\rho(\delta_P^{1/2} \otimes (\pi\boxtimes \nu^{1/2}))$
is a one-dimensional $\C$-vectorspace on which $T$ acts by the character $\nu$.
Hence for $\chi=\nu$ we obtain $TS$-linear maps in ${\cal C}={\cal C}_{Gl_a(1)}$:
$$         \beta_\rho(eval):  \beta_\rho(I) \ \twoheadrightarrow \  (\C,\chi) = i_*(\chi) \ .$$
%We now choose $\rho=\nu$ and   
$i_*(\chi) \in {\cal C}$ is trivial as an $S$-module, so the morphism 
$\beta_\nu(eval)$ factorizes over the  map $\widetilde I:=\beta_\rho(I) \twoheadrightarrow
\pi_0(\widetilde I)$ to the quotient space $\pi_0(\widetilde I)$ of $S$-coinvariants (for the notation see \cite{RW}).
Composed with the quotient map $I
\twoheadrightarrow \beta_\rho(I)$ this defines 
 a $TS$-linear surjections $$ I \twoheadrightarrow  \beta_\rho(I) \twoheadrightarrow  \pi_0( \beta_\rho(I))
\twoheadrightarrow  i_*(\nu)\ .$$ 
 For generic representations $\Pi_{gen}$ by \cite{RW}, prop. 6.3 we have  $\beta^\rho(\Pi_{gen})=0$. Hence \cite{RW}, lemma 4.15 implies that $   \beta_\rho(u)$ 
 is injective and 
this gives an exact sequence
$$ \xymatrix{ 0 \ar[r] & \beta_\rho(\Pi) \ar[r]^{\beta_\rho(u)} & \beta_\rho(I) \ar[r]^-{\beta_\rho(v)} & \beta_\rho(\Pi_{gen}) \ar[r] & 0} \ .$$
Since $\pi_0$ is an exact functor
on ${\cal C}_{Gl_a(1)}$,  lemma 4.15 of \cite{RW} gives 
the exact sequence of $T$-modules
$$ 0\to \pi_0(\widetilde \Pi) \to \pi_0(\widetilde I) \to \pi_0(\widetilde \Pi_{gen}) \to 0 \ .$$
The companion representations $\Pi_{gen}$ of the Saito-Kurokawa representations are of type \nosf{IIa, Va, VIa, XIa}. The normalized characters that occur in $\pi_0(\widetilde \Pi_{gen})$
are listed in the column $\widetilde\Delta$ of table 3, Multisets of \cite{RW}.
The $T$-character $\chi=\nu$ corresponds to the following normalized $T$-character $\chi_{norm}=\nu^{-3/2}\chi=\nu^{-1/2}$ by normalization, in the notations of loc.cit.  
By \cite{RW}, table 3 and the conditions listed in table 1 of loc. cit. the character
$\chi_{norm}=\nu^{-1/2}$ does not appear as constituent in the
relevant modules $\pi_0(\widetilde \Pi_{gen})$.  Since $T$ acts on $i_*(\nu)$
by the character $\chi=\nu$, therefore the last exact sequence 
shows that the 
map $\pi_0(\widetilde{eval}\circ \widetilde u):  \pi_0(\widetilde \Pi) \to i_*(\nu)$ is an isomorphism. 
Hence for $\rho=\nu$ the map
$\tilde f := \beta_\nu(eval \circ u)$   is surjective and $T$-linear and makes the following diagram commutative
$$   \xymatrix@+3mm{  \beta_\nu(\Pi) \ar@{^{(}->}[r]^{\beta_\nu(u)} \ar@{->>}[dr]_{\tilde f} &   \beta_\nu(I)  \ar@{->>}[d]^{\beta_\nu(eval)}    \cr
&   i_*(\nu). } \ $$
If we compose $\tilde f$
with $\Pi\twoheadrightarrow \overline\Pi \twoheadrightarrow \beta_\nu(\Pi)$, this 
yields functionals denoted $\overline f$ resp. $f$ on $\overline\Pi$ resp. $\Pi$.
The nontrivial functional $f:\Pi \to \C$ is $(\chi,\rho)$-equivariant with respect  to   
  the character $\chi\boxtimes\rho=\nu\boxtimes\nu$ of $\tilde T \times T$
so that 
$  f\in Hom_{\tilde R \cdot T}(\Pi, \nu\boxtimes \nu)$.

\bigskip%\noindent
\textbf{The case  \nosf{VId}}. Now we discuss the irreducible representations 
of $G$ in the remaining case \textbf{VId}. In this case
the representation $I=Ind_Q^G(1)$ (normalized induction from the Klingen parabolic subgroup $Q$)  
decomposes into a direct sum $I \cong \Pi \oplus \Pi'$ of two irreducible representations; \cite{SLN}, p.129. One of the summands 
$\Pi'$ is normalized of type \nosf{VIc} and paramodular of level 1  \cite{Roberts-Schmidt}.
The other one is $\Pi$ which is normalized of type $\nosf{VId}$ and paramodular of level 0 (spherical). 
Evaluation at the unit element $e$, on the space of functions defining the induced representation $I$, now
gives a $Q$-linear map $I \twoheadrightarrow (\C, \delta_Q^{1/2})$. Composing this evaluation with the inclusion $\Pi\hookrightarrow I$, we obtain 
a $Q$-linear map $f: \Pi \to \C$. This map is equivariant on the radical of $Q$ and 
equivariant for $\tilde T\times T$ up to a twist by $\nu\boxtimes \nu$
 since the restriction of $\delta_Q^{1/2}$ to $Gl(2) \subseteq Q$
is the character $\nu\!\circ\!\det$. Furthermore, it is equivariant for the 
upper triangular matrices in $Gl(2)\subseteq Q$. In other words, $f$
defines an element in $Hom_{\tilde R \cdot T}(\Pi, \nu\boxtimes \nu)$.
To show that $f$ is nontrivial, we show $f(v_{new})\neq 0$ for the spherical  vector
$v_{new}\in \Pi$. Indeed $eval$ is nontrivial on the spherical vector of $I$ by 
Iwasawa decomposition, so the claim immediately follows from the explicit construction of $f$ in terms of $eval$.  Since $\Pi'$ is paramodular of level one, it does  not contain the spherical vector of $I$. Therefore $f\neq 0$, moreover $f(v_{new})\neq 0$ holds for the spherical vector $v_{new}$ in $\Pi$. In the next lemma we show that $f$ generates
$Hom_{\tilde R \cdot T}(\Pi, \nu\boxtimes \nu)$.

%since $f$ is nontrivial. 

\begin{lem}\label{dim1}
 For the normalized irreducible representations $\Pi$ of extended Saito-Kurokawa type
 $ \dim( Hom_{\tilde R \cdot T}(\Pi, \nu\boxtimes \nu)) = 1 $ holds.    
\end{lem}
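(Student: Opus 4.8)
The plan is to reduce the statement to a rank-one computation on the Bessel module and then handle the two constructions of $f$ separately. For the reduction: any $f\in Hom_{\tilde R\cdot T}(\Pi,\nu\boxtimes\nu)$ is trivial on $\tilde N=\langle s_a,a\in k\rangle\cdot\langle\kappa s_a\kappa^{-1},a\in k\rangle$, transforms under $\tilde T$ by the split Bessel character $\Lambda$ attached to $\rho=\nu$, and under $T$ by $\nu$. Hence it factors through $\eta\colon\Pi\twoheadrightarrow\overline\Pi=\Pi_{\langle s_a\rangle}$, then through the Bessel module $\widetilde\Pi=\beta_\nu(\Pi)$, and through its space $\pi_0(\widetilde\Pi)$ of $\langle\kappa s_a\kappa^{-1}\rangle$-coinvariants, on which $T$ still acts since $T$ normalises $\tilde R$ and preserves $\Lambda$. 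Because $\langle s_a\rangle$, $\langle\kappa s_a\kappa^{-1}\rangle$, $\tilde T$ and $T$ generate $\tilde R\cdot T$, this chain of factorizations is an isomorphism
\[
Hom_{\tilde R\cdot T}(\Pi,\nu\boxtimes\nu)\ \cong\ Hom_{Gl_a(1)}\bigl(\widetilde\Pi,\,i_*(\nu)\bigr)\ \cong\ Hom_T\bigl(\pi_0(\widetilde\Pi),\,\nu\bigr),
\]
under which the companion functional built above corresponds, in the Saito--Kurokawa cases, to $\tilde f=\beta_\nu(eval\circ u)$, and in case \nosf{VId} to $\beta_\nu$ of the evaluation map. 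So the space is already known to be nonzero; only the bound $\le 1$ remains.

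For the Saito--Kurokawa types \nosf{IIb}, \nosf{Vbc}, \nosf{VIc} and \nosf{XIb} this bound is in fact already contained in the discussion preceding the lemma: from the exact sequence $0\to\pi_0(\widetilde\Pi)\to\pi_0(\widetilde I)\to\pi_0(\widetilde\Pi_{gen})\to 0$ together with the absence of the normalised character $\nu^{-1/2}$ among the constituents of $\pi_0(\widetilde\Pi_{gen})$, we obtained there an isomorphism $\pi_0(\widetilde\Pi)\cong i_*(\nu)$. Since $i_*(\nu)$ is the one-dimensional $T$-module $(\C,\nu)$, the displayed formula gives $Hom_{\tilde R\cdot T}(\Pi,\nu\boxtimes\nu)\cong Hom_T((\C,\nu),(\C,\nu))=\C$, which is the claim.

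The remaining type \nosf{VId} I would deduce from the splitting $I:=Ind_Q^G(1)\cong\Pi\oplus\Pi'$, in which $\Pi'$ is the normalised paramodular representation of type \nosf{VIc}. Applying the additive functor $Hom_{\tilde R\cdot T}(-,\nu\boxtimes\nu)$ and feeding in the case \nosf{VIc} just settled yields
\[
\dim Hom_{\tilde R\cdot T}(\Pi,\nu\boxtimes\nu)\ =\ \dim Hom_{\tilde R\cdot T}\bigl(Ind_Q^G(1),\,\nu\boxtimes\nu\bigr)\ -\ 1 ,
\]
and since both $\Pi$ and $\Pi'$ carry a nonzero companion functional, it then suffices to prove $\dim Hom_{\tilde R\cdot T}(Ind_Q^G(1),\nu\boxtimes\nu)\le 2$. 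This last inequality is the only genuinely computational point, and the one I expect to be the main obstacle. I would obtain it by Mackey theory applied to the Klingen induced representation: stratify $Q\backslash G$ by the finitely many $\tilde R\cdot T$-orbits — the relevant representatives being $e$ and the lower triangular Weyl element $m_A$ already used in lemma \ref{impossible} — so that $Hom_{\tilde R\cdot T}(Ind_Q^G(1),\nu\boxtimes\nu)$ acquires a filtration whose graded pieces are the Hom-spaces attached by Frobenius reciprocity (lemma \ref{Frobenius}) to the double cosets, and then check that at most two of these survive the matching of $\delta_Q^{1/2}$ against the relevant modulus characters, each surviving piece being one-dimensional. Equivalently, the same bound can be read off from the structure of the Bessel module $\beta_\nu(Ind_Q^G(1))=\beta_\nu(\Pi)\oplus\beta_\nu(\Pi')$, computed as in \cite{RW}.
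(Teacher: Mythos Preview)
Your reduction to $Hom_T(\pi_0(\beta_\nu(\Pi)),\nu)$ is correct and matches the paper. For the Saito--Kurokawa types \nosf{IIb}, \nosf{Vbc}, \nosf{VIc}, \nosf{XIb} your argument is fine: the isomorphism $\pi_0(\widetilde\Pi)\cong i_*(\nu)$ established just before the lemma already gives the bound.

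Where you diverge from the paper is in the treatment of \nosf{VId}. Your route via the decomposition $Ind_Q^G(1)\cong\Pi\oplus\Pi'$ and a Mackey filtration on $Q\backslash G/(\tilde R\cdot T)$ is plausible, but you leave the double-coset count and the modulus matching unchecked, so as written the argument is incomplete. More importantly, the detour is unnecessary: the paper handles \emph{all} extended Saito--Kurokawa types, \nosf{VId} included, by a single uniform observation that makes no use of the exact sequence with $\Pi_{gen}$ nor of any induced-representation analysis. Since a normalized $\Pi$ of extended Saito--Kurokawa type has no Bessel model for $\rho=\nu\neq 1$, the module $\beta_\nu(\Pi)$ has degree zero, hence $\beta_\nu(\Pi)=\pi_0(\beta_\nu(\Pi))$ is a finite-length $T$-module. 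Combining $\beta^\nu(\Pi)=0$ (\cite{RW}, prop.~6.3) with the $L$-factor identity of \cite{RW}, prop.~4.25 gives $L(\beta_\nu(\Pi),s)=L(J_P(\Pi)_\psi,s)$, and the multiset $\Delta_0(\Pi)$ listed in \cite{RW}, table~3 shows that the normalized character $\nu^{-1/2}$ (i.e.\ $\chi=\nu$) occurs there with multiplicity exactly one, for \nosf{VId} just as for the others. Hence $\dim\beta_\nu(\Pi)_{(T,\nu)}=1$ directly.

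Your closing remark --- that the bound can equivalently be read off from the Bessel module as computed in \cite{RW} --- is in fact the paper's whole proof, applied to $\Pi$ itself rather than to $Ind_Q^G(1)$. Pursuing that line from the start would have spared you the case split and the unfinished Mackey computation.
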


\begin{proof} 
By definition $Hom_{\tilde R \cdot T}(\Pi, \nu\boxtimes \nu)=Hom_\C(\beta_\rho(\Pi)_\chi, \C)$ for $\rho=\nu$, $\chi=\nu$ holds. It is known
that a normalized irreducible representation $\Pi$ of extended Saito-Kurokawa type
does not have a Bessel model for $\rho\neq 1$. Hence the Bessel module $\beta_\rho(\Pi)$ has degree zero
for $\rho\neq 1$,  so that $\beta_\rho(\Pi) \cong \pi_0(\beta_\rho(\Pi))$ holds. The semisimplification of the $T$-module
$\pi_0(\beta_\rho(\Pi))$ is described by the $L$-factor $L(\beta_\rho(\Pi),s)$.  
By \cite{RW}, prop. 4.25 %\ref{5.3} 
%and $\widetilde I:= \beta_\rho(I)$ is
we know that
$$  L(\beta_\rho(\Pi),s) = L(J_P(\Pi)_\psi,s) L(\beta^\rho(\Pi),s) \ .$$ 
For $\rho\neq 1$, from \cite{RW}, prop. 6.3 we conclude $ \beta^\rho(\Pi)=0$ for  irreducible normalized  $\Pi\not\cong$ \nosf{Vd}. %\ref{prop:beta^rho}.
This gives $L(\beta_\rho(\Pi),s) = L(J_P(\Pi)_\psi,s)$ in our situation, and the characters of the $L$-factors of $L(J_P(\Pi)_\psi,s)$ 
are listed in the column $\Delta_0(\Pi)$ of  table 3 (multisets) in \cite{RW}, up to a shift by 
the normalization factor $\delta_P^{1/2}=\nu^{3/2}$. 
For the normalized irreducible representations $\Pi$ of extended Saito-Kurokawa type 
this table shows  that $L(\nu,s)$ appears with multiplicity
one in $L(J_P(\Pi)_\psi,s)$; indeed $\chi=\nu$ corresponds to the normalized character
$\chi_{norm}=\nu^{-1/2}$ in the column $\Delta_0(\Pi)$.
%corresponds to the shifted character $\chi=\delta_P^{1/2}\nu^{-1/2} =\nu$ defining $i_*(\nu)$  and counts the multiplicity of $\chi$ in $\beta_\nu(\Pi)$.
This completes the proof. It shows that  that $\beta_\rho(\Pi)_\chi$  has
dimension one for $\rho=\nu$ and $\chi=\nu$.
\end{proof}

\textbf{Remark}. The same argument also implies $ \dim( Hom_{\tilde R \cdot T}(\Pi, \rho\boxtimes \chi)) = 0 $ 
for all irreducible normalized representations $\Pi$ of extended Saito-Kurokawa type unless $\rho=1$ or $\chi=\nu$ holds. 

\begin{lem}\label{new=sur}
The normalized representations $\Pi$ of extended Saito-Kurokawa type \nosf{IIb, Vb, VIc, XIb} and \nosf{VId} are paramodular representations. The space
$$  Hom_{\tilde R \cdot T}(\Pi, \nu\boxtimes\nu) \ = \ \C \cdot f  \ $$
is spanned by a $(\nu,\nu)$-functional $f: \Pi \to \C$
which is nonzero on the paramodular new vector $v_{new}$ of the representation $\Pi$.
\end{lem}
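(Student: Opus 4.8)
The plan is to verify the three assertions in turn: paramodularity, the spanning by $f$, and the non-vanishing of $f$ on the new vector. Paramodularity is immediate from \cite{Roberts-Schmidt}, table A.12 --- this is exactly the input already invoked in the Assumption preceding lemma \ref{KH} --- so each normalized $\Pi$ of type \nosf{IIb, Vb, VIc, XIb, VId} carries a new vector $v_{new}$, unique up to a scalar, at its minimal level $N$. By lemma \ref{dim1} the space $Hom_{\tilde R\cdot T}(\Pi,\nu\boxtimes\nu)$ is one-dimensional, and the $(\nu,\nu)$-functional $f$ constructed above lies in it and is nonzero: in the case \nosf{VId} because $eval$ does not vanish on the spherical vector of $Ind_Q^G(1)$ by the Iwasawa decomposition, and in the Saito--Kurokawa cases because $\tilde f=\beta_\nu(eval\circ u)$ was shown to be surjective. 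Hence $f$ spans $Hom_{\tilde R\cdot T}(\Pi,\nu\boxtimes\nu)$, and only $f(v_{new})\neq 0$ remains.

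\textbf{Reduction to the Bessel module.} By lemma \ref{B6} the normalized $\Pi$ has $A=i_*(\nu)$, i.e. the twisted Siegel Jacquet module $J_P(\Pi)_\psi$ is the one-dimensional character $\nu$; together with $\beta^\nu(\Pi)=0$ (as in the proof of lemma \ref{dim1}) this gives $L(\beta_\nu(\Pi),s)=L(\nu,s)$, and since $\beta_\nu(\Pi)$ is of degree zero it follows that $\beta_\nu(\Pi)\cong(\C,\nu)$ is one-dimensional. Consequently $f$ factors as $\Pi\twoheadrightarrow\overline\Pi\twoheadrightarrow\beta_\nu(\Pi)\xrightarrow{\,\tilde f\,}\C$ with $\tilde f$ an isomorphism (a nonzero map of one-dimensional spaces, as also used in the construction). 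Therefore $f(v_{new})\neq 0$ is equivalent to the assertion that the image of $v_{new}$ under the split Bessel projection $\Pi\twoheadrightarrow\beta_\nu(\Pi)$ is nonzero. We already know from lemma \ref{KH} that $\eta(v_{new})\in\overline\Pi$ itself is nonzero, so the content is that $\eta(v_{new})$ does not lie in the Bessel augmentation submodule $\ker(\overline\Pi\twoheadrightarrow\beta_\nu(\overline\Pi))$.

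\textbf{Non-vanishing on $v_{new}$.} For \nosf{VId} this has already been established: $v_{new}$ is the spherical vector of $\Pi$, it is not contained in the level one summand $\Pi'$ of $Ind_Q^G(1)$, and $eval$ is nonzero on it. For the Saito--Kurokawa cases \nosf{IIb, Vb, VIc, XIb} I would argue through the explicit construction. Since $\tilde f=\beta_\nu(eval\circ u)$, one has
$$ f(v_{new}) \ = \ WT_\nu\bigl(u(v_{new})(e)\bigr)\ , $$
where $u(v_{new})(e)\in\delta_P^{1/2}\otimes(\pi\boxtimes\nu^{1/2})$ is the value at the identity of the function on $G$ realizing $v_{new}$ inside $I=Ind_P^G(\pi\boxtimes\nu^{1/2})$, and $WT_\nu$ is the split Waldspurger--Tunnell functional realizing $\beta_\nu(\delta_P^{1/2}\otimes(\pi\boxtimes\nu^{1/2}))=(\C,\nu)$. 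The claim then follows from two facts: first, using the description of paramodular vectors of parabolically induced representations in \cite{Roberts-Schmidt}, $u(v_{new})(e)$ is a nonzero multiple of the $Gl(2)$-new vector of $\pi$; and second, the split toral functional $WT_\nu$ does not vanish on the $Gl(2)$-new vector, which one checks for each of the four representations $\pi=\nu^{-1/2}\chi_1\times\nu^{-1/2}\chi_1^{-1}$, $Sp(\nu^{-1/2}\chi_0)$, $Sp(\nu^{-1/2})$, $\nu^{-1/2}\otimes\pi_{cusp}$. In the case \nosf{XIb}, where $\overline\Pi=j_!i_*(\nu)$ by lemma \ref{Atable}, one may instead verify directly that the image of $\eta(v_{new})$ in $\beta_\nu(j_!i_*(\nu))\cong(\C,\nu)$ is nonzero.

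\textbf{The main obstacle.} The delicate part is the last step for the Saito--Kurokawa cases: locating $u(v_{new})(e)$ inside $I$ and identifying it with the $Gl(2)$-new vector of $\pi$. The subtlety is the comparison between the paramodular level of $\Pi$ and that of its generic companion $\Pi_{gen}$, since only when these agree is $u(v_{new})$ the new vector of $I$ itself; in general one must track the higher level paramodular vector $u(v_{new})$ through the model of functions on $G$. Because $f$ is equivariant only under $\tilde R\cdot T$ and $eval$ only under $P$, neither the $G$-irreducibility of $\Pi$ nor a direct Iwasawa argument is available, so the evaluation at $e$ has to be carried out explicitly, after which the non-vanishing reduces to a standard computation on $Gl(2)$.
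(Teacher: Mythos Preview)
Your proposal is correct and follows essentially the same route as the paper: one-dimensionality from lemma \ref{dim1}, the case \nosf{VId} from the Iwasawa argument already given, and the Saito--Kurokawa cases by evaluating $u(v_{new})$ at the identity in $I=Ind_P^G(\pi\boxtimes\nu^{1/2})$ and then applying the split Waldspurger--Tunnell non-vanishing on the $Gl(2)$-new vector (which the paper isolates as lemma \ref{Waldspurger-Tunnell}).

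The obstacle you flag at the end is not really there. The paper simply cites \cite{Roberts-Schmidt}, page 171 formula (5.38), theorem 5.2.2, and proposition 5.5.5 i)--ii), which describe the paramodular new vector of the Saito--Kurokawa subrepresentation $\Pi\hookrightarrow I$ directly as a function in the induced model and give $eval(u(v_{new}))=w_{new}$; no comparison of the paramodular levels of $\Pi$ and $\Pi_{gen}$ is needed, and your worry that $u(v_{new})$ might fail to be the new vector of $I$ is irrelevant to the argument. Your intermediate reduction via $\beta_\nu(\Pi)\cong(\C,\nu)$ is valid but unnecessary: since $\tilde f$ was already shown to be surjective, one only needs that the image of $v_{new}$ in $\beta_\nu(\Pi)$ is nonzero, and this follows immediately from $eval(u(v_{new}))=w_{new}$ together with lemma \ref{Waldspurger-Tunnell}.
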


\begin{proof} $  Hom_{\tilde R \cdot T}(\Pi, \nu\boxtimes\nu)$ has dimension one
by lemma \ref{dim1}. We explicitely constructed a generator $f$
for all cases. In the case \nosf{VId}, where the new vector $v_{new}$ is spherical, we already have seen $f(v_{new})\neq 0$. It remains to show $f(v_{new})\neq 0$  
for the Saito-Kurokawa cases. We use that $f$ was explicitely constructed from the evaluation $eval$ at $e$ on the induced representation $Ind_P^G(\pi\boxtimes \nu^{1/2})$. 
By \cite{Roberts-Schmidt}, page 171 formula (5.38), quotation of  thm. 5.2.2 and  prop. 5.5.5 part i) and ii) we obtain
$eval(v_{new}) =w_{new}$ for the new vector of  the representation $\delta_P^{1/2}\otimes  (\pi\boxtimes\nu^{1/2}) \in {\cal C}_{Gl(2)}$ we induce from.
Hence it suffices to know that new vector of the $Gl(2)$-representation  $\delta_P^{1/2}\otimes (\pi\boxtimes\nu^{1/2})$ projects to a nonzero vector in $\delta_P^{1/2}\widetilde(\pi\boxtimes\nu^{1/2}) \cong \C$. 
This projection is of Waldspurger-Tunnell type, so  
the desired property follows from the next lemma. 
\end{proof}

\begin{lem} \label{Waldspurger-Tunnell} For irreducible representations $\pi\in \C_{Gl(2)}$ of dimension  $>1$ and unramified characters
$\rho$ of $k^*$ the Waldspurger-Tunnell 
space of coinvariants $\pi_\rho$ of $\pi$, defined as the maximal quotient space of $\pi$ on which all matrices $diag(1,\delta), \delta\in k^*$ in $Gl(2)$ act
as $\rho(\delta)$, has dimension one. %Furthermore 
The quotient map $p: \pi\twoheadrightarrow \pi_\rho$
is nontrivial on the Atkin-Lehner new vector $w_{new}$ of $\pi$. 
The same holds for the spherical
vector $w_{sph}$ of the representation $\pi=\nu^{1/2}\times \nu^{-1/2}$. 
\end{lem}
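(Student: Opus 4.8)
The plan is to read off $\pi_\rho$ from the Kirillov model of $\pi$. Write $\Gamma'\subset Gl(2)$ for the mirabolic subgroup stabilising the column vector $(\begin{smallmatrix}1\\0\end{smallmatrix})$, so that $\Gamma'=U\rtimes A$ with $U=\{(\begin{smallmatrix}1&x\\0&1\end{smallmatrix})\mid x\in k\}$ and $A=\{\diag(1,\delta)\mid\delta\in k^*\}$; thus $A\cong\Gamma'/U\cong k^*$ is exactly the torus occurring in the statement and $\pi_\rho=\pi_{(A,\rho)}$. Since $\pi$ is irreducible of dimension $>1$ it is generic, so Bernstein--Zelevinsky derivative theory \cite{Bernstein-Zelevinsky} (equivalently the $Gl(2)$ Kirillov model, \cite{Bump}) supplies a short exact sequence of $\Gamma'$-modules
$$0\longrightarrow C_c^\infty(k^*)\longrightarrow \pi\vert_{\Gamma'}\longrightarrow \pi^{(1)}\longrightarrow 0 ,$$
in which $C_c^\infty(k^*)=ind_U^{\Gamma'}(\psi)$ carries the scaling action of $A$, and $\pi^{(1)}$ is a smooth finite-length $A$-module whose constituents are the characters of $k^*$ prescribed by the cuspidal support of $\pi$: none if $\pi$ is supercuspidal, one if $\pi$ is a twist of the Steinberg representation, and two — forming an indecomposable self-extension when they coincide — in the principal series case. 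Applying $Hom_A(-,\rho)$ then reduces everything to these two pieces.

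For the Kirillov piece I would use that $C_c^\infty(k^*)=ind_{\{1\}}^{k^*}(\C)$ is projective in the category of smooth $k^*$-modules, so $Ext^i_A(C_c^\infty(k^*),\rho)=0$ for $i\geq 1$ and $Hom_A(C_c^\infty(k^*),\rho)=\C$, spanned by the Tate--Mellin functional $\phi\mapsto\int_{k^*}\phi(x)\rho(x)^{-1}d^*x$; as $\rho$ is unramified the Mellin transforms of the functions in $C_c^\infty(k^*)$ exhaust $\C[q^s,q^{-s}]$, so this functional is nonzero already on $C_c^\infty(k^*)$, giving $\dim\pi_\rho\geq 1$. For the quotient, $Hom_A(\eta,\rho)=Ext^1_A(\eta,\rho)=0$ for a smooth character $\eta\neq\rho$ of $k^*$ (different central characters, cf. \cite{Borel_Wallach}), while both are one-dimensional for $\eta=\rho$. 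Hence the long exact $Ext$-sequence of the displayed short exact sequence reads
$$0\to Hom_A(\pi^{(1)},\rho)\to Hom_A(\pi,\rho)\to \C\stackrel{\delta}{\longrightarrow} Ext^1_A(\pi^{(1)},\rho) ,$$
so $\dim\pi_\rho=\dim Hom_A(\pi^{(1)},\rho)+\dim\ker(\delta)$. When $\rho$ is not a constituent of $\pi^{(1)}$ — the generic situation, and in particular, by the conditions $\chi_1^2\neq\nu^{\pm1}$ and $\chi_0^2=1$, every representation arising in the Saito-Kurokawa cases of Lemma~\ref{new=sur} — both outer terms vanish and $\dim\pi_\rho=1$.

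The remaining, resonant, cases are where the only real work lies. They occur when $\rho$ equals a constituent of $\pi^{(1)}$, which forces $\pi$ to be a principal series or a twist of Steinberg and $\rho$ a $\nu^{\pm1/2}$-twist of an unramified character in the cuspidal support; and they likewise occur for the reducible representation $\nu^{1/2}\times\nu^{-1/2}$ singled out in the lemma, for which $\nu$ is one of the two constituents of $\pi^{(1)}$. In each such case one must prove that the connecting map $\delta$ is injective, i.e. that the Kirillov extension does not split ``along $\rho$'': no $(A,\rho)$-eigenfunctional of $\pi$ annihilates $C_c^\infty(k^*)$ while lifting the $\rho$-coinvariants of $\pi^{(1)}$. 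The plan is to obtain this from the explicit $Gl(2)$-theory — from the asymptotics of Kirillov functions near the origin, where a genuine logarithmic term appears exactly when two exponents collide, equivalently from the fact that the local zeta integral $\int W_v(\diag(1,\delta))\rho(\delta)^{-1}d^*\delta$, with $v$ running over $\pi$, has a pole of the full expected order. Together with the non-resonant case this yields $\dim\pi_\rho=1$ throughout, the first assertion of the lemma.

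Finally, for the non-triviality of the quotient map $p\colon\pi\twoheadrightarrow\pi_\rho\cong\C$ on the new vector: by the above $p$ is, up to scalar, the regularized Mellin transform of the Kirillov function. The Whittaker function of the Atkin--Lehner new vector $w_{new}$ is explicitly known, and the regularized value of $\int W_{w_{new}}(\diag(1,\delta))\rho(\delta)^{-1}d^*\delta$ equals, up to a nonzero constant and a monomial in $q^{s}$, the local $L$-factor $L(\pi\otimes\rho\omega_\pi^{-1},s)$ at the point under consideration; local $L$-factors never vanish, so $p(w_{new})\neq 0$. For the last clause one applies the identical computation to $\pi=\nu^{1/2}\times\nu^{-1/2}$ and its spherical vector $w_{sph}$, whose Whittaker function is given by the explicit unramified formula (\cite{Bump}); its regularized Mellin transform is again a nonzero $L$-factor. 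The main obstacle, as flagged above, is the resonant case: establishing injectivity of $\delta$, equivalently the appropriate non-splitting of the Kirillov filtration of $\pi\vert_{\Gamma'}$; everything else is formal bookkeeping with Bernstein--Zelevinsky derivatives and the standard Mellin and zeta-integral computations for $Gl(2)$.
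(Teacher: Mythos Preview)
Your approach and the paper's are essentially the same: both realise the projection $p$ as the regularised zeta integral on the Kirillov model and deduce $p(w_{new})\neq 0$ from the standard identity $Z(W_{w_{new}},s)=c\cdot L(\pi,s)$ (and analogously for the spherical vector of $\nu^{1/2}\times\nu^{-1/2}$ via the explicit unramified Whittaker formula). The difference is purely organisational: the paper packages the dimension-one claim by quoting that $\pi$ restricted to the mirabolic is a \emph{perfect} $Gl_a(1)$-module of degree $1$ (\cite{RW}, lemma 3.20), whereas you unpack this via the Bernstein--Zelevinsky filtration and the $Ext$ long exact sequence, separating non-resonant from resonant $\rho$. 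Your identification of the resonant case as the crux is exactly right, and your sketch (full pole order of the zeta integral, equivalently non-splitting of the Kirillov extension) is what the perfect-module machinery encodes.

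One small imprecision worth fixing: the Mellin functional on $C_c^\infty(k^*)$ need not extend to $\pi$ in the resonant case---indeed, $\delta\neq 0$ says precisely that it does not---so it does not by itself yield $\dim\pi_\rho\geq 1$. In the resonant case the lower bound instead comes from $(\pi^{(1)})_\rho\neq 0$ via right exactness of coinvariants; the upper bound is then what requires the injectivity of $\delta$ that you flag.
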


\begin{proof} Replacing $\pi$ by a twist, we can assume $\rho=1$. 
The restriction of $\pi$ to $\{(\begin{smallmatrix} * & * \cr 0 & 1\end{smallmatrix})\} \subseteq Gl(2)$ defines a perfect $Gl_a(1)$-module $M$ of degree 1 in all cases\footnote{This would not be true for $\nu^{-1/2}\times \nu^{1/2}$.} considered
(\cite{RW}, lemma 3.20). Hence $M$ can be embedded in
$C_b^\infty(k^*)$, and this defines the zeta integrals $Z(g,s) =\int_{k^*} g(x)\vert x\vert^s d^* x$ for $g\in M$. The projection $p: \pi \to \pi_\rho$  is realized as $p(g) = \lim_{s\to 0} Z(g,s)/L(M,s)$, up to a nonvanishing constant that depends on the chosen embedding. 
For irreducible $\pi$  the $L$-factor $L(M,s)$ agrees with the usual definition
of $L(\pi,s)$ after a shift of variables $s\mapsto s-1/2$. The zeta integral $Z(g_{new},s)$, attached to the Whittaker function $g_{new}=W_{w_{new}}$ of the new vector $w_{new}$, is the $L$-factor of $\pi$  up to the constant $W_{new}(1)\neq 0$.  For irreducible representations $\pi$ this is well known (\cite{Roberts-Schmidt}, page 3). For the indecomposable representation
$\pi=\nu^{1/2}\times \nu^{-1/2}$ the same holds for the spherical vector $w_{sph}$.
In this case $L(M,s)\!=\! L(1,s)L(\nu,s)$. Indeed by \cite{Bump}, Exercise 4.6.2 
$Z(g_{sph},s)/g_{sph}(1)$ is obtained from $ \sum_{m\!=\! 0}^\infty t^m q^{-m/2} Tr(Sym^m(diag(\alpha_1,\alpha_2))$ by evaluation at $t\!=\! q^{-s}$. Here we have $\alpha_1\!=\! \nu^{1/2}(\pi)$ and  $\alpha_2\! =\! \nu^{-1/2}(\pi)$. Thus the $L$-factor $Z(g_{sph},s)$ becomes $(1- q^{-1/2}\alpha_1 t)^{-1} (1- q^{-1/2}\alpha_2 t)^{-1} g_{sph}(1)$ evaluated at 
$t=q^{-s}$. Hence $Z(g_{sph},s)= L(1,s)L(\nu,s) g_{sph}(1)$, which proves our claim.
\end{proof}

%\noindent
\textbf{The functionals $\tilde f$}. For the extended Saito-Kurokawa representations $\Pi$ 
let us come back to lemma \ref{Atable}. We have already shown $A= i_*(\nu)\in {\cal C}$,
so we obtain for all cases an the exact sequence in ${\cal C}_2$
$$ 0 \to j_!i_*(\nu) \to \overline\Pi \to  i_*(B) \to 0 \ .$$
We now apply the Bessel functor $\beta_\nu$ to this exact sequence.
For the Mellin transforms
of characters $ \rho\neq 1 $ (\cite{RW}, lemma 4.25) %\ref{INDECOMP}
we know that
$M_\rho(i_*(\nu)=k_\rho(j_!i_*(\nu)) \cong i_*(\nu)$ and
$M^\rho(i_*(\nu))=k^\rho(j_!i_*(\nu)) =0$ holds . Since 
$\beta^\rho(\Pi)=0$ for $\rho\neq 1$ \cite{RW}, prop. 6.3 
for $\rho=\nu$ we therefore obtain the following
long exact sequence in ${\cal C}_T$
$$ \xymatrix{ 0 \ar[r] & k^\nu(i_*(B)) \ar[r]^\delta &  i_*(\nu)  \ar[r]^-{a} &  \beta_\nu(\Pi)\ar@{->>}[d]_{\tilde f}  \ar[r]^-{b} &  k_\nu(i_*(B)) \ar[r] & 0 \cr 
 &  &     & i_*(\nu) &  &  }\ ,$$
taking into account 
$\beta^\nu(\Pi)=k^\nu(\overline\Pi)$ and $\beta_\mu(\Pi)=k_\nu(\overline \Pi)$.
For $\rho=\nu$ there does not exist a Bessel model for $\Pi$. Hence $\beta_\nu(\Pi)$ and $\beta^\nu(\Pi)$ is a $ST$-module of degree 0. 
Hence all modules in this long exact sequence are trivial $S$-modules.
If we compute the multiplicity of the character $\chi=\nu$ of $T$ in the $T$-modules
$k_\nu(i_*(B))\cong k^\nu(i_*(B))$, we obtain the next lemma
which will give us another interpretation
of the functionals $\tilde f = \beta_\nu(f)$.

\begin{lem} \label{F2} For extended Saito-Kurokawa representations $\Pi$
and the Bessel character $\rho=\nu$ the following holds: 
% $k_\nu(j_!i_*(\nu)) \cong i_*(\nu)$ and
\begin{enumerate}
\item $k_\nu(i_*(B))=0$ holds for $\Pi$ of type \nosf{IIb, Vb, Vc, XIb}  and the composition $\tilde f \circ a$ induces an isomorphism 
$$ \xymatrix{0 \ar[r] &  i_*(\nu)\ar[dr]_\sim  \ar[r]^-{a} &  \beta_\nu(\Pi)\ar@{->>}[d]_{\tilde f}  \ar[r]^-{b} &  k_\nu(i_*(B)) \ar[r] & 0 \cr 
  &     & i_*(\nu) &  &  }\ $$
%$$ \tilde f \circ a: \i_*(\nu)  \cong i_*(\nu) \ .$$
\item For $\Pi$ of type \nosf{VIc}, \nosf{VId} on the other hand the map $a$ is zero
 is zero, hence $b$ 
 %$b:k_\nu(\overline\Pi)  \twoheadrightarrow k_\nu(i_*(B))$ 
 induces an isomorphism 
$\beta_\nu(\Pi)\cong k_\nu(i_*(B)) $ 
$$ b: \beta_\nu(\Pi) \cong i_*(\nu) $$ %In case {\bf VIc} therefore 
so that $\tilde f$ factorizes over the quotient map $b$,
inducing an isomorphism $k_\nu(i_*(B)) \cong i_*(\nu)$
$$ \xymatrix{ \ar[r] & i_*(\nu) \ar[r]^-{0} & \beta_\nu(\Pi)\ar[d]_{\tilde f}^\sim \ar[r]^b_\sim & i_*(\nu) \ar@{.>}[dl]^\sim \ar[r] & 0 \cr
& & i_*(\nu) &  &  }$$
 \end{enumerate}
\end{lem}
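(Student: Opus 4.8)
The plan is to analyze the long exact sequence in ${\cal C}_T$ displayed just before the lemma, feeding it the computation of the semisimplified $T$-module $k_\nu(i_*(B)) \cong k^\nu(i_*(B))$ obtained from the constituents of $B$ listed in lemma \ref{Atable} together with the rule $M_\rho(i_*(\tau)) \cong i_*(\tau)$, $M^\rho(i_*(\tau)) = 0$ for $\rho \neq 1$. Since for $\rho = \nu$ the representation $\Pi$ has no Bessel model, $\beta_\nu(\Pi)$ is an $ST$-module of degree $0$, hence all four terms in the sequence are trivial $S$-modules and the sequence is really a sequence of finite-length $T$-modules whose Euler characteristic of multiplicities must vanish. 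The functor $k_\nu$ applied to $i_*(\pi)$ for an irreducible $\pi \in {\cal C}_{Gl(2)}$ is exactly the Waldspurger--Tunnell coinvariant space $\pi_\nu$ (in the normalization of lemma \ref{Waldspurger-Tunnell}), so I would compute, case by case from lemma \ref{Atable}, the multiplicity with which the character $\chi = \nu$ of $T$ occurs in $k_\nu(i_*(B))$.

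First I would handle the cases \nosf{IIb, Vb, Vc, XIb}. Here the constituents of $B$ are, after the twist by $\nu$, principal series $\chi_1 \times \nu^{-1/2}$, $\chi_1^{-1} \times \nu^{-1/2}$ (for \nosf{IIb}), $\nu^{1/2}\chi_0 \times \nu^{-1/2}$ (for \nosf{Vb,Vc}), or nothing (for \nosf{XIb}); using $\chi_1^2 \neq \nu^{\pm 1}$ and $\chi_0^2 = 1$ one checks that the character $\chi = \nu$ of $T$ does not occur in $k_\nu$ of any of these constituents, so $k_\nu(i_*(B))_{(T,\nu)} = 0$ and hence (by the equality of multisets under $M_\rho$ and $M^\rho$) also $k^\nu(i_*(B))_{(T,\nu)} = 0$. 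Since $\tilde f \colon \beta_\nu(\Pi) \to i_*(\nu)$ is surjective by construction and $i_*(\nu)$ is the one-dimensional $T$-module on which $T$ acts by $\chi = \nu$, the $\chi=\nu$-isotypic part of $\beta_\nu(\Pi)$ is forced, via the exact sequence, to coincide with the image of $a$, which therefore is nonzero; restricting $\tilde f$ to that image gives an isomorphism $i_*(\nu) \xrightarrow{\sim} i_*(\nu)$, i.e. $\tilde f \circ a$ is an isomorphism. (One should note $a$ is automatically injective here because $\delta = 0$, as $k^\nu(i_*(B))$ has no $\chi=\nu$-part and more generally is forced to vanish for degree reasons.)

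Next I would treat \nosf{VIc} and \nosf{VId}. In both cases $B$ contains the one-dimensional summand $\nu \circ \det$ with central character $\nu^2$; applying $k_\nu = (-)_\nu$ to $i_*(\nu\circ\det)$ yields precisely the one-dimensional $T$-module with character $\chi = \nu$ (this is the elementary case, $(\nu\circ\det)_\nu \cong \C$ with $T$ acting by $\nu$), while the other constituent of $B$ in the \nosf{VId} case, $\nu^{1/2}\times\nu^{1/2}$, does not contribute a $\chi=\nu$ term. Thus in these two cases $k_\nu(i_*(B))$ has a one-dimensional $\chi=\nu$-part, and so does $k^\nu(i_*(B)) \cong k_\nu(i_*(B))$. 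Now $\beta_\nu(\Pi)$ is still of $T$-multiplicity one at $\chi=\nu$ (this is lemma \ref{dim1}); comparing with the exact sequence, the map $\delta \colon k^\nu(i_*(B)) \to i_*(\nu)$ must be an isomorphism onto $i_*(\nu)$ (both sides are one-dimensional with character $\nu$, and $\delta$ is the connecting map of a nontrivial extension — here one uses that $\beta_\nu(\Pi)$ has no $\chi=\nu^{1/2}$ constituent, so the sequence cannot split off $i_*(\nu)$ as a direct summand), hence $a = 0$ and $b$ is an isomorphism $\beta_\nu(\Pi) \xrightarrow{\sim} k_\nu(i_*(B)) \cong i_*(\nu)$. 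Since $\tilde f \neq 0$ and $\beta_\nu(\Pi)$ is then one-dimensional, $\tilde f$ must factor through the isomorphism $b$, giving the second diagram.

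The main obstacle will be the bookkeeping in the \nosf{VIc}/\nosf{VId} cases: I need to rule out that $\delta = 0$ there (equivalently, that $a \neq 0$), and this does not follow from dimension count alone but requires knowing that the extension $0 \to j_!i_*(\nu) \to \overline\Pi \to i_*(B) \to 0$ is "non-split at the $\chi=\nu$ level" — concretely, that $\beta_\nu(\Pi) = k_\nu(\overline\Pi)$ does not contain a $T$-subrepresentation isomorphic to $i_*(\nu)$ mapping isomorphically under $b$. I would extract this from the structure of $\overline\Pi$ for \nosf{VIc} and \nosf{VId} as recorded in \cite{RW} (the indecomposability of the relevant piece), or alternatively from lemma \ref{dim1} itself: if $a$ were nonzero then, together with a nonzero contribution from $k_\nu(i_*(B))$, the $\chi=\nu$-multiplicity of $\beta_\nu(\Pi)$ would be $\geq 2$, contradicting lemma \ref{dim1}. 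That last argument is the cleanest route and I would use it; the remaining steps are routine multiplicity computations with the tables.
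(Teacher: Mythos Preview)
Your approach is correct and matches the paper's, with only cosmetic differences in execution. The paper compresses your case-by-case check into a single observation: a constituent of $B$ can contribute a $T$-character $\chi=\nu$ to $k_\nu(i_*(B))$ only if its central character equals $\chi\rho=\nu^2$, and by the table in lemma \ref{Atable} such constituents exist precisely for \nosf{VIc} and \nosf{VId} (namely the summand $\nu\circ\det$). This is exactly your computation, packaged as one criterion rather than four separate checks. For part 2 the paper deduces $a=0$ from $k^\nu(i_*(B))\cong k_\nu(i_*(B))\neq 0$ together with the injectivity of $\delta$ into the one-dimensional $i_*(\nu)$, whereas you reach the same conclusion via the multiplicity bound of lemma \ref{dim1}; both routes are valid and immediate. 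Your detour through an indecomposability argument (``$\beta_\nu(\Pi)$ has no $\chi=\nu^{1/2}$ constituent'') is unnecessary, as you yourself note---the lemma \ref{dim1} count settles it cleanly.
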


\begin{proof} For the proof it suffices that $k_\nu(i_*(B))\cong i_*(\nu)$ holds for \nosf{VIc, VId}
and that the $T$-character $\chi=\nu$ does not appear
as a constituent of $k_\nu(i_*(B))$ for \nosf{IIb, Vb, Vc, XIb}.
This easily follows from the table  in lemma \ref{Atable} that lists the constituents of $B$.
Indeed, only constituents of $B$ with the central character $\omega_i=\chi\rho =\nu^2$
can contribute, but these only exist in the cases \nosf{VIc} and \nosf{VId}.
\end{proof}

\section{Specialization of $\overline \Pi$ with respect to $(Z,\mu)$}\label{divis}

For  $\Pi\in {\cal C}_G(\omega)$ the quotient space
$\overline \Pi$ is a $Gl_a(2)$-module by definition. Let $Z$ denote the center of $Gl(2)$ 
considered as our fixed subgroup $Gl(2)\subseteq Gl_a(2)$. Let $B$ denote the subgroup of upper triangular matrices in $Gl(2)$.

\begin{lem}\label{NOINV}
Suppose $\Pi \in {\cal C}_G(\omega)$ admits a nontrivial split Bessel model and is  irreducible 
and not one of the generic representations of type \nosf{VII,VIIIa, IXa}.
Then for any character $\mu$ of $Z$ the subspace $M$  of $\overline\Pi$
spanned by $\mu$-eigenvectors  of $Z\subseteq Gl(2)$ in the representation space of $\overline \Pi \in {\cal C}_{Gl(2)}$  is trivial:
$$ M=\overline\Pi^{(Z,\mu)} = 0 \ .$$
\end{lem}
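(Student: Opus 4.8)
The plan is to reduce the statement, in three steps, to a non-splitting assertion for the $Gl_a(2)$-module $\overline\Pi$, which is then settled case by case.

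\textbf{Step 1.} First I would show that every $v\in\overline\Pi^{(Z,\mu)}$ is automatically fixed by the affine translation subgroup $k^2\subseteq Gl_a(2)$. By smoothness $v$ is fixed by some compact open $U_0\subseteq k^2$; since $z_\lambda\in Z$ acts on $k^2$ by scaling by a fixed nonzero power $\lambda^{c}$, conjugating $U_0$ by a high power of $z_\pi$ (or of $z_\pi^{-1}$, according to the sign of $c$) makes it absorb any prescribed $u\in k^2$, and combining this with $\overline\Pi(z_\lambda)v=\mu(z_\lambda)v$ yields $uv=v$ for all $u\in k^2$. Hence $\overline\Pi^{(Z,\mu)}\subseteq\overline\Pi^{k^2}$, so it suffices to prove $\overline\Pi^{k^2}=0$.

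\textbf{Step 2.} Next I would locate $\overline\Pi^{k^2}$ inside $J_Q(\Pi)$ using the $Gl_a(2)$-structure of $\overline\Pi$ recalled above (\cite{RW}, proof of cor.~4.12): there is a submodule $\mathbb{S}_2^{m_\Pi}\subseteq\overline\Pi$ whose quotient $Q$ fits into $0\to j_!(A)\to Q\to i_*(B)\to 0$, with $A=i_*(J_P(\Pi)_\psi)$ and $B=J_Q(\Pi)$. Put $\overline\Pi':=\ker(\overline\Pi\twoheadrightarrow i_*(B))$, an extension of $j_!(A)$ by $\mathbb{S}_2^{m_\Pi}$. Its irreducible constituents $\mathbb{S}_2$ and $j_!i_*(\mu')$ are supported on the open orbit and so have no nonzero $k^2$-fixed vector, hence $(\overline\Pi')^{k^2}=0$; applying the left exact functor of $k^2$-invariants to $0\to\overline\Pi'\to\overline\Pi\to i_*(B)\to 0$ gives an embedding of $Gl(2)$-modules $\overline\Pi^{k^2}\hookrightarrow i_*(B)^{k^2}=B$. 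Being $k^2$-fixed and $Gl(2)$-stable, $\overline\Pi^{k^2}$ is moreover a $Gl_a(2)$-submodule of $\overline\Pi$ lying in $i_*({\cal C}_{Gl(2)})$ which, via this embedding, maps isomorphically onto a submodule $i_*(W)\subseteq i_*(B)$. Consequently $\overline\Pi^{k^2}=0$ is equivalent to the assertion that the extension $0\to\overline\Pi'\to\overline\Pi\to i_*(B)\to 0$ admits no $Gl_a(2)$-equivariant splitting over any nonzero (equivalently, irreducible) submodule $W$ of $B$.

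\textbf{Step 3 (the main obstacle).} It remains to exclude such a partial splitting, and this is where the hypotheses are used. Since $\overline\Pi'\in j_!({\cal C}_1)$ (a subcategory closed under extensions), Lemma~\ref{Restrict-extensions} reduces the non-existence of a splitting over $i_*(W)$ to the non-splitness in ${\cal C}_{Gl(2)}$ of the corresponding extension $0\to\overline\Pi'|_{Gl(2)}\to E_W\to W\to 0$. I would then combine the description of $\overline\Pi'|_{Gl(2)}$ as an iterated extension of the Gelfand--Graev representation $ind_U^{Gl(2)}(\psi_{gen})$ and of mirabolic inductions $ind_\Gamma^{Gl(2)}(\mu')$ (from the proof of Lemma~\ref{B2}) with the list of constituents of $B=J_Q(\Pi)$, comparing central characters in the spirit of the proof of Lemma~\ref{B7}; this would be carried out case by case over the irreducible $\Pi$ admitting a split Bessel model, the generic representations of type \nosf{VII}, \nosf{VIIIa}, \nosf{IXa} being precisely those for which the partial splitting cannot be obstructed and which therefore must be excluded. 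I expect the bookkeeping here — equivalently, checking that $\overline\Pi$ carries no nonzero $i_*$-type submodule for every allowed $\Pi$ — to be the real work; everything preceding it is formal. Granting it, $W=0$, whence $\overline\Pi^{k^2}=0$ and $M=\overline\Pi^{(Z,\mu)}=0$ for every character $\mu$ of $Z$.
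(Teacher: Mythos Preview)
Your Steps 1 and 2 match the paper's opening moves almost verbatim: the scaling argument to show $M\subseteq\overline\Pi^V$ (the paper first does it for $S$, then uses $Gl(2)$-stability of $M$ to get all of $V=k^2$), and the identification of $M$ with an $i_*$-type $Gl_a(2)$-submodule of $\overline\Pi$, hence with a submodule of $B=J_Q(\Pi)$.

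The gap is in Step 3. The central-character comparison you propose ``in the spirit of Lemma~\ref{B7}'' cannot work here: that lemma relies on both modules being admissible with distinct central characters, whereas $\overline\Pi'|_{Gl(2)}$ is a finite extension of Gelfand--Graev and mirabolic inductions $ind_\Gamma^{Gl(2)}(\mu')$, which are not admissible and carry every central character of $Z$. So central characters alone give no obstruction to a $Gl(2)$-linear section $W\to E_W$; to proceed your way you would need to identify the \emph{specific} extension class determined by $\Pi$ for each case, and you offer no mechanism for that. Your explanation of why \nosf{VII}, \nosf{VIIIa}, \nosf{IXa} are excluded is correspondingly vague.

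The paper's argument bypasses this entirely. Once $M\neq 0$ is a finite-length $Gl_a(2)$-submodule of $\overline\Pi$ lying in $i_*({\cal C}_{Gl(2)})$, apply the left-exact functor $k^\rho$ to the inclusion $M\hookrightarrow\overline\Pi$ to obtain $k^\rho(M)\hookrightarrow k^\rho(\overline\Pi)=\beta^\rho(\Pi)$. Excluding types \nosf{VII}, \nosf{VIIIa}, \nosf{IXa} is precisely the condition that no irreducible constituent of $B$ is supercuspidal, so $k^\rho(M)\neq 0$ for some $\rho$ (\cite{RW}, lemma 4.3). Then $\beta^\rho(\Pi)$ contains a nonzero finite-dimensional submodule; by \cite{RW}, prop.~6.3 either $\Pi$ has no split Bessel model (excluded by hypothesis) or $\beta^\rho(\Pi)$ is a perfect $Gl_a(1)$-module, whence $\kappa(\beta^\rho(\Pi))=0$ --- contradicting the presence of $k^\rho(M)$. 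This use of the structural results on $\beta^\rho(\Pi)$ from \cite{RW} is the key idea you are missing; it replaces your proposed case-by-case extension analysis with a uniform argument.
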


\begin{proof} Suppose $M\neq 0$.
The center $Z$ of $Gl(2)$ consists in $G$ of the elements $$z_\lambda=[\begin{smallmatrix} \lambda & 0 \cr 0 & \lambda
\end{smallmatrix}] = x_\lambda \cdot \tilde t$$ for $\tilde t = diag(\lambda, 1, 1, \lambda)\in \widetilde T$
and $x_\lambda\in T$.
Since $\tilde T$ commutes with $S$, we have $z_\lambda s_b z_{\lambda}^{-1} = s_{\lambda b}$.
Hence $M\subseteq \overline\Pi^S$, by the argument of the proof of lemma \ref{Restrict-extensions}. 
Since $M$ is a $Gl(2)$-submodule of the $Gl_a(2)$-module $\overline\Pi$ and since the affine subgroup $V\subseteq Gl_a(2)$ is generated by $S \subseteq V$ 
under conjugation by $Gl(2)$, it easily follows that  $M$ is contained in the  subspace of $V$-invariant
elements in $\overline\Pi$. Thus $M\subseteq \overline\Pi$ is contained in the subcategory $i_*({\cal C}_{Gl(2)})$ of ${\cal C}_2$ in the sense of \cite{RW}, 4.0. Recall that $i_*,i^*, j_!,j^!$ are exact functors such that $i^*j_!=0$, $j^!i_*=0$ and $0\to j_!j^! \to id \to i_*i^*\to 0$ holds.
Since $M$ is a submodule of $\overline \Pi$, $i^*(M)$ is a submodule of $i^*(\overline\Pi)$.
Since $i^*(\overline\Pi)\in {\cal C}_{Gl(2)}$ is of finite length (lemma \ref{Atable}), so is $M=i_*i^*(M)$. 
Hence $\dim(k^\rho(M)) < \infty$ for all $\rho$.
Since $k^\rho$ is a left exact functor and $\beta^\rho=k^\rho\circ\eta$ holds (\cite{RW}, lemma 4.16), we obtain inclusions 
$$    k^\rho(M) \hookrightarrow k^{\rho}(\overline \Pi) =\beta^\rho(\Pi) \ .$$
We supposed $M\neq 0$. Hence $i^*(M)\neq 0$ holds  by our previous arguments
and $M=i_*i^*M $ is a constituent of $i_*(B)$ defined as in lemma \ref{Atable}. 
By our assumptions on $\Pi$ we excluded type \nosf{VII,VIIIa, IXa}. So none of the irreducible constituents of
$i^*(M)$ is supercuspidal; see \cite{Roberts-Schmidt}  table A.5, \cite{Roberts-Schmidt_Bessel}, thm. 6.2.2. Hence there exists $\rho$ such that $k^\rho(M)\neq 0$ and hence
$\beta^\rho(\Pi)\neq 0$. This assertion immediately follows from the description of $k^\rho(M)$ in \cite{RW}, lemma 4.3.
By \cite{RW}, prop. 6.3,  $\beta^\rho(\Pi)\neq 0$ is only possible for non-generic $\Pi$ such that 
either $\Pi$ is of type \nosf{IVd, Vd, VIb} and therefore has no split Bessel model (which is excluded by our assumptions); or $\rho$ provides a split Bessel model for $\Pi$.
In this case $\beta^\rho(\Pi)$ is a perfect $TS$-module by \cite{RW}, prop. 6.3.2.
But $\beta^\rho(\Pi)$ is not perfect by \cite{RW}, lemma 3.13, since $\beta^\rho(\Pi)$ contains the finite dimensional submodule $ k^\rho(M)\neq 0$ by our choice of $\rho$ and hence $\kappa(\beta^\rho(\Pi))\neq 0$ holds by the definition of $\kappa$ in \cite{RW}. This is a contradiction, which 
%leaves us to consider the cases \nosf{VII,VIIIa, IXa}. In these cases
%$B$ has at most 
%Since $M\neq 0$
proves $M=0$.
\end{proof} 

%\noindent
\textbf{$(Z,\mu)$-specialization}. $Z({\frak o})$ is compact. Hence for all $M\in {\cal C}_{Gl(2)}$ the natural map $M^{Z({\frak o})} \to M_{Z({\frak o})}$ from the space of $Z({\frak o})$-invariants to the space of $Z({\frak o})$-coinvariants 
is an isomorphism. For unramified characters $\mu$ of $Z$ put
$M^{\mu^{(n)}} = (M^{Z({\frak o})})^{(z_\pi -\mu(\pi))^n}$ and $M_{\mu^{(n)}} = (M_{Z({\frak o})})/{(z_\pi -\mu(\pi))^n}$ where $z_\lambda= \lambda \cdot (\begin{smallmatrix}
1 & 0 \cr 0 & 1\end{smallmatrix})$. We also write $M^{(Z,\mu)}=M^\mu$ resp. 
$M_{(Z,\mu)}=M_\mu$.
Let $\Lambda= \C[\tau, \tau^{-1}]$ denote the ring of Laurent polynomials
in the variable $\tau$. Let $\tau$ act on $M^{Z({\frak o})}$ by
the central element $z_{\pi}^{-1} = \pi^{-1} \cdot (\begin{smallmatrix} 1 & 0 \cr 0 & 1 \end{smallmatrix})$ of $Gl(2)$. This makes the $Gl(2)$-module $M^{Z({\frak o})}$ into a $\Lambda$-module so that both actions commute. 

\medskip
\textbf{Remark}. Since $\Lambda$ is a principal ideal domain,
whose prime elements are the $z_\pi - \mu(\pi)$, the assertion
$\overline\Pi^{(Z,\mu)}=0$ for all smooth characters $\mu$ of $Z$
(as in lemma \ref{NOINV}) implies that $M^{Z({\frak o})}$ is a torsion-free $\Lambda$-module. Since a torsionfree module over a Dedekind ring is flat, 
 $M^{Z({\frak o})}$ then is a flat $\Lambda$-module. 

\medskip
Suppose the assertion
of lemma \ref{NOINV} holds for $\Pi$.
Then for all integers $m$, such that $1\leq m\leq n-1$, the exact sequences 
$$  0 \to \Lambda/( z_\pi-\mu(\pi))^m \to \Lambda/( z_\pi-\mu(\pi))^n \to \Lambda/( z_\pi -\mu(\pi))^{n-m} \to 0 $$
and $\Lambda$-flatness, as explained in the last remark, give by tensoring for all $n$ 
the following exact  sequences showing the \lq{\textit{divisibility property}\rq
 $$   0 \to \overline\Pi_{\mu^{(m)}} \to \overline\Pi_{\mu^{(n)}}  \to \overline\Pi_{\mu^{(n-m)}} 
\to 0 \  .$$   
In particular, the assertion
$\overline\Pi^{(Z,\mu)}=0$  implies  that the kernels of the
natural quotient morphisms $\overline\Pi_{\mu^{(n)}} \twoheadrightarrow \overline\Pi_{\mu^{(n-1)}}$
are isomorphic to $\overline\Pi_{\mu}$ as a $Gl(2)$-module
$$  \overline\Pi_{\mu} \cong Kern(\overline\Pi_{\mu^{(n)}} \twoheadrightarrow \overline\Pi_{\mu^{(n-1)}}) \ .$$ 
\noindent
For irreducible representations $\Pi$ of extended local Saito-Kurokawa type recall from lemma \ref{Atable} the exact sequence
$   0 \to j_!(A) \to \overline\Pi \to i_*(B) \to 0 $
for $A=i_*(\nu)$. Since $\overline\Pi^{(Z,\mu)}=0$ holds for 
 extended local Saito-Kurokawa representations by lemma \ref{NOINV}, this implies
$\overline\Pi^{\mu^{(n)}}=0$ for all $n$. Thus we obtain the following exact sequences 
$$ (*) \quad 0 \to i_*(B)^{\mu^{(n)}} \to j_!(A)_{\mu^{(n)}} \to \overline\Pi_{\mu^{(n)}}
 \to i_*(B)_{\mu^{(n)}} \to 0 \ .$$
 
 \medskip
 \textbf{Notations}. For smooth representations $\alpha$, $\beta$
of $k^*$ (not necessarily characters) we consider the
smooth representation $\alpha\boxtimes\beta$ of $B$ on the tensor product of the representation spaces of $\alpha$ and $\beta$ defined by $(\alpha\boxtimes \beta)(b) = \alpha(t_1) \otimes \beta(t_2)$ for $b=(\begin{smallmatrix} t_1 & * \cr 0 & t_2 \end{smallmatrix})\in B$. The (unnormalized) induced 
representation $Ind_B^{Gl(2)}(\alpha\boxtimes \beta)$ defines a smooth representation of $Gl(2)$
denoted $(\nu^{-1/2}\alpha) \times (\nu^{1/2}\beta)$.  

\medskip 
By definition the functor $j_!$ (see \cite{RW})  is defined by compact induction
from $Gl_a(1)\hookrightarrow Gl_a(2)$. The intersection of the image of $Gl_a(1)$
with $Gl(2)\subseteq Gl_a(2)$ is the mirabolic subgroup of the Borel subgroup
$B\subseteq Gl(2)$ of upper triangular matrices. 
Using double induction, first from the mirabolic subgroup of $B$
to $B$, we obtain the $Gl(2)$-representation $  Ind_B^{Gl(2)}(\nu \boxtimes {\cal S})$.
Indeed $ind_{\{1\}}^{\, k^*}(1) = {\cal S}$  is the Schwartz space 
${\cal S} = C_c^\infty(k^*)$, considered as $k^*$-module with respect to the action $(\lambda \cdot f)(t)=f(\lambda t)$ for $\lambda\in k^*$. In our situation, for representations $\Pi$
of extended Saito-Kurokawa type, we have $A=i_*(\nu)$ in the sense of lemma \ref{Atable}.
For such $A$, more generally for any $A=i_*(\chi)$ given
by some smooth representation $\chi$ of $Gl(1)$, we have 
%from the proof of lemma 4.4 in \cite{RW} 

\begin{lem} \label{Schw} For $A=i_*(\chi)\in {\cal C}_1$  the restriction $j_!(A)\vert_{Gl(2)} $ of $j_!(A)\in {\cal C}_2 $ to $Gl(2)$ can be identified
with the  unnormalized induced representation from the Borel subgroup $B$ of upper triangular matrices 
$  Ind_B^{Gl(2)}(\chi \boxtimes {\cal S})$. 
\end{lem}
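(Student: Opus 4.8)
The plan is to unravel the definition of $j_!$ and then, exactly along the lines sketched in the paragraph preceding the statement, to reduce the assertion to the computation of $ind_\Gamma^B(i_*(\chi))$. Recall from \cite{Bernstein-Zelevinsky} and \cite{RW} that $j_!\colon{\cal C}_1\to{\cal C}_2$ is (up to the normalising conventions of \cite{RW}) compact induction to $Gl_a(2)$ from the subgroup $Gl_a(1)\cdot V$, where $V\cong k^2$ is the affine part of $Gl_a(2)$, twisted by a fixed generic character $\psi$ of $V$ (chosen, as in \cite{Bernstein-Zelevinsky}, so as to be stabilised by $Gl_a(1)$); and that $i_*(\chi)$ is the inflation of $\chi$ to $Gl_a(1)$ with $V$ acting trivially. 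Under the embedding of \cite{RW}, the group $Gl_a(1)$ is the mirabolic subgroup $\Gamma$ of $B$, which lies in $Gl(2)$, so that $Gl_a(1)\cdot V=\Gamma V$ with $\Gamma\cap V=\{1\}$.

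The first step is the Mackey restriction of $j_!(i_*(\chi))=ind_{\Gamma V}^{Gl_a(2)}\bigl(i_*(\chi)\otimes\psi\bigr)$ to $Gl(2)$. Using $Gl_a(2)=Gl(2)\ltimes V$, the homogeneous space $Gl_a(2)/Gl(2)$ is $Gl(2)$-equivariantly identified with $V$, and since $\Gamma V$ contains all the translations it acts transitively there; hence $\Gamma V\backslash Gl_a(2)/Gl(2)$ consists of a single double coset, with $\Gamma V\cap Gl(2)=\Gamma$. As $Gl_a(2)=\Gamma V\cdot Gl(2)$ is open, the single-double-coset form of Mackey's theorem applies with no modulus twist (for unnormalised compact induction the restriction map $f\mapsto f\vert_{Gl(2)}$ is directly a $Gl(2)$-isomorphism), and gives $j_!(i_*(\chi))\vert_{Gl(2)}\cong ind_\Gamma^{Gl(2)}\bigl((i_*(\chi)\otimes\psi)\vert_\Gamma\bigr)=ind_\Gamma^{Gl(2)}(i_*(\chi))$, the last equality because $\psi$ is supported on $V$ and $\Gamma\cap V=\{1\}$.

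Next I would apply induction in stages through $B$: since $\Gamma\subseteq B\subseteq Gl(2)$ and $B\backslash Gl(2)\cong{\mathbb P}^1(k)$ is compact, one has $ind_B^{Gl(2)}=Ind_B^{Gl(2)}$ and $ind_\Gamma^{Gl(2)}(i_*(\chi))\cong Ind_B^{Gl(2)}\bigl(ind_\Gamma^B(i_*(\chi))\bigr)$. It then remains to identify $ind_\Gamma^B(i_*(\chi))$ with $\chi\boxtimes{\cal S}$. The unipotent radical $N\subseteq\Gamma$ acts trivially on $i_*(\chi)$, hence on $ind_\Gamma^B(i_*(\chi))$, so this $B$-module factors through the torus $T=T_1\times T_2$; the coset space $\Gamma\backslash B$ is identified with $k^*$ via the diagonal entry on which $\Gamma$ has no torus part, and a short check on a section shows that $T_1$ acts by $\chi$, that $T_2$ acts by the multiplicative translation $t\mapsto f(\lambda t)$ on $\Gamma\backslash B\cong k^*$, and that $N$ acts trivially. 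Since $ind_{\{1\}}^{k^*}(1)={\cal S}=C_c^\infty(k^*)$ carries exactly this translation action, one obtains $ind_\Gamma^B(i_*(\chi))\cong\chi\boxtimes{\cal S}$, and the whole chain of identifications goes through verbatim for an arbitrary smooth representation $\chi$ of $Gl(1)$, not only for a character. Combining the three isomorphisms gives $j_!(i_*(\chi))\vert_{Gl(2)}\cong Ind_B^{Gl(2)}(\chi\boxtimes{\cal S})$.

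I expect the genuine obstacle to be the first step: one must feed in precisely the right description of $j_!$ — compact induction from $\Gamma V$ twisted by a generic $\psi$, and not merely from $\Gamma$ — so that the $Gl(2)$-orbit count on $Gl_a(2)/Gl(2)$ collapses to a single orbit and no boundary strata contribute, and one must keep track of the unnormalised conventions of \cite{RW} for $j_!$, $i_*$ and $Ind_B^{Gl(2)}$ carefully enough that no spurious power of $\nu$ survives in the final answer. Everything else is formal, given Mackey's theorem and transitivity of induction.
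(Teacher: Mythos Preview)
Your proposal is correct and follows essentially the same route as the paper: the paper's argument (given in the paragraph immediately preceding the lemma) also reduces to $j_!(i_*(\chi))\vert_{Gl(2)}\cong ind_\Gamma^{Gl(2)}(\chi)$, then applies induction in stages through $B$ and identifies $ind_\Gamma^B(\chi)$ with $\chi\boxtimes{\cal S}$ via $ind_{\{1\}}^{k^*}(1)={\cal S}$. The only difference is that where you carry out the Mackey restriction explicitly from the Bernstein--Zelevinsky description $j_!=ind_{\Gamma V}^{Gl_a(2)}(\,\cdot\,\otimes\psi)$, the paper simply cites the definition of $j_!$ from \cite{RW} and the identification $j_!i_*(\mu)\vert_{Gl(2)}\cong ind_\Gamma^{Gl(2)}(\mu)$ already recorded in lemma~\ref{B2}.
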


Let us return to our case $A=i_*(\nu)$.
Notice ${\cal S} \cong \nu^{-1}\otimes{\cal S}$ as $k^*$-module. Hence in ${\cal C}_{Gl(2)}$
$$ Ind_B^{Gl(2)}(\nu \boxtimes {\cal S})\cong   Ind_B^{Gl(2)}(\nu \boxtimes (\nu^{-1} \otimes{\cal S})) \ .$$
 The space on the right 
consists of smooth functions $f: Gl(2) \to {\cal S}$ such that $f(gb)= \nu(a)\nu^{-1}(d){\cal S}(d) f(g)$ holds
for $b\in B$ with diagonal entries $a$ and $d$, where functions in ${\cal S}$ will be written as functions 
of the variable $x\in k^*$.
Then $${\cal S}^{{\frak o}^*} \ =\ \bigoplus_{n\in \Z} \ \C\cdot 1_{\pi^n \cdot {\frak o}^*}\ $$
for the characteristic functions $1_{\pi^n \cdot {\frak o}^*}$ of the subsets $\pi^n \cdot {\frak o}^*$ of $k^*$.
We view functions  of the induced representation $Ind_B^{Gl(2)}(\nu \boxtimes (\nu^{-1} \otimes{\cal S}))$ as $\C$-valued functions $f(g,x)$. In this sense 
$z_\lambda\in Z\subseteq Gl(2)$ acts on $f(g,x)$ by $(z_\lambda f)(g,x)= f(gz_\lambda,x)= f(z_\lambda g,x)=
\nu(\lambda) \nu^{-1}(\lambda)f(g, \lambda x)= f(g,\lambda x)$. Thus functions $f(g)\in
 Ind_B^{Gl(2)}(\nu \boxtimes \nu^{-1} \otimes{\cal S})^{Z({\frak o})}$ have values in $\Lambda:= \bigoplus_{n\in \Z} \C\cdot 1_{\pi^n \cdot {\frak o}^*}$. The action of $Z$ on $f:G \to \Lambda$ factorizes over $Z/Z({\frak o}) \cong \tau^\Z$ for $\tau= z_\pi^{-1}$. Then $\tau^i(\sum a_j t^j) = \sum_j a_jt^{i+j}$,
and so we may identify $\Lambda$ with the group ring $\C[\tau, \tau^{-1}]$ of $\tau^\Z$, 
or we may think of $\Lambda$ as the ring of Laurent polynomials $\C[t,t^{-1}]$
where $t^n$ corresponds to the function $1_{\pi^n \cdot {\frak o}^*}$ by viewing 
$t=\tau$ as the indeterminate. In this sense 
$ Ind_B^{Gl(2)}(\nu \boxtimes \nu^{-1} \otimes{\cal S})^{Z({\frak o})}$ becomes the $\Lambda$-module
$$   I_B^{Gl(2)}(\sigma_\Lambda) := Ind_B^{Gl(2)}\Bigl(\nu \boxtimes (\nu^{-1}\otimes \Lambda)\Bigr)  \ $$
of smooth $\Lambda$ valued function $f:G\to \Lambda$ such that $f(bg)= \sigma_\Lambda(b)f(g)$
holds for the character $$\sigma_\Lambda: B \to \Lambda^*$$ that sends $b\in B$ with diagonal entries $a$ and $d$ 
to $\nu(a/d)\cdot t^{-v(d)} \in \Lambda^*$.
The action of $G$ commutes with the action of $\Lambda$.
For any $\Lambda$-module $L$ this defines a smooth representation
in ${\cal C}_{Gl(2)}$ via the tensor product
 $$ I_B^{Gl(2)}(\sigma_\Lambda) \otimes_\Lambda L \ .$$
 Since the functions in  $I_B^{Gl(2)}(\sigma_\Lambda)$ are smooth $\Lambda$-valued
 with compact support, the tensor product with $L$ commutes with induction. Hence
 the elements of $I_B^{Gl(2)}(\sigma_\Lambda) \otimes_\Lambda L $ can be considered
 as $L$-valued smooth functions $f$ on $G$ with the property $$f(bg)=\sigma_\Lambda(b)f(g)\ .$$ 

\bigskip\noindent
For any fixed  unramified character $\mu: Z \to \C^*$ 
let $\mathsf{T}$ denote the prime element  $\mathsf{T} = z_\pi - \mu(\pi)$
of $\Lambda$. For any $\Lambda$-module we consider the 
decreasing $\mathsf{T}$-filtration $F_i(L)= \mathsf{T}^i \cdot L$ for $i=0,1,...$ 
with $F_0(L)=L$. For $L=\Lambda$ 
we have $F_0(\Lambda)/F_n(\Lambda) 
=\Lambda/\mathsf{T}^n\Lambda
=\mu^{(n)}$  as a $\Lambda$-module, i.e. as an unramified $Z$-module. 
With this notation $\overline\Pi_{\mu^{(n)}} = F_0(\overline\Pi^{Z{\frak o}})/
F_n(\overline\Pi^{Z{\frak o}})$.
%defines a $\Lambda$-module $\Lambda(\mu)$ of rank 1
%on which $\tau$ acts by $\mu(\pi)^{-1}$.
%This being said, 
%is isomorphic to the $\Lambda$-module
%$\Lambda(\mu) \otimes_\Lambda 1^{(n)}$ for the $\Lambda$-module $1^{(n)}$ defined by the quotient
%$\Lambda/(z_{\pi}-1)^n\Lambda = \Lambda/(\tau -1)^n\Lambda$. 
%It can be viewed 
% a Jordan block $1^{(n)}\cong \C[\mathsf{T}]/\mathsf{T}^n$ of length $n$ for the operator $\mathsf{T}=\tau -1$. 
% It has a natural filtration by the $\Lambda$-submodules $F_i = \mathsf{T}^i \cdot 1^{(n)}$ such that
% $$   F_i/F_{i+1} \cong  \Lambda/(\tau -1)\Lambda \cong \C $$
% restricts to the trivial representation of $Z$ resp. $\tau^\Z \subseteq \Lambda$ that is denoted $1$.
Unraveling the definitions, for the $\Lambda$-module $L = \mu^{(n)}= \Lambda/ \mathsf{T}^n \Lambda$ we obtain from lemma \ref{Schw}
$$ (j_!(A)\vert_{Gl(2)})_{\mu^{(n)}} \ \cong \ Ind_B^{Gl(2)}(\sigma_\Lambda) \otimes_\Lambda L 
\cong Ind_B^{Gl(2)}(\sigma_\Lambda \otimes_\Lambda L) \ $$
because induction is an exact functor. 
%Indeed ${\cal S}^{Z({\frak o})} = \C[z_\pi, z_{\pi}^{-1}] 1_{{\frak o}^*}$ is a free module
%over the Laurent polynomial ring $\Lambda = \C[z_\pi, z_{\pi}^{-1}]$ endowed with a smooth action of $Gl(2)$
%compatible with the $\Lambda$-module structure. Notice $$\Lambda/(z_\pi - \mu(\pi))^n \cong \C[\tau]/\tau^n = 1^{(n)}$$ is a $\mu$-Jordan block of length $n$ for the monodromy operator $\tau = z_\pi - \mu(\pi)$. 
Ignoring the $\Lambda$-structure, as a $B$-module $\sigma_\Lambda \otimes_\Lambda L = \nu\boxtimes (\nu^{-1}\mu \otimes 1^{(n)})$
 will be identified with the $\C$-vectorspace $\Lambda/\mathsf{T}^n\Lambda$ % $1^{(n)}\cong \C^n$ 
 such that $b\in B$ acts by $b\cdot v=  \nu(a/d)\mu(d) \tau^{-v(d)} v$
for $v\in \Lambda/\mathsf{T}^n\Lambda $, whereas the center $Z$ acts by $z_\lambda v = \mu(\lambda) \tau^{v(\lambda)} \cdot v$.
Hence, as a representation of $Gl(2)$
$$  (j_!(A)\vert_{Gl(2)})_{\mu^{(n)}} \ \cong \ \nu^{1/2}\times (\nu^{-1/2}\mu\otimes 1^{(n)}) \ .$$
The same arguments show 

\begin{lem}\label{l6.3} For $A=(\nu^s)^{(m)}$ the module $(j_!i_*(A))_{Z,\mu}$ has a
composition series of modules of length $m$ whose graded pieces are of the form
$$(j_!i_*(\nu^s))_{Z,\mu} \cong  \nu^{s-1/2} \times (\nu^{-s+1/2}\mu)\ .$$
Furthermore $(j_!i_*(A))^{Z,\mu}=0$ holds (divisibility property).
\end{lem}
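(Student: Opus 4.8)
The plan is to prove both assertions simultaneously, by induction on $m$, reducing everything to the computation already carried out above for $A=i_*(\nu)$. First I would do the base case $m=1$, where $A=\nu^s$ is a character: here one simply reruns the preceding computation with $\nu$ replaced by $\nu^s$. By lemma \ref{Schw}, $j_!i_*(\nu^s)\vert_{Gl(2)}\cong Ind_B^{Gl(2)}(\nu^s\boxtimes{\cal S})$, and since ${\cal S}\cong\nu^{-s}\otimes{\cal S}$ as a $k^*$-module (via $f(x)\mapsto\nu^s(x)f(x)$) this is $Ind_B^{Gl(2)}\bigl(\nu^s\boxtimes(\nu^{-s}\otimes{\cal S})\bigr)$, the twist being arranged precisely so that $Z$ acts only through the Schwartz factor, by translation of the argument. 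Passing to $Z({\frak o})$-invariants then gives $Ind_B^{Gl(2)}\bigl(\nu^s\boxtimes(\nu^{-s}\otimes\Lambda)\bigr)$, on which $Z/Z({\frak o})$ acts through the free rank-one $\Lambda$-module $\nu^{-s}\otimes\Lambda$. Because compact induction of a torsion-free $\Lambda$-module is again torsion-free, multiplication by $\mathsf{T}=z_\pi-\mu(\pi)$ is injective there, so $j_!i_*(\nu^s)^{(Z,\mu)}=0$; and since the functions in the induced module are compactly supported modulo $B$, the tensor product over $\Lambda$ commutes with induction, so
$$ j_!i_*(\nu^s)_{(Z,\mu)}\ \cong\ Ind_B^{Gl(2)}\bigl(\nu^s\boxtimes(\nu^{-s}\mu)\bigr)\ \cong\ \nu^{s-1/2}\times(\nu^{-s+1/2}\mu), $$
the last identification being the same normalization bookkeeping performed in the text for $s=1$.

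For the inductive step I would use the short exact sequence $0\to\nu^s\to(\nu^s)^{(m)}\to(\nu^s)^{(m-1)}\to 0$ of $k^*$-modules, one step of the radical filtration of the Jordan block $(\nu^s)^{(m)}$. Applying the exact functor $j_!\circ i_*$, and then the exact functor of $Z({\frak o})$-invariants, produces a short exact sequence of $\Lambda$-modules; tensoring it over the principal ideal domain $\Lambda$ with $\Lambda/\mathsf{T}\Lambda$ and using that for a $\Lambda$-module $P$ one has $P\otimes_\Lambda\Lambda/\mathsf{T}\Lambda=P_{(Z,\mu)}$ and $\mathrm{Tor}_1^\Lambda(\Lambda/\mathsf{T}\Lambda,P)=\{p:\mathsf{T}p=0\}=P^{(Z,\mu)}$ gives the six-term exact sequence
$$ 0\to {N'}^{(Z,\mu)}\to N^{(Z,\mu)}\to {N''}^{(Z,\mu)}\to N'_{(Z,\mu)}\to N_{(Z,\mu)}\to N''_{(Z,\mu)}\to 0 $$
with $N'=j_!i_*(\nu^s)$, $N=j_!i_*\bigl((\nu^s)^{(m)}\bigr)$ and $N''=j_!i_*\bigl((\nu^s)^{(m-1)}\bigr)$. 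The base case gives ${N'}^{(Z,\mu)}=0$ and the inductive hypothesis gives ${N''}^{(Z,\mu)}=0$, hence $N^{(Z,\mu)}=0$ — the divisibility property for $m$ — and the remaining four terms form the short exact sequence $0\to N'_{(Z,\mu)}\to N_{(Z,\mu)}\to N''_{(Z,\mu)}\to 0$. Thus $N_{(Z,\mu)}$ is an extension of $N''_{(Z,\mu)}$, which by induction carries a length-$(m-1)$ filtration with graded pieces $j_!i_*(\nu^s)_{(Z,\mu)}$, by $N'_{(Z,\mu)}=j_!i_*(\nu^s)_{(Z,\mu)}$; this produces the asserted length-$m$ filtration.

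The homological part of this is formal; the one place where the specifics of the situation enter is the base case, namely checking that after the twist ${\cal S}\cong\nu^{-s}\otimes{\cal S}$ the centre genuinely acts on the $Z({\frak o})$-invariants through a free $\Lambda$-module, and keeping track of the normalization factor $\nu^{\pm1/2}$ that turns the unnormalized $Ind_B^{Gl(2)}(\alpha\boxtimes\beta)$ into $(\nu^{-1/2}\alpha)\times(\nu^{1/2}\beta)$, so that the specialized $B$-module is read off correctly as $\nu^s\boxtimes(\nu^{-s}\mu)$, i.e. $\nu^{s-1/2}\times(\nu^{-s+1/2}\mu)$. This is exactly the computation already done in the excerpt, with $\nu^s$ substituted for $\nu$, so no new input is needed. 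Proving the divisibility statement jointly with the filtration statement is what keeps the induction clean: after a single extension step $N^{Z({\frak o})}$ need no longer be $\Lambda$-torsion-free, and it is precisely the vanishing $N^{(Z,\mu)}=0$ that collapses the six-term sequence to a short exact sequence at each stage.
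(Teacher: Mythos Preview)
Your proof is correct and follows essentially the same line as the paper's: reduce to $m=1$ via the filtration of $(\nu^s)^{(m)}$ by copies of $\nu^s$, use exactness of $j_!\circ i_*$ and of induction, and invoke vanishing of $(Z,\mu)$-invariants on the graded pieces to prevent loss when passing to coinvariants. The one small difference is that the paper obtains $(j_!i_*(\nu^s))^{(Z,\mu)}=0$ by appealing to (the argument of) lemma \ref{NOINV}, whereas you extract it directly from the free rank-one $\Lambda$-module structure on $Ind_B^{Gl(2)}\bigl(\nu^s\boxtimes(\nu^{-s}\otimes\Lambda)\bigr)$; your route is arguably more self-contained here, since lemma \ref{NOINV} is literally stated for $\overline\Pi$ rather than for $j_!i_*(\nu^s)$.
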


\begin{proof} Induction is an exact functor and $(\nu^s)^{(m)}$
has a composition series of
length $m$  whose graded pieces are $\nu^s$.
Hence $(j_!i_*(A))_{Z,\mu}$ has a composition series of
length $m$ whose graded pieces are $(j_!i_*(\nu^s))_{Z,\mu}$.
Since $(j_!i_*(\nu^s))^{Z,\mu}=0$ by lemma \ref{NOINV},
the claim reduces to the case $m=1$. Then the proof is 
the same as in the special case $s=1$, $n=1$ 
discussed preceding the lemma. 
\end{proof}

\textbf{The $Gl(2)$-modules $\widehat \Pi$}. For $\mu\!=\!\nu^2$ put $\widehat \Pi \!=\! \overline\Pi_{(Z,\mu)}$. So $\widehat \Pi$ is the $Z$-specialization of $\overline \Pi$ with respect to the character $\mu=\nu^2$ of $Z$. 
From $(*)$ in the case $n=1$ we  obtain the following exact sequence  of $Gl(2)$-modules
$$ (**) \quad 0 \to i_*(B)^{(Z,\mu)}  \to \nu^{1/2}\times \nu^{3/2} \to \widehat\Pi
 \to i_*(B)_{(Z,\mu)}  \to 0 \ .$$
Since  $\mu=\nu^2$,
lemma \ref{Atable} implies $i_*(B)_{(Z,\mu)} \cong i_*(B)^{(Z,\mu)}=0 $
for \nosf{IIb, Vbc, XIb} and $i_*(B)_{(Z,\mu)} \cong i_*(B)^{(Z,\mu)}  =\nu\circ\det$ in the cases \nosf{VIc, VId}.  By $(**)$ this immediately gives the next lemma for the first 
three cases.

\begin{lem} \label{needed} The
$Gl(2)$-module $\widehat \Pi$ is isomorphic to $\nu^{1/2}\times \nu^{3/2}$ for the cases \nosf{IIb, Vbc, XIb} and $\widehat \Pi$ is isomorphic to $\nu^{3/2}\times \nu^{1/2}$ for the cases \nosf{VIc, VId}.
\end{lem}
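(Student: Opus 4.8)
The strategy is to read off everything from the four-term exact sequence $(**)$ together with Lemma~\ref{Atable}. For the cases \nosf{IIb, Vbc, XIb} one knows from Lemma~\ref{Atable} that the constituents of $B$ all carry central characters $\omega_i\neq\nu^2$ (this is precisely the content of the footnote in Lemma~\ref{B7}: $\chi_0\neq 1$ and $\chi_1\neq\nu^{\pm 3/2}$ always hold), hence the $(Z,\nu^2)$-invariants and $(Z,\nu^2)$-coinvariants of $i_*(B)$ both vanish. Feeding this into $(**)$ collapses it to an isomorphism $\nu^{1/2}\times\nu^{3/2}\;\xrightarrow{\ \sim\ }\;\widehat\Pi$, which is the first assertion. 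For the cases \nosf{VIc, VId} the table in Lemma~\ref{Atable} shows that $B$ has exactly one constituent with central character $\nu^2$, namely the one-dimensional summand $\nu\circ\det$, and that this summand is in fact a direct summand of $B$; hence $i_*(B)^{(Z,\mu)}\cong i_*(B)_{(Z,\mu)}\cong\nu\circ\det$ for $\mu=\nu^2$.

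So the remaining task is the two cases \nosf{VIc, VId}, where $(**)$ reads
$$ 0 \to \nu\!\circ\!\det \to \nu^{1/2}\times\nu^{3/2} \to \widehat\Pi \to \nu\!\circ\!\det \to 0 \ . $$
Here I would argue as follows. The induced representation $\nu^{1/2}\times\nu^{3/2}$ is (after an unramified twist) the principal series $\nu^{-1}\!\times\!1$ up to normalization; its Jordan--H\"older constituents are the one-dimensional $\nu\!\circ\!\det$ and the twisted Steinberg $\nu\!\circ\!\det\otimes Sp$ — more precisely, in our normalization $\nu^{1/2}\times\nu^{3/2}$ has a unique irreducible submodule isomorphic to $\nu\!\circ\!\det$ with quotient an irreducible of the form $\nu\otimes Sp$, while the opposite induced $\nu^{3/2}\times\nu^{1/2}$ has the twisted Steinberg as submodule and $\nu\!\circ\!\det$ as quotient. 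Now in $(**)$ the injection $\nu\!\circ\!\det\hookrightarrow \nu^{1/2}\times\nu^{3/2}$ must, by irreducibility of the source and uniqueness of the one-dimensional submodule, be the canonical inclusion of that submodule; therefore the image of $\nu^{1/2}\times\nu^{3/2}$ in $\widehat\Pi$ is the twisted Steinberg $\nu\otimes Sp$. The four-term sequence then exhibits $\widehat\Pi$ as an extension
$$ 0 \to \nu\otimes Sp \to \widehat\Pi \to \nu\!\circ\!\det \to 0 \ , $$
i.e. $\widehat\Pi$ has the same constituents as $\nu^{3/2}\times\nu^{1/2}$ and the submodule/quotient structure arranged oppositely to $\nu^{1/2}\times\nu^{3/2}$. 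To pin down $\widehat\Pi\cong\nu^{3/2}\times\nu^{1/2}$ on the nose it remains to see that this extension is the non-split one (equivalently, that $\widehat\Pi$ is indecomposable, since the split extension would give $\nu^{3/2}\times\nu^{1/2}$ only if that module were itself split, which it is not). For \nosf{VId} this is immediate because there $\overline\Pi$ and hence $\widehat\Pi$ is cyclic, generated by the image of the spherical new vector, so cannot contain $\nu\!\circ\!\det$ as a direct summand (the unique spherical line must lie in the Steinberg part modulo the $\nu\!\circ\!\det$ sub, matching $\nu^{3/2}\times\nu^{1/2}$); for \nosf{VIc} the same follows since $\Pi$ (hence $\overline\Pi$) has paramodular level $1$ and the image of $v_{new}$ generates, so again no one-dimensional summand can split off.

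The main obstacle I expect is precisely this last non-splitting point: ruling out $\widehat\Pi\cong (\nu\!\circ\!\det)\oplus(\nu\otimes Sp)$ and instead getting the genuinely mixed module $\nu^{3/2}\times\nu^{1/2}$. The cleanest way around it is to avoid the extension-class computation altogether and instead identify the connecting maps in $(**)$ directly: the first map is the one-dimensional submodule inclusion, so its cokernel inside $\widehat\Pi$ is $\nu\otimes Sp$ as a \emph{submodule} of $\widehat\Pi$ (not a subquotient), and dually the last map realizes $\nu\!\circ\!\det$ as a \emph{quotient} of $\widehat\Pi$; a module with twisted Steinberg as a submodule and $\nu\!\circ\!\det$ as a quotient, of total length $2$, is forced to be the length-two quotient of $\nu^{3/2}\times\nu^{1/2}$, hence equal to $\nu^{3/2}\times\nu^{1/2}$ itself since that induced module already has length $2$. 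This reduces the whole lemma to bookkeeping with Lemma~\ref{Atable} plus the elementary submodule structure of $\nu^{1/2}\times\nu^{3/2}$ and $\nu^{3/2}\times\nu^{1/2}$, with no $Ext$-group left to compute.
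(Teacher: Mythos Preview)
Your treatment of the cases \nosf{IIb, Vbc, XIb} is correct and matches the paper: once Lemma~\ref{Atable} tells you that no constituent of $B$ has central character $\nu^2$, both $i_*(B)^{(Z,\mu)}$ and $i_*(B)_{(Z,\mu)}$ vanish and $(**)$ collapses to the desired isomorphism.

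For \nosf{VIc, VId}, however, there is a genuine gap. You correctly extract from $(**)$ the short exact sequence
\[
0 \to Sp(\nu) \to \widehat\Pi \to \nu\!\circ\!\det \to 0,
\]
but your two attempts to rule out the split case both fail. The ``cleanest way'' argument is simply wrong: a length-two module with $Sp(\nu)$ as a submodule and $\nu\!\circ\!\det$ as a quotient can just as well be $Sp(\nu)\oplus(\nu\!\circ\!\det)$; the submodule/quotient data do not distinguish the two extension classes. Your first argument, via cyclicity of $\widehat\Pi$ on the image of $v_{new}$, is unsubstantiated: nothing proved up to this point says that the image of the new vector generates $\widehat\Pi$ as a $Gl(2)$-module. (Proposition~\ref{ima} does show the image of $v_{new}$ spans $\widehat\Pi^K$, but its proof uses Lemma~\ref{needed}, so invoking it here would be circular; and in any case spanning $\widehat\Pi^K$ is not the same as generating $\widehat\Pi$.) Note that if $\widehat\Pi$ were split, its spherical line would sit entirely in the $\nu\!\circ\!\det$ summand since $Sp(\nu)$ has no $K$-fixed vectors, so your heuristic about ``the spherical line lying in the Steinberg part'' is also off.

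The paper resolves the non-splitting by an honest deformation argument: it passes to the second-order specialization $\overline\Pi_{\mu^{(2)}}$, uses the divisibility property coming from Lemma~\ref{NOINV} to identify the kernel $\Omega$ of $\overline\Pi_{\mu^{(2)}}\twoheadrightarrow\overline\Pi_\mu$ with $\widehat\Pi$, and then reinterprets $\Omega$ purely inside the explicit induced module $I=(j_!(A)|_{Gl(2)})_{\mu^{(2)}}\cong \nu^{1/2}\times(\nu^{3/2}\otimes 1^{(2)})$ as the module $J$ of the diagram preceding Lemma~\ref{J}. The non-splitting of $J$ is then a concrete Frobenius-reciprocity computation (Lemma~\ref{J}): if $J$ split, $I$ would contain $M=(\nu\!\circ\!\det)\otimes 1^{(2)}$ as a submodule, forcing $\dim Hom_{Gl(2)}(M,I)\geq 2$, whereas a direct $Hom_B$-calculation gives $\dim\leq 1$. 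This is the missing ingredient in your argument.
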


\begin{proof} 
%The assertion is obvious  for the first three cases \nosf{IIb, Vbc, XIb} by $(**)$ and the mentioned vanishing of $i_*(B)_{(Z,\mu)} \cong i_*(B)^{(Z,\mu)}$. 
First notice:
$\nu^{1/2}\times \nu^{3/2}$ is an indecomposable with $\nu\circ\det$ as a submodule and $Sp(\nu)$ as quotient:
$  0 \to \nu\circ\det \to \nu^{1/2}\times \nu^{3/2} \to Sp(\nu) \to 0 $.
Hence in the cases \nosf{VIc, VId} the above exact sequence $(**)$ for $\widehat\Pi$ and $i_*(B)^{(Z,\mu)}  =\nu\circ\det$ gives an exact sequence
$$ 0 \to Sp(\nu) \to \widehat\Pi \to \nu\circ\det \to 0 \ .$$
So, for the proof of the lemma it suffices to show $\widehat\Pi\, \not\cong\, Sp(\nu)\, \oplus\, (\nu\circ\det)$.
%The $\Lambda$- and $Gl(2)$-bimodule  $\widehat\Pi_{\mu^{(n)}}$ has a 
%filtration defined by  $F_i=T^i\cdot \widehat\Pi_{\mu^{(n)}}$.
%The graded quotients $F_i/F_{i+1}$ are isomorphic to $\widehat\Pi=F_0/F_1$. So it suffices  to show that $F_1/F_2$
%is indecomposable, say if $n=2$.
Consider the commutative diagrams obtained from $(*)$ for $\mu=\nu^2$ and $n=1$ and $n=2$
$$ \xymatrix{    0 \ar[r] & \nu\circ\det \ar[r] &    (j_!(A)\vert_{Gl(2)})_{\mu}       \ar[r]^-{k\ \ } &  \overline\Pi_{\mu}    \ar[r]^-{\lambda} &   \nu\circ\det     \ar[r] & 0  \cr  
 0 \ar[r] & \nu\circ\det \ar@{.>}[u]_0 \ar@{_{(}.>}[dr] \ar[r]^-{u\ } &    (j_!(A)\vert_{Gl(2)})_{\mu^{(2)}}      \ar@{->>}[u]^{pr}  \ar[r]^-{v\ \ } &  \overline\Pi_{\mu^{(2)}}  \ar@{->>}[u]^{pr}   \ar@{->>}[r] &   \nu\circ\det    \ar@{.>}[u]_\sim \ar[r] & 0 \cr 
 & &    (j_!(A)\vert_{Gl(2)})_{\mu}   \ar@{^{(}->}[u]^i    \ar[r] &  \Omega \ar@{^{(}->}[u]^w \ar@{.>}[ur]_0  &    &   } $$ 
where $\Omega$ denotes the kernel of the natural surjective quotient map $\overline\Pi_{\mu^{(2)}}
\to \overline\Pi_{\mu}$. Since lemma \ref{NOINV} holds in the cases VIc, VId, we have the divisibility property
$$ \Omega \cong \overline\Pi_\mu \ .$$
Similarly, the divisibility property following from lemma \ref{l6.3} implies that both vertical
sequences in the middle define short exact sequences.  
The right vertical map is surjective by right exactness of coinvariants. Hence it is bijective by dimension reasons. The left vertical map is zero since ${\mathsf T}=z_\lambda - \mu(\pi)$ acts trivially $\nu\circ \det$ for $\mu=\nu^2$. Hence the image of $u$
is annihilated by ${\mathsf T}$, 
so  $u(\nu\circ\det) $ in the middle horizontal line is contained in ${\mathsf T}\cdot (j_!(A)\vert_{Gl(2)})_{\mu^{(2)}}= i(j_!(A)\vert_{Gl(2)})_{\mu})$  and therefore is annihilated by $pr$. Hence $\nu\circ\det $ defines a submodule
in  $Kern(pr)\cong j_!(A)\vert_{Gl(2)})_{\mu} $, the image of the first lower vertical map $i$. Since induction defines an exact functor, or alternatively by divisibilty, this first lower vertical map is the inclusion obtained from the $\mathsf{T}$-filtration on $1^{(2)}$. 
The same holds for $w$ by the divisibility property mentioned above.
Since $\widehat\Pi =  \overline\Pi_\mu$ has lenght two with the constituents $\nu\circ\det$ and $Sp(\nu)$, 
$\overline\Pi_{\mu^{(2)}}$ has length 4 as a $Gl(2)$-module by divisibility. From this
it is not hard to unravel that $\Omega$ can be expressed purely in terms
of the module $I:= (j_!(A)\vert_{Gl(2)})_{\mu^{(2)}}$, meaning
$$  \Omega \ \cong\   \frac{Kern\bigl(k\circ pr: (j_!(A)\vert_{Gl(2)})_{\mu^{(2)}} \twoheadrightarrow Kern(\lambda)\bigr)}{ u(\nu\circ\det) } \ .$$  
%Since $F_1$ is a quotient of $(j_!(A)\vert_{Gl(2)})_{\mu^{(n)}} \ \cong \ \nu^{1/2}\times (\nu^{3/2}\otimes 1^{(n)})$, 
$I$ has also length 4 as a $Gl(2)$-module with two constituents $Sp(\nu)$ and two
constituents $\nu\circ\det$. The quotient $E:= I/u(\nu\circ\det)$ has three constituents.
$Kern(\lambda) \cong Sp(\nu)$ is a natural quotient of $E$ (under the map
$\overline b$ of the next diagram) and the kernel $J:= Kern(\overline b: E \to Sp(\nu))$
in the next diagram by its definition is isomorphic to $\Omega$. Using this, our claim
$\widehat\Pi\cong \nu^{3/2}\times\nu^{1/2}$ follows from
$\widehat\Pi= \overline\Pi_\mu \cong \Omega \cong J$ and the next lemma \ref{J}.

We remark that our arguments more generally show: If a  
smooth representation $M$ of $Gl(2)$ fits into an exact sequence 
$0 \to j_!i_*(\nu)|_{Gl(2)} \to M \to (\nu\circ\det)\to 0$ of $Gl(2)$-modules
and $M^{(Z,\nu^2)}=0$ holds, then its central specialization $M_{(Z,\nu^2)}$
is isomorphic to $\nu^{3/2}\times\nu^{1/2}$. 
 \end{proof} 

%\noindent
The $Gl(2)$-module $I=(j_!(A)\vert_{Gl(2)})_{\mu^{(2)}}$ is 
isomorphic to $\nu^{1/2} \times (\nu^{3/2} \otimes 1^{(2)})$; see the remarks
preceding lemma \ref{l6.3} for $n=2$. This isomorphism and the exact sequence
$0\to 1 \to 1^{(2)}\to 1 \to $ defined by the ${\mathsf T}$-filtration 
induces the next exact sequence $(***)$ 
with submodule $F_1/F_2\cong \nu^{1/2} \times \nu^{3/2}$ and quotient $F_0/F_1\cong \nu^{1/2} \times \nu^{3/2}$
(using the $\mathsf{T}$-filtration on $1^{(2)}$ and the exactness of induction)
$$  (***) \quad 0 \to \nu^{1/2} \times \nu^{3/2} \to I
\to \nu^{1/2} \times \nu^{3/2}  \to  0 \ .$$ 
There are morphisms  $a: \nu\circ\det \hookrightarrow F_1/F_2 \hookrightarrow I $ and $b: I \twoheadrightarrow F_0/F_1 \twoheadrightarrow 
Sp(\nu)$, unique up to nonvanishing constants. % obtained from the exact sequence with 
For the cokernel $E$ of $a$, the morphisms $F_1/F_2 \hookrightarrow I $ and $b$ induce the maps $\overline a$ and $\overline b$ of the commutative diagram %the embedding $\nu\circ\det \hookrightarrow  \nu^{1/2} \times \nu^{3/2}\hookrightarrow I$
%we define $J$ by the following commutative diagram with exact lines and columns
 \label{IEJ}
$$ \xymatrix{ &  &  Sp(\nu)  \ar@{=}[r] & Sp(\nu)  & \cr
0 \ar[r] &  Sp(\nu) \ar[ur]^0\ar[r]^{\overline a} &  E  \ar@{->>}[r]\ar@{->>}[u]_{\overline b} &  \nu^{1/2} \times \nu^{3/2}\ar@{->>}[u]_c \ar[r] & 0\cr
0 \ar[r] &  Sp(\nu) \ar@{=}[u]\ar[r] &  J  \ar[r]\ar@{^{(}->}[u] & \nu\circ\det \ar[r]\ar@{^{(}->}[u] & 0}
 \ $$
The map $c$ is defined as the composition $ \nu^{1/2} \times \nu^{3/2} \cong F_0/F_1 \twoheadrightarrow Sp(\nu)$.
Now, for the kernel $J$  of the morphism $\overline b$
the following holds 

\goodbreak 
\begin{lem} \label{J} \ $J\cong \nu^{3/2} \times \nu^{1/2}$.
\end{lem}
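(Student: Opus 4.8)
The plan is to identify $J$ concretely as a $Gl(2)$-submodule of the explicit induced module $I = \nu^{1/2}\times(\nu^{3/2}\otimes 1^{(2)})$ and to check that it is the non-split extension of $\nu\circ\det$ by $Sp(\nu)$ which realizes $\nu^{3/2}\times\nu^{1/2}$. Recall that $\nu^{3/2}\times\nu^{1/2}=Ind_B^{Gl(2)}(\nu\boxtimes\nu^2)$ (unnormalized) sits in a non-split exact sequence $0\to Sp(\nu)\to\nu^{3/2}\times\nu^{1/2}\to\nu\circ\det\to 0$, dual to the one for $\nu^{1/2}\times\nu^{3/2}$. Since by the bottom row of the diagram on page~\pageref{IEJ} we already know $J$ fits into $0\to Sp(\nu)\to J\to\nu\circ\det\to 0$, it suffices to prove that this extension does \emph{not} split, i.e.\ $J\not\cong Sp(\nu)\oplus(\nu\circ\det)$.

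First I would make the module $I$ and its two-step $\mathsf T$-filtration $0\subseteq F_2\subseteq F_1\subseteq F_0=I$ fully explicit, with $F_1/F_2\cong F_0/F_1\cong\nu^{1/2}\times\nu^{3/2}$, and name the canonical constituents: inside $F_1/F_2$ there is the submodule $\nu\circ\det$ (the image of $a$), and $F_0/F_1$ has $Sp(\nu)$ as a quotient (the target of $c$). Then $E=I/a(\nu\circ\det)$ has a three-step filtration with graded pieces $Sp(\nu)$, $\nu\circ\det$, $Sp(\nu)$ (bottom to top coming from $F_1/F_2$ then $F_0/F_1$), and $\overline b:E\twoheadrightarrow Sp(\nu)$ is the composite $E\twoheadrightarrow F_0/F_1\twoheadrightarrow Sp(\nu)$. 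Hence $J=\ker(\overline b)$ has graded pieces $Sp(\nu)$ (the bottom, coming from $F_1/F_2/(\nu\circ\det)$) and $\nu\circ\det$ (the middle piece, which is exactly the image of $F_1$ in $E$ modulo the bottom $Sp(\nu)$, i.e.\ the copy of $\nu\circ\det$ sitting as $\ker(F_0/F_1\twoheadrightarrow Sp(\nu))$ pulled into $F_0$). So concretely $J$ is the preimage in $I$ of $\ker(c)\subseteq F_0/F_1$, modulo $a(\nu\circ\det)$.

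The key step is to compute the action of the mirabolic part of $B$ (equivalently, to look at the Jacquet module $J_B(J)$, or the restriction of $J$ to the mirabolic subgroup) and show it is \emph{not} semisimple. Here is where I expect the main obstacle: one must exploit the specific structure of the length-two $\Lambda$-module $1^{(2)}=\Lambda/\mathsf T^2$, on which $z_\pi$ acts by a single nontrivial Jordan block with eigenvalue $\mu(\pi)=q^{-2}$, to see that the nilpotent part $\mathsf T$ glues the bottom $Sp(\nu)$ to the middle $\nu\circ\det$ \emph{across} the filtration step. Concretely, a vector lifting the generator of the $\nu\circ\det$-piece of $J$ lives in $F_1$ but not in $F_2$; applying $\mathsf T$ to it lands in $F_2=F_1/F_2$-part of the bottom, which is a nonzero element of the bottom $Sp(\nu)$ inside $J$. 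Since $\mathsf T=z_\pi-\mu(\pi)$ is (up to the scalar shift) a central element of $Gl(2)$ acting on $I$, this shows $J$ cannot decompose as a direct sum in which $z_\pi$ acts by the scalar $\mu(\pi)$ on each summand separately; that is, the extension $0\to Sp(\nu)\to J\to\nu\circ\det\to 0$ is non-split. Therefore $J\cong\nu^{3/2}\times\nu^{1/2}$, the unique non-split such extension.

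An alternative, perhaps cleaner, route that I would try in parallel: apply the remark at the end of the proof of lemma~\ref{needed}, which says that any smooth $Gl(2)$-module $M$ with $0\to j_!i_*(\nu)|_{Gl(2)}\to M\to\nu\circ\det\to 0$ and $M^{(Z,\nu^2)}=0$ has central specialization $M_{(Z,\nu^2)}\cong\nu^{3/2}\times\nu^{1/2}$. One checks that $E$ (or a suitable preimage) is precisely of this shape: $E$ is an extension of $\nu\circ\det$ by the cokernel of $\nu\circ\det\hookrightarrow j_!i_*(\nu)|_{Gl(2)}$, and the divisibility property (lemmas~\ref{NOINV}, \ref{l6.3}) forces the vanishing of the relevant $(Z,\nu^2)$-invariants, so the cited remark applies and pins down $J$ up to isomorphism. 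Either way the conclusion is $J\cong\nu^{3/2}\times\nu^{1/2}$, completing the proof.
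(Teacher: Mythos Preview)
Your first approach has a genuine gap at the key step. You claim that applying $\mathsf{T}=z_\pi-\mu(\pi)$ to a lift of the $\nu\circ\det$-piece of $J$ yields a nonzero element of the bottom $Sp(\nu)$ inside $J$. This is false: $\mathsf{T}$ acts \emph{trivially} on $J$. Indeed, $\mathsf{T}$ on $I$ has image $F_1$ and kernel $F_1$, and induces a $Gl(2)$-isomorphism $F_0/F_1\to F_1$; under this isomorphism the submodule $\nu\circ\det$ of $F_0/F_1$ maps precisely to $a(\nu\circ\det)\subseteq F_1$, \emph{not} to a nonzero class in $F_1/a(\nu\circ\det)=Sp(\nu)$. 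Passing to $E=I/a(\nu\circ\det)$, one sees that the kernel of $\mathsf{T}$ on $E$ is exactly the preimage of $\nu\circ\det\subseteq F_0/F_1$, i.e.\ $J$ itself. So $Z$ already acts on $J$ by the scalar $\mu=\nu^2$, and the central operator $\mathsf{T}$ cannot detect whether $J$ splits. (Your filtration indices are also off: the $\nu\circ\det$-piece of $J$ lifts to $F_0\setminus F_1$, not to $F_1\setminus F_2$.)

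Your alternative route is circular: the remark at the end of the proof of lemma~\ref{needed} is a consequence of that proof, and that proof explicitly appeals to lemma~\ref{J} in its last step. You cannot invoke the remark to establish lemma~\ref{J}.

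The paper's argument does use $\mathsf{T}$, but in $I$ rather than in $J$. If $J$ split, pulling back the $\nu\circ\det$ summand along $I\twoheadrightarrow E$ would give a submodule $M\subseteq I$ with two constituents $\nu\circ\det$, one in each layer of the $\mathsf{T}$-filtration; the (now correct) $\mathsf{T}$-argument on $I$ forces $M\cong(\nu\circ\det)\otimes 1^{(2)}$, so $\dim Hom_{Gl(2)}(M,I)\geq\dim End_{Gl(2)}(M)=2$. The contradiction comes from Frobenius reciprocity: $Hom_{Gl(2)}(M,I)\cong Hom_B\bigl(M,\nu\boxtimes(\nu\otimes 1^{(2)})\bigr)$, and an explicit $2\times 2$ matrix computation (matching the $\log|t_1t_2|$-action on $M$ against the $\log|t_2|$-action on the target) shows this space has dimension at most $1$. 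The missing idea is that the obstruction to splitting comes from the induced-module structure of $I$, not from $\mathsf{T}$ acting on $J$.
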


\begin{proof} $E$ has three constituents, namely $\nu\!\circ\!\det$ and twice $Sp(\nu)$.
Hence $J$ has the two constituents $\nu\!\circ\!\det$ and $Sp(\nu)$.  
By Frobenius reciprocity either $J\cong \nu\!\circ\!\det\oplus Sp(\nu)$ splits, 
or $J$ is indecomposable so that $J\cong  \nu^{3/2}\times\nu^{1/2}$ holds. 

\bigskip\noindent
Suppose $J$ splits. Then the induced representation $I$ with ${\mathsf T}$-filtration $(***)$ must contain a submodule 
$M$ with two constituents of the form $\nu\!\circ\!\det$, so that these constituents occur in different
layers of the filtration $(***)$ induced by the exact sequence coming from the ${\mathsf T}$-filtration $F_\bullet$. Hence the center $Z$ acts nontrivially on
$(\nu^{-1}\circ\det)\otimes M$. This implies $M \cong (\nu\!\circ\!\det) \otimes 1^{(2)}$ as a $Gl(2)$-module, where $Gl(2)$
acts on $1^{(2)}$ by unipotent matrices involving the logarithm of $\nu\circ\det$. From the splitting of $J$ we hence conclude that
$$  Hom_{Gl(2)}( M , I) \supseteq Hom_{Gl(2)}( M, M)  \cong Hom_{Gl(2)}( 1^{(2)}, 1^{(2)}) \ .$$  
%$$  Hom_{Gl(2)}( L ,   I)   = Hom_{Gl(2)}( \nu^{(2)}\!\circ\!\det, \nu^{1/2} \times (\nu^{3/2} \otimes 1^{(2)}))) $$  
has dimension at least 2. Indeed, the right side has dimension 2. 
%
%\bigskip\noindent
%Then $Hom_{Gl(2)}((\nu\!\circ\!\det)\otimes 1^{(2)},
%L)$ and  $Hom_{Gl(1)}(\nu\otimes 1^{(2)},\nu\otimes 1^{(2)})$ have the same dimension.
On the other hand $I\cong \nu^{1/2} \times (\nu^{3/2} \otimes 1^{(2)})$. Hence  by Frobenius
reciprocity
$$ Hom_{Gl(2)}(M,I) = Hom_{Gl(2)}\bigl( M , Ind_B^{Gl(2)}(\nu \boxtimes (\nu \otimes 1^{(2)}) )\bigr) \ $$
 is isomorphic to the vectorspace
$$ Hom_B\bigl(M, \nu \boxtimes (\nu \otimes 1^{(2)})\bigr)  \ .$$
The representation spaces of $M$ and $\nu \boxtimes (\nu \otimes 1^{(2)})$ are both isomorphic to $\C^2$ as vectorspaces.
So, homomorphisms in  $ Hom_B(M, \nu \boxtimes (\nu \otimes 1^{(2)}))$ are given
by $2\times 2$-matrices that satisfy for all $t_1,t_2\in k^*$ the conditions
$$ \begin{bmatrix} \alpha & \beta \cr \gamma & \delta\end{bmatrix} \nu(t_1t_2) \begin{bmatrix} 1 & \log \vert t_1t_2 \vert \cr 0 & 1\end{bmatrix} \ = \ \nu(t_1)\nu(t_2)\begin{bmatrix} 1 & \log \vert t_2\vert \cr 0 & 1\end{bmatrix}  \begin{bmatrix} \alpha & \beta \cr \gamma & \delta\end{bmatrix}  \ .$$
Notice, these equations are equivalent to the assertion that the matrix in brackets
with the entries $\alpha, \beta, \gamma, \delta\in \C$ defines a $B$-linear map from $M\vert_B$ to $ \nu \boxtimes (\nu \otimes 1^{(2)})$
for the action of $(\begin{smallmatrix} t_1 & * \cr 0 & t_2 \end{smallmatrix}) \in B$
defined by the $B$-modules $M\vert_B$ resp. $ \nu \boxtimes (\nu \otimes 1^{(2)})$.
Solving these equations  for  $\alpha, \beta, \gamma, \delta$ gives 
$\alpha=\delta=\gamma=0$. Hence the dimension of
$Hom_B(M, \nu \boxtimes (\nu \otimes 1^{(2)}))$ is at most one, as well as the dimension of 
$Hom_{Gl(2)}(M, I)$ as shown by Frobenius reciprocity. This contradicts 
the dimension estimate $\geq 2$ found earlier. This contradiction implies
that $J$ cannot split. Hence $J\ \cong\  \nu^{3/2}\times\nu^{1/2}$
follows.
\end{proof}

%\noindent

\begin{prop} \label{ima}
The image of the paramodular new vector $v_{new}$ of $\Pi$ in $\widehat\Pi$ under the composite projection $\Pi\twoheadrightarrow \overline\Pi \twoheadrightarrow \widehat \Pi$  spans the one-dimensional space $\widehat\Pi^K$ of spherical vectors in  $\widehat\Pi$.
\end{prop}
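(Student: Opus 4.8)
The plan is to track the image of $v_{new}$ along the two maps $\Pi\twoheadrightarrow\overline\Pi\twoheadrightarrow\widehat\Pi=\overline\Pi_{(Z,\mu)}$ with $\mu=\nu^2$, and to identify the resulting vector inside the explicit $Gl(2)$-module $\widehat\Pi$ computed in Lemma~\ref{needed}. First I would record that $\widehat\Pi$ is either $\nu^{1/2}\times\nu^{3/2}$ (cases \nosf{IIb, Vbc, XIb}) or $\nu^{3/2}\times\nu^{1/2}$ (cases \nosf{VIc, VId}); in either case this is a length-two indecomposable induced representation, whose space of spherical vectors $\widehat\Pi^K$ is one-dimensional by the standard computation for principal series of $Gl(2)$ (dimension of $K$-fixed vectors equals the number of $B$-orbits on $Gl(2)/K$, i.e.\ one). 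So the content of the proposition is the single assertion that the image $\widehat v := (\text{proj})(v_{new})$ is a \emph{nonzero} spherical vector.

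That $\widehat v$ is spherical is immediate: $v_{new}$ is $K_H$-invariant, hence its image $\eta(v_{new})\in\overline\Pi$ is invariant under the image of $K_H$ in $Gl_a(2)$, which contains $Gl(2,\frak o)=K$; and passing to the $(Z,\mu)$-coinvariant quotient is $Gl(2)$-equivariant, so $\widehat v\in\widehat\Pi^K$. The real point is nonvanishing. Here I would use the companion functionals of Section~5: by Lemma~\ref{new=sur} there is a $(\nu,\nu)$-functional $f:\Pi\to\C$ with $f(v_{new})\neq 0$, and by construction (the functional $\tilde f=\beta_\nu(f)$ and the diagram preceding Lemma~\ref{F2}) $f$ factorizes over $\eta:\Pi\twoheadrightarrow\overline\Pi$ as $\overline f:\overline\Pi\to\C$. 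The key is that $\overline f$, being $(\rho,\chi)$-equivariant for $\rho=\nu,\chi=\nu$ hence for the central character $\mu=\rho\chi=\nu^2$ of $Z$, factorizes further over the $(Z,\mu)$-specialization $\widehat\Pi=\overline\Pi_{(Z,\mu)}$, giving $\widehat f:\widehat\Pi\to\C$ with $\widehat f(\widehat v)=f(v_{new})\neq 0$. Therefore $\widehat v\neq 0$, and being a nonzero element of the one-dimensional space $\widehat\Pi^K$ it spans it.

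The main obstacle I anticipate is the verification that $\overline f$ genuinely descends to $\widehat\Pi$, i.e.\ that the $\tilde R\cdot T$-equivariance of $f$ translates into $Z$ acting by $\mu=\nu^2$ on the part of $\overline\Pi$ seen by $\overline f$. This requires unwinding the identification $z_\lambda=x_\lambda\tilde t$ with $\tilde t=\diag(\lambda,1,1,\lambda)$ (as in Lemma~\ref{NOINV} and Section~4) and checking that $f(\Pi(z_\lambda)v)=\Lambda(\tilde t)\chi(\lambda)f(v)=\nu(\lambda)\nu(\lambda)f(v)=\mu(\lambda)f(v)$, so that $\overline f$ kills $(z_\pi-\mu(\pi))\overline\Pi^{Z(\frak o)}$ and hence factors through $\overline\Pi_{(Z,\mu)}$. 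Once this bookkeeping is done the argument is short; alternatively, if one prefers to avoid it, one can argue directly that $\widehat v$ generates $\widehat\Pi$ as a $Gl(2)$-module — since a nonzero spherical vector in an irreducible-quotient-free indecomposable principal series generates it — and then observe that $\widehat\Pi\neq 0$ by Lemma~\ref{needed}, which already forces $\widehat v\neq 0$ because $v_{new}$ generates the whole paramodular representation and its image must generate $\widehat\Pi$; but the cleanest route is via the companion functional $f$, so that is the one I would write up.
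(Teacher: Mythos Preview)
Your argument is essentially the paper's own proof: use Lemma~\ref{needed} to get $\dim\widehat\Pi^K=1$, use the companion functional $f$ of Lemma~\ref{new=sur} and its factorization through $\widehat\Pi$ (via $\mu=\rho\chi=\nu^2$ on $Z$) to see $\widehat v_{new}\neq 0$, and conclude. Your bookkeeping for the factorization is exactly the content of the paper's one-line remark that ``the character $\nu\boxtimes\nu$ of $\tilde T\times T$ restricts to $\mu=\nu^2$ on $Z$''.

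One small slip: the claim ``$v_{new}$ is $K_H$-invariant'' is false when the paramodular level $N$ is positive, since $K_H\not\subseteq K(\frak p^N)$ for $N\geq 1$ (the $Gl(2)^\kappa$-part of $K_H$ violates the lower-left congruence). What is true, and suffices, is that $K=Gl(2,\frak o)$ (via the embedding $\iota$) sits inside every paramodular group $K(\frak p^N)$, so $v_{new}$ is $K$-invariant and hence so is $\widehat v_{new}$; alternatively, by Lemma~\ref{KH-sp} the image of $\eta(\Pi^{K_H})$ in $\widehat\Pi$ is $\C\cdot\widehat v_{new}$ (since $\tau$ acts there by the scalar $q^2$), and that image is visibly $K$-spherical.
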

 
\begin{proof} By lemma \ref{needed},  for $K=Gl(2,{\frak o})$ the subspace of $K$-spherical vectors in $\widehat \Pi$
has dimension one for normalized representations of extended Saito-Kurokawa type.
Consider the functionals $f:\Pi \to (\C,\nu\boxtimes\nu)$ of lemma \ref{new=sur}. By their equivariance
properties they factorize over the two quotient map $\eta: \Pi \twoheadrightarrow \overline\Pi$ and also
over the $(Z,\nu^2)$-specialization  $\overline\Pi \twoheadrightarrow \widehat \Pi$. Indeed $Z$ is contained in $T_{split} = \tilde T \cdot T$
 and the character $\nu\boxtimes\nu$ of $\tilde T\times T$ restricts to $\mu=\nu^2$ on $Z$.
By lemma \ref{new=sur} the image $f(v_{new})$ is nonzero, hence the image $\widehat v_{new}$ of $v_{new}$ in $\widehat \Pi$ is also nonzero. Since the images of $K_H$-spherical vectors of $\Pi$ are $K$-spherical in $\widehat\Pi\in {\cal C}_{Gl(2)}$, this implies our assertion. \end{proof}

\section{The comparison isomorphism}

For a smooth character $\chi$ of $k^*$ and a smooth representation $\pi$
of $Gl(2)$ consider the quotient map defined by the $(T,\chi)$-coinvariants $$\pi\twoheadrightarrow \pi_{(T,\chi)}$$
from the representation space $\pi$ to the maximal quotient space $ \pi_{(T,\chi)}$
on which $T \subseteq Gl(2)$ acts by the character $\chi$. We identify $T$ with 
$k^*$ by $T\ni x_\lambda \mapsto \lambda\in k^*$
for  $x_\lambda = [\begin{smallmatrix}  \lambda & 0 \cr 0 & 1
\end{smallmatrix}]$ in $Gl(2)$.  
$T$ is conjugate in $Gl(2)$ to the subtorus of $Gl(2)$ used 
in lemma \ref{Waldspurger-Tunnell}, so the results from that lemma show
$\dim(\pi_{(T,\chi)})=1$ for all irreducible generic representations $\pi\in {\cal C}_{Gl(2)}$.
%as well as for $\pi = \nu^{1/2} \times \nu^{-1/2}$ and any character twists of it.

\bigskip\noindent
For $\pi=\widehat \Pi$ and extended Saito-Kurokawa 
representations $\Pi$ and $\chi=\nu^2$ 
$$  \widehat\Pi_{(T,\nu^2)} \cong \C \ $$ 
holds since
$\dim(\pi_{(T,\chi)})=0$  for $\pi=\nu\!\circ\!\det$,
and hence $\widehat\Pi_{(T,\nu^2)} \cong  Sp(\nu)_{(T,\nu^2)} \cong \C$
by lemma \ref{needed}. Recall $\widehat T = \overline\Pi_{(Z,\mu)}$.
If $\mu=\chi\rho$, then $\widehat\Pi_{(T,\nu^2)}$ is the maximal quotient space
of $\overline\Pi$ on which the diagonal split torus in $Gl(2)$ acts with the character
$\chi\boxtimes\rho$. Notice, this diagonal torus is generated by $T$ and $Z$.

\bigskip\noindent
\textbf{Claim}. \textit{ For  normalized $\Pi$ of generalized Saito-Kurokawa type and $\rho=1$, $\chi=\nu^2$ there exists a unique $(T,\nu^2)$-equivariant map
$WT: \widehat \Pi \to k_{\nu^2}(k_1(\overline\Pi))$ 
making the following diagram commutative }
$$ \xymatrix{ & &  \overline\Pi \in {\cal C}_{Gl_a(2)} \ar@{->>}[dl]_{k_1} \ar@{->>}[dr]^{(Z,\nu^2)}    &  &   \cr     &
{\cal C}_{Gl_a(1)} \ni \widetilde \Pi \ \ \ \ \ \ar@{->>}[dr]_{k_{\nu^2}} &           &   \ \ \  \ \ \widehat \Pi \in {\cal C}_{Gl(2)} \ar@{->>}[dl]^{\ \ \ WT} & \cr
&   &     \widetilde \Pi_{(T,\nu^2)} &  & \cr} $$

\begin{proof}
By definition, $ \widetilde \Pi_{(T,\nu^2)}  = k_\chi(k_\rho(\overline\Pi))$  is the maximal quotient space
of $\overline\Pi$ on which the Borel subgroup $B$ of $Gl(2)$ acts by the character
$\chi\boxtimes\rho = \nu^2\boxtimes 1$. On this quotient the diagonal torus generated by $T$ acts by $\nu^2$,  and $Z$ acts by $\mu=\nu^2$ and  $\tilde T$ acts trivially. Since $Z$ acts on $\widehat \Pi$ by the central character $\mu=\nu^2$, the universal property of coinvariant quotients for $\rho=1$  shows that there exists a \textit{surjective} $\C$-linear map
$$  \phi:   \widehat\Pi_{(T,\nu^2)} \ \twoheadrightarrow \ k_{\nu^2}(k_\rho(\overline\Pi)) = k_{\nu^2}(\beta_\rho(\Pi)) \ .$$
All normalized extended Saito-Kurokawa representation have a split Bessel model for the Bessel character
$\rho=1$. Furthermore $\beta_\rho(\Pi)$ is a perfect $Gl_a(1)$-module
%for non-generic irreducible representations $\Pi$ of $G$ with a Bessel model for $\rho$
(\cite{RW} cor. 6.10). For perfect $Gl_a(1)$-modules $M$, $\dim(M_\chi)=1$ holds for all smooth characters $\chi$.
Therefore
$$   k_{\nu^2}(\beta_1(\Pi)) \cong \C \ .$$
Hence the above comparison map $\phi$ is an isomorphism by dimension reasons.
The natural projection map $\widehat\Pi \twoheadrightarrow   \widehat\Pi_{(T,\nu^2)}$ composed with $\phi$ gives us the desired
map $$WT: \widehat\Pi \to  k_{\nu^2}(k_1(\overline \Pi))$$ 
making the above diagram commutative. \end{proof} Furthermore

\begin{lem} \label{final1}
This map $WT$ 
defines a universal $(T,\nu^2)$-coinvariant quotient map on $\widehat \Pi$. It vanishes
on $\widehat\Pi^K = \C \cdot \widehat v_{new}$ for $\Pi$ of type \nosf{IIb, Vbc, XIb} and it is nontrivial
on $\widehat\Pi^K = \C \cdot \widehat v_{new}$ if $\Pi$ of type \nosf{VIc, VId}.
\end{lem}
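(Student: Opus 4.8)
The plan is to establish the three parts of the lemma in turn, using the explicit description of $\widehat\Pi$ from Lemma \ref{needed}, Proposition \ref{ima}, and the spherical--vector clause of Lemma \ref{Waldspurger-Tunnell}. The first part is immediate from the construction: unwinding the Claim preceding the lemma, $WT$ is by definition the composite of the canonical quotient $\widehat\Pi\twoheadrightarrow\widehat\Pi_{(T,\nu^2)}$ with the comparison isomorphism $\phi\colon\widehat\Pi_{(T,\nu^2)}\xrightarrow{\sim}k_{\nu^2}(k_1(\overline\Pi))$, so it enjoys the universal property of the $(T,\nu^2)$-coinvariant quotient, and it is a nonzero map because $k_{\nu^2}(k_1(\overline\Pi))\cong\C$ by perfectness of $\beta_1(\Pi)$.

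For $\Pi$ of type \nosf{IIb, Vbc, XIb} I would argue as follows. By Lemma \ref{needed}, $\widehat\Pi\cong\nu^{1/2}\times\nu^{3/2}$, which sits in the exact sequence $0\to\nu\circ\det\to\widehat\Pi\to Sp(\nu)\to 0$ recorded in the proof of that lemma. On the submodule $\nu\circ\det$ the torus element $x_\lambda$ acts by the scalar $\nu(\lambda)$, which is not $\nu^2(\lambda)$ for suitable $\lambda$, so $(\nu\circ\det)_{(T,\nu^2)}=0$; by right exactness of coinvariants the map $WT$ then factors through the quotient $\widehat\Pi\twoheadrightarrow Sp(\nu)$ and hence annihilates the submodule $\nu\circ\det$. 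Since $\nu$ is unramified, $K=Gl(2,{\frak o})$ acts trivially on the line $\nu\circ\det$, which therefore lies inside $\widehat\Pi^K$; but by Proposition \ref{ima} the space $\widehat\Pi^K$ is one-dimensional and spanned by $\widehat v_{new}$, so $\widehat v_{new}$ in fact spans the submodule $\nu\circ\det$, and consequently $WT(\widehat v_{new})=0$.

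For $\Pi$ of type \nosf{VIc, VId} the representation is $\widehat\Pi\cong\nu^{3/2}\times\nu^{1/2}=(\nu\circ\det)\otimes\pi$ with $\pi:=\nu^{1/2}\times\nu^{-1/2}$ (twisting $\nu^a\times\nu^b$ by $\nu^c\circ\det$ gives $\nu^{a+c}\times\nu^{b+c}$). Since $\det(x_\lambda)=\lambda$, the twist by $\nu\circ\det$ identifies the universal $(T,\nu^2)$-coinvariant quotient of $\widehat\Pi$ with the universal $(T,\nu)$-coinvariant quotient $\pi\twoheadrightarrow\pi_{(T,\nu)}$ of $\pi$, and carries $\widehat v_{new}$ --- which spans $\widehat\Pi^K$ by Proposition \ref{ima} and hence is the spherical vector of the unramified principal series $\widehat\Pi$ --- to the spherical vector $w_{sph}$ of $\pi$. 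As $T$ is conjugate in $Gl(2)$, by an element of $K$ fixing $w_{sph}$, to the torus used in Lemma \ref{Waldspurger-Tunnell}, this coinvariant quotient coincides with the Waldspurger--Tunnell quotient $\pi\twoheadrightarrow\pi_\nu$ of that lemma; and by the spherical--vector clause of Lemma \ref{Waldspurger-Tunnell}, which holds for every unramified character (its proof exhibits the zeta integral of $g_{sph}$ as the Euler product $L(1,s)L(\nu,s)\,g_{sph}(1)$, an identity of rational functions), this quotient is nonzero on $w_{sph}$. Therefore $WT(\widehat v_{new})\neq 0$.

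The only point of genuine substance is this last non-vanishing. It is delicate precisely because $w_{sph}$ does not lie in the Steinberg submodule of $\nu^{1/2}\times\nu^{-1/2}$ (Steinberg having no spherical vector), whereas the one-dimensional target of $WT$ is, by right exactness, a quotient of the coinvariants of that very submodule; so the statement really records how the spherical vector is spread across the non-split extension of the trivial representation by $Sp$ which $\nu^{1/2}\times\nu^{-1/2}$ realizes, and this is exactly what Lemma \ref{Waldspurger-Tunnell} supplies. All the remaining ingredients --- the exact sequences of coinvariant functors and the unramified character twists --- are formal.
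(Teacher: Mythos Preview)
Your proof is correct and follows essentially the same approach as the paper's own proof, which is a one-line citation of Lemma \ref{Waldspurger-Tunnell}, Lemma \ref{needed}, and Proposition \ref{ima}. You have simply unpacked these citations: identifying the spherical line with the $\nu\circ\det$ submodule of $\nu^{1/2}\times\nu^{3/2}$ in the cases \nosf{IIb, Vbc, XIb} (where $WT$ kills it since the coinvariant functor factors through the Steinberg quotient), and in the cases \nosf{VIc, VId} twisting $\nu^{3/2}\times\nu^{1/2}$ down to $\nu^{1/2}\times\nu^{-1/2}$ so that the spherical-vector clause of Lemma \ref{Waldspurger-Tunnell} applies directly.
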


\begin{proof}
The first assertion is obvious from the proof of the last claim. The remaining assertions follow from lemma \ref{Waldspurger-Tunnell}, lemma \ref{needed}
and lemma \ref{ima}.
\end{proof}

%\noindent
Up to a nonvanishing scalar, by \cite{RW}, lemma 3.31
the universal quotient map $p_{\nu^2}: \widetilde \Pi \to k_{\nu^2}(\widetilde\Pi) \cong \C$
is given by the normalized zeta integral of some model of the perfect $Gl_a(1)$-module
$\widetilde\Pi$ 
$$  p_{\nu^2}(f) =  \lim_{s\to 0} \frac{ Z(f, \nu^{-2}, s)}{L(\widetilde \Pi,\nu^{-2},s)}  \ .$$
Since $Z(f, \nu^{-2}, s) = Z^{PS}_{reg}(f, \nu^{-1/2}, s)$ and $L(\widetilde \Pi,\nu^{-2}, s)
= L^{PS}_{reg}(\Pi,\nu^{-1/2}, s)$, lemma \ref{final1} together with the claim above (commutativity of the last diagram) implies that 
$ord_{s=-\frac{1}{2}}(Z^{PS}_{reg}(s, W_{new}, 1)$ is $\geq 1$ for the cases \nosf{IIb, Vbc, XIb}
and that $ord_{s=-\frac{1}{2}}(Z^{PS}_{reg}(s, W_{new}, 1)$ is $0$ for \nosf{VIc, VId}.
By corollary \ref{Hilfsschritt} for \nosf{VIc, VId} hence
$$ {\cal L}(s) = L(s+\frac{1}{2})^2\ .$$ 
A modification of these arguments, that will be given in the next section, similarly implies for the cases \nosf{IIb, Vbc, XIb} 
 $ord_{s=-\frac{1}{2}}(Z^{PS}_{reg}(s, W_{new}, 1)= 1$,   
hence 
$$ {\cal L}(s) = L(s+\frac{1}{2})\ .$$
\textbf{A first order deformation argument}. 
For the cases \nosf{IIb, Vbc, XIb} and the characters $\rho=1$, $\mu=\chi=\nu^2$ we now construct a map  $WT'$ making the diagram  
$$ \xymatrix{ & &  \overline\Pi \in {\cal C}_{Gl_a(2)} \ar@{->>}[dl]_{k_1} \ar@{->>}[dr]^{(Z,\mu^{(2)})}    &  &   \cr     &
{\cal C}_{Gl_a(1)} \ni \widetilde \Pi \ \ \ \ \ \ar@{->>}[dr]_{k_{\chi^{(2)}}} &           &   \ \ \  \ \ \widehat \Pi' \in {\cal C}_{Gl(2)} \ar@{->>}[dl]^{\ \ \ WT'} & \cr
&   &     \widetilde \Pi_{(T,\chi^{(2)})} &  & \cr} $$
commutative. Concerning the notations first notice $z_\pi=x_\pi \tilde t$ for $\tilde t\in \tilde T$, where $(t_1,t_2)=(\pi, 1)$. So, if $\rho$ acts trivially, then $x_\pi$ and $z_\pi$ can be identified. 
This being said, we now explain the notations appearing in the diagram above:
\begin{itemize}
\item $\widehat\Pi'$ is the maximal quotient of $\overline\Pi$ on which $T({\frak o})$ and
$\mathsf{T}^2$ for $\mathsf{T}=z_\pi - \mu(\pi)$ acts trivially.
\item $\widetilde \Pi_{(T,\chi^{(2)})}$ is the maximal quotient of $\widetilde \Pi=\beta_1(\Pi)$ on which $T({\frak o})$ and the operator 
$(x_\pi - \chi(\pi))^2$ acts trivially. %$x_\pi - \chi(\pi)=z_\pi- \mu(\pi)$ on $\beta_1(\overline \Pi)$
\item $k_{(T,\chi^{(2)})}$ the natural projection map, defined by the
$s$-derivative of the zeta integral on a model of $\widetilde\Pi$ (\cite{RW}, lemma 3.31).
\item $pr': \widehat \Pi' \to \widehat\Pi'_{(T,\chi^{(2)})}$ is the maximal quotient of $\widehat \Pi'$ on which $T({\frak o})$ and the operator
$(x_\pi - \chi(\pi))^2$ acts trivially.
\end{itemize}

To construct $WT'$ we use similar arguments as for the construction of $WT$.
It is easy to show
$  \widehat\Pi'_{(T,\chi^{(2)})} \cong \C^2  $.
Furthermore $ \widetilde \Pi_{(T,\chi^{(2)})} \cong \C^2$
 (\cite{RW}, lemma 3.34 and primitivity). 
Since $\rho=1$, $\tilde T$ acts trivially on $\widetilde\Pi=\beta_1(\overline \Pi)$. Hence
$x_\pi - \chi(\pi)=z_\pi- \mu(\pi)$ on $\tilde\Pi$. Therefore
$(z_\pi- \mu(\pi))^2=0$ holds on $\tilde\Pi_{(T,\chi^{(2)})}$.
So the composition $k_{\chi^{(2)}}\circ k_1$ on the left
canonically factorizes over the universal quotient map $\overline \Pi \to \widehat\Pi'$.
Since $(x_\pi-\chi(\pi))^2$ acts trivially on $ \widetilde \Pi_{(T,\chi^{(2)})}$, this map further factorizes over the quotient $\widehat\Pi' \twoheadrightarrow \widehat\Pi'_{(T,\chi^{(2)})}$, inducing a canonical surjective $\C$-linear
 quotient map $$\phi': \widehat \Pi'_{(T,\chi^{(2)})} \twoheadrightarrow \widetilde \Pi_{(T,\chi^{(2)})} $$
by the universal property of coinvariants. 
Then  $WT' := (\phi')\circ pr'$ makes the above diagram commutative
and is a surjective $\C$-linear map since $k_{\chi^{(2)}}\circ k_1$ is surjective. 
By the same reason $\phi'$ is surjective, so counting dimensions shows 
$$ \phi': \widehat \Pi'_{(T,\chi^{(2)})} \cong \widetilde \Pi_{(T,\chi^{(2)})} \ $$
must be an isomorphism.
We use this  to show

\begin{lem} \label{last} For the cases \nosf{IIb, Vbc, XIb}
the map $WT'$ is nonzero on the space of $K$-spherical
vectors $(\widehat\Pi')^K$,  and $(\widehat\Pi')^K$ contains the image $\widehat v_{new}' $ in $\widehat \Pi'$
of the new vector $v_{new}$ from $\Pi$.
\end{lem}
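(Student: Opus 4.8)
The plan is to make $\widehat\Pi'$ completely explicit, settle the (easy) second assertion, and then reduce the first assertion to a single Mellin/zeta-integral computation on an induced representation, carried out one order deeper than in lemma \ref{Waldspurger-Tunnell}.

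First I would identify $\widehat\Pi'$. For the three cases \nosf{IIb, Vbc, XIb} none of the constituents of $B$ listed in lemma \ref{Atable} has central character $\nu^2$ (for \nosf{XIb} one has $B=\emptyset$), so $i_*(B)^{(Z,\mu^{(2)})}=i_*(B)_{(Z,\mu^{(2)})}=0$ for $\mu=\nu^2$; hence the exact sequence $0\to j_!i_*(\nu)\to\overline\Pi\to i_*(B)\to 0$ gives $\widehat\Pi'\cong(j_!i_*(\nu)|_{Gl(2)})_{(Z,\mu^{(2)})}$, which by lemma \ref{l6.3} (and the computation preceding it, with $n=2$) is the induced representation $\nu^{1/2}\times(\nu^{3/2}\otimes 1^{(2)})\cong Ind_B^{Gl(2)}(\nu\boxtimes(\nu\otimes 1^{(2)}))$; the $\mathsf T$-filtration $0\to 1\to 1^{(2)}\to 1\to 0$ presents it as a selfextension of $\nu^{1/2}\times\nu^{3/2}\cong\widehat\Pi$. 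Since $B({\frak o})$ acts trivially on the representation space $V_\sigma\cong\C^2$ of $\sigma:=\nu\boxtimes(\nu\otimes 1^{(2)})$, Iwasawa decomposition identifies $(\widehat\Pi')^K$ with $V_\sigma$ via $f\mapsto f(1)$, so it is two-dimensional. For the second assertion, $v_{new}$ is $K_H$-spherical, so by lemma \ref{KH} its image under $\eta$ is already $K=Gl(2,{\frak o})$-fixed in $\overline\Pi|_{Gl(2)}$, hence $\widehat v_{new}'$ is $K$-fixed; and $\widehat v_{new}'\neq 0$ because its image in $\widehat\Pi=\widehat\Pi'/\mathsf T\widehat\Pi'$ is $\widehat v_{new}\neq 0$ by proposition \ref{ima}. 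In particular $\widehat v_{new}'\notin\mathsf T\widehat\Pi'$, so, writing $v_0=\mathsf T v_1$ for the two $\mathsf T$-filtration generators of $1^{(2)}$, one gets $(\widehat\Pi')^K=\C f_{v_1}\oplus\C f_{v_0}$ with $\C f_{v_0}=(\widehat\Pi')^K\cap\mathsf T\widehat\Pi'$ and $\widehat v_{new}'=\alpha f_{v_1}+\beta f_{v_0}$ with $\alpha\neq 0$.

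For the first assertion, since $\phi'$ is an isomorphism it suffices that $pr':\widehat\Pi'\to\widehat\Pi'_{(T,\chi^{(2)})}$ be nonzero on $\widehat v_{new}'$. First $pr'(f_{v_0})=0$: under the divisibility identification $\mathsf T\widehat\Pi'\cong\widehat\Pi$ the vector $f_{v_0}$ is the spherical vector $\widehat v_{new}$, on which $WT$ (equivalently the $(T,\nu^2)$-coinvariant projection for $\widehat\Pi$) vanishes by lemma \ref{final1}. Hence $pr'(\widehat v_{new}')=\alpha\,pr'(f_{v_1})$, and everything comes down to computing $pr'$ on the explicit spherical function $f_{v_1}$. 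The one-dimensional constituents $\nu\circ\det$ of $\widehat\Pi'$, together with the two $T$-fixed points $[B]$ and $[Bw_0]$ on $B\backslash Gl(2)$, all carry $T$-characters different from $\nu^2$, so $\widehat\Pi'_{(T,\chi^{(2)})}$ is computed entirely on the open $T$-orbit; restricting $f_{v_1}$ there and Iwasawa-decomposing identifies $pr'(f_{v_1})$ with the value at $s_0=-\tfrac12$, together with the first $s$-derivative, of the regularized Mellin integral of the same type as in lemma \ref{Waldspurger-Tunnell}, now with the factor $1^{(2)}$ producing an extra $\log|\lambda|$ in the integrand. Evaluating this (a finite computation of the kind behind \cite{Bump}, Exercise 4.6.2, with the $\log$-term) gives $pr'(f_{v_1})\neq 0$, and translated back through the commutativity of the $WT'$-diagram this says precisely that $Z^{PS}_{reg}(s,W_{new},1)$ vanishes to order exactly one at $s_0=-\tfrac12$ in these cases.

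The main obstacle is this last step. The purely formal part only places $WT'(\widehat v_{new}')$ inside the one-dimensional line $\mathsf T\cdot\widetilde\Pi_{(T,\chi^{(2)})}$, because $WT(\widehat v_{new})=0$; the real content is that this element is genuinely nonzero, which forces one to carry the Waldspurger--Tunnell computation one order further on the induced model $\nu^{1/2}\times(\nu^{3/2}\otimes 1^{(2)})$ and to check that the logarithmic contribution of the unipotent factor $1^{(2)}$ does not cancel on the paramodular spherical vector. Everything else is bookkeeping with the exact sequences and divisibility isomorphisms already established in this and the preceding sections.
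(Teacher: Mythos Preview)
Your setup is correct: the identification $\widehat\Pi'\cong I=\nu^{1/2}\times(\nu^{3/2}\otimes 1^{(2)})$, the two-dimensional space $(\widehat\Pi')^K$, the fact that $\widehat v_{new}'$ is $K$-spherical and nonzero modulo $\mathsf T\widehat\Pi'$, and the vanishing $pr'(f_{v_0})=0$ (which really comes down to $f_{v_0}\in\nu\circ\det$, since the spherical vector of $\nu^{1/2}\times\nu^{3/2}$ lies in its one-dimensional submodule). So the problem is indeed reduced to showing $pr'(f_{v_1})\neq 0$, and you propose to do this by a direct Mellin computation with a logarithmic term. You yourself flag this as the main obstacle and do not carry it out; it is less routine than it looks, because one must check that the $\log$-contribution from the $1^{(2)}$ factor does not accidentally cancel against the regularizing $L$-factor at $s_0$.

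The paper avoids this computation entirely by a structural argument that uses lemma \ref{J}, and this is really the point of that lemma. Since $T$ acts by $\nu\neq\nu^2$ on the unique irreducible submodule $\nu\circ\det\subset \mathsf T\widehat\Pi'$, the map $WT'$ kills it and hence factors through $E=I/a(\nu\circ\det)$. From the exact sequence $0\to J\to E\to Sp(\nu)\to 0$ and $Sp(\nu)^K=0$ one gets $E^K=J^K$, which is one-dimensional because $J\cong\nu^{3/2}\times\nu^{1/2}$ by lemma \ref{J}. The image of $\widehat v_{new}'$ in $E$ is therefore a nonzero spherical vector of $J$. Finally, restricted to $J$ the map $WT'$ is the ordinary Waldspurger--Tunnell projection for the representation $J\cong\nu^{3/2}\times\nu^{1/2}$, and this is precisely the case covered by lemma \ref{Waldspurger-Tunnell}: the spherical vector of $\nu\otimes(\nu^{1/2}\times\nu^{-1/2})$ maps nontrivially. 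The crucial twist is that $J$ has the \emph{opposite} order of inducing characters compared to $\widehat\Pi=\nu^{1/2}\times\nu^{3/2}$, which is why lemma \ref{Waldspurger-Tunnell} applies to $J$ but not to $\widehat\Pi$ (cf.\ the footnote there). Thus the paper trades your first-order deformation computation for the nontrivial extension result of lemma \ref{J}, reducing everything to the zeroth-order Waldspurger--Tunnell statement already in hand.
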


\begin{proof} Notice, as a representation of $Gl(2)$ there is the exact sequence 
$$  0 \to \widehat\Pi \to \widehat\Pi' \to \widehat\Pi \to 0 \ $$
defined by the $\mathsf{T}$-filtration on $\overline\Pi^{Z({\frak o})}$.
Indeed $\widehat\Pi' = (\overline\Pi^{Z({\frak o})})_{\mu^{(2)}}$
and the quotient on the right side is $\widehat\Pi = (\overline\Pi^{Z({\frak o})})_{Z,\mu}$.
The submodule on the left is $\mathsf{T}\cdot (\overline\Pi^{Z({\frak o})})_{\mu^{(2)}}$.
By the divisibility property, discussed in section \ref{divis} (i.e. by lemma \ref{NOINV}), this $Gl(2)$-module on
the left is isomorphic to  $\widehat\Pi$.   Recall $\widehat\Pi
= \nu^{1/2}\times \nu^{3/2}$ by lemma \ref{needed}, whose proof also
implies $(j_!(A)\vert_{Gl(2)})_{\mu^{(2)}} \cong \widehat\Pi'$ for $A=i_*(\nu)$. 
Hence  $\widehat\Pi'$ as a representation of $Gl(2)$
is isomorphic to $I$, as given in the exact sequence $(***)$ on page \pageref{IEJ}.  
So $\widehat\Pi$ has $\nu\circ\det$ as unique nontrivial irreducible submodule.
The map $WT'$ is trivial on the submodule $\nu\circ\det \hookrightarrow   \widehat\Pi \cong \mathsf{T}\cdot \widehat\Pi' $, hence factorizes over the quotient representation $E=I/a(\nu\circ\det)$. But $E^K =J^K$, hence $\dim(E^K)=1$ by lemma 
\ref{J}. Hence  $E^K$ is generated by the
image of the new vector $v_{new}$ by lemma \ref{ima}. 
Since $WT'$ restricted to $J$ gives the usual Waldspurger-Tunnel
quotient map for $J$, we can invoke lemma \ref{Waldspurger-Tunnell}.
If we utilize $J \cong \nu^{3/2} \times \nu^{1/2} $ (lemma \ref{J}), this implies that $WT'$ is nonzero on 
the new vector.
This proves our claim.  
\end{proof}

%\noindent
Still assume that $\Pi$ is normalized of type \nosf{IIb, Vbc, XIb}. 
Let us now explain the consequence of the last lemma for the exceptional
pole order. Lemma \ref{last} and the commutativity of the last diagram imply that $$k_{\chi^{(2)}}:  \widetilde\Pi \twoheadrightarrow
\widetilde \Pi_{(T,\chi^{(2)})}$$ does
not vanish on the image  of $\widetilde v_{new}\in \widetilde\Pi$. Hence
the description of $k_{\chi^{(2)}}$ in terms of the derivative of the normalized zeta integral on $\widetilde\Pi$
of \cite{RW}, lemma 3.34 implies the following: Although $Z^{PS}_{reg}(s, f_{new}, 1)$  vanishes at $s=-\frac{1}{2}$ on the Bessel function $f_{new}$ of the new vector $v_{new}$ attached to $\tilde v_{new}$, its 
 first derivative $\frac{d}{ds}Z^{PS}_{reg}(s, f_{new},1)$ at $s=-\frac{1}{2}$ does not vanish on 
$f_{new}$. Therefore $$k=ord_{s=-\frac{1}{2}} Z^{PS}_{reg}(s, f_{new}, 1)   = 1$$
holds for the cases \nosf{IIb, Vbc, XIb}. This determines the pole order of $Z^{PS}_{ex}(s,\Pi,\Lambda)$
at $s_0$ for the cases \nosf{IIb, Vbc, XIb} by corollary \ref{Hilfsschritt}. 
Since this order is one, together with lemma \ref{final1} this finally completes the proof of the following main theorem.

\begin{thm}\label{THEEND}
The exceptional $L$-factors $L_{ex}^{PS}(s,\Pi,\Lambda)$ for smooth irreducible representations $\Pi$ of $G$ 
with Bessel model $\Lambda$ are given by table 1. 
\end{thm}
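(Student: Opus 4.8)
The statement is essentially a bookkeeping consequence of the results accumulated above, and the plan is to assemble them, reducing first to finitely many cases. By proposition~\ref{pole->H}, a nontrivial exceptional factor $L^{PS}_{ex}(s,\Pi,\Lambda)\neq 1$ for a split Bessel model forces $\Pi$ to be of extended Saito--Kurokawa type \nosf{IIb, Vb, VIc, XIb, VId}, obtained from a normalized representation by an unramified character twist, and forces $\rho=\rho^\divideontimes$ (hence $\rho=1$, $\Lambda=1$, $\omega=1$ after normalization); for all other $\Pi$ and all split Bessel models one therefore gets $L^{PS}_{ex}=1$, which is exactly the assertion of the table caption. Using $L^{PS}(s,\mu\otimes\Pi,\mu\otimes\Lambda)=L^{PS}(s,\Pi,\mu,\Lambda)$ and the compatibility of all three factors with character twists, it then suffices to compute $L^{PS}_{ex}(s,\Pi_{norm},1)$ for the five types; twisting back by the normalization character ($\sigma$, resp.\ $\chi\sigma$ for \nosf{IIb} and $\xi\sigma$ for \nosf{Vc}) turns the answer into the entries of table~1.

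Next I would reduce the problem to a single pole order. By lemma~\ref{C19}, with $\Phi=\Phi_0$ the spherical Schwartz function one has $Z^{PS}(s,W_v,\Phi_0,1)=L(\nu^{1/2},s)^2\,Z^{PS}_{reg}(s,W_v^{av},1)$, so $L^{PS}$ is the least common multiple (in divisibility) of $L^{PS}_{reg}$ and the regularizing factor $\mathcal L(s)$ of the functions $L(\nu^{1/2},s)^2 Z^{PS}_{reg}(s,W_v,1)$ with $v$ running over the $K_H$-spherical vectors of $\Pi$ (lemma~\ref{spherical}). By lemma~\ref{KH-sp} and corollary~\ref{Hilfsschritt}, this reduces all the way to the paramodular new vector: $\mathcal L(s)=L(s+\tfrac12)^{k}$ with $k=2,1,0$ according as $Z^{PS}_{reg}(s,W_{v_{new}},1)$ has a zero of order $0$, $1$, or $\ge 2$ at $s_0=-\tfrac12$. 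Since $L^{PS}_{reg}=L^{PS}_{sreg}\,L(\mu\otimes M,s)$ with $L^{PS}_{sreg}$ known from \cite{RW2} and \cite{RW} (it is the corresponding table entry) and, by lemma~\ref{disjoint}, $s_0$ is not a pole of $L(\mu\otimes M,s)$, I obtain
$$\operatorname{ord}_{s_0}L^{PS}_{ex}=\operatorname{ord}_{s_0}L^{PS}-\operatorname{ord}_{s_0}L^{PS}_{reg}=\max\bigl(k,\operatorname{ord}_{s_0}L^{PS}_{sreg}\bigr)-\operatorname{ord}_{s_0}L^{PS}_{sreg},$$
while $L^{PS}_{ex}$ is trivial away from $s_0$ by lemma~\ref{C19}. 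So everything comes down to the integer $k$.

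To determine $k$, I would combine the identity $Z^{PS}_{reg}(s_0,W_v,1)=c\cdot p_{\nu^2}(v)$, $c\neq 0$, established above, with the comparison diagram of the preceding section, which factors the universal $(T,\nu^2)$-quotient $p_{\nu^2}=k_{\nu^2}$ through the Waldspurger--Tunnell map $WT\colon\widehat\Pi\to\widetilde\Pi_{(T,\nu^2)}\cong\C$ on the $(Z,\nu^2)$-specialization $\widehat\Pi$; hence $Z^{PS}_{reg}(s_0,W_{v_{new}},1)=c\cdot WT(\widehat v_{new})$. By lemma~\ref{final1} (together with lemma~\ref{needed} and proposition~\ref{ima}), $WT$ is nonzero on $\widehat v_{new}\in\widehat\Pi^K$ in the cases \nosf{VIc, VId} and vanishes there in the cases \nosf{IIb, Vbc, XIb}; so $k=2$ for \nosf{VIc, VId}, while for \nosf{IIb, Vbc, XIb} one only gets a zero, i.e.\ $k\le 1$. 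To see that this zero has order exactly one in the latter cases, I would run the first-order deformation argument: by lemma~\ref{last} the deformed map $WT'\colon\widehat\Pi'\to\widetilde\Pi_{(T,(\nu^2)^{(2)})}$ is nonzero on the image $\widehat v_{new}'$ of $v_{new}$, and since the quotient $\widetilde\Pi\to\widetilde\Pi_{(T,(\nu^2)^{(2)})}$ is computed by the first $s$-derivative of the normalized zeta integral (\cite{RW}, lemma 3.34), commutativity of the deformed diagram forces $\frac{d}{ds}Z^{PS}_{reg}(s,W_{v_{new}},1)|_{s=-1/2}\neq 0$, hence $k=1$.

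Putting this together: $\mathcal L(s)=L(s+\tfrac12)^2$ for \nosf{VIc, VId} and $\mathcal L(s)=L(s+\tfrac12)$ for \nosf{IIb, Vbc, XIb}, while $\operatorname{ord}_{s_0}L^{PS}_{sreg}$ equals $1$ in the former cases and $0$ in the latter; the displayed identity then gives $\operatorname{ord}_{s_0}L^{PS}_{ex}=1$ in every case, i.e.\ $L^{PS}_{ex}(s,\Pi_{norm},1)=L(\nu^{1/2},s)$, and twisting back by the normalization character produces exactly the column $L^{PS}_{ex}(s,\Pi,\Lambda)$ of table~1 (with $L^{PS}_{ex}=1$ for every $\Pi$ not listed). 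I expect the genuine obstacle to be the last step: one needs the \emph{exact} order of vanishing of $Z^{PS}_{reg}(s,W_{v_{new}},1)$ at $s_0$, not merely $k\le1$, and it is precisely for this that the first-order deformation $WT'$ of $WT$ and the interpretation of the Mellin functor $k_{(T,\chi^{(2)})}$ as a derivative of the zeta integral are indispensable.
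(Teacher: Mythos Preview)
Your proposal is correct and follows essentially the same route as the paper: reduce via proposition~\ref{pole->H} to normalized $\Pi$ of extended Saito--Kurokawa type with $\rho=1$, use corollary~\ref{Hilfsschritt} to reduce $\mathcal L(s)$ to the vanishing order of $Z^{PS}_{reg}(s,W_{v_{new}},1)$ at $s_0=-\tfrac12$, determine that order via lemma~\ref{final1} (cases \nosf{VIc, VId}) and lemma~\ref{last} with the first-order deformation $WT'$ (cases \nosf{IIb, Vbc, XIb}), and finally divide out $L^{PS}_{sreg}$ using \cite{RW2}. Your explicit lcm/max formula $\operatorname{ord}_{s_0}L^{PS}_{ex}=\max(k,\operatorname{ord}_{s_0}L^{PS}_{sreg})-\operatorname{ord}_{s_0}L^{PS}_{sreg}$ is in fact a helpful clarification of the step the paper writes more tersely as $L_{ex}=\mathcal L/L^{PS}_{sreg}$.
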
 

In table 1 we used the notation of \cite{Sally-Tadic} and \cite{Roberts-Schmidt} 
for the classification of the irreducible representations of $G$.

\begin{proof} We may assume that $\Pi$ is normalized. We use our computation
of the auxiliary $L$-factor ${\cal L}(s)$, which is the regularizing $L$-factor
of $L^{PS}(s,W_v,\Phi_0,\mu)$ for all Bessel functions $W_v$ fixing $\Phi_0$ to be the characteristic
function  of the lattice ${\frak o}^2 \oplus {\frak o}^2$.
As stated at the end of section 7, we have  $ {\cal L}(s) = L(s+\frac{1}{2})$ for the cases \nosf{IIb, Vbc, XIb} and by [RW2] $L_{reg}^{PS}(s,\Pi,\Lambda) = L(\mu\otimes M,s)$. Hence $L_{ex}(s,\Pi,\Lambda) = {\cal L}(s)$. Thus $L_{ex}(s,\Pi,\Lambda) = L(s+\frac{1}{2})$ holds for the cases \nosf{IIb, Vbc, XIb}.
Recall $ {\cal L}(s) = L(s+\frac{1}{2})^2$ for \nosf{VIc, VId}. Since $L_{sreg}^{PS}(s,\Pi,\Lambda) = L(s+\frac{1}{2})$ holds in these cases by [RW2], this implies $$L_{ex}(s,\Pi,\Lambda) = {\cal L}(s)/L_{sreg}^{PS}(s,\Pi,\Lambda) = L(s+\frac{1}{2}) $$ in the two remaining cases \nosf{VIc, VId}. This determines $L_{ex}(s,\Pi,\Lambda)$ in all the cases, where it is nontrivial.
\end{proof}

\textbf{Remark}. Notice $L^{PS}(s,\Pi,\mu,\Lambda)= L^{PS}(s,\mu\otimes\Pi,\mu\otimes\Lambda)$ holds.
Hence the last theorem \ref{THEEND} describes all the exceptional $L$-factors $L^{PS}(s,\Pi,\mu,\Lambda)$
for $\Pi$ , $\mu$ and split Bessel models $\Lambda$ in the sense of 
Piateskii-Shapiro.

\medskip\noindent
\textbf{Remark}. For cuspidal $\Pi$ and split Bessel models $L^{PS}(s,\Pi,\Lambda)=1$ holds for the local $L$-factor. The analogous statement does not hold for anisotropic Bessel models $\Lambda$ as shown in
\cite{Danisman3bis}. 

\goodbreak
\section{Bibliography}

\begin{footnotesize}
\bibliographystyle{amsalpha}

\begin{thebibliography}{ccccccc}\advance\itemsep by -.5em


\bibitem[AGRS]{AGRS} 
A.~Aizenbud, D.~Gourevich, S.~Rallis and G.~Schiffmann, 
\newblock{\textit{Multiplicity one theorems}.}
\newblock{{Annals of Math.}, second series, vol. 172, no. 2
September, (2010)}

\bibitem[BZ]{Bernstein-Zelevinsky}
I.N.~Bernshtein and A.V.~Zelevinsky,
\newblock{\textit{Representations of the group $GL(n,F)$ where $F$ is a non-archimedean local field}.}
\newblock{{Russ. Math. Surveys} 31:3:1-68 (1976)}

\bibitem[BW]{Borel_Wallach}
A.~Borel and N.~Wallach,
\newblock{ \textit{Continuous cohomology, discrete subgroups, and representations of reductive groups.}}
\newblock{ {{AMS}, volume 67 of {Mathematicals Surveys and Monographs}, (2000).}}

\bibitem[B]{Bump}
D.~Bump,
\newblock{ \textit{Automorphic Forms and Representations},}
\newblock{{Cambridge Studies}, vol.\,55, (1998).}

\bibitem[C]{C}
P.~Cartier,
\newblock{ \textit{Representations of $p$-adic groups: A survey},}
\newblock{in {Automorphic Forms, Representations, and $L$-functions}, Proceedings of Symp.
Pure Math., Vol XXXIII, Part 1(1979).}


\bibitem[D14]{Danisman}
Y.~Danisman,
\newblock{\textit{Regular poles for the p-adic group $\mathrm{GSp}_4$.}}
\newblock{{Turk. J. Math.}, 38:587--613, (2014).}

\bibitem[D15]{Danisman2}
Y.~Danisman,
\newblock \textit{Regular poles for the p-adic group $\mathrm{GSp}_4$ II.}
\newblock{{Turk. J. Math.}, 39:369--394, (2015).}

\bibitem[D15bis]{Danisman3bis}
Y.~Danisman,
\newblock {\textit{Local factors of nongeneric supercuspidal representations of $GSp_4$.}}
\newblock{{Math. Ann.} 361 (3-4):1073--1121, (2015).}


\bibitem[D17]{Danisman3}
Y.~Danisman,
\newblock {\textit{$L$-Factor of Irreducible $\chi_1\times\chi_2\rtimes\sigma$.}}
\newblock{{Chin.\ Ann.\ Math.} 38B(4):1019--1036, (2017).}

\bibitem[PSS81]{Soudry_Piatetski_L_Factors}
I.~Piatetskii-Shapiro and D.~Soudry,
\newblock{\textit{{The {$L$} and {$\epsilon$} Factors for {$\mathrm{GSp}(4)$}}},}
\newblock{J. Fac. Sci. Univ. Tokyo, 28:505--530, (1981).}


\bibitem[PS83]{PS83}
I.~Piatetskii-Shapiro.
\newblock \textit{On the Saito-Kurokawa Lifting}.
\newblock {{Invent. Math.}, 71:309--338, (1983).}


\bibitem[PS97]{PS-L-Factor_GSp4}
I.~Piatetskii-Shapiro.
\newblock \textit{$L$-functions for $\mathrm{GSp}_4$}.
\newblock {{Pac.\,J.}, 181(3):259--275, (1997).}


\bibitem[RS07]{Roberts-Schmidt}
B.~Roberts and R.~Schmidt.
\newblock {\textit{Local Newforms for {$\mathrm{GSp}(4)$}}, 
\newblock{volume 1918 of {Lecture Notes in Mathematics}}. Springer, (2007).}

\bibitem[RS16]{Roberts-Schmidt_Bessel}
B.~Roberts and R.~Schmidt,
\newblock {\textit{Some results on Bessel functionals for $\mathrm{GSp}(4)$}.}
\newblock{Documenta Math. 21:467-553, (2016).}


\bibitem[RW]{RW}
M.~R\"osner and R.~Weissauer,
\newblock {\textit{Regular poles for $L$-series attached to split Bessel models of $GSp(4)$}.}
\newblock{ArXiv:1711.07409 [math.RT]  (2017).}

\bibitem[RW2]{RW2}
M.~R\"osner and R.~Weissauer,
\newblock {\textit{Regular poles of local $L$-functions for $GSp(4)$ with respect to  split Bessel models (the subregular cases)}.}
\newblock{preprint (2018).}


\bibitem[ST94]{Sally-Tadic}
Sally. Jr.~P.J. and M.~Tadic,
\newblock {\textit{Induced representations and classification for $\mathrm{GSp}(2,F)$
and $Sp(2,F)$}.}
\newblock{Mem. Soc. Math. France.(N.S.) 52:75-133, (1993).}


\bibitem[W]{Wa}
J.L.~Waldspurger,
\newblock {\textit{Une variante d'un resultat de Aizenbud, Gourevitch, Rallis et Schiffmann}.}
\newblock{ArXiv: 0911.1618v1.}

\bibitem[W09]{SLN} R.~Weissauer,
\newblock {\textit{Endoscopy for $\mathrm{GSp}(4)$ and the Cohomology of Siegel Modular Threefolds}.}
\newblock{{Volume 1968 of {Lecture Notes in Mathematics}}. Springer, (2009).}


 
\end{thebibliography}

\end{footnotesize}

\bigskip\noindent

\bigskip\noindent

\begin{footnotesize}

\centering{Rainer Weissauer\\ Mathematisches Institut, Universit\"at Heidelberg\\ Im Neuenheimer Feld 205, 69120 Heidelberg\\ email: weissauer@mathi.uni-heidelberg.de}

\end{footnotesize}

\
\end{document}